\numberwithin{equation}{section}
\newtheorem{theorem}{Theorem}[section]
\newtheorem{proposition}[theorem]{Proposition}
\newtheorem{lemma}[theorem]{Lemma}
\newtheorem{corollary}[theorem]{Corollary}
\theoremstyle{definition}
\newtheorem{definition}[theorem]{Definition}
\newtheorem{notation}[theorem]{Notation}
\theoremstyle{remark}
\newtheorem{remark}[theorem]{Remark}
\def\C{{\mathbb{C}}}
\def\N{{\mathbb N}}
\def\R{{\mathbb R}}
\def\i{\textnormal {i}}
\newcommand{\trace}{\mathrm{tr}}
\renewcommand{\ge}{\geqslant}
\renewcommand{\le}{\leqslant}
\renewcommand{\Re}{\operatorname{Re}}
\renewcommand{\Im}{\operatorname{Im}}
\newcommand{\Span}{\operatorname{Span}}
\newcommand{\Range}{\operatorname{Ran}}
\newcommand{\HS}{\operatorname{HS}}
\newcommand{\Pf}{\operatorname{Pf}}
\newcommand{\rank}{\operatorname{rank}}
\newcommand{\Ind}{\operatorname{Ind}}
\renewcommand{\liminf}{\mathop{\underline{\lim}}\limits}
\newcommand{\masha}[1]
{{\color{red} Masha says: #1}}
\newcommand{\rohan}[1]
{{\color{blue} Rohan says: #1}}
\begin{document}
	
	\title[Spectral theory]{Spectral theory for L\'evy and L\'evy-Ornstein-Uhlenbeck semigroups on step 2 Carnot groups}
	
	\makeatletter
	\@namedef{subjclassname@2020}{%
		\textup{2020} Mathematics Subject Classification}
	\makeatother
	
	\author[Gordina]{Maria Gordina{$^{\dag}$}}
	\address{ Department of Mathematics\\
		University of Connecticut\\
		Storrs, CT 06269,  U.S.A.}
	\thanks{\footnotemark {$\dag$} Research was supported in part by NSF Grant DMS-2246549.}
	\email{maria.gordina@uconn.edu}
	
	\author[Sarkar]{Rohan Sarkar{$^{\dag }$}}
	\address{ Department of Mathematics and Statistics\\
		Binghamton University\\
		Binghamton, NY 13795,  U.S.A.}
	\email{rsarkar2@binghamton.edu}

	\keywords{Carnot groups; sub-Laplacian; generalized Fourier transform; L\'evy processes on Lie groups; Ornstein-Uhlenbeck semigroups; spectrum; intertwining relationship; non-self-adjoint integro-differential operators}
	
	\subjclass[2020]{35P05; 22E25; 60E07; 42B15}
	
	
\begin{abstract}
We consider non-local perturbations $\Delta^\psi_G$ of sub-Laplacians on a step $2$ Carnot group $G$. The perturbations are by translation-invariant non-local operators acting along the vertical directions in $G$. We use harmonic analysis on $G$ to obtain intertwining relationship between the semigroups generated by $\Delta^\psi_G$ and some strongly continuous contraction semigroups on Euclidean spaces with purely continuous spectrum, and as a result we identify the spectrum of $\Delta^\psi_G$. Further we introduce the L\'evy-Ornstein-Uhlenbeck (OU) semigroup corresponding to $\Delta^\psi_G$. We prove that these Markov semigroups are ergodic, though they are not normal operators on  $L^2$ space with respect to the invariant distribution $\mathsf{p}_\psi$. The intertwining relationships allow us to show that all L\'evy-OU generators on $G$ are isospectral, that is, they have the same eigenvalues with the same multiplicities. As a byproduct, we obtain a precise description of the eigenspaces, and also derive explicit formula for the co-eigenfunctions corresponding to some eigenvalues.
\end{abstract}
	
\maketitle
	
\tableofcontents

\section{Introduction}

	Spectral theory of Markov semigroups are of special interest as they are related to various functional inequalities such as Poincar\'e inequality, logarithmic Sobolev inequality, hypercontractivity. In particular,  the spectral gap is often used to find the rate of convergence of a Markov semigroup to its invariant distribution (when it exists). One of our goals is to study the spectrum of a sub-Laplacian $\Delta_{\mathcal{H}}$ and its L\'evy-type perturbations on a step $2$ Carnot group $G\cong\R^n\times\R^m$, where $\R^n$ is identified with the horizontal layer, see \eqref{eq:group_law} below for details. When $m=0$, that is, $G$ is a Euclidean space, the spectral theory of L\'evy semigroups relies on the Fourier transform of such semigroups, see e.g. \cite{Applebaum2019}. However, for a non-commutative locally compact Lie group, the Fourier transform is not scalar-valued, therefore making the spectral analysis more intricate. For the $3$-dimensional Heisenberg group, the spectrum of $\Delta_{\mathcal{H}}$ was obtained in \cite{DasguptaMolahajlooWong2011} using the Fourier-Wigner transform on the Heisenberg group. We also refer to \cite{FurutaniSagamiOtsuki1993} for the spectrum of the Laplace-Beltrami operator on Carnot groups of step $2$, where the authors proved that the spectrum equals $[0,\infty)$, and for Heisenberg groups, the spectrum is purely continuous. In \cite{NiuLuo2000}, the authors obtained spectrum of some second order differential operators on step $2$ nilpotent Lie groups under some mild conditions on the group, and their methods rely on the unitary representations of such groups. 
	
	As the spectrum of the Laplace operator on Euclidean spaces is $[0, \infty)$, \cite[Theorem~1]{FurutaniSagamiOtsuki1993} makes it natural to expect some connection between spectral properties of  $\Delta_{\mathcal H}$ and the Laplace operator on Euclidean space. We prove that the spectrum of the sub-Laplacian on any Carnot group of step $2$ is purely continuous, and it is equal to $[0,\infty)$. The novelty of our approach is to rely on intertwining relationships with some strongly continuous contraction semigroups on Euclidean spaces, which not only strengthens the results in \cite{FurutaniSagamiOtsuki1993}, but also extends those to a larger class of non-local operators on $G$.
	
Two bounded linear operators $P, Q$ on Banach spaces $\mathcal{X}_1, \mathcal{X}_2$ are said to be \emph{intertwined} if there exists a bounded linear operator $\Lambda:\mathcal{X}_2\longrightarrow\mathcal{X}_1$ such that 
	\begin{align*}
		P\Lambda=\Lambda Q.
	\end{align*}
	When $\Lambda$ is an invertible operator, $P, Q$ have the same spectrum. Although $\Lambda$ is not assumed to be injective in general, having an intertwining relationship often enables us to obtain some spectral information on $P$ (resp.~$Q$) if the same information on $Q$ (resp.~$P$) is known. The idea of using intertwining to study the spectral and ergodicity properties of non-self-adjoint integro-differential Markov operators have been used previously, and we refer to \cite{PatieSarkar2023, MicloPatieSarkar2022, Patie_Savov} for spectral theory of self-similar and related Markov semigroups using intertwining relationships with self-adjoint operators.
	
 We now briefly describe the main idea that allows us to identifying the spectrum of $\Delta_{\mathcal{H}}$ on $L^2(G)$, the space of all square-integrable functions on $G$ with respect to the (bi-invariant) Haar measure. Denote by $\mathbb{Q}_t$  the semigroup on $L^2(G)$ whose infinitesimal generator is $\Delta_{\mathcal{H}}$, we first prove that there exists a closed subspace $\mathcal{L}\subset L^2(G)$ such that 
	\begin{align}\label{eq:L_beta}
		\mathcal{L}=\bigoplus_{\beta\in\N^d_0}^\infty \mathcal{L}_\beta,
	\end{align}
where $\mathcal{L}_\beta\cong L^2(\R^{m+k})$, $m,k$ are the dimension of the second layer of the Lie algebra of $G$ and the \emph{radical} respectively, as defined in Section~\ref{sec:GFT_step2}, and for each $\beta\in\N^d_0$ there exists a strongly continuous contraction semigroup $Q^\beta$ and a unitary operator $\operatorname{M}_\beta:L^2(\R^{m+k})\longrightarrow\mathcal{L}_\beta$ such that 
	\begin{enumerate}
		\item\label{it:intro_1} $\mathbb{Q}_{t}\operatorname{M}_\beta=\operatorname{M}_\beta Q^\beta_t$   on $\mathcal{L}_\beta$; 
		\item\label{it:intro_2} $\sigma(Q^\beta_t)=\sigma_c(Q^\beta_t)=[0,1]$ for all $t>0$ and $\beta\in\N$.
	\end{enumerate}
	The novelty of this approach lies in characterizing the subspaces $\mathcal{L}_\beta$ in \eqref{eq:L_beta} by using the generalized Fourier transform (GFT) on Carnot groups. We refer to Proposition~\ref{prop:decomp} and Section~\ref{sec:5} for details about the construction of Hilbert spaces $\mathcal{L}_\beta$ and the semigroups $Q^\beta_{t}$. In particular, for $H$-type groups and non-isotropic Heisenberg groups, $Q^\beta_{t}$ corresponds to a rescaled Poisson semigroup on $\R^{m}$, where $m$ is the dimension of the center of the Lie algebra. 
	
	The decomposition technique in \eqref{it:intro_1} and \eqref{it:intro_2}  extends further to non-local perturbations of the sub-Laplacian. Namely, we introduce L\'evy-type perturbations of $\Delta_{\mathcal{H}}$ along the vertical directions in $G$, denoted by $\Delta^{\!\psi}_{G}$, where $\psi$ is the L\'evy-Khintchine exponent of a L\'evy process. Such operators are generators of L\'evy processes on $G$, and  we show in Theorem~\ref{thm:characterization} that $\Delta^{\!\psi}_{G}$ characterizes all L\'evy processes on $G$ whose \emph{horizontal projection} is the Brownian motion on $\mathbb{R}^n$. Using the decomposition in \eqref{eq:L_beta} and intertwining relationships, we describe the spectrum of $\Delta^{\!\psi}_{G}$ in Theorem~\ref{thm:spec_dGp}, in particular, that the spectrum is purely continuous for all such perturbations. Our results also indicate that the spectrum of $\Delta^{\!\psi}_{G}$ depends on perturbations, and it is given explicitly in terms of the L\'evy-Khintchine exponent $\psi$  in Theorem~\ref{thm:spec_dGp}. 
	
	For a general treatment of L\'evy processes on Lie groups we refer to \cite{LiaoMingBookLevyProcessesinLieGroups}, and finding a core for the infinitesimal generator of such a process is a non-trivial question in general. In Theorem~\ref{thm:heat_perurbation}, we show that compactly supported smooth functions on $G$, denoted by $C^\infty_c(G)$, is a core for $\Delta^{\!\psi}_{G}$, therefore generating a Feller semigroup, and a standard approximation argument using mollifiers proves that $C^\infty_c(G)$ is a core for $\Delta^{\!\psi}_{G}$ in $L^2(G)$. We also refer to \cite{AsaadGordina2016} for the analysis of horizontal heat kernel on a (filiform) $n$-step nilpotent Lie group, and to \cite{GordinaHaga2014} for the analysis of L\'evy processes on $3$-step nilpotent Lie groups. Both of these references use Kirillov's representation theory and the generalized Fourier transform relevant to the current article.
	
	In addition to the operators mentioned above, we  introduce the Ornstein-Uhlenbeck (OU) operators associated with $\Delta^{\!\psi}_{G}$, that is, we define 
	\begin{align}\label{eq:intro_L}
		\mathbb{L}_\psi=\Delta^{\!\psi}_{G}+\mathbb{D},
	\end{align}
	where $\mathbb{D}$ is the generator of the dilation semigroup on $G$, that is, $\delta_{e^{-t}}=e^{t\mathbb{D}}$ for all $t\geqslant 0$. $\mathbb{L}_\psi$ can be realized as the generator of the OU process on $G$ driven by the L\'evy process with generator $\Delta^{\!\psi}_G$.
	In Theorem~\ref{prop:P_gamma} we prove that the closure of $(\mathbb{L}_\psi, C^\infty_c(G))$ generates a Feller semigroup $\mathbb{P}^\psi$ on $G$ and it is ergodic with respect to a smooth invariant density $\mathsf{p}_\psi$. In particular, if we denote by $\mathbb{L}$ the operator $\mathbb{L}_\psi$ when $\psi\equiv 0$, then $\mathbb{L}$ is the generator of the OU semigroup corresponding to the horizontal heat semigroup on $G$. This operator can also be realized as the generator of \emph{Mehler's semigroup} in the sense of \cite[Equation (10)]{Lust-Piquard2010}. Unlike in the Euclidean case, the OU semigroup on $G$ is not a self-adjoint operator. In fact, it is not normal on $L^2$ with respect to the invariant distribution, as we show in Corollary~\ref{cor:non-normal}. Using an argument involving the scale invariance property of the sub-Laplacian, see \cite[Lemma~5, Equation (17)]{Lust-Piquard2010}, Lust-Piquard proved that the spectrum of $\mathbb{L}$ on such an $L^2$ space consists of eigenvalues and equals $-\N_0$, a property also satisfied by OU operators on Euclidean spaces, see \cite{MetafunePallaraPriola2002}.

	Due to the presence of non-local perturbations in $\mathbb{L}_\psi$, one cannot rely on scale invariance for $\Delta^{\!\psi}_{G}$. In fact, the scale invariance does not even hold for second order local perturbations, as the scaling index for the horizontal and vertical variables in $G$ are different in general. As a result, the approach in \cite{Lust-Piquard2010} is not applicable to study the spectrum of $\mathbb{L}_\psi$. Instead we rely on intertwining relationships in Theorem~\ref{thm:OU_intertwining} to prove isospectrality, that is, that all operators $\mathbb{L}_\psi$ for $\psi$ with the L\'evy measure satisfying the exponential moment condition \eqref{eq:exponential_moment} have the same eigenvalues with the same algebraic and geometric multiplicities. The intertwining operators are Fourier multiplier operators on $G$, which can also be viewed as Markov operators on $G$. In particular, if $G$ is given a Riemannian structure, then our result implies that the OU operator corresponding to the Laplace-Beltrami operator on $G$ has discrete point spectrum and equals $-\N_0$. Besides the intertwining relationship, another main ingredient in the proof of isospectrality is showing that the generalized eigenfunctions of $\mathbb{L}_\psi$ are polynomials on $G$. This involves proving regularity estimates for the semigroup $\mathbb{P}^\psi$, and we refer to Section~\ref{sec:regularity} for details. We note that the phenomenon of isospectrality is somewhat surprising as the spectrum of $\mathbb{L}_\psi$ does not depend on $\psi$ at all, while the spectrum of $\Delta^{\!\psi}_{G}$ in \eqref{eq:intro_L} indeed depends on $\psi$. We refer to \cite{Sarkar2025} for a similar  observation in a very different context of L\'evy-OU semigroups on Euclidean spaces.

	Such resemblance also reflects deep connections between the OU operators on $G$ and those on  Euclidean spaces. In this vein, we prove the existence of Markov operators $\Pi:L^p(\R^n,\widetilde{\mu})\longrightarrow L^p(G,\mathsf{p}_\psi)$ and $\Lambda_\psi:L^p(\R^m,\mu)\longrightarrow L^p(G,\mathsf{p}_\psi)$ such that 
	\begin{align*}
		\mathbb{P}^\psi_t\Pi&=\Pi \widetilde{P}_t \text{ on }  L^p(\R^n,\widetilde{\mu}) \\
		P_t\Lambda_\psi &=\Lambda_\psi \mathbb{P}^\psi_t \text{ on }   L^p(G,\mathsf{p}_\psi)
	\end{align*}
	where $\widetilde{P}$ (resp. $P$) is a self-adjoint diffusion OU semigroup on $\R^n$ (resp. $\R^m$), and $\widetilde{\mu}$, $\mu$ are the unique invariant distributions of $\widetilde{P}$ and $P$ respectively. We refer to Theorem~\ref{thm:OU_intertwining} and Proposition~\ref{prop:horizontal_intertwining} for details. With the help of the above intertwining relations, we obtain a very different behavior of the spectrum for $p=1$, in which case the spectrum equals the unit disc $\{z\in\mathbb{C}: |z|\le 1\}$, see Theorem~\ref{thm:L_1_spec} for details. This result again conforms with the OU operators on  Euclidean spaces as observed in \cite{MetafunePallaraPriola2002}. Using \eqref{it:intro_2}, we provide explicit formula for the eigenfunctions of the adjoint of $\mathbb{P}^\psi$ for even eigenvalues. To the best of our knowledge, such a formula has not appeared in the literature until now.
	
	The rest of the paper is organized as follows: in Section~\ref{sec:2} we introduce the notation including the basics of Carnot groups. In Section~\ref{sec:GFT_step2} we briefly review the representation theory for Carnot groups of step $2$, which is crucial for defining the generalized Fourier transform. In Section~\ref{sec:4} we prove properties of GFT on Carnot groups of step $2$, extending previous results on Heisenberg groups in \cite{ThangaveluBook1998}. These areessential to study the spectral properties of the sub-Laplacian and its perturbations. In Section~\ref{sec:5} and \ref{sec:6} we study the spectrum of the sub-Laplacian and its L\'evy-type perturbations. Section~\ref{sec:7} is devoted to the spectral theory of L\'evy-OU semigroups on $G$ using intertwining relationships. 
	
	\section{Notation}
	For a simply connected Lie group $G$, we denote by $B_b(G)$, $C_b(G)$, $C^\infty(G)$, $C^\infty_c(G)$, $C_0(G)$ and $C^\infty_0(G)$ the spaces of bounded measurable functions, bounded continuous functions, smooth functions, smooth functions with compact support, continuous functions vanishing at infinity, and smooth functions with all derivatives vanishing at infinity respectively. $C_0(G)$ and $C_b(G)$ are equipped with the supremum norm, that is, for any $f\in C_0(G)$ or $C_b(G)$, we define $\|f\|_\infty:=\sup_{g\in G}|f(g)|$.

	If $\mu$ is a $\sigma$-finite measure on  $\left( G, \mathcal{B}\left( G\right) \right)$, then  by $L^p(G,\mu), p\in [1,\infty]$ we denote the space of complex-valued functions on $G$ that are $L^p$-integrable with respect to $\mu$. We identify $L^\infty(G,\mu)$ with $C_b(G)$. For a unimodular Lie group $G$ we denote by $L^{p}(G)$ the space of $L^p$-integrable functions with respect to a (bi-invariant) Haar measure.

	For any closed operator $\left( A, \mathcal{D}(A) \right)$ on a Banach space $X$, we denote the spectrum of $A$ in $X$ by $\sigma(A; X)$, that is,
	\begin{align*}
		\sigma(A;X)=\{\zeta\in\C: (A-\zeta I) \text{ does not have a bounded inverse} \},
	\end{align*}
	and we denote point, continuous and residual spectrum of $A$ by $\sigma_{\rm p}(A;X), \sigma_{\rm c}(A;X), \sigma_{\rm r}(A;X)$ respectively, where 
	\begin{align*}
		\sigma_{\rm p}(A; X)&:=\{\zeta\in\sigma(A; X): \ker(A-\zeta I)\neq\{0\}\},
		\\
		\sigma_{\rm c}(A; X)&:=\{\zeta\in\sigma(A; X): \ker(A-\zeta I)=\{0\}, (A-\zeta I) \text{ is not surjective and has dense range}\}, 
		\\
		\sigma_{\rm r}(A; X)&:=\sigma(A; X)\setminus (\sigma_{\rm p}(A; X)\cup\sigma_{\rm c}(A; X)).
	\end{align*}
	We also use $\mathbb{N}_{0}$ to denote the space of all non-negative integers. 
	
	\section{Preliminaries}\label{sec:2}
	\subsection{Carnot groups}
	In this paper we concentrate on a particular class of metric spaces, namely, homogeneous Carnot groups of step $2$ equipped with a left-invariant homogeneous metric. The assumption about homogeneity of the group can be made without loss of generality by  \cite[Proposition 2.2.17, Proposition 2.2.18]{BonfiglioliLanconelliUguzzoniBook}. 
	
	Recall that  $G$ is a Carnot group of step $r$ if $G$ is a connected and simply connected Lie group whose Lie algebra $\mathfrak{g}$ is \emph{stratified}, that is, it can be written as
	\begin{align*}
		\mathfrak{g}=V_{1}\oplus\cdots\oplus V_{r},
	\end{align*}
	where
	\begin{equation}\label{e.Stratification}
		\begin{aligned}
			&\left[V_{1}, V_{i-1}\right]=V_{i}, \hskip0.1in 2 \leqslant i \leqslant r,
			\\
			&[ V_{1}, V_{r} ]=\left\{ 0 \right\}.
		\end{aligned}
	\end{equation}
	To exclude trivial cases we assume that the dimension of $\mathfrak{g}$ is at least $3$. In addition we will use a stratification such that the center of $\mathfrak{g}$ is contained in  $V_{r}$. In particular, Carnot groups are nilpotent. We will concentrate on the case of $r=2$. 
	\begin{notation}\label{n.startification} By $\mathcal{H}:=V_{1}$ we denote the space of \emph{horizontal} vectors that generate the rest of the Lie algebra with $\mathcal{V}:=V_{2}=[\mathcal{H},\mathcal{H}], \ldots, V_r = \mathcal{H}^{(r)}$.
	\end{notation}
	For $r=2$, we therefore have the decomposition
	\begin{align}\label{eq:lie_alg_decomp}
		\mathfrak{g}=\mathcal{H} \oplus  \mathcal{V},
	\end{align}
	where for any $Z=(Z_{\mathcal H},Z_{\mathcal V} )$, $Z'=(Z'_{\mathcal H},Z'_{\mathcal V})\in \mathfrak g$ with $Z_{\mathcal H}, Z^\prime_{\mathcal H}\in\mathcal{H}$ and $Z_{\mathcal V}, Z'_{\mathcal V}\in\mathcal{V}$, we have 
	\begin{align*}
		& [Z, Z']=(0, [Z_{\mathcal H}, Z'_{\mathcal H}])
	\end{align*}
	As usual, we let
	\begin{align*}
		\exp&: \mathfrak{g} \longrightarrow G,
		\\
		\log&: G \longrightarrow \mathfrak{g}
	\end{align*}
	denote the exponential and logarithmic maps, which  are global diffeomorphisms for connected nilpotent groups, see for example \cite[Theorem 1.2.1]{CorwinGreenleafBook}. For any $Z\in\mathfrak g$, the left-invariant vector field corresponding to $Z$ is defined as 
	\begin{align*}
		Zf(g)=\left.\frac{d}{dt}\right|_{t=0}f(g\star \exp(tZ)).
	\end{align*}
	Choosing a basis $\{Z_1,\ldots, Z_{n+m}\}$ of $\mathfrak g$ with $\{Z_i: 1\le i\le n\}\subset \mathcal H$ and $\{Z_i: n+1\le i\le n+m\}\in \mathcal V$, this allows us to identify $G$ with a linear space underlying its Lie algebra, $\R^{n}\times\R^{m}$,  equipped with  a non-trivial group law. 
	This is done by using the Baker-Campbell-Dynkin-Hausdorff formula, expressing the group operation in terms of the Lie algebra.  Namely, the group operation on $G$ can be described as 
	\begin{align}\label{eq:group_law}
		& (h_{1}, v_{1} )\star (h_{2}, v_{2} )=(h_{1}+h_{2}, v_{1}+v_{2}+\frac{1}{2}\omega( h_{1}, h_{2})), 
		\\
		& (h_{1}, v_{1} ), (h_{2}, v_{2} ) \in \mathbb{R}^{n+m}\cong G, \notag
	\end{align}
	where $\omega:\R^n\times \R^n\to \R^m$ is a skew-symmetric bilinear map. In this identification, we represent any element $g\in G$ as $g=(h,v)$, where $h\in\R^n, v\in\R^m$, and we call $h$ \emph{horizontal} and $v$ \emph{vertical} components of $g$. Moreover, a Haar measure on $G$ can be chosen to be the Lebesgue measure on $\R^{n+m}$. In the exponential coordinates we just described, one can show that $Z_1,\ldots, Z_{n+m}$ can be expressed as first order differential operators as follows
	\begin{align}\label{eq:V_i}
		Z_i=\frac{\partial}{\partial h_i}+\frac{1}{2}\omega_i(h,\nabla_v), \  Z_{n+j}=\frac{\partial}{\partial v_j}, \quad 1\leqslant i\le n, 1\leqslant j\le m.
	\end{align}
	$(G,\star)$ can also be equipped with dilations $(\delta_c)_{c\in\R}\subset\operatorname{Aut}(G)$ such that for any $(h,v)\in G$,
	\begin{align*}
		\delta_{c}(h,v)=(ch, c^2 v).
	\end{align*}
	Due to the decomposition in \eqref{eq:lie_alg_decomp}, $G$ can be equipped with a sub-Riemannian structure with respect to the horizontal vector fields $\mathcal{H}$ and the metric $g$ on  $\mathcal H$ is chosen in such a way that $Z_1,\ldots, Z_n$ are orthonormal in $\mathcal H$. Then for the Carnot group  equipped with this sub-Riemannian structure, the central object of interest is the sub-Laplacian on $G$  defined as 
	\begin{align}\label{eq:sub-laplacian}
		\Delta_{\mathcal{H}}:=\sum_{i=1}^n Z^2_i.
	\end{align}
	Due to H\"ormander's theorem, $\Delta_{\mathcal H}$ is a hypoelliptic diffusion operator on $G$. By \cite[p.~428]{DriverGrossSaloff-Coste2010} it is known that $(\Delta_{\mathcal{H}}, C^\infty_c(G))$ is essentially self-adjoint in $L^2(G)$, and with abuse of notation we denote its self-adjoint extension in $L^2(G)$ by $\Delta_{\mathcal{H}}$. The corresponding \emph{horizontal heat semigroup} on $L^2(G)$ is defined as $\mathbb{Q}_t:=e^{t\Delta_{\mathcal{H}}}$.
	
	\subsection{Generalized Fourier transform for step 2 Carnot groups}\label{sec:GFT_step2}
	For step $2$ Carnot groups,  we will use an explicit description of the generalized Fourier transform (GFT), which we describe in more detail in  Appendix~\ref{sec:3}. We also refer to thePh.D. thesis of Z.~Yang \cite{YangZPhDThesis2022} for harmonic analysis on step $2$ Carnot groups and its applications. For the sake of completeness and clarity, we briefly describe the construction of irreducible representations and the GFT on step $2$ Carnot groups.
	
	To use Theorem~\ref{t.Kirillov} and Theorem~\ref{t.Plancherel}, one needs to identify
	\begin{enumerate}
		\item polarizing sub-algebras $\mathfrak{h}_l$ for each $l\in\mathfrak{g}^{\ast}$,
		\item \label{step2} the set $S$ of \emph{jump indices}, and its complement $T$ in $\{1,\ldots, n+m\}$,
		\item \label{step3} the set $\mathcal{U}^{\ast}\cap\mathfrak{g}^{\ast}_T$ and the \emph{Plancherel measure} $\Pf(l) dl$.
	\end{enumerate}
	We first describe how \eqref{step2} and \eqref{step3} can be accomplished. Since $G$ is a stratified group of step $2$, we note that $\mathcal{V}\subseteq\operatorname{rad}_{l}$ for any $l\in\mathfrak{g}^{\ast}$. Let $\{Z_1,\ldots, Z_n, Z_{n+1}, \ldots, Z_{n+m}\}$ be a basis of $\mathfrak{g}$ such that $\mathcal{V}=\Span\{Z_{n+1},\ldots, Z_{n+m}\}$. Then we can write $T=\{n+1,\ldots, n+m, n_1,\ldots, n_k\}$. Any $l\in\mathcal{U}^{\ast}\cap\mathfrak{g}^{\ast}_T$ can be written as
	\begin{align}\label{eq:ell}
		l=\sum_{i=1}^m\lambda_i Z^{\ast}_{n+i}+\sum_{j=1}^k \nu_jZ^{\ast}_{n_j}, \quad \lambda\in\R^m, \nu\in\R^k.
	\end{align}
	Since the matrix $K_l\in\mathcal{M}_{n+m}$ defined by $K_l(i,j)=l([Z_i, Z_j])$ only depends on $l\left|\right._{\mathcal{V}}$, \eqref{eq:ell} shows that any $l\in\mathcal{U}^{\ast}\cap\mathfrak{g}^{\ast}_T$ can be identified by $(\lambda,\nu)\in\mathcal{Z}\times\R^k$, where $\mathcal{Z}$ is a Zariski open subset of $\R^m$. Also, we identify $K_l$ and $\operatorname{rad}_l$ simply by $K_\lambda$, $\operatorname{rad}_\lambda$ respectively. We now describe the Plancherel measure. For any $l=(\lambda,\nu)\in \mathcal{Z}\times \R^k$, we can choose an orthonormal basis $\mathcal{B}_\lambda$ of $\mathcal H$ such that
	\begin{align}\label{eq:basis_lambda}
		\mathcal{B}_\lambda=\{X_1(\lambda), Y_1(\lambda), \ldots, X_d(\lambda), Y_d(\lambda), R_1(\lambda), \ldots, R_k(\lambda)\},
	\end{align}
	where $\operatorname{rad}_{\lambda}=\Span\{R_1(\lambda),\ldots, R_k(\lambda)\}$, and with respect to the basis \[\{X_1(\lambda), Y_1(\lambda), \ldots, X_d(\lambda), Y_d(\lambda)\}\]the matrix $\widetilde{K}_l=\widetilde{K}_\lambda$ in Theorem~\ref{t.Plancherel} reduces to 
	\begin{align}\label{eq:K}
		\begin{pmatrix}
			0 & \eta(\lambda) \\
			-\eta(\lambda) & 0 \\
		\end{pmatrix}\in\mathcal{M}_{2d}(\mathbb R),
	\end{align}
	where $\eta(\lambda)=\operatorname{diag}(\eta_1(\lambda), \ldots, \eta_d(\lambda))\in\mathcal{M}_{d}(\R)$, and each $\eta_j(\lambda)>0$ is smooth and homogeneous of degree $1$ in $\lambda=(\lambda_1,\ldots, \lambda_m)$. Therefore the Pfaffian is given by  
	\begin{align}
		\operatorname{Pf}(\lambda)=\prod_{j=1}^d\eta_j(\lambda).
	\end{align}
	Writing an element $g=\exp(X(\lambda)+Y(\lambda)+R(\lambda)+W)\in G$, where 
	\begin{equation}\label{eq:basis_rep}
		\begin{aligned}
			&
			X(\lambda)=\sum_{i=1}^d x_i(\lambda)X_i(\lambda), \quad Y(\lambda)=\sum_{i=1}^d y_i(\lambda)Y_i(\lambda), \\
			& R(\lambda)=\sum_{i=1}^k r_i(\lambda)R_i(\lambda), \quad W=\sum_{i=1}^m v_i Z_{n+i},
		\end{aligned}
	\end{equation}
	we identify $g$ with $(x(\lambda),y(\lambda),r(\lambda),v)\in\R^d\times\R^d\times\R^k\times\R^m$. For each $l=(\lambda,\nu)\in\mathcal{Z}\times \R^k$, let us write 
	\begin{align*}
		\mathcal{P}_\lambda=\Span\{X_1(\lambda),\ldots, X_d(\lambda)\}, \quad \mathcal{Q}_\lambda=\Span\{Y_1(\lambda),\ldots, Y_d(\lambda)\}.
	\end{align*}
	Then $\mathcal{H}=\mathcal{P}_\lambda\oplus\mathcal{Q}_{\lambda}\oplus\operatorname{rad}_\lambda$. Defining
	\begin{align*}
		\mathfrak{h}_l=\Span\{X_1(\lambda),\ldots, X_d(\lambda), R_1(\lambda),\ldots, R_k(\lambda), Z_{n+1},\ldots, Z_{n+m}\},
	\end{align*}
	we note that $\mathfrak{h}_l$ satisfies \eqref{pol1}-\eqref{pol3}. As a result, $\mathfrak{h}_l$ is a polarizing sub-algebra for $l$. The one-dimensional unitary irreducible representations on $\operatorname{H}_l=\exp(\mathfrak{h}_l)$ are given by 
	\begin{align*}
		\rho_l(\exp(X(\lambda)+R(\lambda)+W))=\exp(\i l(X(\lambda)+R(\lambda)+W))=\exp(\i l(R(\lambda))+\i l(W)),
	\end{align*}
	since $l(X(\lambda))=0$. Following the argument described in \cite[p.~31]{ParuiPhDThesis2005}, the induced representation $\pi_l=\Ind^{G}_{\mathbb{H}_l,\rho_l}$ is defined on $L^2(\mathcal{Q}_\lambda)$ and it is given by 
	\begin{align*}
		&\pi_l(\exp(X(\lambda),Y(\lambda),R(\lambda),W))\phi(Y^{\prime}(\lambda))
		\\
		&=e^{\i l(R(\lambda))}e^{\i l(W+[Y^{\prime}(\lambda)+\frac{1}{2}Y(\lambda), X(\lambda)-Y^{\prime}(\lambda)+R(\lambda)])}\phi(Y(\lambda)+Y^{\prime}(\lambda)).
	\end{align*}
	Writing $X(\lambda), Y(\lambda), R(\lambda), W, Y^{\prime}(\lambda)$ using the basis expansion in \eqref{eq:basis_rep}, $\pi_l=\pi_{\lambda,\nu}$ can be realized on $L^2(\R^d)$, and it is given by
	\begin{align}
		\pi_{\lambda,\nu}(g)\phi(\xi)=e^{\i\langle r(\lambda),\nu\rangle}e^{\i\langle\lambda,v\rangle} e^{\i\sum_{j=1}^d\eta_j(\lambda)(y_j(\lambda)\xi_j+\frac{1}{2}x_j(\lambda)y_j(\lambda))}\phi(\xi+x(\lambda)),
	\end{align}
	where 
	\begin{align}\label{eq:coordinates}
		g=(x_1(\lambda),y_1(\lambda),\ldots, x_d(\lambda), y_d(\lambda), r(\lambda),v), 
	\end{align}
	$\phi\in L^2(\R^d)$. Then the GFT on $G$ is defined  as
	\begin{align}\label{eq:fourier_transform}
		\mathcal{F}_{ G}(f)(\lambda,\nu)=\int_{\mathbb G} f(g)\pi_{\lambda,\nu}(g)dg.
	\end{align}	
	For each $(\lambda,\nu)\in\mathcal{Z}\times\R^k$, $\mathcal{F}_G(f)(\lambda,\nu):L^2(\R^d)\longrightarrow L^2(\R^d)$ is a Hilbert-Schmidt operator. The Plancherel formula in Theorem~\ref{t.Plancherel} reduces to the following.
	\begin{proposition}[Plancherel's theorem]\label{prop:plancherel}
		For all $f\in L^1(G)\cap L^2(G)$ we have 
		\begin{align*}
			\int_G |f(g)|^2 dg=\int_{\mathcal{Z}}\int_{\R^k}\|\mathcal{F}_G(f)(\lambda,\nu)\|^2_{\HS} \frac{\Pf(\lambda)}{(2\pi)^{d+k+m}}d\lambda d\nu.
		\end{align*}
	\end{proposition}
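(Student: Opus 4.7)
The plan is to specialize the general Plancherel theorem for simply-connected nilpotent Lie groups (Theorem~\ref{t.Plancherel}) to the step~$2$ setup constructed above. The paragraphs preceding the statement already identify all the data needed to invoke that theorem: the jump index set $T$, the parametrization of $\mathcal{U}^{\ast}\cap\mathfrak{g}^{\ast}_T$ by $(\lambda,\nu)\in\mathcal{Z}\times\R^k$, the polarizing subalgebras $\mathfrak{h}_l$, and the realization of the induced representations $\pi_{\lambda,\nu}$ on $L^2(\R^d)$. What remains is to verify the hypotheses and to compute the Plancherel density explicitly.

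First, I would confirm that $\mathfrak{h}_l=\Span\{X_i(\lambda),R_j(\lambda):1\le i\le d,\ 1\le j\le k\}\oplus\mathcal{V}$ is polarizing for $l=(\lambda,\nu)$. The skew form $B_l(Z,Z'):=l([Z,Z'])$ vanishes identically on $\mathcal{V}$ (because $G$ is step $2$) and on $\operatorname{rad}_\lambda$, while the block structure \eqref{eq:K} shows that $\mathcal{P}_\lambda$ is a Lagrangian complement of $\mathcal{Q}_\lambda$ in the symplectic quotient $\mathcal{H}/\operatorname{rad}_\lambda$. Hence $\mathfrak{h}_l$ is isotropic of dimension $d+k+m=\tfrac12(\dim\mathfrak{g}+\dim\operatorname{rad}_l)$, the maximal possible, so it satisfies the polarization conditions \eqref{pol1}--\eqref{pol3}.

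Second, I would compute the Pfaffian. By \eqref{eq:K}, the reduced matrix $\widetilde{K}_\lambda$ is the standard symplectic block matrix with off-diagonal block $\eta(\lambda)=\operatorname{diag}(\eta_1(\lambda),\ldots,\eta_d(\lambda))$, so $\Pf(\lambda)=\prod_{j=1}^{d}\eta_j(\lambda)$, smooth and strictly positive on $\mathcal{Z}$. Plugging the explicit form of $\pi_{\lambda,\nu}$ into \eqref{eq:fourier_transform} and computing the Hilbert--Schmidt norm in the kernel picture shows that $\|\mathcal{F}_G(f)(\lambda,\nu)\|_{\HS}^{2}$ equals the $L^2$-norm of a partial Fourier transform of $f$ with respect to the $y(\lambda)$, $r(\lambda)$ and $v$ variables, evaluated after the rescaling $y_j(\lambda)\mapsto\eta_j(\lambda)y_j(\lambda)$.

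The main obstacle is constant-tracking: the Haar measure on $G$ is fixed as Lebesgue measure in the standard basis $\{Z_i\}$, whereas $\pi_{\lambda,\nu}$ is defined via the $\lambda$-dependent orthonormal basis $\mathcal{B}_\lambda$ from \eqref{eq:basis_lambda}. Because $\mathcal{B}_\lambda$ is orthonormal with respect to the fixed inner product on $\mathcal{H}$, the change-of-basis Jacobian equals $\pm 1$ and no extra factor arises. The $(2\pi)^{d+k+m}$ then appears from three independent Fourier inversions of respective dimensions $m$, $k$ and $d$ along the center $\mathcal{V}$, the radical $\operatorname{rad}_\lambda$ and the Lagrangian $\mathcal{Q}_\lambda$, while the $\Pf(\lambda)$ weight is exactly the Jacobian produced by the rescaling $y_j(\lambda)\mapsto\eta_j(\lambda)y_j(\lambda)$ that converts the Schr\"odinger-type action on $L^2(\R^d)$ into a unitary Fourier transform. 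Collecting these factors yields the asserted Plancherel identity.
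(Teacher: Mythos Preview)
Your proposal is correct and follows exactly the approach the paper takes: the paper presents this proposition as an immediate specialization of the general Plancherel theorem for nilpotent Lie groups (Theorem~\ref{t.Plancherel}) to the step~$2$ data assembled in Section~\ref{sec:GFT_step2}, without writing out any further argument. Your write-up simply fills in the verification of the polarization conditions, the Pfaffian computation, and the constant bookkeeping that the paper leaves implicit.
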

	\begin{notation}[Coordinates on \texorpdfstring{$G$}{G}]\label{notation:coordinates}
		Below we use coordinates  $g=(z,r,v)\in G$ in \eqref{eq:coordinates}, where $z\in \C^{2d}, r\in\R^k, v\in\R^m$. 
	\end{notation}
	We end this section with the following lemma describing the behavior of the GFT with respect to the dilation on $G$.
	
	\begin{lemma}\label{lem:dilation}
		For any $f\in L^2(G)$ and $c>0$
		\begin{align*}
			\mathcal{F}_G(\delta_c f)(\lambda,\nu)=c^{-(n+2m)}d_c\mathcal{F}_G(f)(c^{-2}\lambda, c^{-1}\nu) d_{\frac1c},
		\end{align*}
		where $(d_c)_{c>0}$ is the dilation group on $\R^d$ defined by $d_c f(x)=f(cx)$ for any function on $\R^d$.
	\end{lemma}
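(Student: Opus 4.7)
The plan is to unfold the definition of the GFT, reduce the claim to a pointwise intertwining identity for the irreducible representation $\pi_{\lambda,\nu}$ under the dilation $\delta_{1/c}$, and then verify this identity using the explicit formula for $\pi_{\lambda,\nu}$ together with the homogeneity of the objects built from $\lambda$.

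First I would write
\[
\mathcal{F}_G(\delta_c f)(\lambda,\nu)=\int_{G} f(\delta_c g)\pi_{\lambda,\nu}(g)\,dg
\]
and perform the change of variable $h=\delta_c g$. Since the Haar measure on $G$ is Lebesgue measure on $\R^{n+m}$ and $\delta_c$ scales the $n$ horizontal coordinates by $c$ and the $m$ vertical coordinates by $c^2$, the Jacobian is $c^{n+2m}$, giving
\[
\mathcal{F}_G(\delta_c f)(\lambda,\nu)=c^{-(n+2m)}\int_{G} f(h)\,\pi_{\lambda,\nu}(\delta_{1/c} h)\,dh.
\]
The lemma then follows once I establish the operator identity
\[
\pi_{\lambda,\nu}(\delta_{1/c} h)=d_c\,\pi_{c^{-2}\lambda,c^{-1}\nu}(h)\,d_{1/c}
\]
on $L^2(\R^d)$, because $d_c,d_{1/c}$ are independent of $h$ and can be pulled out of the integral.

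To prove this intertwining, I would combine three scaling inputs with the explicit formula for $\pi_{\lambda,\nu}$. (i) The coordinates of $\delta_{1/c}h$ in the basis $\mathcal{B}_\lambda$ are $(x(\lambda)/c,y(\lambda)/c,r(\lambda)/c,v/c^2)$, because $X_j(\lambda),Y_j(\lambda),R_j(\lambda)\in\mathcal{H}$ and $Z_{n+i}\in\mathcal{V}$. (ii) The matrix $K_l(i,j)=l([Z_i,Z_j])$ is linear in $l$, so $K_{c^{-2}\lambda}=c^{-2}K_\lambda$; hence $\operatorname{rad}_{c^{-2}\lambda}=\operatorname{rad}_\lambda$ and the basis $\mathcal{B}_\lambda$ of \eqref{eq:basis_lambda} may be taken identical to $\mathcal{B}_{c^{-2}\lambda}$, so the coordinates of $h$ are unaffected by the rescaling of $\lambda$. (iii) The symplectic eigenvalues obey $\eta_j(c^{-2}\lambda)=c^{-2}\eta_j(\lambda)$. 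Since conjugation of a multiplication operator $M(\xi)$ by $d_c\cdot d_{1/c}$ replaces $M(\xi)$ by $M(c\xi)$, while conjugation of a translation $\phi(\xi+a)$ by $d_c\cdot d_{1/c}$ produces $\phi(\xi+a/c)$, one checks directly:
the scalar phase $e^{i\langle r(\lambda),\nu\rangle/c}$ matches $e^{i\langle r(\lambda),c^{-1}\nu\rangle}$; the scalar phase $e^{i\langle\lambda,v\rangle/c^2}$ matches $e^{i\langle c^{-2}\lambda,v\rangle}$; the $\xi$-dependent phase $e^{i\sum_j \eta_j(\lambda)(y_j(\lambda)\xi_j/c+\frac12 c^{-2}x_j(\lambda)y_j(\lambda))}$ is obtained by sending $\xi_j\mapsto c\xi_j$ in $e^{i\sum_j c^{-2}\eta_j(\lambda)(y_j(\lambda)\xi_j+\frac12 x_j(\lambda)y_j(\lambda))}$; and the translation $\phi(\xi+x(\lambda))$ of $\pi_{c^{-2}\lambda,c^{-1}\nu}$ becomes $\phi(\xi+x(\lambda)/c)$ under the conjugation, matching $\pi_{\lambda,\nu}(\delta_{1/c}h)$.

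The main obstacle is the bookkeeping in the symplectic phase, where the $\lambda$-rescaling and the conjugation by $d_c$ interact; the reason the identity closes is the precise compatibility between the degree-one homogeneity of $\eta_j(\lambda)$, the weight $c$ for horizontal coordinates, and the weight $c^2$ for vertical coordinates. Once the intertwining identity is in hand, inserting it into the integral above and factoring $d_c$ and $d_{1/c}$ outside yields the stated formula.
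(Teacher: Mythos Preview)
Your proof is correct and follows essentially the same approach as the paper: change of variables in the GFT integral followed by the intertwining identity $\pi_{\lambda,\nu}(\delta_{1/c}h)=d_c\,\pi_{c^{-2}\lambda,c^{-1}\nu}(h)\,d_{1/c}$. If anything, you are more careful than the paper, since you explicitly verify the intertwining via the formula for $\pi_{\lambda,\nu}$ and address the point that the basis $\mathcal{B}_\lambda$ (and hence the coordinates $x(\lambda),y(\lambda),r(\lambda)$) can be chosen invariant under $\lambda\mapsto c^{-2}\lambda$, which the paper leaves implicit.
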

	\begin{proof}
		First we note that for any $c>0$ and $(z,r,w)\in G$
		\begin{align*}
			d_c \pi_{\lambda,\nu}(z,r, v)=\pi_{c^2\lambda , c\nu}(z,r,v)d_{c}
		\end{align*}
		Now \eqref{eq:fourier_transform} becomes
		\begin{align*}
			\mathcal{F}_G(\delta_c f)(\lambda,\nu)&=\int_{G} f(c z, cr, c^2 v) \pi_{\lambda,\nu}(z,r,v) dzdv\\
			&=c^{-(n+2m)}\int_{G} f(v) \pi_{\lambda,\nu}\left(\frac{z}{c},\frac{r}{c}, \frac{v}{c^2}\right) dz dv \\
			&=c^{-(n+2m)}d_{c}\left(\int_G f(v)\pi_{c^{-2}\lambda , c^{-1}\nu }(z,r, v) dzdv\right) d_{\frac 1c} \\
			&=c^{-(n+2m)}d_{c}\mathcal{F}_G(f)(c^{-2}\lambda,c^{-1}\nu) d_{\frac1c}.
		\end{align*}
	\end{proof}
	
	\subsection{Properties of the GFT on step 2 Carnot groups} \label{sec:4}
	Let $G$ be a Carnot group of step $2$. Using coordinates in Notation~\ref{notation:coordinates}, the GFT of $f\in L^1(G)$ can be written as 
	\begin{align*}
		\mathcal{F}_G(f)(\lambda,\nu)&=\int_{\C^d}\int_{\R^k}\int_{\R^m} f(z,r,v) \pi_{\lambda,\nu}(z,r,v) dz dr dv \\
		&=\int_{\C^d}\int_{\R^k}\int_{\R^m} f(z,r,v)e^{\i\langle r,\nu\rangle}e^{\i\langle\lambda,v\rangle} \pi_{\lambda,\nu}(z,0,0) dz dr dv,
	\end{align*}
	where the last identity follows from the factorization
	\begin{align*}
		\pi_{\lambda,\nu}(z,r,v)=\pi_{\lambda,\nu}(z,0,0)e^{\i\langle r,\nu\rangle}e^{\i\langle\lambda,v\rangle}
	\end{align*}
	for all $(\lambda,\nu)\in\mathcal{Z}\times\R^k$. This motivates the definition of the \emph{Weyl transform} of a function as given below.
	\begin{definition}[Weyl transform] For each $\lambda\in\mathcal{Z}$, the $\lambda$-Weyl transform of $f\in\mathcal{S}(\C^d)$ is defined by 
		\begin{align*}
			W_{\lambda}(f):=\int_{\C^d} f(z)\pi_{\lambda}(z) dz,
		\end{align*}
		where $\pi_{\lambda}(z):=\pi_{\lambda,\nu}(z,0,0)$, and $\mathcal{S}(\C^d)$ denotes the Schwartz class of functions on $\C^{d}$. 
	\end{definition}
	Note that $\pi_{\lambda,\nu}(z,0,0)$ does not depend on $\nu$, which justifies the notation.
	\begin{remark}
		This definition of the Weyl transform is slightly different from the one considered in \cite{YangZPhDThesis2022}, and it is similar to the definition introduced in \cite{ThangaveluBook1998} in the setting of Heisenberg groups.
	\end{remark}
	Let us also consider the Fourier transform with respect to the \emph{radical variables}, that is, for any $f\in L^1(G)$ we define 
	\begin{align}\label{eq:fourier_radical}
		f^{\lambda,\nu}(z):=\int_{\R^k}\int_{\R^m} f(z,r,v) e^{\i\langle r,\nu\rangle}e^{\i\langle v, \lambda\rangle} dv dr.
	\end{align}
	Then by definition of the GFT $\mathcal{F}_G$ in \eqref{eq:fourier_transform}, we have that for all $f\in L^1(G)$
	\begin{align}\label{eq:Fourier_Weyl}
		\mathcal{F}_G(f)(\lambda,\nu)=W_{\lambda}(f^{\lambda,\nu}).
	\end{align}
	\begin{proposition}
		For any $\lambda\in\mathcal{Z}$ and $f\in\mathcal{S}(\C^d)$, $W_{\lambda}(f)$ is a Hilbert-Schmidt operator on $L^2(\R^d)$, and for all $f\in\mathcal{S}(\C^d)$
		\begin{align}\label{eq:Weyl_isometry}
			\frac{\Pf(\lambda)}{(2\pi)^d}\|W_{\lambda}(f)\|^2_{\HS}= \|f\|^2_{L^2(\C^d)}.
		\end{align}
		In addition, \eqref{eq:Weyl_isometry} extends to all $f \in L^2(\C^d)$.
	\end{proposition}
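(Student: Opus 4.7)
The plan is to compute $W_\lambda(f)$ as an integral operator on $L^2(\R^d)$, read off its kernel explicitly, and then obtain the Hilbert-Schmidt identity by a change of variables combined with the classical Plancherel theorem on Euclidean space.

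First I would write $z = (x,y) \in \R^d \times \R^d \cong \C^d$ and specialize the representation formula for $\pi_{\lambda,\nu}$ to $(r,v) = (0,0)$, obtaining
\begin{align*}
\pi_\lambda(x,y)\phi(\xi) = \exp\Bigl(\i\sum_{j=1}^d \eta_j(\lambda)\bigl(y_j\xi_j + \tfrac{1}{2}x_jy_j\bigr)\Bigr)\,\phi(\xi + x).
\end{align*}
Substituting into the definition of $W_\lambda(f)$ and performing the change of variable $u = \xi + x$ in the $x$-integral yields $(W_\lambda(f)\phi)(\xi) = \int_{\R^d} K_f(\xi,u)\phi(u)\,du$ with
\begin{align*}
K_f(\xi,u) = \int_{\R^d} f(u-\xi, y)\,\exp\Bigl(\i\sum_{j=1}^d \tfrac{\eta_j(\lambda)}{2}(\xi_j+u_j)y_j\Bigr)\,dy = \widehat{f}^{\,2}\!\Bigl(u-\xi,\tfrac{\eta(\lambda)}{2}(\xi+u)\Bigr),
\end{align*}
where $\widehat{f}^{\,2}$ denotes the partial Fourier transform of $f$ in the second (i.e.\ $y$) variable.

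Next I would compute the Hilbert-Schmidt norm from this kernel representation. Setting $a := u-\xi$ and $b := \tfrac{\eta(\lambda)}{2}(\xi+u)$, the map $(\xi,u)\mapsto(a,b)$ is a linear bijection of $\R^d\times\R^d$ whose Jacobian, because $\eta(\lambda)=\operatorname{diag}(\eta_1(\lambda),\ldots,\eta_d(\lambda))$, equals $\prod_{j=1}^d \eta_j(\lambda)=\Pf(\lambda)$. Hence
\begin{align*}
\|W_\lambda(f)\|_{\HS}^2 = \int_{\R^d}\!\int_{\R^d}\bigl|\widehat{f}^{\,2}(a,b)\bigr|^2\,\Pf(\lambda)^{-1}\,da\,db = \frac{(2\pi)^d}{\Pf(\lambda)}\,\|f\|_{L^2(\C^d)}^2
\end{align*}
by Plancherel's theorem for the Euclidean Fourier transform in $y$, which gives the claimed identity after rearrangement. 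Finiteness of this quantity for Schwartz $f$ shows that $W_\lambda(f)$ is Hilbert-Schmidt, and the extension to all of $L^2(\C^d)$ is then standard: since $W_\lambda$ restricted to $\mathcal{S}(\C^d)$ is a scalar multiple of an isometry into the Hilbert-Schmidt class, it extends by continuity to an isometry (up to the factor $(2\pi)^d/\Pf(\lambda)$) on $L^2(\C^d)$, and the identity \eqref{eq:Weyl_isometry} persists on the closure.

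The main technical point is getting the bookkeeping of the change of variables right, in particular verifying that the diagonal structure \eqref{eq:K} of $\widetilde{K}_\lambda$ in the basis $\mathcal{B}_\lambda$ is what produces the Pfaffian as the Jacobian determinant; once this is in place, the rest reduces to Euclidean Plancherel and a density argument.
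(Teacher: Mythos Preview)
Your proof is correct and follows essentially the same route as the paper: both compute the integral kernel of $W_\lambda(f)$ explicitly, then obtain the Hilbert--Schmidt norm via a linear change of variables on $\R^d\times\R^d$ together with the Euclidean Plancherel theorem, with the Pfaffian arising from the Jacobian. The only cosmetic difference is that you fold the $\eta(\lambda)$-scaling into the single change of variables $(\xi,u)\mapsto(a,b)$, whereas the paper first substitutes $(\xi-\zeta,\xi+\zeta)$ and then picks up the $\det(\eta(\lambda))$ factor from Plancherel.
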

	\begin{proof}
		We provide the proof for completeness, even though it is similar to the case of the Heisenberg groups. For any $f\in\mathcal{S}(\C^d)$, we note that the kernel associated to $W_\lambda$ is given by 
		\begin{align*}
			K_\lambda(\xi,\zeta)=\int_{\R^d} f(x+\i(\xi-\zeta)) e^{\frac{\i}{2}\langle \eta(\lambda)x,(\xi+\zeta)\rangle}dx,
		\end{align*}
		where $\eta(\lambda)=\operatorname{diag}(\eta_1(\lambda),\cdots,\eta_d(\lambda))$, see \eqref{eq:K}. Using a simple change of variable $(\xi-\zeta, \xi+\zeta)\longmapsto (\xi,\zeta)$, we have
		\begin{align*}
			\int_{\R^{2d}}|K_\lambda(\xi,\zeta)|^2 d\xi d\zeta=4^{-d}\int_{\R^{2d}}\left|\int_{\R^d} f(x+\i\xi) e^{\frac{\i}{2}\langle \eta(\lambda)x,\zeta\rangle}dx\right|^2 d\xi d\zeta.
		\end{align*}
		Finally, using the Plancherel theorem for the classical Fourier transform we obtain 
		\begin{align*}
			\int_{\R^d}\left|\int_{\R^d} f(x+\i\xi) e^{\frac{\i}{2}\langle \eta(\lambda)x,\zeta\rangle}dx\right|^2 d\zeta=\frac{(8\pi)^d}{\det(\eta(\lambda))}\int_{\R^d}|f(x+\i\xi)|^2dx.
		\end{align*}
		Since $\det(\eta(\lambda))=\Pf(\lambda)$, it follows that 
		\[
		\int_{\R^{2d}}|K_\lambda(\xi,\zeta)|^2d\xi d\zeta=(2\pi)^d\Pf(\lambda)^{-1}\|f\|^2_{L^2(\C^d)},
		\]
		which proves the proposition.
	\end{proof}
	
	\begin{notation}[Vertical convolutions] For $f\in L^1(G)$ and a finite measure $\mu$ on $\left(\R^m, \mathcal{B}\left(\R^m\right)\right)$, we denote by
		\begin{equation}
			\begin{aligned}
				f {\ast}_v \mu(h, v)&=\int_{\R^m} f(h,v-u)\mu(du).
			\end{aligned}
		\end{equation}
		
\end{notation}
	
	\begin{lemma}\label{lem:vertical_conv}
		For any $f\in L^1(G)$ and a finite measure $\mu$ on $\R^m$, one has $(\lambda,\nu)\in\mathcal{Z}\times\R^k$ and 
		\begin{align*}
			\mathcal{F}_G(f{\ast}_v \mu)(\lambda,\nu)&=\mathcal{F}(\mu)(\lambda)\mathcal{F}_G(f)(\lambda,\nu), \\
			\mathcal{F}_G(f {\ast}_v \overline{\mu} )(\lambda,\nu)&= \mathcal{F}(\mu)(-\lambda)\mathcal{F}_G(f)(\lambda,\nu),
		\end{align*}
		where $\overline{\mu}(B)=\mu(-B)$ for all $B\in\mathcal{B}(\R^m)$.
	\end{lemma}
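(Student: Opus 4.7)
The statement concerns only the vertical convolution, and the key observation is that the representations $\pi_{\lambda,\nu}$ factor as $\pi_{\lambda,\nu}(z,r,v)=\pi_{\lambda,\nu}(z,0,0)\,e^{\i\langle r,\nu\rangle}\,e^{\i\langle\lambda,v\rangle}$, so the vertical variable $v$ enters the Fourier transform only through a scalar character. Combined with the identity \eqref{eq:Fourier_Weyl}, this suggests that the entire computation can be pushed down to the partial Fourier transform $f^{\lambda,\nu}$ in the radical and vertical variables, where vertical convolution should turn into scalar multiplication by $\mathcal{F}(\mu)(\lambda)$ in the usual Euclidean way.

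Concretely, my plan is first to compute $(f\ast_v\mu)^{\lambda,\nu}$ directly from the definition \eqref{eq:fourier_radical}. Since $\mu$ is a finite measure and $f\in L^1(G)$, Fubini's theorem applies without issue and allows swapping the $\mu$-integral with the integration in $r$ and $v$. After that, a change of variables $w=v-u$ in the inner $\R^m$-integral produces a factor $e^{\i\langle u,\lambda\rangle}$ that is independent of $w$, leaving behind $f^{\lambda,\nu}(z)$. Integrating the character against $\mu(du)$ yields the classical Euclidean Fourier transform $\mathcal{F}(\mu)(\lambda)$, so
\begin{align*}
(f\ast_v\mu)^{\lambda,\nu}(z)=\mathcal{F}(\mu)(\lambda)\,f^{\lambda,\nu}(z).
\end{align*}

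Next I would apply $W_\lambda$ to both sides. Since $\mathcal{F}(\mu)(\lambda)$ is a scalar depending only on $\lambda$, it commutes with the Weyl transform (which is linear in its input), and so by \eqref{eq:Fourier_Weyl}
\begin{align*}
\mathcal{F}_G(f\ast_v\mu)(\lambda,\nu)=W_\lambda\bigl((f\ast_v\mu)^{\lambda,\nu}\bigr)=\mathcal{F}(\mu)(\lambda)\,W_\lambda(f^{\lambda,\nu})=\mathcal{F}(\mu)(\lambda)\,\mathcal{F}_G(f)(\lambda,\nu),
\end{align*}
which is the first claimed identity. The second identity follows by the same argument applied to $\overline{\mu}$: using $\overline{\mu}(du)=\mu(-du)$, the change of variables produces the character $e^{-\i\langle u,\lambda\rangle}$ instead, giving $\mathcal{F}(\mu)(-\lambda)$ in place of $\mathcal{F}(\mu)(\lambda)$.

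There is no serious obstacle here, since everything reduces to a one-line Fubini/change-of-variables computation at the level of $f^{\lambda,\nu}$. The only point worth being careful about is the justification for interchanging integrals, which is automatic because $\mu$ has finite mass and $\int_G|f|<\infty$, so the joint integral $\int_G\int_{\R^m}|f(z,r,v-u)|\,|\mu|(du)\,dz\,dr\,dv$ is finite.
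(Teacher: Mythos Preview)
Your proposal is correct and follows essentially the same approach as the paper's proof: compute $(f\ast_v\mu)^{\lambda,\nu}=\mathcal{F}(\mu)(\lambda)f^{\lambda,\nu}$ via the classical Fourier convolution identity, then apply \eqref{eq:Fourier_Weyl}. The paper simply cites the convolution formula without spelling out the Fubini and change-of-variables steps, while you make these explicit, but the argument is the same.
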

	\begin{proof}
		Using the convolution formula for the classical Fourier transform we have $(f{\ast}_v\mu)^{\lambda,\nu}=\mathcal{F}(\mu)(\lambda)f^{\lambda,\nu}$ for all $(\lambda,\nu)\in\mathcal{Z}\times\R^k$. Therefore by \eqref{eq:Fourier_Weyl} we obtain $\mathcal{F}_G(f{\ast}_v\mu)(\lambda,\nu)=W_\lambda((f{\ast}_v\mu)^{\lambda,\nu})=\mathcal{F}(\mu)(\lambda)\mathcal{F}_G(f)(\lambda,\nu)$ for all $(\lambda,\nu)\in\mathcal{Z}\times\R^k$. Since $\mathcal{F}(\overline{\mu})(\lambda)=\mathcal{F}(\mu)(-\lambda)$ for all $\lambda\in\R^m$, the second identity follows similarly.
	\end{proof}
	
	\subsubsection{GFT of functions with symmetries}
	The aim of this section is to analyze the GFT of functions which are invariant under orthogonal transformations of the horizontal variables. Using coordinates in Notation~\ref{notation:coordinates}, we consider the following group action on $G$. Let $\mathbb{U}$ denote the group of all $2n\times 2n$ orthogonal matrices of the form
	\begin{align*}
		U=U_1\otimes\cdots\otimes U_d=\begin{pmatrix}
			U_1 &  &  \\
			& \ddots &  \\
			&  & U_d
		\end{pmatrix}, \quad   U_i \text{ is a } 2\times 2 \text{ orthogonal matrix},
	\end{align*}
	and let $\mathbb{U}$ act on $G$ and $\C^d$ as follows
	\begin{align*}
		U\cdot g&=(U_1(x_1,y_1),\ldots, U_d(x_d, y_d), r,v), \quad g=(x_1,y_1,\ldots, x_d, y_d,r,v), \\
		U\cdot z&=(U_1(x_1,y_1),\ldots, U_d(x_d, y_d)), \quad z=(x_1, y_1,\ldots, x_d, y_d).
	\end{align*}
	We note that this group action does not necessarily induce an automorphism on $G$. We write $L^2_{\mathbb{U}}(G)$ (resp. $L^2_{\mathbb{U}}(\C^d)$) to denote the space of all $\mathbb{U}$-invariant functions in $L^2(G)$ (resp. $L^2(\C^d)$), that is, 
	\begin{equation}\label{eq:U_invariant}
		\begin{aligned}
			L^2_{\mathbb{U}}(G)&=\{f\in L^2(G): f(U\cdot g)=f(g) \text{ for all } g\in G\}, 
			\\
			L^2_{\mathbb{U}}(\C^d)&=\{f\in L^2(\C^d): f(U\cdot z)=f(z) \text{ for all } z\in\C^d\}.
		\end{aligned}
	\end{equation}
	We aim to obtain the spectral expansion of the Hilbert-Schmidt operator $\mathcal{F}_G(f)(\lambda,\nu): L^2(\R^d) \longrightarrow L^2(\R^d)$  for any $f\in L^2_{\mathbb{U}}(G)$. For that, we introduce the Laguerre transform of functions in $L^2_{\mathbb{U}}(\C^d)$. For $\beta=(\beta_1,\ldots,\beta_d)\in\N^d_0$ and $(z_1,\ldots, z_d)\in\C^d$, let us define 
	\begin{align*}
		\varphi^\lambda_{\beta}(z_1,\ldots, z_d)=\Pf(\lambda)^{\frac12} (2\pi)^{-\frac{d}{2}}\prod_{j=1}^d L_{\beta_j}\left(\frac{1}{2}\eta_j(\lambda)|z_j|^2\right)e^{-\frac{1}{4}\eta_j(\lambda)|z_j|^2},
	\end{align*}
	where $L_k(x)=e^{x}\frac{d^k}{dx^k}(x^k e^{-x})$ are the Laguerre polynomials. Now, for any $f\in L^1_\mathbb{U}(\C^d)$ and $\beta\in\N^d_0$, we define its \emph{Laguerre transform} by
	\begin{align}
		R^\lambda_{\beta}(f)=\int_{\C^d} f(z_1,\ldots, z_d)\varphi^\lambda_{\beta}(z_1,\ldots, z_d) dz_1\cdots dz_d.
	\end{align}
	Let us also introduce the projection operators $\{E^\lambda_{\beta}: \beta\in\N^d_0\}$ on $L^2(\R^d)$ such that
	\begin{align}\label{eq:E}
		E^\lambda_{\beta} f=\langle f, \Phi^\lambda_{\beta}\rangle \Phi^\lambda_{\beta}, \quad \Phi^\lambda_{\beta}(x)=|\Pf(\lambda)|^{\frac{1}{4}}\Phi_{\beta_1}(\eta_1(\lambda)^{\frac12}x_1)\cdots\Phi_{\beta_d}(\eta_d(\lambda)^{\frac12} x_d),
	\end{align}
	where $\{\Phi_n: n\in\N_0\}$ are orthonormal Hermite functions on $\R$ defined by 
	\begin{align*}
		\Phi_n(x)=(-1)^n (2^n n!\sqrt{\pi})^{-\frac{1}{2}} e^{\frac{x^2}{2}}\frac{d^n}{dx^n}(e^{-x^2}).
	\end{align*}
	\begin{proposition}\label{prop:U_inv}
		For all $f\in L^2_\mathbb{U}(G)$ and $(\lambda,\nu)\in\mathcal{Z}\times\R^k$,
		\begin{align*}
			\mathcal{F}_G(f)(\lambda,\nu)=\frac{(2\pi)^{\frac{d}{2}}}{\Pf(\lambda)^{\frac12}}\sum_{\beta\in\N^d_0} R^\lambda_{\beta}(f^{\lambda,\nu}) E^{\lambda}_{\beta}.
		\end{align*}
	\end{proposition}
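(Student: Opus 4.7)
The plan is to reduce the claim to a Peter--Weyl-type diagonalization of the Weyl transform $W_\lambda$ on $\mathbb{U}$-invariant functions in $L^2(\C^d)$, in which the Laguerre functions $\varphi^\lambda_\beta$ are mapped to multiples of the rank-one Hermite projections $E^\lambda_\beta$.

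First, by \eqref{eq:Fourier_Weyl} we have $\mathcal{F}_G(f)(\lambda,\nu)=W_\lambda(f^{\lambda,\nu})$. Since $\mathbb{U}$ acts trivially on the radical and vertical coordinates $(r,v)$, the partial Fourier transform $f^{\lambda,\nu}$ defined in \eqref{eq:fourier_radical} inherits the $\mathbb{U}$-invariance of $f$ in the variable $z$. Hence the proposition is equivalent to the statement that for every $F\in L^2_{\mathbb{U}}(\C^d)$ and every $\lambda\in\mathcal{Z}$,
\[
W_\lambda(F)=\frac{(2\pi)^{d/2}}{\Pf(\lambda)^{1/2}}\sum_{\beta\in\N^d_0} R^\lambda_\beta(F)\, E^\lambda_\beta.
\]

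The crux of the proof is the identity
\[
W_\lambda(\varphi^\lambda_\gamma)=c_\gamma(\lambda)\, E^\lambda_\gamma, \qquad \gamma\in\N^d_0,
\]
for an explicit constant $c_\gamma(\lambda)$ that will combine with the squared norms $\|\varphi^\lambda_\gamma\|^{2}_{L^2(\C^d)}$ appearing in the Laguerre expansion below to yield the uniform factor $(2\pi)^{d/2}/\Pf(\lambda)^{1/2}$. Because of the block-diagonal form \eqref{eq:K} of $\widetilde K_\lambda$, the representation $\pi_\lambda(z)$, the Hermite basis vectors $\Phi^\lambda_\beta$, and the Laguerre function $\varphi^\lambda_\gamma$ all factor as tensor products across the $d$ symplectic planes $\{(x_j,y_j)\}$, so after the rescaling $z_j\mapsto \eta_j(\lambda)^{-1/2}z_j$ it suffices to verify the identity factor by factor. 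Concretely, I would compute the matrix coefficient
\[
\langle W_\lambda(\varphi^\lambda_\gamma)\Phi^\lambda_\alpha,\Phi^\lambda_\beta\rangle=\int_{\C^d}\varphi^\lambda_\gamma(z)\,\langle\pi_\lambda(z)\Phi^\lambda_\alpha,\Phi^\lambda_\beta\rangle\,dz,
\]
recognize the integrand as a product of one-dimensional special Hermite functions, and invoke the classical Laguerre--special Hermite orthogonality relations from \cite{ThangaveluBook1998} for the isotropic Heisenberg group to conclude that this coefficient vanishes unless $\alpha=\beta=\gamma$, and equals $c_\gamma(\lambda)$ in that case.

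Once this identity is in hand, the proposition follows by expanding an arbitrary $F\in L^2_{\mathbb{U}}(\C^d)$ in the orthogonal system $\{\varphi^\lambda_\gamma\}_\gamma$ -- whose completeness in $L^2_{\mathbb{U}}(\C^d)$ follows, by the same tensor-product / rescaling argument, from completeness of the classical Laguerre system $\{L_n(\tfrac12|z|^2)e^{-|z|^2/4}\}_{n\ge 0}$ in the radial subspace of $L^2(\R^2)$ -- applying $W_\lambda$ termwise, and using Parseval on $L^2_{\mathbb{U}}(\C^d)$ together with the isometry \eqref{eq:Weyl_isometry} to secure convergence of the resulting series in the Hilbert--Schmidt norm on $L^2(\R^d)$. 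I expect the main obstacle to lie in the tensor-product reduction for the key identity: the anisotropic Pfaffian normalization $\eta(\lambda)=\operatorname{diag}(\eta_1(\lambda),\dots,\eta_d(\lambda))$ has to be tracked carefully so that the $\gamma$-dependent constants $c_\gamma(\lambda)\|\varphi^\lambda_\gamma\|^{-2}_{L^2(\C^d)}$ collapse to the advertised $\gamma$-independent factor $(2\pi)^{d/2}/\Pf(\lambda)^{1/2}$.
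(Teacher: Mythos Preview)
Your proposal is correct and follows essentially the same route as the paper: reduce via \eqref{eq:Fourier_Weyl} to the Weyl transform of $f^{\lambda,\nu}\in L^2_{\mathbb U}(\C^d)$, expand in the Laguerre system $\{\varphi^\lambda_\beta\}$, and use the key identity $W_\lambda(\varphi^\lambda_\beta)=\frac{(2\pi)^{d/2}}{\Pf(\lambda)^{1/2}}E^\lambda_\beta$. The only differences are that the paper simply cites this identity from \cite[Theorem~3.63]{YangZPhDThesis2022} rather than sketching its tensor-product reduction to the Heisenberg case, and that the paper records at the outset that $\{\varphi^\lambda_\beta\}$ is already \emph{orthonormal} in $L^2_{\mathbb U}(\C^d)$---so $R^\lambda_\beta(f^{\lambda,\nu})=\langle f^{\lambda,\nu},\varphi^\lambda_\beta\rangle$ directly, the constant $c_\gamma(\lambda)=(2\pi)^{d/2}/\Pf(\lambda)^{1/2}$ is $\gamma$-independent on its own, and your anticipated cancellation with $\|\varphi^\lambda_\gamma\|^{-2}_{L^2(\C^d)}$ is unnecessary.
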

	\begin{remark}
		This proposition can be regarded as a version of \cite[Theorem~1.4.3]{ThangaveluBook1998} which provides the spectral expansion of the GFT of radially symmetric functions defined on Heisenberg groups.
	\end{remark}
	\begin{proof}
		We start with the observation that for each $\lambda\in\mathcal{Z}$, $\{\varphi^\lambda_{\beta}: \beta\in\N^d_0\}$ form an orthonormal basis for $L^2_{\mathbb U}(\C^d)$. Observe that by \eqref{eq:Fourier_Weyl} and the isometry relation for the Weyl transform in \eqref{eq:Weyl_isometry}, $f^{\lambda,\nu}\in L^2_\mathbb{U}(\C^d)$ for any $f\in L^2_\mathbb{U}(G)$, and therefore
		\begin{align*}
			f^{\lambda,\nu}=\sum_{\beta\in\N^d_0}\langle f^{\lambda,\nu}, \varphi^\lambda_{\beta}\rangle_{L^2(\C^d)}\varphi^\lambda_{\beta}.
		\end{align*}
		Using the identity $\mathcal{F}_G(f)(\lambda,\nu)=W_{\lambda}(f^{\lambda,\nu})$, we note that for any $f\in L^2_\mathbb{U}(G)$
		\begin{align*}
			\mathcal{F}_G(f)(\lambda,\nu)=\sum_{\beta\in\N^d_0} R^\lambda_\beta(f^{\lambda}) W_{\lambda}(\phi^\lambda_{\beta}).
		\end{align*}
		Finally, we use the identity 
		\begin{align}\label{eq:varphi_beta}
			W_{\lambda}(\varphi^\lambda_{\beta})=\frac{(2\pi)^{\frac d2}}{\Pf(\lambda)^{\frac12}}E^\lambda_\beta,
		\end{align}
		which follows from the proof of \cite[Theorem~3.63]{YangZPhDThesis2022}, to conclude the proof.
	\end{proof}
	As a consequence of the previous proposition, we obtain an orthogonal decomposition of the subspace $L^2_{\mathbb U}(G)$. For $\beta\in\N^d_0$, we define the following subspace of Hilbert-Schmidt operators on $L^2(\R^d)$. 
	\begin{align}\label{e.masthscrLbeta}
		\mathscr{L}_\beta:=\{(\lambda,\nu)\longmapsto \Pf(\lambda)^{-\frac12} f(\lambda,\nu) E^\lambda_\beta: f\in L^2(\mathcal{Z}\times\R^k)\}.
	\end{align}
	
	\begin{proposition}\label{prop:decomp}
		For all $\beta\in\N^d_0$, $\mathscr{L}_\beta\subset L^2\left(\mathcal{Z}\times\R^k,\operatorname{HS}(L^2(\R^d)), \frac{\operatorname{Pf}(\lambda)}{		(2\pi)^{d+k+m}}d\lambda d\nu \right)$, where $\operatorname{HS}(L^2(\R^d))$ is the space of all Hilbert-Schmidt operators on $L^2(\R^d)$. Moreover, one has the following orthogonal decomposition
		\begin{align}\label{eq:L_2_decomp}
			L^2_\mathbb{U}(G)=\bigoplus_{\beta\in\N^d_0} \mathcal{L}_\beta,
		\end{align}
		where $\mathcal{L}_\beta=\mathcal{F}_G^{-1}(\mathscr{L}_\beta)$.
	\end{proposition}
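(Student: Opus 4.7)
The plan is to establish the three claims separately: (i) $\mathscr{L}_\beta$ is contained in the HS-valued $L^2$ target space, (ii) the subspaces $\mathscr{L}_\beta$ are pairwise orthogonal there, and (iii) via the Plancherel isometry (Proposition~\ref{prop:plancherel}), $\mathcal{F}_G$ carries $L^2_{\mathbb U}(G)$ onto $\bigoplus_{\beta\in\N^d_0}\mathscr{L}_\beta$.

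For (i), the normalization in \eqref{eq:E} is chosen so that $\|\Phi^\lambda_\beta\|_{L^2(\R^d)}=1$, hence $E^\lambda_\beta$ is a rank-one orthogonal projection with $\|E^\lambda_\beta\|_{\HS}=1$. Therefore, for any $f\in L^2(\mathcal Z\times\R^k)$ and $F(\lambda,\nu)=\Pf(\lambda)^{-1/2}f(\lambda,\nu)E^\lambda_\beta$, the weighted HS norm collapses to $(2\pi)^{-(d+k+m)}\|f\|^2_{L^2(\mathcal Z\times\R^k)}<\infty$ after the $\Pf(\lambda)$ factors cancel. For (ii), the pointwise identity $\langle E^\lambda_\beta, E^\lambda_{\beta'}\rangle_{\HS}=|\langle \Phi^\lambda_\beta,\Phi^\lambda_{\beta'}\rangle_{L^2(\R^d)}|^2=0$ for $\beta\neq\beta'$ follows from orthogonality of tensorized Hermite functions, so $\mathscr{L}_\beta\perp\mathscr{L}_{\beta'}$ in the target Hilbert space, and $\mathcal L_\beta\perp\mathcal L_{\beta'}$ in $L^2(G)$ by Plancherel.

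For (iii), the forward inclusion is exactly Proposition~\ref{prop:U_inv}: given $f\in L^2_{\mathbb U}(G)$, its GFT expands as $\sum_\beta \frac{(2\pi)^{d/2}}{\Pf(\lambda)^{1/2}}R^\lambda_\beta(f^{\lambda,\nu})E^\lambda_\beta$, whose $\beta$-th summand lies in $\mathscr{L}_\beta$, with the coefficient $(\lambda,\nu)\mapsto R^\lambda_\beta(f^{\lambda,\nu})$ belonging to $L^2(\mathcal Z\times\R^k)$ by applying Parseval for the orthonormal basis $\{\varphi^\lambda_\beta\}$ of $L^2_{\mathbb U}(\C^d)$ fiberwise, combined with Plancherel on $G$ and the Euclidean Plancherel for the map $f\mapsto f^{\lambda,\nu}$ in \eqref{eq:fourier_radical}. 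For the reverse inclusion, given $F(\lambda,\nu)=\Pf(\lambda)^{-1/2}f(\lambda,\nu)E^\lambda_\beta\in\mathscr{L}_\beta$, I define a preimage by setting
\begin{equation*}
h^{\lambda,\nu}(z) := (2\pi)^{-d/2}\,f(\lambda,\nu)\,\varphi^\lambda_\beta(z),
\end{equation*}
and then recovering $h$ on $G$ by inverting the Fourier transform in the vertical and radical variables $(v,r)$. The identity \eqref{eq:varphi_beta} yields $W_\lambda(h^{\lambda,\nu}) = F(\lambda,\nu)$, so by \eqref{eq:Fourier_Weyl} this is $\mathcal F_G(h)(\lambda,\nu)$; since $\varphi^\lambda_\beta$ is $\mathbb U$-invariant in $z$ and $\mathbb U$ acts trivially on $(v,r)$, the inverse Fourier in $(v,r)$ preserves $\mathbb U$-invariance of $h$; and the same computation as in (i) shows $h\in L^2(G)$.

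The main obstacle is conceptual rather than technical: one must keep straight how the various Fourier transforms (the classical one on $(v,r)$, the Weyl transform on $z$, and the full GFT on $G$) interact with the $\mathbb U$-symmetry, and verify that the $\beta$-indexed series converges in the $L^2$ sense, which rests on the fiberwise Parseval identity on $L^2_{\mathbb U}(\C^d)$ already used in the proof of Proposition~\ref{prop:U_inv}.
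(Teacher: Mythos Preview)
Your proposal is correct and follows essentially the same approach as the paper: orthogonality via $\langle E^\lambda_\beta, E^\lambda_{\beta'}\rangle_{\HS}=0$, the forward inclusion from Proposition~\ref{prop:U_inv} together with Plancherel, and the reverse inclusion by building the preimage from $\varphi^\lambda_\beta$ via \eqref{eq:varphi_beta} and inverting the Euclidean Fourier transform in $(r,v)$. The paper writes the last step as a single explicit integral formula for $f_\beta$, but the content is identical to your two-step description (set $h^{\lambda,\nu}=(2\pi)^{-d/2}f(\lambda,\nu)\varphi^\lambda_\beta$, then invert in $(r,v)$).
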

	\begin{proof}
		To check the orthogonality, for any $T\in\mathcal{L}_\beta,  S \in \mathcal{L}_{\beta'}$ with $\beta,\beta'\in\N^d_0, \beta\neq\beta'$, writing $T(\lambda,\nu)=\Pf(\lambda)^{-1/2}f(\lambda,\nu)E^\lambda_\beta$ and $S(\lambda,\nu)=\Pf(\lambda)^{-1/2}g(\lambda,\nu) E^\lambda_{\beta'}$ we have
		\begin{align*}
			&\quad \int_{\R^k}\int_{\mathcal{Z}}\langle T(\lambda,\nu),S(\lambda,\nu)\rangle_{\HS} \Pf(\lambda) d\lambda d\nu \\
			&=\int_{\R^{k}} \int_{\mathcal Z}f(\lambda,\nu)\overline{g(\lambda,\nu)}\langle E^\lambda_\beta, E^\lambda_{\beta'}\rangle_{\HS}d\lambda d\nu \\
			&=0.
		\end{align*}
		On the other hand, by Proposition~\ref{prop:U_inv} for any $f\in L^2_\mathbb{U}(G)$ we have
		\begin{align*}
			\mathcal{F}_G(f)(\lambda,\nu)=\frac{(2\pi)^{\frac d2}}{\Pf(\lambda)^{\frac12}}\sum_{\beta\in\N^d_0} R^\lambda_\beta(f^{\lambda,\nu}) E^\lambda_\beta.
		\end{align*}
		Then by Proposition~\ref{prop:plancherel} it follows that
		\begin{align*}
			&
			\frac{1}{(2\pi)^{k+m}}
			\sum_{\beta\in\N^d_0} \int_{\R^{k}}\int_{\mathcal{Z}}|R^\lambda_\beta(f^{\lambda,\nu})|^2 d\lambda d\nu \\		&=\frac{1}{(2\pi)^{d+k+m}}\int_{\R^{k}}\int_{\mathcal{Z}}\|\mathcal{F}_G(f)(\lambda,\nu)\|^2_{\HS}\Pf(\lambda) d\lambda d\nu 
			\\
			&=\|f\|^2_{L^2(G)}.
		\end{align*}
		Therefore, for each $\beta\in\N^d_0$, $(\lambda,\nu)\longmapsto \Pf(\lambda)^{-\frac12}R^\lambda_\beta(f^{\lambda,\nu})\in L^2(\mathcal{Z}\times \R^k)$. This shows that 
		\begin{align*}
			L^2_{\mathbb{U}}(G)\subseteq \bigoplus_{\beta\in\mathbb{N}^d_0}\mathcal{L}_\beta.
		\end{align*}
		To prove the reverse inclusion, let $T\in\bigoplus_{\beta\in\mathbb{N}^d_0}\mathscr{L}_\beta$. We want to show that there exists $f\in L^2_{\mathbb{U}}(G)$ such that $T(\lambda,\nu)=\mathcal{F}_G(f)(\lambda,\nu)$ for all $(\lambda,\nu)\in\mathcal{Z}\times \R^k$. Let us write 
		\begin{align*}
			T(\lambda,\nu)=\Pf(\lambda)^{-\frac12}\sum_{\beta\in\mathbb{N}^d_0} g_\beta(\lambda,\nu) E^\lambda_\beta,
		\end{align*}
		and define 
		\begin{align*}
			f_\beta(z,r,v)=\frac{1}{(2\pi)^{k+m+\frac d2}}\int_{\mathcal{Z}}\int_{\R^k}  g_\beta(\lambda,\nu) \varphi^\lambda_\beta(z) e^{-\i\langle r,\nu\rangle} e^{-\i\langle v,\lambda\rangle} d\nu d\lambda.
		\end{align*}
		By \eqref{eq:varphi_beta} we have $\mathcal{F}_G(f_\beta)(\lambda,\nu)=W_\lambda(f^{\lambda,\nu}_\beta)=\Pf(\lambda)^{-\frac12}g_\beta(\lambda,\nu) E^\lambda_\beta$, which shows that $f_\beta\in L^2_{\mathbb{U}}(G)$ for all $\beta\in\mathbb{N}^d_0$. Therefore, taking $f=\sum_{\beta\in\mathbb{N}^d_0} f_\beta$, we conclude that $\mathcal{F}_G(f)(\lambda,\nu)=T(\lambda,\nu)$ for all $(\lambda,\nu)\in\mathcal{Z}\times\R^k$. Moreover, $f\in L^2_{\mathbb{U}}(G)$, which completes the proof of the proposition.
	\end{proof}

	\section{Spectrum of the horizontal heat semigroup on $G$ by intertwining}\label{sec:5}
	We now turn to the sub-Laplacian $\Delta_{\mathcal H}$ in \eqref{eq:sub-laplacian} and the horizontal heat semigroup $\mathbb{Q}_t= e^{t\Delta_{\mathcal H}}$. Since $(\Delta_{\mathcal H}, C^\infty_c(G))$ is essentially self-adjoint in $L^2(G)$, $(\mathbb{Q}_t)_{t\geqslant 0}$ is a self-adjoint Markov semigroup on $L^2(G)$. The main result of this section is stated below.
	\begin{theorem}\label{thm:subLaplacian_spectrum}
		For all $t\geqslant 0$, $\sigma(\mathbb{Q}_t; L^2(G))=\sigma_{\rm c}(\mathbb{Q}_t; L^2(G))=[0,1]$.
	\end{theorem}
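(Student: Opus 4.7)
The plan is to exploit the orthogonal decomposition of $L^2_\mathbb{U}(G)$ from Proposition~\ref{prop:decomp} to reduce $\mathbb{Q}_t$ to a family of multiplication operators. The crucial first step is to establish that for $f$ in a Schwartz-type core of $\Delta_{\mathcal H}$,
\begin{equation*}
\mathcal{F}_G(\Delta_{\mathcal H} f)(\lambda,\nu)=-\mathcal{F}_G(f)(\lambda,\nu)\circ\bigl(L(\lambda)+|\nu|^2 I\bigr),
\end{equation*}
where $L(\lambda)=\sum_{j=1}^d(-\partial^2_{\xi_j}+\eta_j(\lambda)^2\xi_j^2)$ is the $d$-dimensional harmonic oscillator on $L^2(\R^d)$. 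This identity follows by computing $d\pi_{\lambda,\nu}(Z_i)$ via the representation formula and the decomposition $\mathcal{H}=\mathcal{P}_\lambda\oplus\mathcal{Q}_\lambda\oplus\operatorname{rad}_\lambda$: on $\mathcal{P}_\lambda$ the $X_j(\lambda)$ act as $\partial_{\xi_j}$, on $\mathcal{Q}_\lambda$ the $Y_j(\lambda)$ act as multiplication by $\i\eta_j(\lambda)\xi_j$, and on $\operatorname{rad}_\lambda$ the $R_i(\lambda)$ act as the scalars $\i\nu_i$. The spectral structure of $L(\lambda)$ is then already encoded in \eqref{eq:E}: $L(\lambda)E^\lambda_\beta=\omega_\beta(\lambda)E^\lambda_\beta$ with $\omega_\beta(\lambda)=\sum_j\eta_j(\lambda)(2\beta_j+1)>0$.

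Combined with the description \eqref{e.masthscrLbeta} of $\mathscr{L}_\beta$, this yields for any $T(\lambda,\nu)=\Pf(\lambda)^{-1/2}c(\lambda,\nu)E^\lambda_\beta$ the identity $T\circ(L(\lambda)+|\nu|^2 I)=(\omega_\beta(\lambda)+|\nu|^2)T$. Hence each $\mathcal{L}_\beta$ is invariant under $\Delta_{\mathcal H}$ and, by functional calculus, under $\mathbb Q_t$. The natural unitary from $L^2\bigl(\mathcal{Z}\times\R^k,\tfrac{\Pf(\lambda)}{(2\pi)^{d+k+m}}d\lambda d\nu\bigr)$ onto $\mathcal{L}_\beta$ sending $c\mapsto\mathcal{F}_G^{-1}\bigl(\Pf(\lambda)^{-1/2}c(\lambda,\nu)E^\lambda_\beta\bigr)$ then conjugates $\mathbb Q_t|_{\mathcal{L}_\beta}$ to multiplication by the symbol $m_\beta(\lambda,\nu):=e^{-t(\omega_\beta(\lambda)+|\nu|^2)}$. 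Since $\omega_\beta$ is smooth, positive, and homogeneous of degree one on the Zariski-open set $\mathcal{Z}$, the symbol $m_\beta$ sweeps $(0,1)$ densely and all of its level sets are null codimension-$1$ real-analytic hypersurfaces; the multiplication operator therefore satisfies $\sigma=\sigma_{\rm c}=[0,1]$ with no eigenvalues. Because $\mathcal{L}_\beta$ is a closed $\mathbb Q_t$-invariant subspace and $\mathbb Q_t$ is a self-adjoint Markov contraction on $L^2(G)$ (which forces $\sigma\subseteq[0,1]$ and $\sigma_{\rm r}=\emptyset$), we obtain $\sigma(\mathbb Q_t;L^2(G))=[0,1]$.

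To promote this to $\sigma=\sigma_{\rm c}$ on all of $L^2(G)$ it suffices to rule out eigenvalues. Suppose $\mathbb Q_t f=\alpha f$ with $f\neq 0$; injectivity of $\mathbb Q_t$ excludes $\alpha=0$, and the spectral theorem gives $\Delta_{\mathcal H}f=-\omega f$ with $\omega=-t^{-1}\log\alpha\geqslant 0$. Applying $\mathcal F_G$ and using the first-paragraph identity produces
\begin{equation*}
\mathcal F_G(f)(\lambda,\nu)\,\bigl(L(\lambda)-(\omega-|\nu|^2)I\bigr)=0\quad\text{for a.e. }(\lambda,\nu).
\end{equation*}
For each $\beta\in\N^d_0$ the locus $\{(\lambda,\nu):\omega_\beta(\lambda)+|\nu|^2=\omega\}$ is a null hypersurface in $\mathcal{Z}\times\R^k$, so off the countable union of such null sets the operator $L(\lambda)-(\omega-|\nu|^2)I$ has trivial kernel and dense range, forcing $\mathcal F_G(f)(\lambda,\nu)=0$ and hence $f=0$. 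The main technical obstacle in this plan is the rigorous derivation of the Fourier-side intertwining identity together with its extension from a Schwartz-type core to the full self-adjoint domain of $\Delta_{\mathcal H}$; the remaining steps are essentially measure-theoretic.
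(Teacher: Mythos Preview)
Your proposal is correct and follows essentially the same route as the paper: reduce via the GFT to composition with the harmonic oscillator (the paper's \eqref{eq:sL_fourier}), so that on each block $\mathcal{L}_\beta$ the semigroup is unitarily equivalent to multiplication by $e^{-t\mathfrak{n}(\beta,\lambda,\nu)}$, and then rule out eigenvalues on all of $L^2(G)$ by showing the level sets of $\mathfrak{n}(\beta,\cdot,\cdot)$ are Lebesgue-null. The only minor divergence is in that last step: you appeal to real-analytic hypersurfaces, whereas the paper (which only records the $\eta_j$ as smooth and homogeneous of degree one) instead uses homogeneity to show the sublevel-volume function $a\mapsto\operatorname{Leb}\{(\lambda,\nu):\mathfrak{n}(\beta,\lambda,\nu)\le a\}=a^{m+k/2}\vartheta(1)$ is continuous, which is a slightly cleaner justification of the same null-set claim.
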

	Proof of this theorem requires several intermediate steps, including the Fourier analysis of the sub-Laplacian $\Delta_{\mathcal H}$. Since $\Delta_{\mathcal H}$ does not depend on the choice of the orthonormal basis in $\mathcal H$  by \cite[Theorem~3.6]{GordinaLaetsch2016}, we note that for any $\lambda\in\mathcal Z$
	\begin{align*}
		\Delta_{\mathcal{H}}=\sum_{i=1}^d (X^2_i(\lambda)+Y^2_i(\lambda))+\sum_{j=1}^k R^2_j(\lambda),
	\end{align*}
	where $X_i(\lambda), Y_i(\lambda), R_j(\lambda)$ are defined by \eqref{eq:basis_rep}. Using \cite[Lemma~2.4]{Ray2001}, the GFT of the operator $\Delta_{\mathcal{H}}$ applied to $f\in\mathcal{S}(G)$, the Schwartz space of functions on $G$, and for $(\lambda,\nu)\in\mathcal{Z}\times\R^k$, is given by
	\begin{align}\label{eq:sL_fourier}
		\mathcal{F}_G(\Delta_{\mathcal{H}} f)(\lambda,\nu)=\mathcal{F}_G(f)(\lambda,\nu)\circ B_{\lambda,\nu},
	\end{align}
	where $B_{\lambda,\nu}=\Delta_x-\sum_{j=1}^d \eta_j(\lambda)^2 x^2_j-|\nu|^2$ is the Schr\"{o}dinger operator on $\R^d$. As a result, for any $t\geqslant 0$ and $f\in L^2(G)$, the GFT for the heat semigroup $\mathbb{Q}_t$ is given by
	\begin{align}\label{eq:heat_fourier}
		\mathcal{F}_G(\mathbb{Q}_tf)(\lambda,\nu)=\mathcal{F}_G(f)(\lambda,\nu)\circ H^{\lambda,\nu}_t,
	\end{align}
	where $H^{\lambda,\nu}_t := e^{tB_{\lambda,\nu}}$ on $L^2(\R^d)$. It is known that $H^{\lambda,\nu}_t$ is a Hilbert-Schmidt operator with the generalized Hermite functions as its eigenfunctions, that is, for any $\beta\in\N^d_0$
	\begin{equation}\label{eq:harmonic_osc}
		\begin{aligned}
			&H^{\lambda,\nu}_t\Phi^\lambda_\beta=e^{-t\mathfrak{n}(\beta,\lambda,\nu)}\Phi^\lambda_\beta, \\
			&\mathfrak{n}(\beta,\lambda,\nu)=\sum_{j=1}^d (2\beta_j+1)\eta_j(\lambda)+|\nu|^2,
		\end{aligned}
	\end{equation}
	where $\Phi^\lambda_\beta$ is defined by \eqref{eq:E}, and $\eta(\lambda)$ is defined by \eqref{eq:K}.
	$(H^{\lambda,\nu}_t)_{t\geqslant 0}$ also satisfies the following scaling property which follows directly from the scaling property of the Laplacian on $\R^d$ and homogeneity of $\eta_j$.
	\begin{lemma}\label{lem:scaling_Harmonic}
		For all $c>0$ and $(\lambda,\nu)\in\mathcal{Z}\times\R^k$ we have
		\begin{align*}
			H^{c^2\lambda,c\nu}_t=d_c H^{\lambda,\nu}_{tc^2} d_{\frac{1}{c}}.
		\end{align*}
	\end{lemma}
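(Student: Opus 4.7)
The plan is to reduce the identity to the level of the generators $B_{\lambda,\nu}$ and then exponentiate. Concretely, it suffices to establish the conjugation identity
\begin{align*}
d_{1/c}\, B_{c^2\lambda,\,c\nu}\, d_c \;=\; c^2\, B_{\lambda,\nu}
\end{align*}
on a common core (say $\mathcal{S}(\R^d)$), since then by functional calculus $d_{1/c}\, e^{tB_{c^2\lambda,c\nu}}\, d_c = e^{t c^2 B_{\lambda,\nu}}$, i.e.\ $d_{1/c}\, H^{c^2\lambda,c\nu}_t\, d_c = H^{\lambda,\nu}_{tc^2}$, which rearranges to the stated identity.

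To verify the generator identity, I would treat each of the three pieces of $B_{c^2\lambda,c\nu} = \Delta_x - \sum_j \eta_j(c^2\lambda)^2 x_j^2 - |c\nu|^2$ separately. The chain rule yields $\Delta_x\, d_c = c^2 d_c\, \Delta_x$, hence $d_{1/c}\Delta_x d_c = c^2\Delta_x$. For multiplication, $d_c M_{x_j^2} = c^2 M_{x_j^2} d_c$, so $d_{1/c} M_{x_j^2} d_c = c^{-2} M_{x_j^2}$. Combined with the homogeneity $\eta_j(c^2\lambda) = c^2\eta_j(\lambda)$ (degree one), the middle term contributes $-c^4 \eta_j(\lambda)^2 \cdot c^{-2} M_{x_j^2} = -c^2 \eta_j(\lambda)^2 M_{x_j^2}$, while the constant term becomes $-c^2|\nu|^2$. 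Summing, every piece acquires a common factor of $c^2$, giving $c^2 B_{\lambda,\nu}$.

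An alternative and perhaps cleaner route is to check the identity directly on the generalized Hermite eigenfunctions $\Phi^\lambda_\beta$ that diagonalize $H^{\lambda,\nu}_t$ via \eqref{eq:harmonic_osc}. Using the definition \eqref{eq:E} together with the homogeneity of $\eta_j$ and $\Pf(c^2\lambda) = c^{2d}\Pf(\lambda)$, one checks that $\Phi^{c^2\lambda}_\beta = c^{d/2}\, d_c \Phi^\lambda_\beta$ and that $\mathfrak{n}(\beta, c^2\lambda, c\nu) = c^2\, \mathfrak{n}(\beta,\lambda,\nu)$. Applying $d_c H^{\lambda,\nu}_{tc^2} d_{1/c}$ to $\Phi^{c^2\lambda}_\beta$ then produces $e^{-t\mathfrak{n}(\beta,c^2\lambda,c\nu)}\Phi^{c^2\lambda}_\beta = H^{c^2\lambda,c\nu}_t \Phi^{c^2\lambda}_\beta$, and since $\{\Phi^{c^2\lambda}_\beta\}_{\beta\in\N^d_0}$ is an orthonormal basis of $L^2(\R^d)$, the two bounded Hilbert--Schmidt operators coincide.

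There is no deep obstacle here; the only thing to watch is the bookkeeping of scaling exponents, ensuring that the $c^4$ coming from $\eta_j(c^2\lambda)^2$ matches the $c^{-2}$ produced by conjugating $M_{x_j^2}$ and the $c^2$ from $|c\nu|^2$ so that $B_{\lambda,\nu}$ picks up a uniform factor of $c^2$. Passage from the generator identity to the semigroup identity is routine given that $d_c$ is an invertible bounded operator and the two sides share an explicit orthonormal eigenbasis.
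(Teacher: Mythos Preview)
Your proposal is correct and your first approach is precisely what the paper has in mind: the paper omits a detailed proof, simply stating that the lemma ``follows directly from the scaling property of the Laplacian on $\R^d$ and homogeneity of $\eta_j$,'' which is exactly the generator-conjugation computation you spell out. Your alternative eigenfunction route is also valid and equally elementary.
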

	Let $\mathsf{q}_t(\cdot,\cdot)$ denote the transition kernel associated to horizontal heat semigroup $\mathbb{Q}_t$, that is, for all $f\in C^\infty_c(G)$,
	\begin{align*}
		\mathbb{Q}_tf(g)=\int_{G}\mathsf{q}_t(g,h)f(h)dh,    g, h\in G.
	\end{align*}
	Since $\mathbb{Q}_t$ is left translation invariant, one has $\mathsf{q}_t(g, h)=\mathsf{q}_t(g^{-1}\star h)$ with $\mathsf{q}_t(g)=\mathsf{q}_t(0,g)$. The next proposition provides an explicit formula for $\mathsf{q}_t$, and the proof can be found in \cite[Theorem~4.23]{YangZPhDThesis2022}, and a similar result in \cite[Example~3.1]{AsaadGordina2016}.
	
	\begin{proposition} \label{prop:heat_kernel}
		In coordinates $g=(z,r,v)\in G$  defined by \eqref{eq:coordinates}, we have that  for all $t>0$ and $(\lambda,\nu)\in\mathcal{Z}\times \R^k$ 
		\begin{equation}
			\begin{aligned}
				& (2\pi)^d\int_{\R^m}\int_{\R^k}\mathsf{q}_t(z,r,v) e^{\i\langle\nu,r\rangle} e^{\i\langle\lambda,v\rangle}drdv \\	
				&=e^{-t|\nu|^2}\prod_{j=1}^d \frac{\eta_j(\lambda)}{2\sinh(\eta_j(\lambda)t)}\exp\left(-\sum_{j=1}^d\frac{\eta_j(\lambda)|z_j|^2}{2}\coth(\eta_j(\lambda)t)\right).
			\end{aligned}
		\end{equation}
	\end{proposition}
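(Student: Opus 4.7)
The strategy is to reduce the identity to a Weyl-transform computation on the Fourier side. By definition, the left-hand side equals $(2\pi)^d\mathsf{q}_t^{\lambda,\nu}(z)$ in the notation of \eqref{eq:fourier_radical}. Since $\Delta_{\mathcal H}$ is left-invariant and essentially self-adjoint, $\mathsf{q}_t$ is symmetric under inversion and $\mathbb{Q}_t$ acts by convolution with $\mathsf{q}_t$; applying $\mathcal{F}_G$ to this convolution identity and comparing with \eqref{eq:heat_fourier} (say, by testing against an approximate identity) gives $\mathcal{F}_G(\mathsf{q}_t)(\lambda,\nu) = H^{\lambda,\nu}_t$. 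Combining this with \eqref{eq:Fourier_Weyl} yields
\begin{align*}
W_\lambda\bigl(\mathsf{q}_t^{\lambda,\nu}\bigr) = H^{\lambda,\nu}_t,
\end{align*}
so the claim reduces to identifying the inverse $\lambda$-Weyl transform of the Mehler kernel $H^{\lambda,\nu}_t$ with the explicit Gaussian-type expression on the right-hand side.

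To carry this out, I would expand both sides in the orthonormal basis of generalized Hermite functions. By \eqref{eq:harmonic_osc},
\begin{align*}
H^{\lambda,\nu}_t = e^{-t|\nu|^2}\sum_{\beta \in \N^d_0} e^{-t\sum_{j=1}^d (2\beta_j+1)\eta_j(\lambda)}\,E^\lambda_\beta.
\end{align*}
Let $M_t^{\lambda,\nu}(z)$ denote the right-hand side of the claim divided by $(2\pi)^d$. Since $M_t^{\lambda,\nu}(z)$ depends only on $|z_1|^2,\ldots,|z_d|^2$, it belongs to $L^2_\mathbb{U}(\C^d)$ as a function of $z$, and the identity \eqref{eq:varphi_beta} yields $W_\lambda(\varphi^\lambda_\beta) = (2\pi)^{d/2}\Pf(\lambda)^{-1/2}E^\lambda_\beta$. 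Consequently the problem reduces to verifying that the Laguerre coefficients $R^\lambda_\beta(M_t^{\lambda,\nu})$ produce exactly the spectral weights $e^{-t\mathfrak{n}(\beta,\lambda,\nu)}$ after the normalization $(2\pi)^{d/2}\Pf(\lambda)^{-1/2}$ is accounted for.

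The core computation is a $d$-fold tensor product of the one-dimensional Mehler identity for Laguerre polynomials: from the classical generating function
\begin{align*}
\sum_{n=0}^\infty L_n(x)\,s^n = \frac{1}{1-s}\,e^{-\frac{xs}{1-s}}, \qquad |s|<1,
\end{align*}
applied with $s=e^{-2\eta_j(\lambda)t}$ and $x=\tfrac{1}{2}\eta_j(\lambda)|z_j|^2$, one obtains
\begin{align*}
\sum_{n=0}^\infty e^{-(2n+1)\eta_j(\lambda)t}\,L_n\!\Bigl(\tfrac{1}{2}\eta_j(\lambda)|z_j|^2\Bigr) = \frac{1}{2\sinh(\eta_j(\lambda)t)}\exp\!\Bigl(-\tfrac{\eta_j(\lambda)|z_j|^2 e^{-\eta_j(\lambda)t}}{4\sinh(\eta_j(\lambda)t)}\Bigr).
\end{align*}
Multiplying by the Gaussian weight $e^{-\eta_j(\lambda)|z_j|^2/4}$ built into $\varphi^\lambda_\beta$, using the identity $\sinh(\eta t)+e^{-\eta t}=\cosh(\eta t)$ to collect the exponent into $\coth(\eta_j(\lambda)t)$, and tensorizing over $j=1,\dots,d$, produces exactly the claimed product. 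The factor $e^{-t|\nu|^2}$ pulls out trivially because $|\nu|^2$ enters $B_{\lambda,\nu}$ as a scalar shift, and injectivity of the Weyl transform — immediate from the Plancherel identity \eqref{eq:Weyl_isometry} — then completes the proof.

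The main obstacle is the careful bookkeeping of constants: the factors $\Pf(\lambda)^{1/2}$, $(2\pi)^d$, the frequencies $\eta_j(\lambda)$, and the $e^{-\eta_j(\lambda)|z_j|^2/4}$ weight in $\varphi^\lambda_\beta$ must all align so that the Mehler generating-function identity reproduces the spectral weights $e^{-t(2\beta_j+1)\eta_j(\lambda)}$ term by term with the correct prefactor $\tfrac{\eta_j(\lambda)}{2\sinh(\eta_j(\lambda)t)}$. A minor additional issue is justifying termwise inversion of the Weyl transform in the Laguerre series, but this is routine for $t>0$ since the series converges geometrically in $e^{-\eta_j(\lambda)t}$ uniformly on compact subsets of $\mathcal{Z}\times\R^k\times\C^d$.
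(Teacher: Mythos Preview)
The paper does not give its own proof of this proposition; it simply cites \cite[Theorem~4.23]{YangZPhDThesis2022} and \cite[Example~3.1]{AsaadGordina2016}. Your approach---identifying $W_\lambda(\mathsf{q}_t^{\lambda,\nu})=H^{\lambda,\nu}_t$ via the convolution structure and \eqref{eq:heat_fourier}, then inverting through the spectral expansion \eqref{eq:harmonic_osc} and the Laguerre generating function (i.e., Mehler's formula for the anisotropic harmonic oscillator)---is the standard and correct route, and is essentially how such formulas are derived in the cited references.

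One bookkeeping point worth flagging beyond what you mention: the paper's Rodrigues formula $L_k(x)=e^x\frac{d^k}{dx^k}(x^k e^{-x})$ yields $k!$ times the standard Laguerre polynomials, so the generating function you quote must be read as $\sum_n \frac{L_n(x)}{n!}s^n=(1-s)^{-1}e^{-xs/(1-s)}$ in the paper's normalization (and the orthonormality of $\varphi^\lambda_\beta$ in Proposition~\ref{prop:U_inv} implicitly requires the standard normalization). This does not affect your argument, but it is exactly the kind of constant-tracking you correctly identify as the main hazard.
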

To prove Theorem~\ref{thm:subLaplacian_spectrum}, we first find subspaces of $L^2(G)$ that are invariant under $\mathbb{Q}_t$ for all $t\geqslant 0$ as follows. 
	\begin{proposition}\label{prop:invariance}
		Recall that $\mathcal{L}_\beta, \beta\in\N^d_0$ is defined by \eqref{e.masthscrLbeta}. Then for all $t\geqslant 0$ we have $\mathbb{Q}_t(\mathcal{L}_\beta)\subseteq \mathcal{L}_\beta$ and moreover
		\begin{align*}
			\mathbb{Q}_t (L^2_{\mathbb{U}}(G))\subseteq L^2_{\mathbb{U}}(G).
		\end{align*}
	\end{proposition}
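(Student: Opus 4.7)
The plan is to first prove $\mathbb{Q}_t(\mathcal{L}_\beta)\subseteq\mathcal{L}_\beta$ for each $\beta\in\N^d_0$ by a direct computation on the Fourier side, and then deduce the $L^2_\mathbb{U}(G)$ invariance as an immediate corollary of the orthogonal decomposition \eqref{eq:L_2_decomp} together with the boundedness of $\mathbb{Q}_t$ on $L^2(G)$. The key tools are the Fourier-side identity \eqref{eq:heat_fourier} and the spectral description \eqref{eq:harmonic_osc} of the Hilbert-Schmidt operator $H^{\lambda,\nu}_t$.

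For $f\in\mathcal{L}_\beta$, I would write $\mathcal{F}_G(f)(\lambda,\nu)=\Pf(\lambda)^{-\frac12}g(\lambda,\nu)E^\lambda_\beta$ with $g\in L^2(\mathcal{Z}\times\R^k)$, as permitted by the definition \eqref{e.masthscrLbeta}. Applying \eqref{eq:heat_fourier} gives
\begin{align*}
\mathcal{F}_G(\mathbb{Q}_t f)(\lambda,\nu)=\Pf(\lambda)^{-\frac12}g(\lambda,\nu)\bigl(E^\lambda_\beta \circ H^{\lambda,\nu}_t\bigr).
\end{align*}
The central observation is that $E^\lambda_\beta \circ H^{\lambda,\nu}_t = e^{-t\mathfrak{n}(\beta,\lambda,\nu)} E^\lambda_\beta$. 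This follows from the fact that $\Phi^\lambda_\beta$ is an eigenfunction of $H^{\lambda,\nu}_t$ by \eqref{eq:harmonic_osc}, together with the self-adjointness of $H^{\lambda,\nu}_t$ on $L^2(\R^d)$: for any $\phi\in L^2(\R^d)$,
\begin{align*}
(E^\lambda_\beta \circ H^{\lambda,\nu}_t)\phi = \langle H^{\lambda,\nu}_t\phi,\Phi^\lambda_\beta\rangle\Phi^\lambda_\beta = \langle \phi, H^{\lambda,\nu}_t\Phi^\lambda_\beta\rangle\Phi^\lambda_\beta = e^{-t\mathfrak{n}(\beta,\lambda,\nu)}E^\lambda_\beta\phi,
\end{align*}
using the definition of $E^\lambda_\beta$ in \eqref{eq:E}.

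Consequently, $\mathcal{F}_G(\mathbb{Q}_t f)(\lambda,\nu) = \Pf(\lambda)^{-\frac12}\widetilde g(\lambda,\nu)E^\lambda_\beta$ with $\widetilde g(\lambda,\nu) = g(\lambda,\nu)e^{-t\mathfrak{n}(\beta,\lambda,\nu)}$. Since $\mathfrak{n}(\beta,\lambda,\nu)\geqslant 0$ on $\mathcal{Z}\times\R^k$ for every $\beta\in\N^d_0$, we have $|\widetilde g|\leqslant |g|$, whence $\widetilde g\in L^2(\mathcal Z\times\R^k)$. Referring back to \eqref{e.masthscrLbeta}, this shows $\mathcal{F}_G(\mathbb{Q}_tf)\in\mathscr{L}_\beta$, i.e.\ $\mathbb{Q}_t f\in\mathcal{L}_\beta$, which is the first assertion.

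For the second assertion, by Proposition~\ref{prop:decomp} every $f\in L^2_\mathbb{U}(G)$ admits an orthogonal decomposition $f=\sum_{\beta\in\N^d_0} f_\beta$ with $f_\beta\in\mathcal{L}_\beta$. Since $\mathbb{Q}_t$ is a bounded operator on $L^2(G)$ and each $\mathcal{L}_\beta$ is a closed subspace of $L^2_\mathbb{U}(G)$ invariant under $\mathbb{Q}_t$ by the previous step, one obtains $\mathbb{Q}_tf=\sum_\beta \mathbb{Q}_tf_\beta\in\bigoplus_\beta\mathcal{L}_\beta=L^2_\mathbb{U}(G)$. No real obstacle is anticipated; the only point requiring care is the exchange of the sum with $\mathbb{Q}_t$, which is immediate from $L^2$-continuity of $\mathbb{Q}_t$.
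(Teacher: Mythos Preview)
Your proof is correct and follows essentially the same approach as the paper's own proof: both compute $\mathcal{F}_G(\mathbb{Q}_tf)$ via \eqref{eq:heat_fourier}, use that $E^\lambda_\beta$ and $H^{\lambda,\nu}_t$ interact as $E^\lambda_\beta\circ H^{\lambda,\nu}_t=e^{-t\mathfrak{n}(\beta,\lambda,\nu)}E^\lambda_\beta$, and then deduce the $L^2_{\mathbb U}(G)$ invariance from the decomposition \eqref{eq:L_2_decomp}. Your presentation is in fact slightly more detailed than the paper's, spelling out the self-adjointness argument for the eigenprojection identity and the continuity justification for passing $\mathbb{Q}_t$ through the orthogonal sum.
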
	
	\begin{proof}
		For any $f\in\mathcal{L}_\beta$, we have 
		\begin{align*}
			\mathcal{F}_G(f)(\lambda,\nu)=\Pf(\lambda)^{-\frac12} g(\lambda,\nu) E^\lambda_\beta
		\end{align*}
		for some $g\in L^2(\mathcal{Z}\times\R^k)$. Using the formula in \eqref{eq:heat_fourier} for the GFT of $\mathbb{Q}_t f$ we get 
		\begin{align*}
			\mathcal{F}_G(\mathbb{Q}_t f)(\lambda,\nu)&=\Pf(\lambda)^{-\frac12} g(\lambda,\nu) E^\lambda_\beta\circ H^{\lambda,\nu}_t \\
			&=\Pf(\lambda)^{-\frac12} g(\lambda,\nu) e^{-t\mathfrak{n}(\beta,\lambda,\nu)} E^\lambda_\beta,
		\end{align*} 	
		where the last identity follows from the fact that $H^{\lambda,\nu}_t E^\lambda_\beta=e^{-t\mathfrak{n}(\beta,\lambda,\nu)} E^\lambda_\beta$ for all $(\lambda,\nu)\in\mathcal{Z}\times\R^k$. Since 
		\begin{align*}
			(\lambda,\nu)\longmapsto e^{-t\mathfrak{n}(\beta,\lambda,\nu)} g(\lambda,\nu)\in L^2(\mathcal{Z}\times \R^k),
		\end{align*}
		we conclude that $\mathbb{Q}_t(\mathcal{L}_\beta)\subseteq\mathcal{L}_\beta$. Finally, the second assertion follows by  \eqref{eq:L_2_decomp}.
	\end{proof}	
	Proposition~\ref{prop:invariance} allows us to introduce the following notation 
	\begin{equation}\label{nota:Qtrestricted}
		\begin{aligned}
			\mathbb{Q}^\beta_t&:=\left.\mathbb{Q}_t\right|_{\mathcal{L}_\beta}, \beta\in\N^d_0,
			\\
			\mathbb{Q}^{\mathbb{U}}_t&:=\left.\mathbb{Q}_t\right|_{L^2_{\mathbb{U}}(G)}.
		\end{aligned}
	\end{equation}
	
	Since $\mathbb{Q}_t$ is a self-adjoint operator on $L^2(G)$, by Proposition~\ref{prop:invariance} it follows that $\mathbb{Q}^\beta_t:\mathcal{L}_\beta\longrightarrow \mathcal{L}_\beta$ is also a self-adjoint. In what follows, we will obtain an intertwining relationship between $\mathbb{Q}^\beta_t$ and a strongly continuous contraction semigroup defined on $L^2(\R^{m+k})$.
	To this end, for each $\beta\in\N^d_0$ we define the Fourier multiplier operator $\operatorname{M}_\beta:L^2(\R^{m+k})\longrightarrow L^2(G)$ such that

\begin{align}\label{eq:M_beta}
		\mathcal{F}_G(\operatorname{M}_\beta f)(\lambda,\nu)=(2\pi)^{\frac{d}{2}}\Pf(\lambda)^{-\frac{1}{2}}\mathcal{F}(f)(\lambda,\nu)E^\lambda_\beta,
	\end{align}
	where $E^\lambda_\beta$ is defined by \eqref{eq:E}. Note that $\Range(\operatorname{M}_\beta)\subseteq\mathcal{L}_\beta$, where $\mathcal{L}_\beta$ is defined in \eqref{e.masthscrLbeta}. By Proposition~\ref{prop:plancherel}, $\operatorname{M}_\beta:L^2(\R^{m+k})\longrightarrow\mathcal{L}_\beta$ is a unitary operator. Next, for each $\beta\in\N^d_0$, let us also define the semigroup $Q^\beta_t$ on $L^2(\R^{m+k})$ such that for all $f\in L^2(\R^{m+k})$, 
	\begin{align*}
		\mathcal{F}(Q^\beta_t f)(\lambda,\nu)=e^{-t\mathfrak{n}(\beta,\lambda,\nu)}\mathcal{F}(f)(\lambda,\nu), \quad (\lambda,\nu)\in\R^{m+k},
	\end{align*}
where $\mathfrak{n}(\beta,\lambda,\nu)$ is defined in \eqref{eq:harmonic_osc}. Since $Q^\beta_t$ is unitarily equivalent to a multiplication operator on $L^2(\R^{m+k})$, it is self-adjoint with purely continuous spectrum. More precisely, for all $t>0$, 
	\begin{align*}
		\sigma(Q^\beta_t; L^2(\R^{m+k}))=\sigma_{\rm c}(Q^\beta_t; L^2(\R^{m+k}))=\{e^{-t\mathfrak{n}(\beta,\lambda,\nu)}: (\lambda,\nu)\in\R^{m+k}\}=[0,1].
	\end{align*}
	Then we have the following intertwining relationship.
	\begin{proposition}\label{thm:heat_intertwining}
		For all $t\geqslant 0$, $f\in L^2(\R^{m+k})$, and $\beta\in\mathbb{N}^d_0$ we have
		\begin{align*}
			\mathbb{Q}^\beta_t\operatorname{M}_\beta f=\operatorname{M}_\beta Q^\beta_t f,
		\end{align*}
		where $\mathrm{M}_\beta$ is defined in \eqref{eq:M_beta}.
	\end{proposition}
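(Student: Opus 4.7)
The plan is to verify the intertwining identity by taking the generalized Fourier transform of both sides and checking that they agree; injectivity of $\mathcal{F}_G$ (a consequence of the Plancherel formula in Proposition~\ref{prop:plancherel}) then yields the result. The essential algebraic fact driving the proof is that the rank-one projections $E^\lambda_\beta$ pick out precisely the eigenfunctions of the harmonic oscillator semigroup $H^{\lambda,\nu}_t$, so that composition with $H^{\lambda,\nu}_t$ collapses to scalar multiplication by $e^{-t\mathfrak{n}(\beta,\lambda,\nu)}$.

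First I would apply $\mathcal{F}_G$ to the left-hand side. Using the identity \eqref{eq:heat_fourier} for the GFT of the heat semigroup together with the definition \eqref{eq:M_beta} of $\operatorname{M}_\beta$, one obtains
\begin{align*}
\mathcal{F}_G(\mathbb{Q}^\beta_t\operatorname{M}_\beta f)(\lambda,\nu)
&=\mathcal{F}_G(\operatorname{M}_\beta f)(\lambda,\nu)\circ H^{\lambda,\nu}_t\\
&=(2\pi)^{d/2}\operatorname{Pf}(\lambda)^{-1/2}\mathcal{F}(f)(\lambda,\nu)\, E^\lambda_\beta\circ H^{\lambda,\nu}_t.
\end{align*}
Next I would use self-adjointness of $H^{\lambda,\nu}_t$ on $L^2(\R^d)$ together with \eqref{eq:harmonic_osc} to compute, for $\varphi\in L^2(\R^d)$,
\begin{align*}
(E^\lambda_\beta\circ H^{\lambda,\nu}_t)\varphi
=\langle H^{\lambda,\nu}_t\varphi,\Phi^\lambda_\beta\rangle\Phi^\lambda_\beta
=\langle \varphi, H^{\lambda,\nu}_t\Phi^\lambda_\beta\rangle\Phi^\lambda_\beta
=e^{-t\mathfrak{n}(\beta,\lambda,\nu)}E^\lambda_\beta\varphi,
\end{align*}
so that $E^\lambda_\beta\circ H^{\lambda,\nu}_t=e^{-t\mathfrak{n}(\beta,\lambda,\nu)}E^\lambda_\beta$ as operators on $L^2(\R^d)$.

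For the right-hand side, the definition of $\operatorname{M}_\beta$ combined with the multiplier definition of $Q^\beta_t$ (as a Fourier multiplier with symbol $e^{-t\mathfrak{n}(\beta,\lambda,\nu)}$ on $L^2(\R^{m+k})$) immediately yields
\begin{align*}
\mathcal{F}_G(\operatorname{M}_\beta Q^\beta_t f)(\lambda,\nu)
=(2\pi)^{d/2}\operatorname{Pf}(\lambda)^{-1/2}e^{-t\mathfrak{n}(\beta,\lambda,\nu)}\mathcal{F}(f)(\lambda,\nu)\, E^\lambda_\beta,
\end{align*}
which matches the expression derived above. Since $\mathcal{F}_G$ is injective by Plancherel, the identity $\mathbb{Q}^\beta_t\operatorname{M}_\beta f=\operatorname{M}_\beta Q^\beta_t f$ follows for all $f\in L^2(\R^{m+k})$.

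Honestly I do not foresee a real obstacle here: the statement is essentially a computation, and all the pieces, namely \eqref{eq:heat_fourier}, the eigenrelation \eqref{eq:harmonic_osc}, the invariance $\mathbb{Q}_t(\mathcal{L}_\beta)\subseteq\mathcal{L}_\beta$ from Proposition~\ref{prop:invariance}, and the unitarity of $\operatorname{M}_\beta$ onto $\mathcal{L}_\beta$, have already been established. The only care needed is to keep track of the side on which the operator $H^{\lambda,\nu}_t$ is composed (it appears on the right in \eqref{eq:heat_fourier}), and to use self-adjointness of $H^{\lambda,\nu}_t$ to move it onto the Hermite function $\Phi^\lambda_\beta$ inside the inner product defining $E^\lambda_\beta$. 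With that bookkeeping in place, both sides reduce to the same Fourier-transformed expression.
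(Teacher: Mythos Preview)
Your proposal is correct and follows essentially the same approach as the paper: apply $\mathcal{F}_G$ to both sides, use \eqref{eq:heat_fourier} and \eqref{eq:M_beta}, reduce $E^\lambda_\beta\circ H^{\lambda,\nu}_t$ to $e^{-t\mathfrak{n}(\beta,\lambda,\nu)}E^\lambda_\beta$, and conclude by injectivity of the GFT. If anything, you are slightly more explicit than the paper in justifying the key operator identity via self-adjointness of $H^{\lambda,\nu}_t$.
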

	\begin{remark}\label{rem:poisson} When $\mathbb G=\R^{2d}\times\R^m$ is an $H$-type group (see \cite[p.~681, Definition~18.1.1]{BonfiglioliLanconelliUguzzoniBook}), the semigroup $Q^\beta$ is generated by the fractional Laplacian $(2|\beta|+d)\Delta^{\frac12}$ on $\R^m$. When $\mathbb G=\R^{2d}\times\R$ is a non-isotropic Heisenberg group (see \cite[Definition~1.1]{GordinaLuo2022}) with symplectic form $\omega:\R^{2d}\to\R$ such that 
		$\omega((x,y), (x',y'))=\sum_{i=1}^da_i(x_iy'_i-x'_iy_i)$, then $Q^\beta$ is generated by $\sum_{j=1}^d(2\beta_j+1)a_j \Delta^{\frac12}$ on $\R$.
	\end{remark}
	
	\begin{proof}[Proof of Proposition~\ref{thm:heat_intertwining}]
		For any $t>0$ and $f\in L^2(\R^{m+k})$ we have
		\begin{align*}
			\mathcal{F}_G(\mathbb{Q}^\beta_t \operatorname{M}_\beta f)(\lambda,\nu)&=\mathcal{F}_G(\operatorname{M}_\beta f)(\lambda,\nu) H^{\lambda,\nu}_t \\
			&=\mathcal{F}(f)(\lambda,\nu) \Pf(\lambda)^{-\frac12}E^\lambda_\beta H^{\lambda,\nu}_t \\
			&=e^{-t\mathfrak{n}(\beta,\lambda,\nu)} \mathcal{F}(f)(\lambda,\nu) \Pf(\lambda)^{-\frac12}E^\lambda_\beta \\
			&=\mathcal{F}_G(\operatorname{M}_\beta Q^\beta_t f)(\lambda,\nu).
		\end{align*}
		Therefore, proof of the proposition follows by invoking the injectivity of GFT.
	\end{proof}
	\begin{proof}[Proof of Theorem~\ref{thm:subLaplacian_spectrum}] Let $L^2_{\mathbb{U}}(G)$ be defined by \eqref{eq:U_invariant}, and $\mathbb{Q}^{\mathbb{U}}_t$ by \eqref{nota:Qtrestricted}. 		We note that $\sigma(\mathbb{Q}^\mathbb{U}_t; L^2(G))\subseteq\sigma(\mathbb{Q}_t; L^2(G))\subseteq [0,1]$. Using Proposition~\ref{prop:decomp} and Proposition~\ref{thm:heat_intertwining} we have 
		\begin{gather*}
			\mathbb{Q}^\mathbb{U}_t  =\bigoplus_{\beta\in\N^d_0}^\infty  \mathbb{Q}^\beta_t,
			\\
			\sigma(\mathbb{Q}^\beta_t; \mathcal{L}_\beta) =\sigma_{\rm c}(\mathbb{Q}^{\beta}_t; \mathcal{L}_\beta)=\sigma_{\rm c}(Q^\beta_t; L^2(\R^{m+k}))=\sigma(Q^\beta_t; L^2(\R^{m+k}))=[0,1].
		\end{gather*}
		This proves that $\sigma(\mathbb{Q}_t; L^2(G))=\sigma(\mathbb{Q}^\mathbb{U}_t; L^2(G))=\sigma_{\rm c}(\mathbb{Q}^\mathbb{U}_t; L^2(G))=[0,1]$. It only remains to prove that the spectrum of $\mathbb{Q}_t$ is purely continuous. Since $\mathbb{Q}_t$ is a self-adjoint operator, its residual spectrum is empty. We need to show that $\sigma_{\rm p}(\mathbb{Q}_t; L^2(G))=\emptyset$. Let $e^{-a t}\in\sigma_{\rm p}(\mathbb{Q}_t; L^2(G))$ for some $a\geqslant 0$. Then, there exists a nonzero $f\in L^2(G)$ such that 
		\begin{align*}
			\mathbb{Q}_tf=e^{-a t}f.
		\end{align*}
		Taking the GFT on both sides of the above equation one gets that for any $(\lambda,\nu)\in\mathcal{Z}\times \R^k$
		\begin{align*}
			\mathcal{F}_G(\mathbb{Q}_t f)(\lambda,\nu)=\mathcal{F}_G(f)(\lambda,\nu)\circ H^{\lambda,\nu}_t=e^{-at}\mathcal{F}_G(f)(\lambda,\nu).
		\end{align*}
		Since, according to \eqref{eq:harmonic_osc}, $\sigma(H^{\lambda,\nu}_t; L^2(\R^{d}))\setminus\{0\}=\sigma_{\rm p}(H^{\lambda,\nu}_t; L^2(\R^d))=\{e^{-t\mathfrak{n}(\beta,\lambda,\nu)}: \beta\in\N^d_0\}$, the above equation implies that $\mathcal{F}_G(f)(\lambda,\nu)=0$ whenever $a\notin \{\mathfrak{n}(\beta,\lambda,\nu): \beta\in\N^d_0, \lambda\in\mathcal{Z}, \nu\in\R^k\}$, where $\mathfrak{n}(\beta,\lambda,\nu)$ is defined by \eqref{eq:harmonic_osc}. For any $a\geqslant 0$ and a fixed $\beta\in\N^d_0$, let us denote
		\begin{align*}
			\vartheta(a):=\operatorname{Leb}\{(\lambda,\nu)\in\R^{m+k}: \mathfrak{n}(\beta,\lambda,\nu)\le a\}.
		\end{align*}
		Since $\eta_i$ in \eqref{eq:harmonic_osc} is smooth and homogeneous of degree $1$ for each $i$, and $\nu\mapsto |\nu|^2$ is homogeneous of degree $2$, we have $\vartheta(a)=a^{m+\frac k2}\vartheta(1)$ for all $a\geqslant 0$. From continuity of $\vartheta$, it follows that for all $a\geqslant 0$, 
		\begin{align*}
			\operatorname{Leb}\{(\lambda,\nu)\in\R^{m+k}: \mathfrak{n}(\beta,\lambda,\nu)=a \text{  for some } \beta\in\N^d_0\}=0,
		\end{align*}
		which implies that $\mathcal{F}_G(f)(\lambda,\nu)=0$ on a set of full Lebesgue measure. This forces that $f=0$, and therefore a contradiction. Hence, $\sigma(\mathbb{Q}_t; L^2(G))=\sigma_{\rm c}(\mathbb{Q}_t; L^2(G))$.
	\end{proof}

	\section{Spectrum of some L\'evy semigroups on \texorpdfstring{$G$}{G}} \label{sec:6}
	\subsection{L\'evy processes on Lie groups and Euclidean spaces} 
	\begin{definition}
		A stochastic process $X=(X_t)_{t\ge 0}$ on $G$ is called a \emph{L\'evy process} if 
		\begin{enumerate}
			\item $X_t$ has stationary and independent left-increment, that is, for any $0\le t_1<\cdots<t_n$, $X_{t_1}, X^{-1}_{t_1}\star X_{t_2}, \ldots, X^{-1}_{t_{n-1}} \star X_{t_n}$ are independent random variables, and 
			\begin{align*}
				X^{-1}_{s}\star X_{t+s}\overset{d}{=} X_t \text{ for all } s, t>0.
			\end{align*}
			\item $t\mapsto X_t$ is stochastically continuous, that is, $\lim_{h\to 0}X^{-1}_h\star X_{t+h}=X_t$ in probability for any $t>0$.
		\end{enumerate}
	\end{definition}
	From the definition, it is clear that the semigroup $\widetilde{\mathbb{Q}}_t$ associated to $X$ is  invariant under left translation on $G$. In other words, a L\'evy semigroup $(\widetilde{\mathbb{Q}}_t)_{t\geqslant 0}$ on $C_0(G)$ is a left-invariant Feller Markov semigroup, that is, for any $g\in G$ and $t\geqslant 0$,
	\begin{align*}
		\tau_g \widetilde{\mathbb{Q}}_t=\widetilde{\mathbb{Q}}_t\tau_g,
	\end{align*}
	where $\tau_g f(h):=f(g\star h)$ is the left translation operator on $C_0(G)$. For example, the horizontal heat semigroup on $G$ is a L\'evy semigroup. The following result is due to G.A. Hunt \cite{Hunt1956a} which characterizes all L\'evy semigroups on $C_0(G)$ through their generators. We provide a slightly different version of Hunt's original result which can be found in \cite{LiaoMingBookLevyProcessesinLieGroups}.
	\begin{theorem}[Theorem~1.1 in \cite{LiaoMingBookLevyProcessesinLieGroups}]\label{thm:Hunt} Let $L$ be the generator of a left-invariant Feller semigroup of probability kernels on a Lie group $G$. Then its domain $\mathcal{D}(L)$ contains $C^{2,l}_0(G)$, and for all $f\in C^{2,l}_0(G)$ and $g\in G$, 
		\begin{equation}\label{eq:Levy-generator}
			\begin{aligned}
				L f(g)=\frac{1}{2}&\sum_{j,k=1}^d a_{jk} X^l_j X^l_kf(g)+\sum_{i=1}^dc_iX^l_i f(g) \\
				&+\int_G \left[ f(g\star h)-f(g)-\sum_{i=1}^d x_i(h) X^l_i f(g)\right]\Pi(dh),
			\end{aligned}
		\end{equation}
		where $C^{2,l}_0(G)$ denotes the space of all functions twice differentiable with respect to the left-invariant vector fields and with vanishing derivatives at infinity, $(a_{ij})$ and $c_i$ are constants with $\{a_{ij}\}$ being nonnegative definite symmetric matrix, $(X^l_1,\ldots, X^l_d)$ is a basis of left invariant vector fields in $\mathfrak{g}$, the Lie algebra of $G$ and $x_1, \ldots, x_d$ are coordinate functions satisfying $x_i(e)=0$ and $X_ix_j(e)=\delta_{ij}$, $e$ being the identity element in $G$. Moreover, $\Pi$ is a measure on $G$ that satisfies 
		\begin{align*}
			\Pi(\{e\})=0, \quad \sum_{i=1}^d \int_U x^2_i(g)\Pi(dg)<\infty, \quad \Pi(U^c)<\infty
		\end{align*}
		for any open neighborhood $U$ of $e$.
	\end{theorem}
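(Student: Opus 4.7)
The plan is to follow Hunt's original approach, adapted to the Lie group setting. By left-invariance of $\widetilde{\mathbb{Q}}_t$, the semigroup is determined by the one-parameter family $\mu_t(\cdot) := \widetilde{\mathbb{Q}}_t(e, \cdot)$ of probability measures on $G$ satisfying $\mu_{s+t} = \mu_s \star \mu_t$ (convolution on $G$), with $\mu_t \Rightarrow \delta_e$ as $t \downarrow 0$ by stochastic continuity. The generator then acts via
\begin{align*}
Lf(g) = \lim_{t \downarrow 0} \frac{1}{t}\int_G \bigl(f(g \star h) - f(g)\bigr)\,\mu_t(dh),
\end{align*}
and the task is to identify this limit for $f \in C^{2,l}_0(G)$.

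I would first construct the L\'evy measure $\Pi$ as the vague limit of the rescaled measures $t^{-1} \mu_t\bigr|_{U^c}$ as $t \downarrow 0$, for each open neighborhood $U$ of $e$. Infinite divisibility $\mu_t = \mu_{t/n}^{\star n}$ combined with $\mu_t \Rightarrow \delta_e$ forces this limit to exist and to be finite on each $U^c$, mirroring the classical L\'evy--Khintchine construction in the Euclidean setting; exhausting $G \setminus \{e\}$ by a decreasing sequence of neighborhoods assembles $\Pi$ with the stated integrability conditions. For $f \in C^{2,l}_0(G)$, fix local coordinates $x_1, \ldots, x_d$ near $e$ with $x_i(e) = 0$ and $X_i^l x_j(e) = \delta_{ij}$, and Taylor expand on a neighborhood $U$:
\begin{align*}
f(g \star h) - f(g) - \sum_{i=1}^d x_i(h) X_i^l f(g) = \frac{1}{2}\sum_{j,k=1}^d x_j(h) x_k(h)\,X_j^l X_k^l f(g) + R_f(g, h),
\end{align*}
with $R_f(g,h) = O(|h|^3)$ uniformly in $g$. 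Splitting the defining integral over $U$ and $U^c$: the $U^c$-part converges to the nonlocal term of \eqref{eq:Levy-generator} by the construction of $\Pi$, while the $U$-part extracts the local term once one defines
\begin{align*}
a_{jk} := \lim_{t\downarrow 0}\frac{1}{t} \int_U x_j(h) x_k(h)\,\mu_t(dh), \qquad c_i := \lim_{t\downarrow 0}\frac{1}{t} \int_G \bigl[x_i(h) - x_i(h)\mathbf{1}_{U^c}(h)\bigr]\mu_t(dh).
\end{align*}

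The main obstacle is rigorously establishing the existence of these limits, nonnegative definiteness of the matrix $(a_{jk})$, and independence of the final formula from the choice of $U$. Existence follows from the convolution semigroup property, which makes the truncated moments asymptotically additive in $t$; a tightness argument controls the cubic Taylor remainder uniformly as $t \downarrow 0$. Nonnegative definiteness of $(a_{jk})$ comes from recognizing it as the covariance of the Gaussian component in the L\'evy--It\^o decomposition of the underlying process into a Brownian-motion-plus-drift part and a compensated Poisson jump part driven by $\Pi$. Finally, changing $U$ modifies $c_i$ by jump-compensation integrals but leaves $Lf(g)$ invariant, which produces Hunt's formula \eqref{eq:Levy-generator}. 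The inclusion $\mathcal{D}(L) \supseteq C_0^{2,l}(G)$ then follows from the strong continuity in $C_0(G)$ of the resulting expression in $f$, noting that each of the three terms defines a bounded linear functional of $f$ and its first two left-invariant derivatives.
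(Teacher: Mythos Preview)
The paper does not prove this theorem; it is quoted verbatim as Theorem~1.1 of Liao's book \cite{LiaoMingBookLevyProcessesinLieGroups} and used as a black box (see the sentence immediately following the statement and the application in the proof of Theorem~\ref{thm:characterization}). There is therefore no ``paper's own proof'' to compare your proposal against.

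Your sketch is a reasonable outline of Hunt's original argument, but a couple of points would need tightening if you were to carry it out. First, invoking the L\'evy--It\^o decomposition to establish nonnegative definiteness of $(a_{jk})$ is backwards: in the Lie group setting the L\'evy--It\^o decomposition is typically \emph{derived from} Hunt's formula, not the other way around, so you would need a direct argument (e.g.\ that each $\sum_{j,k} \xi_j\xi_k\, t^{-1}\!\int_U x_j x_k\, d\mu_t \ge 0$ survives the limit). Second, the estimate $R_f(g,h)=O(|h|^3)$ presupposes a norm near $e$ and needs to be made precise using the coordinate chart; on a general Lie group one typically uses a smooth function $\varphi$ with $\varphi(e)=0$, $\varphi>0$ elsewhere, and $\varphi(h)\asymp \sum x_i(h)^2$ near $e$ to control remainders and to formulate the integrability condition on $\Pi$. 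These are standard repairs, and the overall strategy is the correct one.
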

It is worth noting that the above representation of the generator of L\'evy semigroups does not require any metric structure on $\mathfrak{g}$. Although the above theorem is a very powerful result, it does not provide information about the core of the generator. Moreover, it is in general difficult to understand the spectrum of an operator in the form \eqref{eq:Levy-generator} as the GFT of such operators are not tractable objects. In what follows, we will consider L\'evy semigroups on $G$ which are obtained by perturbing the sub-Laplacian $\Delta_{\mathcal H}$ along the directions of vertical variables. To this end, we recall that the generator of a L\'evy semigroup on $\R^m$ is a second order integro-differential operators with the following representation.
	\begin{equation}\label{eq:Levy}
		\begin{aligned}
			Af(v)&=\trace(\sigma\nabla^2 f(v))-\langle b, \nabla f(v)\rangle \\
			&+\int_{\mathbb{R}^m}\left(f(v+v^{\prime})-f(v)-\langle \nabla f(v), v^{\prime}\rangle\mathbbm{1}_{\{|v^{\prime}|\le 1\}}\right)\kappa(dv^{\prime}),
		\end{aligned}
	\end{equation}
	where $\sigma$ is a nonnegative definite matrix, $b\in\R^m$, and $\kappa$ is a L\'evy measure on $\mathcal{B}(\R^m)$, that is, 
	\begin{align*}
		\kappa(\{0\})=0 \quad \text{and} \quad \int_{\mathbb{R}^m} (1\wedge |v|^2)\kappa(dv)<\infty.
	\end{align*}
	A direct computation shows that $A$ is a Fourier multiplier operator with symbol $\psi$, that is, for all $f\in C^\infty_c(\R^m)$ and $\lambda\in\R^d$,
	\begin{align*}
		\mathcal{F}(Af)(\lambda)=\psi(\lambda)\mathcal{F}(f)(\lambda),
	\end{align*}
	where 
	\begin{align}\label{eq:psi}
		\psi(\lambda):=-\langle \sigma\lambda,\lambda\rangle+\i\langle b,\lambda\rangle+\int_{\R^m}(e^{\i\langle v, \lambda\rangle}-1-\i\langle v,\lambda\rangle\mathbbm{1}_{\{|v|\le 1\}})\kappa(dv).
	\end{align}
	$\psi$ is called the \emph{L\'evy-Khintchine exponent} associated to the semigroup generated by $A$. 
	\begin{notation}\label{notation:N}
		We denote by $\mathcal{N}$ the set of all L\'evy-Khintchine exponents $\psi$ that are of the form \eqref{eq:psi}. We note that any $\psi\in\mathcal{N}$ can be uniquely identified with the triple $(\sigma,b,\kappa)$, and we often write $\psi=(\sigma,b,\kappa)$. 
	\end{notation}
	From now on, we denote the operator $A$ in \eqref{eq:Levy} by $A^\psi$ to indicate the dependence on the L\'evy-Khintchine exponent. It is known by \cite[Theorem~31.5]{SatoBook1999} that $(A^\psi, C^\infty_c(\R^m))$ generates a Feller Markov semigroup $(Q^\psi_t)_{t\geqslant 0}$ on $C_0(\R^m)$ and for all $f\in L^1(\R^m)\cap C_0(\R^m)$ and $\lambda\in\R^m$, 
	\begin{align}\label{eq:Levy-semigroup}
		\mathcal{F}(Q^\psi_t f)(\lambda)= e^{t\psi(\lambda)}\mathcal{F}(f)(\lambda).
	\end{align}
	$(Q^\psi_t)_{t\geqslant 0}$ is called the \emph{L\'evy semigroup} corresponding to the L\'evy-Khintchine exponent $\psi$. When $\psi(\lambda)=-|\lambda|^2$, $(Q^\psi_t)_{t\geqslant 0}$ is the classical heat semigroup on $\R^m$. We denote the transition kernel of $Q^\psi_t$ by $\mu^\psi_t$, that is, for all $\lambda\in\R^m$,
	\begin{equation}\label{eq:mu_alpha}
		\begin{aligned}
			e^{t\psi(\lambda)}&=\int_{\R^m} e^{\i\langle\lambda, v\rangle}\mu^\psi_t(dv).
		\end{aligned}
	\end{equation}

	\subsection{L\'evy perturbations of a sub-Laplacian}
	For any $\psi\in\mathcal N$, we consider perturbations of $\Delta_{\mathcal{H}}$ defined by 
	\begin{align*}
		\Delta^{\!\psi}_{G}:=\Delta_{\mathcal{H}}+A^\psi_{\mathcal{V}},
	\end{align*}
	where $A^\psi_{\mathcal{V}}$ denotes $A^\psi$ in the vertical direction of $G$. Note that when $\psi(\lambda)=-|\lambda|^2$, $\Delta^{\!\psi}_{G}$ is the full Laplacian on $G$. By \cite[Theorem~1.1]{LiaoMingBookLevyProcessesinLieGroups}, it follows that $\Delta^{\!\psi}_{G}$ coincides with the generator of a L\'evy semigroup on $G$ at least on the space of all twice differentiable functions with respect to the left-invariant vector fields. The following theorem describes a core for the generator and also gives an explicit formula for the semigroup in terms of the horizontal heat semigroup $(\mathbb{Q}_t)_{t\geqslant 0}$.
	
	\begin{theorem}\label{thm:heat_perurbation} For any $\psi\in\mathcal N$, the closure of $(\Delta^{\!\psi}_{G}, C^\infty_c(G))$ generates a L\'evy semigroup $(\mathbb{Q}^\psi_t)_{t\geqslant 0}$ on $C_0(G)$ such that for any $f\in C_0(G)$
		\begin{align}\label{eq:Q_psi}
			\mathbb{Q}^\psi_t f(h,v)=\int_{\R^m}\mathbb{Q}_t f(h,v-u) \mu^\psi_t(du).
		\end{align}
		The heat kernel for $\mathbb{Q}^\psi_t$, $\mathsf{q}^\psi_t(g):=\mathsf{q}^\psi_t(0,g)$ satisfies
		\begin{equation}\label{eq:heat_kernel_perturbed}
			\begin{aligned}
				& (2\pi)^{d}\int_{\R^m}\int_{\R^k}\mathsf{q}^\psi_t(z,r,v) e^{\i\langle\nu,r\rangle} e^{\i\langle\lambda,v\rangle}drdv \\ &=e^{-t|\nu|^2}e^{t\psi(\lambda)}\prod_{j=1}^d\frac{\eta_j(\lambda)}{2\sinh(\eta_j(\lambda)t)}\exp\left(-\sum_{j=1}^d\frac{\eta_j(\lambda)|z_j|^2}{2}\coth(\eta_j(\lambda)t)\right),
			\end{aligned}
		\end{equation}
		where $g=(z,r,v)$ are the coordinates on $G$ given by \eqref{eq:coordinates}.
	\end{theorem}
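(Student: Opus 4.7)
The plan is to verify that the formula \eqref{eq:Q_psi} defines a Feller semigroup on $C_{0}(G)$ whose generator acts as $\Delta_{G}^{\!\psi}$ on $C^\infty_c(G)$, to argue that this space is a core, and then to read off the heat kernel formula via the GFT. I would begin by introducing the auxiliary vertical convolution semigroup
\begin{align*}
T_{t}^{\psi}f(h,v) := \int_{\R^{m}} f(h,v-u)\,\mu_{t}^{\psi}(du),
\end{align*}
so that \eqref{eq:Q_psi} rewrites as $\mathbb{Q}_{t}^{\psi}=T_{t}^{\psi}\circ\mathbb{Q}_{t}$. The key structural point is that for a step $2$ Carnot group the vertical layer $\mathcal V$ is the center of $\mathfrak g$, so in the exponential coordinates \eqref{eq:V_i} each horizontal left-invariant vector field $Z_{i}$ has coefficients independent of $v$. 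Consequently $\Delta_{\mathcal{H}}$ (and hence $\mathbb{Q}_{t}$) commutes with vertical translations, and therefore with $T_{s}^{\psi}$ for all $s,t\ge 0$. This commutativity immediately yields the semigroup identity $\mathbb{Q}_{t+s}^{\psi}=\mathbb{Q}_{t}^{\psi}\circ\mathbb{Q}_{s}^{\psi}$.

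Next I would verify the Feller properties. Both $\mathbb{Q}_{t}$ and $T_{t}^{\psi}$ are contractive and positivity-preserving on $C_{0}(G)$: the former by hypoellipticity of $\Delta_{\mathcal{H}}$ and the latter since $\mu_{t}^{\psi}$ is a probability measure. Strong continuity of the composition at $t=0^{+}$ follows from strong continuity of each factor together with uniform boundedness. Differentiating $t\mapsto T_{t}^{\psi}\mathbb{Q}_{t}f$ at $t=0^{+}$ in the Banach space sense for $f\in C^\infty_c(G)$, using $T_{0}^{\psi}=I$, $\mathbb{Q}_{0}=I$ and that $C^\infty_c(G)$ sits inside the domains of both generators, produces $\Delta_{\mathcal{H}}f+A_{\mathcal V}^{\psi}f=\Delta_{G}^{\!\psi}f$, so $C^\infty_c(G)$ is contained in the domain of the generator of $\mathbb Q^\psi$ and the action there agrees with $\Delta_{G}^{\!\psi}$.

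The main obstacle is promoting this inclusion to a \emph{core} statement. My plan is to show that $(\lambda I-\Delta_{G}^{\!\psi})(C^\infty_c(G))$ is dense in $C_{0}(G)$ for some $\lambda>0$, and then apply the Lumer--Phillips theorem to the dissipative operator $(\Delta_{G}^{\!\psi},C^\infty_c(G))$ (dissipativity being a consequence of the positive maximum principle for L\'evy-type operators). To produce the required approximations I would fix $f\in C_{0}(G)$, form $R_{\lambda}f=\int_{0}^{\infty}e^{-\lambda t}\mathbb{Q}_{t}^{\psi}f\,dt$, and regularize $R_{\lambda}f$ by multiplying with a smooth compactly supported cutoff on $G$ and convolving with a smooth approximate identity. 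The horizontal smoothness of $R_{\lambda}f$ is supplied by the hypoellipticity of $\Delta_{\mathcal H}$ (which ensures $C^\infty_c(G)$ is a core for $\Delta_{\mathcal H}$ on $C_{0}(G)$), while the vertical smoothness is handled by $T_{t}^{\psi}$ being a vertical convolution with probability measures; together they give approximations in $C^\infty_c(G)$ that converge to $R_{\lambda}f$ in the graph norm, establishing the core property.

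Finally, the explicit kernel formula \eqref{eq:heat_kernel_perturbed} comes from the kernel version $\mathsf{q}_{t}^{\psi}=\mathsf{q}_{t}\ast_{v}\mu_{t}^{\psi}$ of \eqref{eq:Q_psi}. Taking the classical Fourier transform in the vertical and radical variables and invoking Lemma~\ref{lem:vertical_conv} together with \eqref{eq:mu_alpha}, the partial Fourier transform of $\mathsf{q}_{t}^{\psi}$ equals $e^{t\psi(\lambda)}$ times that of $\mathsf{q}_{t}$. Substituting the explicit expression from Proposition~\ref{prop:heat_kernel} for the unperturbed kernel then yields \eqref{eq:heat_kernel_perturbed}.
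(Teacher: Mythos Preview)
Your overall architecture matches the paper's: show \eqref{eq:Q_psi} defines a Feller semigroup, identify its generator on smooth functions as $\Delta_G^{\!\psi}$, upgrade to a core statement, and read off \eqref{eq:heat_kernel_perturbed} from $\mathsf q_t^\psi=\mathsf q_t\ast_v\mu_t^\psi$ together with Proposition~\ref{prop:heat_kernel}. Two points of divergence are worth noting. For the semigroup law you use the commutation $T_s^\psi\mathbb Q_t=\mathbb Q_tT_s^\psi$ coming from centrality of $\mathcal V$; the paper instead verifies $\mathbb Q_{t+s}^\psi=\mathbb Q_t^\psi\mathbb Q_s^\psi$ by a direct GFT computation. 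Your route is more elementary and avoids the GFT machinery entirely at this step.

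The core argument is where your proposal is thin. The paper does not go through a Lumer--Phillips range-density argument; instead it exhibits an explicit subspace $\mathcal C\subset C_0(G)$ (functions whose partial derivatives up to order two decay fast enough to absorb the quadratic growth in $h$ appearing in $\Delta_{\mathcal H}$, cf.\ Lemma~\ref{lem:core_Q}), checks $\mathcal C\subset\mathcal D(\Delta_G^{\!\psi})$ via pointwise differentiation plus \cite[Lemma~31.7]{SatoBook1999}, verifies $\mathbb Q_t^\psi(\mathcal C)\subseteq\mathcal C$ by dominated convergence, and then invokes \cite[Lemma~31.6]{SatoBook1999} to conclude $\mathcal C$ is a core, with a final cutoff to pass to $C^\infty_c(G)$. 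Your sketch, by contrast, asserts that mollifying and truncating $R_\lambda f$ produces approximants converging in the \emph{graph norm} of $\Delta_G^{\!\psi}$, justified by ``hypoellipticity of $\Delta_{\mathcal H}$'' for horizontal smoothness. This is the shaky step: $R_\lambda f$ is the resolvent of $\Delta_G^{\!\psi}$, not of $\Delta_{\mathcal H}$, so hypoellipticity of the sub-Laplacian alone does not give you control of $\Delta_G^{\!\psi}$ on the approximants; and the cutoff introduces commutator terms with both the second-order horizontal part (which sees the $|h|^2$ coefficients) and the nonlocal vertical part, neither of which you have bounded. The paper's invariant-subspace approach sidesteps exactly this difficulty by building the required decay into the definition of $\mathcal C$ from the start.
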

	
	\begin{remark}
		The Markov process $(X^\psi(t))_{t\geqslant 0}$ associated to the semigroup $\mathbb{Q}^\psi_t$ has an explicit pathwise representation. Let $B(t)=(B_1(t),\ldots, B_n(t))_{t\geqslant 0}$ be $n$-dimensional standard Brownian motion and $(Y^\psi(t))_{t\geqslant 0}$ denote the L\'evy process on $\R^m$ associated to $\psi\in\mathcal N$, independent of $(B(t))_{t\geqslant 0}$. Then for all $t\geqslant 0$,
		\begin{align*}
			X^\psi(t)=\left(B(t), Y^\psi(t)+\int_0^t \omega(B(t), dB(t))\right).
		\end{align*}
	\end{remark}
	\begin{remark}\label{remark:2}Since $\mathbb{Q}^\psi=(\mathbb{Q}^\psi_t)_{t\geqslant 0}$ is a convolution semigroup on $G$, the Haar measure on $G$ is invariant with respect to $\mathbb{Q}^\psi$, and therefore $\mathbb{Q}^\psi$ extends to a strongly continuous Markov semigroup on $L^2(G)$. By a routine approximation argument using mollifiers, it can be shown that $C^\infty_c(G)$ is a core for the $L^2(G)$-generator of $\mathbb{Q}^\psi$.
	\end{remark}
	The above representation for $X^\psi$ indicates that the distribution of the horizontal components of $X^\psi$ does not depend on $\psi$. Alternatively, if $\Pi: C_0(\R^n)\longrightarrow C_0(G)$ denotes the \emph{lifting} operator defined by 
\begin{align}\label{eq:def_Pi}
		\Pi f(h,v)=f(h), \quad (h,v)\in G,
	\end{align}
	then for all $f\in C_0(\R^n)$ and $\psi\in\mathcal N$,
	\begin{align}
		\mathbb{Q}^\psi_t\Pi f=\Pi Q_t f \text{ for any } t\geqslant 0,
	\end{align}
	where $(Q_t)_{t\geqslant 0}$ is the classical heat semigroup on $\R^n$.
	In the next theorem, we provide a characterization for the class of the Markov semigroups $\{\mathbb{Q}^\psi: \psi\in\mathcal N\}$.
	\begin{theorem}\label{thm:characterization}
		Let $(\widetilde{\mathbb{Q}}_t)_{t\geqslant 0}$ be a L\'evy semigroup on $C_0(G)$ such that for all $f\in C_0(\R^n)$ and $t\geqslant 0$
		\begin{align}
			\widetilde{\mathbb{Q}}_t \Pi f=\Pi Q_t f.
		\end{align}
		Then there exists a $\psi\in\mathcal N$ such that $\widetilde{\mathbb{Q}}_t=\mathbb{Q}^\psi_t$ for all $t\geqslant 0$.
	\end{theorem}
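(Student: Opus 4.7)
The plan is to apply Hunt's theorem (Theorem~\ref{thm:Hunt}) to $\widetilde{\mathbb{Q}}$ and use the projection hypothesis to force its Hunt data to match those of $\Delta^{\!\psi}_{G}$ for some $\psi\in\mathcal N$.

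In the basis $\{Z_{1},\ldots,Z_{n+m}\}$ of \eqref{eq:V_i} with matched coordinates $x_{i}(h,v)=h_{i}$ for $i\le n$ and $x_{n+j}(h,v)=v_{j}$ for $j\le m$, Theorem~\ref{thm:Hunt} represents the generator $\widetilde L$ on $C^{\infty}_{c}(G)\subseteq C^{2,l}_{0}(G)$ in the form \eqref{eq:Levy-generator} with a nonnegative symmetric matrix $(a_{ij})$, drift $(c_{i})$, and L\'evy measure $\Theta$ on $G$. For any $f\in C^{\infty}_{c}(\R^{n})$ I would differentiate the identity $\widetilde{\mathbb{Q}}_{t}\Pi f=\Pi Q_{t}f$ at $t=0$ to obtain $\widetilde L(\Pi f)=\Pi L_{0}f$, where $L_{0}=\sum_{i=1}^{n}\partial_{h_{i}}^{2}$ is the generator of $(Q_{t})$. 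Since $\Pi f$ depends only on $h$, \eqref{eq:V_i} reduces $Z_{i}\Pi f$ to $\partial_{h_{i}}f$ for $i\le n$ and to $0$ for $i>n$, so only the horizontal diffusion block, the horizontal drift, and the horizontal marginal of $\Theta$ survive in \eqref{eq:Levy-generator}.

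Matching coefficients with $\Pi\sum_{i=1}^{n}\partial_{h_{i}}^{2}f$ and varying $f$ and the base point yields $a_{ij}=2\delta_{ij}$ for $i,j\le n$, $c_{i}=0$ for $i\le n$, and the vanishing of the horizontal marginal of $\Theta$, which forces $\Theta$ to be concentrated on $\{0\}\times\R^{m}$. Its restriction $\kappa$ there is then a L\'evy measure on $\R^{m}$. Setting $\sigma_{ij}=\tfrac{1}{2}a_{n+i,n+j}$ and $b_{j}=-c_{n+j}$, I assemble the triple $(\sigma,b,\kappa)$ into a L\'evy-Khintchine exponent $\psi\in\mathcal N$ via \eqref{eq:psi}. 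The operator identity $\widetilde L=\Delta_{\mathcal H}+A^{\psi}_{\mathcal V}$ on $C^{\infty}_{c}(G)$ should then hold, Theorem~\ref{thm:heat_perurbation} identifies the closure of $\widetilde L$ with the generator of $\mathbb{Q}^{\psi}$ on $C_{0}(G)$, and Feller uniqueness (with $C^{\infty}_{c}(G)$ a common core) yields $\widetilde{\mathbb{Q}}=\mathbb{Q}^{\psi}$.

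The main obstacle is ruling out the off-diagonal second-order coefficients $a_{i,n+j}$ with $i\le n<n+j$: these couple horizontal and vertical directions and are invisible to test functions of the form $\Pi f$ since $Z_{n+j}\Pi f\equiv 0$. I would close this gap by testing the semigroup identity against tensor functions $(h,v)\mapsto f(h)\chi(v)$ with $\chi\in C^{\infty}_{c}(\R^{m})$, and by combining Lemma~\ref{lem:vertical_conv} with the explicit GFT formula \eqref{eq:heat_fourier} for $\mathbb{Q}_{t}$ to compare the GFT of the transition kernel of $\widetilde{\mathbb{Q}}$ with the factorized form \eqref{eq:heat_kernel_perturbed}; the absence of any horizontal-vertical coupling in \eqref{eq:heat_kernel_perturbed} forces the cross coefficients to vanish.
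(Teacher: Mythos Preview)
Your overall strategy—apply Hunt's theorem and read off the constraints imposed by $\widetilde L\Pi f=\Pi\Delta f$—is exactly the paper's approach. Two points where your sketch diverges from what is actually needed:

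\textbf{The support of the L\'evy measure.} You write that ``matching coefficients'' yields ``the vanishing of the horizontal marginal of $\Theta$, which forces $\Theta$ to be concentrated on $\{0\}\times\R^{m}$.'' This step is not a coefficient match: what the projection identity gives is
\[
\int_{G}\Bigl[f(h+h')-f(h)-\langle h',\nabla f(h)\rangle\,\mathbbm{1}_{U}(h',v')\Bigr]\,\Theta(dh',dv')=0
\]
for all $f\in C^{\infty}_{c}(\R^{n})$, and one must argue that this forces $\Theta$ to live on $\{0\}\times\R^{m}$. The paper does this by taking the Euclidean Fourier transform in $h$ to obtain $\int_{G}[e^{i\langle h',\zeta\rangle}-1-i\langle h',\zeta\rangle\mathbbm{1}_{U}]\,\Theta(dh',dv')=0$, then extracting $\int_{U}|h'_{i}|^{2}\Theta=0$ from the $|\zeta_{i}|\to\infty$ asymptotics and using positivity of $1-\cos\langle h',\zeta\rangle$ on $U^{c}$. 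Your outline should include this argument; ``matching coefficients'' does not cover it.

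\textbf{The cross terms $a_{i,n+j}$.} You are right that these are invisible to $\Pi f$, and the paper's proof also does not explicitly treat them. But your proposed fixes do not work. First, the hypothesis only gives $\widetilde{\mathbb{Q}}_{t}\Pi f=\Pi Q_{t}f$ for $f\in C_{0}(\R^{n})$; there is no ``semigroup identity'' available on tensor functions $f(h)\chi(v)$, so there is nothing to test there. Second, your GFT argument is circular: comparing the kernel of $\widetilde{\mathbb{Q}}$ with the factorized form \eqref{eq:heat_kernel_perturbed} presupposes that $\widetilde{\mathbb{Q}}=\mathbb{Q}^{\psi}$, which is precisely the conclusion you are trying to reach. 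If you want to close this gap, you need an argument intrinsic to the Hunt data—e.g.\ via nonnegative definiteness of $(a_{ij})$ together with additional information on the vertical diagonal, or a direct probabilistic identification of the continuous martingale part of the vertical component under the projection hypothesis—not an appeal to the unknown transition kernel.
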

	We need the following lemma to prove this theorem.
	\begin{lemma}\label{lem:core_Q}
		Let $\mathcal{C}\subset C_0(G)$ denote the class of functions such that 
		\begin{align}\label{eq:C(G)}
			\mathcal{C}=\left\{f\in C^\infty_0(G): \lim_{h,v\to\infty}|h|^{2} \frac{\partial f}{\partial h^{\beta_1}\partial v^{\beta_2}}=0,  \beta_1+\beta_2\leqslant 2\right\},
		\end{align}
		Then $\mathcal{C}\subset \mathcal{D}(\Delta_{\mathcal{H}})$ and $\mathbb{Q}_t (\mathcal{C})\subseteq \mathcal{C}$ for all $t\geqslant 0$. As a consequence, $C^\infty_c(G)$ is a core for the operator $\Delta_{\mathcal{H}}$.
	\end{lemma}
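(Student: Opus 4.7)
The lemma has three parts to establish in sequence: (a) $\mathcal{C} \subseteq \mathcal{D}(\Delta_{\mathcal{H}})$, (b) semigroup-invariance $\mathbb{Q}_{t}(\mathcal{C}) \subseteq \mathcal{C}$, and (c) the core property for $C^{\infty}_{c}(G)$. For (a), I would read off from \eqref{eq:V_i} that each horizontal vector field $Z_{i}$ is a first-order differential operator whose coefficients are polynomials of degree at most one in $h$, so that any composition $Z_{i}Z_{j}$ has coefficients of degree at most two in $h$. The decay built into $\mathcal{C}$, namely $|h|^{2}\,\partial_{h}^{\beta_{1}}\partial_{v}^{\beta_{2}}f \to 0$ for $|\beta_{1}|+|\beta_{2}|\leqslant 2$, is calibrated exactly so that every $Z_{i}Z_{j}f$ lies in $C_{0}(G)$. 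This places $\mathcal{C}\subseteq C^{2,l}_{0}(G)$, which in turn is contained in $\mathcal{D}(\Delta_{\mathcal{H}})$ by Hunt's theorem (Theorem~\ref{thm:Hunt}).

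For (b), my plan is to combine the convolution representation $\mathbb{Q}_{t}f(g) = \int_{G}\mathsf{q}_{t}(h)\,f(g\star h)\,dh$ with the Gaussian-type bounds from Proposition~\ref{prop:heat_kernel}. Smoothness of $\mathbb{Q}_{t}f$ follows by differentiation under the integral. For the pointwise decay of its derivatives the key algebraic fact is $Z_{i}\mathbb{Q}_{t}f = \mathbb{Q}_{t}Z_{i}f$, which together with \eqref{eq:V_i} yields $\partial_{v_{j}}\mathbb{Q}_{t}f = \mathbb{Q}_{t}\partial_{v_{j}}f$ and $\partial_{h_{i}}\mathbb{Q}_{t}f = \mathbb{Q}_{t}(Z_{i}f) - \tfrac{1}{2}\omega_{i}(h,\mathbb{Q}_{t}\nabla_{v}f)$, and analogously for second-order derivatives. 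To obtain the weighted decay $|h|^{2}\partial^{\beta}\mathbb{Q}_{t}f(h,v)\to 0$, I would use the probabilistic representation $\mathbb{Q}_{t}\tilde{f}(h,v) = \mathbb{E}\bigl[\tilde{f}(h+B_{t}, v + \xi_{t} + \tfrac{1}{2}\omega(h,B_{t}))\bigr]$, where $(B_{t})$ is horizontal Brownian motion and $\xi_{t}$ is its stochastic area, combine the decay $|h+B_{t}|^{2}|\partial^{\beta}\tilde{f}| \to 0$ coming from $\mathcal{C}$-membership with the ratio $|h|^{2}/|h+B_{t}|^{2}\to 1$ in probability as $|h|\to\infty$ at fixed $t$, and conclude by dominated convergence.

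For (c), since $C^{\infty}_{c}(G)\subseteq\mathcal{C}$ is dense in $C_{0}(G)$ and $\mathcal{C}$ is $\mathbb{Q}_{t}$-invariant by (b), the standard core criterion (a dense semigroup-invariant subspace contained in the domain is a core) already shows that $\mathcal{C}$ is a core for $\Delta_{\mathcal{H}}$. To replace $\mathcal{C}$ by $C^{\infty}_{c}(G)$, I would approximate any $f\in\mathcal{C}$ by $f_{R} = \chi_{R}f$, where $\chi_{R}(g) = \chi(\delta_{1/R}g)$ for a fixed $\chi\in C^{\infty}_{c}(\R^{n+m})$ identically $1$ near the origin. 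The anisotropic dilation gives $|Z_{i}\chi_{R}|\lesssim R^{-1}$ and $|Z_{i}Z_{j}\chi_{R}|\lesssim R^{-2}$ on the shell where $\chi_{R}$ is non-constant; the factor $|h|\lesssim R$ coming from $\omega_{i}(h,\nabla_{v})$ is absorbed by the $R^{-2}$ scaling of $\partial_{v}\chi$. A Leibniz expansion of $\Delta_{\mathcal{H}}(\chi_{R}f)$ then shows that every cross term is bounded by $R^{-1}$ or $R^{-2}$ times the supremum of $f$ or $Z_{i}f$ on the shell, and these vanish as $R\to\infty$, giving $\chi_{R}f\to f$ in the graph norm.

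The main obstacle is the weighted decay estimate in step (b). Because the coordinate derivatives $\partial_{h_{i}}$ are not left-invariant, their decomposition in terms of $Z_{i}$ produces the extra term $\omega_{i}(h,\mathbb{Q}_{t}\nabla_{v}f)$ which carries an explicit factor of $|h|$, and one must exploit the quadratic vertical-decay of $\nabla_{v}f$ encoded in $\mathcal{C}$, together with the contractivity of $\mathbb{Q}_{t}$, in order to beat this factor and recover the required $|h|^{2}$-decay of second-order derivatives; the rest of the argument is then fairly mechanical.
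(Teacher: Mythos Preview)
Your overall strategy matches the paper's: the paper also shows $\Delta_{\mathcal H}f\in C_0(G)$ by writing out the coefficients of $\Delta_{\mathcal H}$ (degree $\le 2$ in $h$), verifies $\mathbb Q_t(\mathcal C)\subseteq\mathcal C$ via the convolution formula with the heat kernel, invokes the standard core criterion (Sato, Lemma~31.6), and then passes from $\mathcal C$ to $C^\infty_c(G)$ by a cutoff $f_n(g)=f(g)\varphi(|g|^2/n^2)$. Your use of Hunt's theorem for part~(a) is a clean alternative to the paper's It\^o-formula argument, and your dilation cutoff $\chi(\delta_{1/R}g)$ works just as well as the paper's norm cutoff for part~(c).

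The one place where your write-up goes astray is the final paragraph on step~(b). The paper differentiates the explicit representation
\[
\mathbb Q_t f(h,v)=\int_G f\bigl(h-\sqrt t\,\tilde h,\ v-t\tilde v-\sqrt t\,\omega(h,\tilde h)\bigr)\,\mathsf q_1(\tilde h,\tilde v)\,d\tilde h\,d\tilde v
\]
directly in the coordinate derivatives $\partial_h,\partial_v$. Since $\omega(h,\tilde h)$ is bilinear, $\partial_{h_i}\omega(h,\tilde h)$ is \emph{constant in $h$} (linear in $\tilde h$), so no extra $|h|$-factor ever appears and the weighted decay follows immediately by dominated convergence using that $\mathsf q_1$ has all moments. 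Your detour through $\partial_{h_i}=Z_i-\tfrac12\omega_i(h,\nabla_v)$ introduces an apparent $|h|$-obstruction in the term $\omega_i(h,\mathbb Q_t\nabla_v f)$; this obstruction actually \emph{cancels} against the matching term hidden inside $\mathbb Q_t Z_i f$ (by bilinearity, $\omega_i(h+B_t,\cdot)-\omega_i(h,\cdot)=\omega_i(B_t,\cdot)$), leaving exactly the paper's formula. Your stated resolution, ``quadratic vertical-decay of $\nabla_v f$ encoded in $\mathcal C$'', is not correct: $\mathcal C$ imposes only $|h|^2$-weighted decay, not $|v|$-decay, and without the cancellation you would need $|h|^3\nabla_v f\to 0$, which $\mathcal C$ does not give. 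So either spell out the cancellation, or---simpler---follow the paper and differentiate the convolution formula directly.
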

	\begin{proof} Writing the skew-symmetric bilinear map $\omega$ in \eqref{eq:group_law} as 
		\begin{align*}
			\omega(h,h')=(\langle A_1h, h'\rangle, \ldots, \langle A_m h, h'\rangle),
		\end{align*}
		where $A_1,\ldots, A_m$ are real skew-symmetric matrices, we observe that the sub-Laplacian $\Delta_{\mathcal{H}}$ on $G$ can be written as
		\begin{align*}
			\Delta_{\mathcal{H}}=\Delta_{h}+\frac{1}{4}\sum_{l_1,l_2=1}^m\langle A_{l_1} h, A_{l_2} h\rangle\frac{\partial^2}{\partial v_{l_1} \partial v_{l_2}} 
			+\sum_{l=1}^m \langle B_lh, \nabla_h\rangle\frac{\partial}{\partial v_l}.
		\end{align*}
		Thus for any $f\in\mathcal{C}$, $\Delta_{\mathcal{H}}f\in C_0(G)$. We need to show that $\mathcal{C}\subset\mathcal{D}(\Delta_{\mathcal{H}})$. We recall that the Brownian motion on $G$ can be described by the stochastic differential equations
		\begin{align*}
			dX_i(t)&=dB_i(t) \text{ for any } 1\le i\le n, \\
			(dY_{1}(t),\ldots, dY_{m}(t))&=\omega(X_1(t),\ldots, X_n(t), dB_1(t), \ldots, dB_n(t)).
		\end{align*}
		Therefore by It\^{o}'s formula for any $f\in C^2(G)$
		\begin{align*}
			M_f(t)=f(X(t))-f(X(0))-\int_0^t \Delta_{\mathcal{H}} f(X(s)) ds
		\end{align*}
		is a $\operatorname{P}_{\!g}$-martingale for every $g\in G$. As $M_f(0)=0$, we have $\operatorname E_g(M_f(t))=0$ for all $t\geqslant 0$ and $g\in G$, which shows that
		\begin{align*}
			\lim_{t\searrow 0} \frac{\mathbb{Q}_tf(g)-f(g)}{t}=\Delta_{\mathcal{H}}f(g) \text{ for any } g\in G.
		\end{align*}
		Therefore, $f\in\mathcal{D}(\Delta_{\mathcal{H}})$ for any $f\in\mathcal{C}$.
		We also note that the heat kernel $\mathsf{q}_t$ of $\mathbb{Q}_t$ has the following space-time scaling property
		\begin{align*}
			\mathsf{q}_t(g)=t^{-\frac{n+2m}{2}}\mathsf{q}_1\circ \delta_{\frac{1}{\sqrt{t}}}(g) \text{ for any } g\in G,
		\end{align*}
		and therefore for all $f\in\mathcal{C}$ one has
		\begin{align*}
			\mathbb{Q}_tf(g)=\int_G f(g\star\delta_{\sqrt{t}}(\widetilde{g}^{-1}))\mathsf{q}_1(\widetilde{g})d\widetilde{g}.
		\end{align*}
		Writing $g=(h,v), \widetilde{g}=(\widetilde{h}, \widetilde{g})$ with $h, \widetilde{h}\in\R^n$ and $v, \widetilde{v}\in\R^m$, the above expression becomes
		\begin{align*}
			\mathbb{Q}_tf(h,v)=\int_{G}f(h-\sqrt{t}\widetilde{h},v-t\widetilde{v}-\sqrt{t}\omega(h, \widetilde{h}))\mathsf{q}_1(\widetilde{h}, \widetilde{v})d\widetilde{h}d\widetilde{v}.
		\end{align*}
		Since both $f$ and $\mathsf{q}_t$ are smooth and $\mathsf{q}_t$ has moments of all order, the above expression also implies that $\mathbb{Q}_t(\mathcal C)\subseteq \mathcal{C}$ for all $t\geqslant 0$. By \cite[Lemma~31.6]{SatoBook1999} we see that $\mathcal{C}$ is a core for $\Delta_{\mathcal{H}}$. Let $\varphi\in C^\infty_c(\R)$ be such that $\varphi\equiv 1$ on $[-1,1]$ and vanishes outside $[-2,2]$. Consider $f_n(g)=f(g)\varphi(|g|^2/n^2)$. Clearly, for all $\beta_1+\beta_2\leqslant 2$ 
		\begin{align*}
			&|h|^{2}\frac{\partial f_n}{\partial h^{\beta_1} \partial v^{\beta_2}}\xrightarrow[n \to \infty]{} |h|^{2} \frac{\partial f}{\partial h^{\beta_1} \partial v^{\beta_2}} \text{ in } C_0(G).
		\end{align*}
		As a result, for any $f\in\mathcal{C}$, both $f_n$ and $\Delta_{\mathcal{H}} f_n$ converge to $f, \Delta_{\mathcal{H}} f$ respectively in $C_0(G)$. This proves that $C^\infty_c(G)$ is also a core for $\Delta_{\mathcal{H}}$, and completes the proof of the lemma.
	\end{proof}
	
	\begin{proof}[Proof of Theorem~\ref{thm:heat_perurbation}]First, we show that $\mathbb{Q}^\psi_t$ is indeed a Markov semigroup on $C_0(G)$. Since $\mathbb{Q}_t$ is a Markov operator, \eqref{eq:Q_psi} shows that $\mathbb{Q}^\psi_t$ is a Markov operator as well. To prove the semigroup property, for any $t,s>0$, $f\in \mathcal{S}(G)$ and $(\lambda,\nu)\in\mathcal{Z}\times \R^k$ we have
		\begin{align*}
			\mathcal{F}_G(\mathbb{Q}^\psi_t \mathbb{Q}^\psi_sf)(\lambda,\nu)&=e^{t\psi(\lambda)}\mathcal{F}_G(\mathbb{Q}^\psi_sf)(\lambda,\nu) H^{\lambda,\nu}_t \\
			&= e^{t\psi(\lambda)}e^{s\psi(\lambda)}\mathcal{F}_G(f)(\lambda,\nu) H^{\lambda,\nu}_s H^{\lambda,\nu}_t \\
			&= e^{(t+s)\psi(\lambda)} \mathcal{F}_G(f)(\lambda,\nu) H^{\lambda,\nu}_{t+s} \\
			&=\mathcal{F}_G(\mathbb{Q}^\psi_{t+s}f)(\lambda,\nu).
		\end{align*}
		By injectivity of GFT, we have $\mathbb{Q}^\psi_{t+s}f=\mathbb{Q}^\psi_t\mathbb{Q}^\psi_sf$ for all $f\in\mathcal{S}(G)$. Finally, using the density of $\mathcal{S}(G)$ in $C_0(G)$, the semigroup property follows. Next, we prove that $\Delta^{\!\psi}_{G}$ is the generator of $\mathbb{Q}^\psi$ with $C^\infty_c(G)$ as its core. We note that for any $f\in\mathcal C$, $\Delta^{\!\psi}_{G}f\in C_0(G)$. Next, we proceed to show that $\mathcal{C}\subset\mathcal{D}(\Delta^{\!\psi}_{G})$. For any $t\geqslant 0$, 
		we can write 
		\begin{equation}
			\begin{aligned}
				\mathbb{Q}^{\psi}_tf(h,v)-f(h,v)&=\int_{\R^m}\left(\mathbb{Q}_tf(h,v-v')-f(h,v-v')\right)\mu^{\psi}_t(dv') \\
				&+\int_{\R^m} f(h, v-v')\mu^{\psi}_t(dv')-f(h,v).
			\end{aligned}
		\end{equation}
		Since $\mu^\psi_t$ is the transition kernel of a L\'evy process with the L\'evy-Khintchine exponent $\psi$, and $\mathcal{C}\subset C^\infty_0(G)$, \cite[Theorem~31.5]{SatoBook1999} implies that 
		\begin{align*}
			t^{-1}\left(\int_{\R^m} f(h,v-v') \mu^\psi_t(dv')-f(h,v)\right)=A^\psi_{\mathcal{V}}f(h,v).
		\end{align*}
		On the other hand, since $\mathcal{C}\subset\mathcal{D}(\Delta_{\mathcal{H}})$ by Lemma~\ref{lem:core_Q} we have
		\begin{align*}
			\lim_{t\searrow 0}\frac{\mathbb{Q}_tf-f}{t}= \Delta_{\mathcal{H}} f \text{ \ in } \ C_0(G).
		\end{align*}
		We also note that $\mu^{\psi}_t\to\delta_0$ weakly as $t\searrow 0$. As a result, given any $(h,v)\in G$, $w'\in\R^m$ and $\epsilon>0$, one can choose small enough $t$ such that
		\begin{align*}
			\left\|\frac{\mathbb{Q}_tf-f}{t}-\Delta_{\mathcal{H}} f\right\|_\infty<\frac{\epsilon}{2}  \text{ and } \left|\int_{\R^m} \Delta_{\mathcal{H}}f(h,v-v')\mu^{\psi}_t(dv')-\Delta_{\mathcal{H}}f(h,v)\right|<\frac{\epsilon}{2}, 
		\end{align*}
		which implies that for each $(h,v)\in G$,
		\begin{align*}
			\lim_{t\searrow 0}t^{-1}\int_{\R^m}\left(\mathbb{Q}_tf(h,v-v')-f(h,v-v')\right)\mu^{\psi}_t(dv')=\Delta_{\mathcal{H}}f(h,v).
		\end{align*}
		Using \cite[Lemma~31.7]{SatoBook1999}, we have $\mathcal{C}\subset \mathcal{D}(\Delta^{\!\psi}_{G})$. Now, for any $\beta_1, \beta_2\in\mathbb{N}_0$ with $\beta_1+\beta_2\le 2$, due to the smoothness of $f$  we have 
		\begin{align*}
			&\lim_{h,v\to\infty}|h|^{2} \frac{\partial}{\partial h^{\beta_1} \partial v^{\beta_2}}\mathbb{Q}^\psi_tf(h,v) \\
			&=\int_{\R^m}\lim_{h,v\to\infty}|h|^{2}\frac{\partial}{\partial h^{\beta_1} \partial v^{\beta_2}}\mathbb{Q}_tf(h,v-v') \mu^\psi_t(dv').
		\end{align*} 
		By Lemma~\ref{lem:core_Q} we have 
		\begin{align*}
			\lim_{h,v\to\infty}|h|^{2} \frac{\partial}{\partial h^{\beta_1} \partial v^{\beta_2}}\mathbb{Q}_tf(h,v-v')=0
		\end{align*}
		as well as 
		\begin{align*}
			\sup_{h,v,v'}|h|^{2}\left| \frac{\partial}{\partial h^{\beta_1} \partial v^{\beta_2}}\mathbb{Q}_tf(h,v-v')\right|<\infty.
		\end{align*}
		 Thus by the dominated convergence theorem, we conclude that $\mathbb{Q}^\psi_t(\mathcal{C})\subseteq \mathcal{C}$. Using the approximation approach as in the proof of Lemma~\ref{lem:core_Q}, one can show that $C^\infty_c(G)$ is also a core for $\Delta^{\!\psi}_{G}$. Finally, for any $g\in G$, let $\tau_g$ denote the left translation operator on $C_0(G)$. Since $\tau_g$ commutes with $\mathbb{Q}_t$, \eqref{eq:Q_psi} implies that for all $(h,v)\in G$
		
		\begin{align*}
			\tau_g \mathbb{Q}^\psi_tf(h,v)=\int_{\R^m} \tau_g\tau_{0, -v'}\mathbb{Q}_tf(h,v) \mu^{\psi_t}(dv').
		\end{align*}
		Since $\tau_g$ commutes with $\tau_{0,-v'}$ and $\mathbb{Q}_t$, the above equation implies that $\tau_g\mathbb{Q}^\psi_t=\mathbb{Q}^\psi_t \tau_g$. This proves that $\mathbb{Q}^\psi_t$ is translation-invariant and the proof  is complete. 
	\end{proof}
	\begin{proof}[Proof of Theorem~\ref{thm:characterization}] Since $\widetilde{\mathbb{Q}}$ is a left invariant Markov semigroup on $C_{0}(G)$, by Theorem~\ref{thm:Hunt}  the generator of $\widetilde{\mathbb{Q}}$, denoted by $\widetilde{\Delta}_G$, has the following representation.
		\begin{align*}
			\widetilde{\Delta}_Gf(g)&=\sum_{i,j=1}^{n+m} a_{ij} Z_i Z_jf(g)+\sum_{i=1}^{n+m} c_i Z_if(g)\\
			&+\int_{G} \left[f(g\star g')-f(g)-\mathbbm{1}_{\{g'\in U\}}\sum_{i=1}^{n+m} \operatorname{x}_i(g') Z_if(g)\right]\kappa(dg'), \nonumber
		\end{align*}
		where $U$ is a relatively compact open neighborhood of $0$ in $G$, 
		$\operatorname{x}_1,\ldots, \operatorname{x}_{n+m}\in C^\infty(G)$ are local coordinate functions with respect to $Z_1,\ldots, Z_{n+m}$ such that $\operatorname{x}_i(0)=0$, $Z_i(\operatorname{x}_j)(0)=\delta_{ij}$, $a_{ij}, c_i$ being constants such that $(a_{ij})_{1\le i,j\le n+m}$ is a symmetric nonnegative definite matrix, and $\kappa$ is a measure on $G$ satisfying 
		\begin{align*}
			\kappa(\{0\})=0, \quad \sum_{i=1}^{n+m}\int_{U}\operatorname{x}_i(g)^2\kappa(dg)<\infty,  \text{ and } \kappa(U^c)<\infty.
		\end{align*}
		Since $G$ is a homogeneous Carnot group, one can take $\operatorname{x}_i(g)=g_i$ for each $i=1,\ldots, n+m$. Recalling the identity $\widetilde{\Delta}_G\Pi=\Pi\Delta$, the above representation of $\widetilde{\Delta}_G$ forces that $a_{ij}=\delta_{ij}$ for $i,j\in\{1,\ldots, n\}$. Moreover, it also implies that $c_i=0$ for all $i=1,\ldots, n$. Let us now write 
		\begin{align}\label{eq:jump}
			\mathbb{J} f(g)=\int_{G} \left[f(g\star g')-f(g)-\mathbbm{1}_{\{g'\in U\}}\sum_{i=1}^{n+m} g'_i Z_if(g)\right]\kappa(dg').
		\end{align}
		Then one has $\mathbb{J}\Pi f=0$ for all $f\in C^\infty_c(\R^n)$. Now, writing $g=(h, v), h=(h', v')$, where $h,h'\in\R^n, v,v'\in\R^m$, we obtain
		\begin{align*}
			\mathbb{J}\Pi f(h,v)=\int_{\R^m}\int_{\R^n}\left[ f(h+h')-f(h)-\mathbbm{1}_{\{(h',v')\in U\}}\sum_{i=1}^{n} h'_i \frac{\partial}{\partial h_i} f(h)\right]\kappa(dh', dv')=0.
		\end{align*}
		We claim that the above equality implies that $\kappa$ must be supported on $\{0\}\times \R^m$. To see why, taking the Fourier transform with respect to $v$ one gets that for all $\zeta\in \R^n$,
		\begin{align*}
			&\int_{\R^n}\mathbb{J}\Pi f(h,v) e^{\i\langle h,\zeta\rangle} dh \\
			&=\mathcal{F}(f)(\zeta)\int_{G}\left[ e^{\i\langle h',\zeta\rangle}-1-\i\langle h',\zeta\rangle\mathbbm{1}_{\{(h',v')\in U\}}\right]\kappa(dh', dv') \\
			&=0.
		\end{align*}
		Since $f\in C^\infty_c(\R^n)$ is arbitrary, the above equality implies that 
		\begin{align}\label{eq:fourier=0}
			\int_{G}\left[ e^{\i\langle h',\zeta\rangle}-1-\i\langle h',\zeta\rangle\mathbbm{1}_{\{(h',v')\in U\}}\right]\kappa(dh', dv')=0 \text{ for all $\zeta\in\R^n$}.
		\end{align}
		Using the facts that 
		\begin{gather*}
			\lim_{|\zeta_i|\to\infty}\frac{e^{\i\langle h',\zeta\rangle}-1-\i\langle h',\zeta\rangle\mathbbm{1}_{\{(h',v')\in U\}}}{\zeta^2_i}=-\frac{1}{2}h'^2_i\mathbbm{1}_{\{(h',v')\in U\}}  \text{ and } \\
			\frac{e^{\i\langle h',\zeta\rangle}-1-\i\langle h',\zeta\rangle\mathbbm{1}_{\{(h',v')\in U\}}}{\zeta^2_i \frac{1}{2} h'^2_i \mathbbm{1}_{\{(h',v')\in U\}}}= \operatorname{O}_{\zeta_i}(1),
		\end{gather*}
		\eqref{eq:fourier=0} implies that $\int_{U} h'^2_i\kappa(dh', dv')=0$ for each $i=1,\ldots, n$. On the other hand, \eqref{eq:fourier=0} also implies that 
		\begin{align*}
			\int_{U^c}(1-\cos(\langle h', \zeta\rangle))\kappa(dh', dv')=0 \text{ for all } \zeta\in \R^n.
		\end{align*} 
		Since $(1-\cos(\langle h',\zeta\rangle))$ is strictly positive almost surely, we conclude that $\kappa$ is supported on $\{0\}\times\R^m$, that is, $\kappa=\delta_0\otimes\kappa_1$ where $\delta_0$ is the Dirac measure at $0\in\R^n$ and $\kappa_1$ is a measure on $\R^m$. As a result, from \eqref{eq:jump} it follows that 
		\begin{align*}
			\mathbb{J}f(h,v)=\int_{\R^m}\left[f(h,v+v')-f(h,v)-\langle v',\nabla_vf(h,v)\rangle\mathbbm{1}_{\{v'\in U_0\}}\right]\kappa_1(dv'),
		\end{align*}
		where $U_0=\{v\in\R^m: (0,v)\in U\}$. Moreover, 
		\begin{align*}
			\int_{U_0} |v|^2\kappa_1(dv)=\int_U\sum_{i=1}^{n+m} g^2_i \kappa(dg)<\infty, \quad \kappa_1(U^c_0)=\kappa(U^c)<\infty,
		\end{align*}
		that is, $\kappa_1$ is a L\'evy measure on $\R^m$. Therefore, $\widetilde{\Delta}_G=\Delta^\psi_G$ where $\psi=(\sigma,b,\kappa_1)$ with $\sigma=(a_{ij})_{n+1\le i,j\le n+m}$, $b=(c_{n+1},\ldots, c_{n+m})$. This completes the proof of the theorem. 
	\end{proof}
	
	\subsection{Spectrum of L\'evy perturbations of a sub-Laplacian} 
	Recall that by Remark~\ref{remark:2} $\mathbb{Q}^\psi$ extends to a strongly continuous Markov semigroup on $L^2(G)$ and with an abuse of notation, we denote its $L^2(G)$-generator also by $\Delta^{\!\psi}_{G}$. In this section, we analyze how the spectrum of $\Delta^{\!\psi}_{G}$ depends on $\psi$. In the case of Euclidean spaces, if $A^\psi$ denotes the generator of the L\'evy semigroup on $\R^m$ with the L\'evy-Khintchine exponent $\psi$, then by \eqref{eq:Levy-semigroup} it follows that $L^2(\R^m)$-spectrum of $\Delta^{\!\psi}_{G}$ is given by 
	\begin{align*}
		\sigma(A^\psi; L^2(\R^m))=\sigma_{\rm c}(A^\psi; L^2(\R^m))=\Range(\psi).
	\end{align*}
	Likewise, in the next theorem, we prove that the spectrum of $\Delta^{\!\psi}_{G}$ is also pure continuous and depends on the $\Range(\psi)$.  
	\begin{theorem}\label{thm:spec_dGp}
		For any $\psi\in\mathcal N$, 
		\begin{align}\label{eq:spec_delta_psi}
			S_\psi\subseteq \sigma(\Delta^{\!\psi}_{G}; L^2(G))=\sigma_{\rm c}(\Delta^{\!\psi}_{G}; L^2(G))\subseteq (-\infty,0]+\Range(\psi),
		\end{align}
		where 
		\begin{align}\label{eq:S_psi}
			S_\psi=\{\psi(\lambda)-\mathfrak{n}(\beta,\lambda,\nu): \beta\in\N^d_0, \lambda\in\R^m, \nu\in\R^k\},
		\end{align}
		and $\mathfrak{n}(\beta,\lambda,\nu)$ is defined in \eqref{eq:harmonic_osc}.
	\end{theorem}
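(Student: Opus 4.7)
The plan is to combine the strategy of Theorem~\ref{thm:subLaplacian_spectrum}---decomposing $L^2_{\mathbb{U}}(G)$ into intertwined copies of Fourier multipliers on Euclidean space and analyzing the full $L^2(G)$ via the direct-integral structure of the GFT---with the formula \eqref{eq:Q_psi} expressing $\mathbb{Q}^\psi_t$ as a vertical convolution. The first step is to apply Lemma~\ref{lem:vertical_conv} to $\mu^\psi_t$ together with \eqref{eq:heat_fourier} to obtain, on $L^2(G)$,
\begin{equation*}
\mathcal{F}_G(\mathbb{Q}^\psi_t f)(\lambda,\nu)=e^{t\psi(\lambda)}\,\mathcal{F}_G(f)(\lambda,\nu)\circ H^{\lambda,\nu}_t.
\end{equation*}
Since the extra factor $e^{t\psi(\lambda)}$ is a scalar, the invariance argument of Proposition~\ref{prop:invariance} carries over unchanged, so each $\mathcal{L}_\beta$ is $\mathbb{Q}^\psi_t$-invariant. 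Denoting the restriction by $\mathbb{Q}^{\psi,\beta}_t$ and introducing on $L^2(\mathbb{R}^{m+k})$ the Fourier multiplier $Q^{\psi,\beta}_t$ with symbol $(\lambda,\nu)\mapsto e^{t(\psi(\lambda)-\mathfrak{n}(\beta,\lambda,\nu))}$, the computation of Proposition~\ref{thm:heat_intertwining} yields the intertwining $\mathbb{Q}^{\psi,\beta}_t\operatorname{M}_\beta=\operatorname{M}_\beta Q^{\psi,\beta}_t$ through the unitary operator $\operatorname{M}_\beta$.

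Passing to generators, $\Delta^{\!\psi}_{G}$ restricted to $\mathcal{L}_\beta$ is unitarily equivalent (via $\operatorname{M}_\beta$) to the Fourier multiplier on $L^2(\mathbb{R}^{m+k})$ with symbol $\varphi_\beta(\lambda,\nu)=\psi(\lambda)-\mathfrak{n}(\beta,\lambda,\nu)$, whose spectrum is the essential range of $\varphi_\beta$. Since $\varphi_\beta$ is continuous, every point of $\{\psi(\lambda)-\mathfrak{n}(\beta,\lambda,\nu):(\lambda,\nu)\in\mathbb{R}^{m+k}\}$ belongs to it; taking the union over $\beta\in\mathbb{N}^d_0$ yields $S_\psi\subseteq\sigma(\Delta^{\!\psi}_{G};L^2(G))$. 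For the upper bound, the full GFT exhibits $\Delta^{\!\psi}_{G}$ as a direct integral of the fibre operators $T\mapsto T\circ(B_{\lambda,\nu}+\psi(\lambda)I)$ on $\operatorname{HS}(L^2(\mathbb{R}^d))$, each with spectrum $\{\psi(\lambda)-\mathfrak{n}(\beta,\lambda,\nu):\beta\in\mathbb{N}^d_0\}\subseteq\Range(\psi)+(-\infty,0]$; this yields the claimed upper inclusion.

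It remains to show that the spectrum is purely continuous. Normality of $\Delta^{\!\psi}_{G}$ on $L^2(G)$ follows fibrewise: $B_{\lambda,\nu}+\psi(\lambda)I$ is normal (self-adjoint plus scalar), so $\sigma_{\rm r}(\Delta^{\!\psi}_{G};L^2(G))=\emptyset$. To rule out eigenvalues, suppose $\Delta^{\!\psi}_{G}f=zf$ with $f\in L^2(G)\setminus\{0\}$; pairing the resulting identity $\mathcal{F}_G(f)(\lambda,\nu)\circ(B_{\lambda,\nu}+\psi(\lambda)I-zI)=0$ with the Hermite eigenbasis $\{\Phi^\lambda_\beta\}$ of $B_{\lambda,\nu}$ forces $\mathcal{F}_G(f)(\lambda,\nu)\Phi^\lambda_\beta=0$ unless $(\lambda,\nu)$ lies in
\begin{equation*}
E_\beta(z):=\{(\lambda,\nu)\in\mathbb{R}^{m+k}:\psi(\lambda)-\mathfrak{n}(\beta,\lambda,\nu)=z\}.
\end{equation*}
Using $\mathfrak{n}(\beta,\lambda,\nu)=\sum_j(2\beta_j+1)\eta_j(\lambda)+|\nu|^2$, for each fixed $\lambda$ the $\nu$-slice of $E_\beta(z)$ is either empty or a sphere in $\mathbb{R}^k$, hence of $k$-dimensional Lebesgue measure zero when $k\geqslant 1$. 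By Fubini $\operatorname{Leb}(E_\beta(z))=0$ for each $\beta$, and since $\mathbb{N}^d_0$ is countable, completeness of $\{\Phi^\lambda_\beta\}$ in $L^2(\mathbb{R}^d)$ forces $\mathcal{F}_G(f)\equiv 0$, contradicting $f\neq 0$.

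The principal obstacle I anticipate is justifying the measure-zero claim for $E_\beta(z)$ in the degenerate case $k=0$ (e.g.\ Heisenberg-type groups), where the $\nu$-slice argument collapses. I expect to handle this in the spirit of the proof of Theorem~\ref{thm:subLaplacian_spectrum}, leveraging the $1$-homogeneity of each $\eta_j$ to reduce the question to the continuity in $a$ of a suitable distribution function of $\psi(\lambda)-\mathfrak{n}(\beta,\lambda)$, and thereby rule out atoms in its level sets uniformly in $\beta$.
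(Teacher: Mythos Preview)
Your strategy for the two inclusions and for the case $k\geqslant 1$ is essentially that of the paper, and your observation that $\Delta^{\!\psi}_{G}$ is normal (as a direct integral of self-adjoint-plus-scalar fibre operators) is a clean way to dispose of the residual spectrum, slightly more efficient than the paper's separate treatment of $\widehat{\Delta}^{\!\psi}_{G}$.

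The genuine gap is exactly where you flag it, and your proposed fix will not work. In Theorem~\ref{thm:subLaplacian_spectrum} the distribution-function argument succeeds only because $\mathfrak{n}(\beta,\cdot,\cdot)$ is homogeneous, giving $\vartheta(a)=a^{m+k/2}\vartheta(1)$ and hence continuity for free. For general $\psi\in\mathcal N$ the function $\lambda\mapsto \mathfrak{n}(\beta,\lambda)-\Re\psi(\lambda)$ is \emph{not} homogeneous (take e.g.\ $\psi(\lambda)=-|\lambda|^\alpha$ with $\alpha\neq 1$, or any $\psi$ with non-trivial L\'evy measure), so there is no analogous closed form for the distribution function, and continuity of $\vartheta_\beta$ is precisely the statement you are trying to prove. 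Sard-type arguments are also unavailable since $\psi$ need not be differentiable.

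The missing idea is the \emph{negative-definiteness} of $-\Re\psi$. Fixing $\lambda_0\in S^{m-1}$ and writing the level-set equation along the ray $\{r\lambda_0:r>0\}$ as $r\delta-\Re\psi(r\lambda_0)=s$ with $\delta=\sum_j(2\beta_j+1)\eta_j(\lambda_0)>0$, one shows that this equation has at most one solution in $r$: if $r_1,r_2$ were two solutions, the negative-definiteness inequality for $x\mapsto -\Re\psi(x\lambda_0)$ applied with $c_1=1,c_2=-1$ (see \cite[Proposition~7.5]{BergForstBook1975}) forces $2s\leqslant 0$, contradicting $s>0$. Integrating in spherical coordinates then gives $\operatorname{Leb}(E_\beta(z))=0$. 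This is the structural property of L\'evy--Khintchine exponents that replaces homogeneity; without it the $k=0$ case does not close.
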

	The inclusions $S_\psi\subseteq \sigma(\Delta^{\!\psi}_{G}; L^2(G))\subseteq (-\infty, 0]+\Range(\psi)$ follow from an argument closely related to the proof of Theorem~\ref{thm:subLaplacian_spectrum}. Proving the equality $\sigma(\Delta^{\!\psi}_{G}; L^2(G))=\sigma_{\rm c}(\Delta^{\!\psi}_{G}; L^2(G))$ requires more delicate arguments where the fact that L\'evy-Khintchine exponents are negative-definite plays a crucial role.
	
	\begin{corollary}\label{cor:spec_delta_psi}
		For any $\psi\in\mathcal N$, $\Im(\sigma(\Delta^{\!\psi}_{G}; L^2(G)))=\Range(\Im(\psi))$. If $\psi$ is real-valued then $\sigma(\Delta^{\!\psi}_{G}; L^2(G))=\sigma_{\rm c}(\Delta^{\!\psi}_{G}; L^2(G))=(-\infty,\psi(0)]$.
	\end{corollary}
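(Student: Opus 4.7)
The plan is to read off both assertions directly from Theorem~\ref{thm:spec_dGp}, using the two elementary facts that $\mathfrak{n}(\beta,\lambda,\nu) \in [0,\infty)$ is real (so translation by $-\mathfrak{n}$ only shifts the real part), and $\psi(0)=0$ by \eqref{eq:psi}.

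For the first statement, I would take imaginary parts across the sandwich in \eqref{eq:spec_delta_psi}. The upper bound $\sigma(\Delta^{\!\psi}_{G};L^2(G)) \subseteq (-\infty,0]+\Range(\psi)$ gives
\begin{equation*}
\Im\bigl(\sigma(\Delta^{\!\psi}_{G};L^2(G))\bigr) \subseteq \Im\bigl((-\infty,0]+\Range(\psi)\bigr) = \Range(\Im(\psi)),
\end{equation*}
since real translations leave imaginary parts invariant. For the reverse inclusion, I would use $S_\psi \subseteq \sigma(\Delta^{\!\psi}_{G};L^2(G))$ together with
\begin{equation*}
\Im\bigl(\psi(\lambda)-\mathfrak{n}(\beta,\lambda,\nu)\bigr) = \Im(\psi(\lambda)),
\end{equation*}
so that $\Im(S_\psi) = \{\Im(\psi(\lambda)) : \lambda\in\R^m\} = \Range(\Im(\psi))$ (taking e.g.\ $\beta=0$ and $\nu=0$ suffices, and $\lambda$ is free).

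For the second statement, assume $\psi$ is real-valued. I would first note that a real-valued L\'evy-Khintchine exponent satisfies $\psi(\lambda) \le 0 = \psi(0)$, since the drift and compensator terms in \eqref{eq:psi} contribute purely imaginary parts that must vanish, leaving $\psi(\lambda)=-\langle \sigma\lambda,\lambda\rangle+\int(\cos\langle v,\lambda\rangle -1)\kappa(dv)\le 0$. Hence $\Range(\psi)\subseteq(-\infty,0]$, and \eqref{eq:spec_delta_psi} yields the upper bound
\begin{equation*}
\sigma(\Delta^{\!\psi}_{G};L^2(G)) \subseteq (-\infty,0]+\Range(\psi) \subseteq (-\infty,0] = (-\infty,\psi(0)].
\end{equation*}
For the matching lower bound, I would exhibit $(-\infty,0]\subseteq S_\psi$ by choosing $\beta=0$ and $\lambda=0$: since each $\eta_j$ is homogeneous of degree $1$, one has $\eta_j(0)=0$, so $\mathfrak{n}(0,0,\nu)=|\nu|^2$, and as $\nu$ ranges over $\R^k$ the quantity $\psi(0)-\mathfrak{n}(0,0,\nu)=-|\nu|^2$ sweeps out $(-\infty,0]$. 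Combined with the pure continuity in Theorem~\ref{thm:spec_dGp}, this proves $\sigma(\Delta^{\!\psi}_{G};L^2(G))=\sigma_{\rm c}(\Delta^{\!\psi}_{G};L^2(G))=(-\infty,\psi(0)]$.

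There is no real obstacle here — everything is a routine unpacking of Theorem~\ref{thm:spec_dGp}. The only points that require a moment of thought are the identity $\psi(0)=0$ (immediate from \eqref{eq:psi}), the homogeneity-based vanishing $\eta_j(0)=0$, and the observation that realness of $\psi$ forces $\psi\le 0$; each is essentially a one-line check.
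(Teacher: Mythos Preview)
Your argument for the first statement is correct and essentially the same as the paper's: both take imaginary parts across \eqref{eq:spec_delta_psi} and use that $\mathfrak{n}(\beta,\lambda,\nu)$ is real.

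For the second statement, your upper bound is fine, but the lower-bound argument has a gap. You fix $\lambda=0$ and vary $\nu\in\R^k$ so that $-|\nu|^2$ sweeps out $(-\infty,0]$. This only works when $k\geqslant 1$. The case $k=0$ is not exotic: it includes all Heisenberg groups and $H$-type groups (see Remark~\ref{rem:poisson}), and there the $\nu$ variable is absent, so your construction produces only the single point $\{0\}$.

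The paper's argument avoids this by varying $\lambda$ instead. For any fixed $\beta\in\N_0^d$, the function $\lambda\mapsto \psi(\lambda)-\mathfrak{n}(\beta,\lambda,\nu)$ is continuous on $\R^m$, equals $\psi(0)=0$ at $\lambda=0$, and tends to $-\infty$ as $|\lambda|\to\infty$ (because $\psi\leqslant 0$ while $\sum_j(2\beta_j+1)\eta_j(\lambda)\to\infty$ by homogeneity and positivity of the $\eta_j$). By the intermediate value theorem the range covers $(-\infty,0]$ regardless of whether a $\nu$ variable is available. Your fix is the same one-line observation: vary $\lambda$ rather than~$\nu$.
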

	\begin{remark} In particular, when $\psi(\lambda)=-|\lambda|^2$, $\Delta^{\!\psi}_{G}$ is the full Laplacian on $G$, denoted by $\Delta_G$, and Corollary~\ref{cor:spec_delta_psi} implies that $\sigma(\Delta_G; L^2(G))=\sigma_{\rm c}(\Delta_G; L^2(G))=(-\infty, 0]$. This strengthens \cite[Theorem~1]{FurutaniSagamiOtsuki1993} in which the authors prove that the spectrum of the full Laplacian on any nilpotent Lie group of step two is $[0,\infty)$. They also proved that the spectrum is purely continuous in the case of the Heisenberg group, see \cite[Theorem~2]{FurutaniSagamiOtsuki1993}.
	\end{remark}
	\begin{proof}[Proof of Theorem~\ref{thm:spec_dGp}] 
		With an abuse of notation, if $Q^\psi$ denotes the semigroup generated by $A^\psi_{\mathcal{V}}$ on $G$, then we note that for all $t\geqslant 0$,
		\begin{align*}
			\mathbb{Q}^\psi_t=\mathbb{Q}_t Q^\psi_t=Q^\psi_t \mathbb{Q}_t \text{ on  } L^2(G).
		\end{align*}	
		As a result, $\sigma(\mathbb{Q}^\psi_t; L^2(G))\subseteq \sigma(\mathbb{Q}_t; L^2(G))\sigma(Q^\psi_t; L^2(\R^m))$ for all $t\geqslant 0$. Since $Q^\psi_t$ is a Fourier multiplier operator on $L^2(\R^m)$ with the multiplier $\lambda\mapsto e^{t\psi(\lambda)}$, it follows that
		\begin{align*}
			\sigma(Q^\psi_t; L^2(\R^m))=\sigma_{\rm c}(Q^\psi_t; L^2(\R^m))=\{e^{t\psi(\lambda)}: \lambda\in\R\}.
		\end{align*}	
		Therefore, $\sigma(\mathbb{Q}^\psi_t; L^2(G))\subseteq \{e^{-t\lambda_1+t\psi(\lambda_2)}: \lambda_1\geqslant 0, \lambda_2\in\R\}$. By spectral mapping theorem, we have $\sigma(\Delta^{\!\psi}_{G})\subseteq [0,\infty)+\Range(\psi)$. In order to show the other inclusion, using similar argument as in the proof of Proposition~\ref{thm:heat_intertwining}, one can show that $\mathbb{Q}^\psi_t(L^2_{\mathbb{U}}(G))\subseteq L^2_{\mathbb{U}}(G)$ for any $\psi\in\mathcal N$. Moreover, for any $\beta\in\N^d_0$ and $t\geqslant 0$, $\mathbb{Q}^\psi_t(\mathcal{L}_\beta)\subseteq\mathcal{L}_\beta$ and
		\begin{align*}
			\mathbb{Q}^\psi_t \operatorname{M}_\beta=\operatorname{M}_\beta \widetilde{Q}^\beta_t,
		\end{align*}
		where $\operatorname{M}_\beta$ is defined in \eqref{eq:M_beta} and $\widetilde{Q}^\beta_t$ is a Fourier multiplier operator defined by 
		\begin{align*}
			\mathcal{F}(\widetilde{Q}^\beta_t f)=e^{-t\mathfrak{n}(\beta,\lambda,\nu)+t\psi(\lambda)}\mathcal{F}(f)(\lambda), \quad f\in L^2(\R^m).
		\end{align*}
		As a consequence, for all $t\geqslant 0$,
		\begin{align*}
			\{e^{-t\mathfrak{n}(\beta,\lambda,\nu)+t\psi(\lambda)}: \lambda\in\R^m, \nu\in\R^k,\beta\in \N^d_0\}\subseteq \sigma(\mathbb{Q}^\psi_t),
		\end{align*}
		which is also equivalent to the inclusion $S_\psi\subseteq \sigma(\Delta^{\!\psi}_{G}; L^2(G))$. It remains to show that $\sigma(\Delta^{\!\psi}_{G}; L^2(G))$ is purely continuous. First, we prove that any $a\in\sigma(\Delta^{\!\psi}_{G}; L^2(G))$ with $\Re(a)=0$, one must have $a\in\sigma_{\rm c}(\Delta^{\!\psi}_{G}; L^2(G))$. It suffices to show that $a\notin\sigma_{\rm p}(\Delta^{\!\psi}_{G}; L^2(G))$ and $a\notin\sigma_{\rm r}(\Delta^{\!\psi}_{G}; L^2(G))$. Let $f\in L^2(G)$ be such that $\Delta^{\!\psi}_{G}f=af$ and $f\neq 0$. Then, for all $t\geqslant 0$, $\mathbb{Q}^\psi_tf= e^{at} f$. Taking the GFT of the last identity we get
		\begin{align*}
			\mathcal{F}_G(f)(\lambda,\nu)\circ H^{\lambda,\nu}_t=e^{-t(\psi(\lambda)-a)}\mathcal{F}_G(f)(\lambda,\nu) \text{ for all } \lambda\in\mathcal{Z}, \nu\in\R^k.
		\end{align*}
		Since $H^{\lambda,\nu}_t$ is a self-adjoint operator with eigenvalues $e^{-t\mathfrak{n}(\beta,\lambda,\nu)}, \beta\in\N^d_0$, the above identity forces that $\mathcal{F}_G(f)(\lambda,\nu)$ vanishes outside the following set 
		\begin{align*}
			\bigcup_{\beta\in\N^d_0}\left\{(\lambda,\nu)\in\R^{m+k}: -a+\psi(\lambda)= \sum_{j=1}^d (2\beta_j+1)\eta_j(\lambda)+|\nu|^2 \right\}
		\end{align*}
		However, for any $\beta\in\N^d_0$, we note that for any $(\lambda,\nu)\in\R^{m+k}\setminus\{0\}$, 
		\begin{align*}
			\sum_{j=1}^d (2\beta_j+1)\eta_j(\lambda)+|\nu|^2-\Re(\psi(\lambda))>0,
		\end{align*}
		which implies that $\mathcal{F}_G(f)(\lambda,\nu)=0$ for all $(\lambda,\nu)\in\mathcal Z^\ast\times \R^k$, that is, $f=0$, which contradicts our assumption. Therefore, $a\notin\sigma_{\rm p}(\Delta^{\!\psi}_{G}; L^2(G))$. Using a similar argument and the fact that $\sigma_{\rm r}(\Delta^{\!\psi}_{G}; L^2(G))\subseteq\sigma_{\rm p}(\widehat{\Delta}^{\!\psi}_{G}; L^2(G))$, we conclude that $a\notin\sigma_{\rm r}(\Delta^{\!\psi}_{G}; L^2(G))$. Hence, $a\in\sigma_{\rm c}(\Delta^{\!\psi}_{G}; L^2(G))$. Next, for the other values in the spectrum with strictly negative real part, we split the rest of the proof into two cases: $k\neq 0$ and $k= 0$ respectively.
		
		\textbf{Case 1: $k\neq 0$.} Note that this equivalent to $\mathrm{rad}_\lambda\neq\{0\}$ for any $\lambda\in\mathcal{Z}$. Let $a\in\sigma_{\rm p}(\Delta^{\!\psi}_{G}; L^2(G))$ for some $a\in\mathbb C$ with $\Re(a)\le 0$. Then, we must have $\mathbb{Q}^\psi_tf=e^{at} f$ for some nonzero $f\in L^2(G)$. Taking GFT of the last identity, a similar computation as before implies that $\mathcal{F}_G(f)(\lambda,\nu)=0$ whenever
		\begin{align}\label{eq:eigenvalue_eq}
			-a+\psi(\lambda)\neq\sum_{j=1}^d(2\beta_j+1)\eta_j(\lambda)+|\nu|^2 \text{ for any } \beta\in\N^d_0.
		\end{align}
		As in the proof of Theorem~\ref{thm:subLaplacian_spectrum}, for each $\beta\in\N^d_0$ and $s\geqslant 0$, let us denote 
		\begin{align*}
			\vartheta_\beta(s)=\operatorname{Leb}\left\{(\lambda,\nu)\in\R^{m+k}: \sum_{j=1}^d(2\beta_j+1)\eta_j(\lambda)+|\nu|^2-\Re(\psi(\lambda))\le s\right\}.
		\end{align*}
		Then, writing $\mathfrak{m}_\beta(\lambda)=\sum_{j=1}^d(2\beta_j+1)\eta_j(\lambda)-\Re(\psi(\lambda))$, we have
		\begin{align*}
			\vartheta_\beta(s)=\int_{\R^m} C_k(s-\mathfrak{m}_\beta(\lambda))^k_+ d\lambda,
		\end{align*}
		where $C_k$ denotes the volume of the unit ball in $\R^k$. This shows that $\vartheta_\beta(s)$ is continuous in $s$ and hence for any $a\in\C$ with $\Re(a)<0$,
		\begin{align*}
			\operatorname{Leb}\left\{(\lambda,\nu)\in\R^{m+k}:\sum_{j=1}^d(2\beta_j+1)\eta_j(\lambda)+|\nu|^2-\Re(\psi(\lambda))= -\Re(a) \right\}=0.
		\end{align*}
		So, $\mathcal{F}_G(f)(\lambda,\nu)=0$ on a set of full Lebesgue measure, which implies that $f=0$. Hence, $\sigma_{\rm p}(\Delta^{\!\psi}_{G}; L^2(G))=\emptyset$. By a similar argument, one can show that $\sigma_{\rm r}(\Delta^{\!\psi}_{G}; L^2(G))=\emptyset$, which proves that $\sigma(\Delta^{\!\psi}_{G}; L^2(G))=\sigma_{\rm c}(\Delta^{\!\psi}_{G}; L^2(G))$.
		
		\textbf{Case 2: $k=0$.} Let $a\in\sigma(\Delta^{\!\psi}_{G}; L^2(G))$ be such that $\Re(a)<0$ and $a\in\sigma_{\rm p}(\Delta^{\!\psi}_{G}; L^2(G))$. If $f\in L^2(G)$ is such that $\mathbb{Q}^\psi_tf=e^{at}f$, using the same argument as before, we get that $\mathcal{F}_G(f)(\lambda)$ vanishes on the complement of the following set 
		\begin{align}\label{eq:vanishing_set}
			\bigcup_{\beta\in\N^d_0}\left\{\lambda\in\R^m: \sum_{j=1}^d(2\beta_j+1)\eta_j(\lambda)-\psi(\lambda)= -a\right\}.
		\end{align}
		As before, for any $\beta\in\N^d_0$ and $s>0$ let denote 
		\begin{align*}
			\varsigma_\beta(s)=\operatorname{Leb}\left\{\lambda\in\R^m: \sum_{j=1}^d(2\beta_j+1)\eta_j(\lambda)-\Re(\psi(\lambda))=s\right\}.
		\end{align*}
		First, we claim that for any $\lambda_0\in S^{m-1}$, where $S^{m-1}$ denotes the $m$-dimensional sphere, the ray $\{r\lambda_0: r>0\}$ contains at most one solution of the equation 
		\begin{align*}
			\sum_{j=1}^d(2\beta_j+1)\eta_j(\lambda)-\Re(\psi(\lambda))=s.
		\end{align*}
		Indeed, any solution on the ray satisfies 
		\begin{align}\label{eq:eigen_equation}
			r\sum_{j=1}^d(2\beta_j+1)\eta_j(\lambda_0)-\Re(\psi(r\lambda_0))=\delta r-\Re(\psi(r\lambda_0))=s,
		\end{align}
		where $\delta:=\sum_{j=1}^d(2\beta_j+1)\eta_j(\lambda_0)>0$. Suppose that there at least two solutions $r_1, r_2$ to \eqref{eq:eigen_equation}. Since $x\mapsto -\Re(\psi(x\lambda_0))$ is a negative definite function, the above equation implies that for any $c_1, c_2\in\R$ with $c_1+c_2=0$, we should have 
		\begin{align*}
			\sum_{i=1}^2\sum_{j=1}^2 (s-\delta(r_i-r_j))c_ic_j\le 0,
		\end{align*}
		see \cite[Proposition~7.5]{BergForstBook1975}. However, choosing $c_1=1, c_2=-1$, we get $\sum_{i=1}^2\sum_{j=1}^2 (s-\delta(r_i-r_j))c_ic_j=2s>0$, which is a contradiction. This proves our claim. Now using Fubini's theorem with respect to the spherical coordinates, we conclude that $\varsigma_\beta(s)=0$ for any $s>0$ and $\beta\in\N^d_0$. As a result, the set defined in \eqref{eq:vanishing_set} has zero Lebesgue measure, which further implies that $\mathcal{F}_G(f)(\lambda)$ vanishes on a set of full Lebesgue measure. This contradicts the assumption that $a\in\sigma_{\rm p}(\Delta^{\!\psi}_{G}; L^2(G))$. A similar argument and the inclusion $\sigma_{\rm r}(\Delta^{\!\psi}_{G}; L^2(G))\subseteq\sigma_{\rm p}(\Delta^{\!\psi}_{G}; L^2(G))$ will imply that $a\notin\sigma_{\rm}(\Delta^{\!\psi}_{G}; L^2(G))$. Therefore, $\sigma(\Delta^{\!\psi}_{G}; L^2(G))=\sigma_{\rm c}(\Delta^{\!\psi}_{G}; L^2(G))$, which completes the proof of theorem.
	\end{proof}
	\begin{proof}[Proof of Corollary~\ref{cor:spec_delta_psi}]
		From the definition of $S_\psi$ in \eqref{eq:S_psi}, it is evident that $\Im(S_\psi)=\Range(\Im(\psi))$. Thus, the first statement follows directly from \eqref{eq:spec_delta_psi}. When $\psi$ is real valued, $\psi(\lambda)\le 0$ for all $\lambda\in\R^m$. On the other hand, since $\eta_j$ for any $j$ is a nonnegative homogeneous continuous function on $\R^m$, we conclude that for every $\beta\in\N^d_0$,
		\begin{align*}
			\{\psi(\lambda)-\mathfrak{n}(\beta,\lambda,\nu): \lambda\in\R^m, \nu\in\R^k\}=(-\infty,\psi(0)),
		\end{align*}
		which implies that $S_\psi=(-\infty,\psi(0))$. Finally, the second statement of the theorem follows after noting that $(-\infty,0]+\Range(\psi)=(-\infty, \psi(0)]$ whenever $\psi$ is real-valued. 
	\end{proof}

	\section{L\'evy-Ornstein-Uhlenbeck semigroup  and intertwinings} \label{sec:7}
	
	\subsection{Ornstein-Uhlenbeck semigroup and its perturbations} For the Euclidean space $\R^n$, the classical Ornstein-Uhlenbeck (OU) operator is defined as the diffusion operator 
	\begin{align*}
		L=\Delta-\langle x, \nabla\rangle.
	\end{align*}
	Then $L$ generates an ergodic Markov semigroup known as the OU semigroup on $\R^n$, and the unique invariant distribution is standard Gaussian. We can alternatively view $L$ as a perturbation of the Laplacian $\Delta$ by $-\langle x,\nabla\rangle$, which is also the generator of the one parameter dilation semigroup $(d_{e^{-t}})_{t\geqslant 0}$ on $C_0(\R^n)$. Similarly  the Ornstein-Uhlenbeck (OU) operator on $G$ is defined as the second order differential operator 
	\begin{align*}
		\mathbb{L}= \Delta_{\mathcal{H}}+\mathbb{D},
	\end{align*}
	where $\Delta_{\mathcal{H}}$ is the sub-Laplacian on $G$, and $\mathbb{D}$ denotes the generator of the dilation semigroup $(\delta_{e^{-t}})_{t\in\R}$ on $C_0(G)$.
	In Lemma~\ref{cor:gen_OU_core} below it is shown that $(\mathbb{L},C^\infty_c(G))$ generates a Feller Markov semigroup $(\mathbb{P}_t)_{t\geqslant 0}$ on $C_0(G)$, which we call the \emph{Ornstein-Uhlenbeck semigroup} on $G$. Using self-similarity of the horizontal heat semigroup $(\mathbb{Q}_t)_{t\geqslant 0}$ generated by $\Delta_{\mathcal{H}}$, one can show that 
	\begin{align}\label{eq:OU_semigroup}
		\mathbb{P}_t = \delta_{e^{-t}}\mathbb{Q}_{\frac{1-e^{-2t}}{2}}.
	\end{align}
	This identity also implies that $(\mathbb{P}_t)_{t\geqslant 0}$ is ergodic with respect to the unique invariant distribution $\mathsf{q}_{1/2}$, where $\mathsf{q}_t$ denotes the heat kernel of the horizontal heat semigroup at time $t$. We refer to \cite{Lust-Piquard2010} for a detailed spectral analysis of OU semigroups on any stratified group, and to \cite{BaudoinHairerTeichmann2008} for self-adjoint OU semigroups on Lie groups. By \cite[Theorem~3]{Lust-Piquard2010}, it is known that $\mathbb{P}$ is non-self-adjoint on $L^2(G,\mathsf{q}_{1/2})$. 
	
	The goal of this section is to study a class of operators obtained by some perturbations of $\mathbb{L}$ along the directions of vertical vector fields, where the perturbation operators are the same as in Section~\ref{sec:6}. We begin with a description of L\'evy-type perturbations of OU operators on $\R^m$. For each $\psi\in\mathcal{N}$, where $\mathcal{N}$ is defined in \eqref{notation:N}, the L\'evy-OU operator $L_\psi$ on $\R^m$ is defined by 
	\begin{align*}
		L_\psi=A^\psi-2\sum_{j=1}^m v_j\frac{\partial}{\partial v_j},
	\end{align*}
	where $A^\psi$ is the generator of L\'evy semigroup defined in \eqref{eq:Levy}.
	By \cite{SatoBook1999} it follows that $L_\psi$ is the generator of a strongly continuous Markov semigroup on $C_0(\R^m)$, which is known as the L\'evy-OU semigroup on $C_0(\R^m)$. Let us denote 
	\begin{align}\label{eq:OU_levy}
		P^\psi_t=\text{ semigroup generated by $L_\psi$ on $C_0(\R^m)$}.
	\end{align}
	We note that $L_\psi$ is a generalization of the diffusion OU operator on $\R^m$. Moreover, the Markov process associated to the semigroup $P^\psi$ satisfies the stochastic differential equation
	\begin{align}\label{eq:P_psi_sde}
		dX(t)=-2X(t) dt+ dY^\psi(t),
	\end{align}
	where $Y^\psi(t)$ is the L\'evy process with generator $A^\psi$.
	
By \cite[Theorem~17.5]{SatoBook1999}, if the L\'evy measure of $\psi$ has finite logarithmic moment away from $1$, that is, $\psi=(\sigma,b,\kappa)$ with 
	\begin{align*}
		\int_{|v|>1}\log|v| \kappa(dv)<\infty,
	\end{align*} 
	$P^\psi$ is ergodic with the unique invariant distribution $\mu_\psi$ such that 
	\begin{align}\label{eq:p_psi}
		\int_{\R^m} e^{\i\lambda v}\mu_\psi(dv)= \exp\left({\int_0^\infty \psi(e^{-2s}\lambda)ds}\right).
	\end{align}
	Let us denote
	\begin{align}\label{eq:N_log}
		\mathcal{N}_{\log}:=\left\{\psi=(\sigma, b, \kappa)\in\mathcal N: \int_{|v|>1} \log|v|\kappa(dv)<\infty \right\}.
	\end{align}
	For Carnot groups, we consider L\'evy-type perturbations of $\mathbb{L}$ defined by 
	\begin{align*}
		\mathbb{L}_{\psi}=\Delta^{\!\psi}_{G} +\mathbb{D}=\mathbb{L}+A^\psi_{\mathcal{V}},
	\end{align*}
	where $A^\psi_{\mathcal{V}}$ denotes $A^\psi$ in the vertical direction of $G$. In Theorem~\ref{prop:P_gamma}, we prove that $\mathbb{L}_\psi$ generates a Markov semigroup on $G$. Moreover, when $\psi\in\mathcal{N}_{\log}$, $\mathbb{L}_\psi$ generates an ergodic Markov semigroup. Before stating the main result, let us introduce a key notation which will be required in the statement and throughout the rest of the paper. For any $t\in\R$, we define 
	\begin{equation}\label{eq:psi_t}
		\begin{aligned}
			\psi_t(\lambda):=\int_0^t \psi(e^{2s}\lambda) ds
			=&-\frac{e^{4t}-1}{4}\langle\sigma \lambda,\lambda\rangle+\i\frac{e^{2t}-1}{2}\langle b,\lambda\rangle \\
			&+\int_0^t \int_{\R^m}(\exp(\i e^{2s}\langle\lambda,v\rangle)-1-\i\langle\lambda,e^{2s}v\rangle)\kappa(dv).
		\end{aligned}
	\end{equation}
	Then for all $t\in [0,\infty)$, $\psi_t, -\psi_{-t}\in\mathcal N$. We are now ready to state the main result of this section.
	\begin{theorem}\label{prop:P_gamma}
		For any $\psi\in\mathcal{N}$, we have the following.
		\begin{enumerate}[leftmargin=*]
			\item  \label{it:3} The closure of $(\mathbb{L}_{\psi}, C^\infty_c(G))$ 
			generates a Feller Markov semigroup $(\mathbb{P}^{\psi}_t)_{t\geqslant 0}$ on $C_0(G)$ such that
			\begin{align}\label{eq:P_alpha}
				\mathbb{P}^{\psi}_tf(h,v)=\int_{\R^m}\mathbb{P}_tf(h,v-v')\mu^{\psi_t}(dv'),
			\end{align}
			where $\psi_t$ is defined in \eqref{eq:psi_t}, and $\mu^{\psi_t}$ is such that 
			\begin{align}\label{eq:q_psi_t}
				\int_{\R^m} e^{\i\langle\lambda, v\rangle} \mu^{\psi_t}(dv)=e^{\psi_t(\lambda)} \text{  for all } \lambda\in\R^m.
			\end{align}
			\item \label{it:4}  $\mathbb{P}^\psi$ has a unique invariant density $\mathsf{p}_\psi\in C^\infty_0(G)$ with respect to the bi-invariant Haar measure on $G$ whenever $\psi\in\mathcal{N}_{\log}$, where $\mathcal{N}_{\log}$ is defined in \eqref{eq:N_log}. Moreover, using Notation~\ref{notation:coordinates} for the coordinates of $G$,
			\begin{equation}\label{eq:p_gamma}
				\begin{aligned}
					& (2\pi)^d\int_{\R^m}\int_{\R^k} \mathsf{p}_\psi(z,r,v) e^{\i\langle \nu,r\rangle} e^{\i\langle \lambda, v\rangle} drdv \\
					&=\exp\left(-\frac{|\nu|^2}{2}+\int_0^\infty \psi(-e^{-2s}\lambda) ds\right) \\
					&\ \ \ \ \ \times \prod_{j=1}^d\frac{\eta_j(\lambda)}{2\sinh(\eta_j(\lambda)/2)}\exp\left(-\frac{\eta_j(\lambda)|z_j|^2}{2}\coth(\eta_j(\lambda)/2)\right).
				\end{aligned}
			\end{equation}
			In fact, $\mathsf{p}_\psi=\mathsf{q}_{\frac12}{\ast}_v \overline{\mu}_\psi$, where $\mu_\psi$ is defined in \eqref{eq:p_psi} and $\overline{\mu}_\psi(B)=\mu_\psi(-B)$ for all $B\in\mathcal{B}(\R^m)$, and $\mathsf{q}_{\frac12}$ is the horizontal heat kernel at $t=1/2$.
			As a result, $\mathbb{P}^\psi$ can be extended to a strongly continuous contraction Markov semigroup on $L^{p}(G, \mathsf{p}_\psi)$ for $p\geqslant 1$.
		\end{enumerate}
	\end{theorem}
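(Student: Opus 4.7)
My plan is to take \eqref{eq:P_alpha} as the defining formula for $\mathbb{P}^\psi_t$ and then to identify it as the Feller semigroup generated by the closure of $\mathbb{L}_\psi$ on $C^\infty_c(G)$. The central algebraic input is that dilation intertwines with vertical convolution: $\delta_c(f \ast_v \nu) = (\delta_c f) \ast_v \tilde\nu$ where $\mathcal{F}(\tilde\nu)(\lambda) = \mathcal{F}(\nu)(c^{-2}\lambda)$. Combining this with \eqref{eq:OU_semigroup} and the fact from Theorem~\ref{thm:heat_perurbation} (via Lemma~\ref{lem:vertical_conv}) that $\mathbb{Q}_t$ commutes with vertical convolution, one obtains $\mathbb{P}_t(f \ast_v \nu) = (\mathbb{P}_t f) \ast_v \tilde\nu$ with $\mathcal{F}(\tilde\nu)(\lambda) = \mathcal{F}(\nu)(e^{2t}\lambda)$. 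Applying this with $\nu = \mu^{\psi_s}$, the semigroup property $\mathbb{P}^\psi_t \mathbb{P}^\psi_s = \mathbb{P}^\psi_{t+s}$ boils down, after one Fourier transform in $v$, to the elementary change-of-variable identity $\psi_s(e^{2t}\lambda) + \psi_t(\lambda) = \psi_{s+t}(\lambda)$.

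Since $\mu^{\psi_t} \to \delta_0$ weakly as $t \searrow 0$ and $\mathbb{P}_t$ is Feller, the family $\mathbb{P}^\psi_t$ inherits strong continuity, positivity, and the mapping $C_0(G) \to C_0(G)$. Differentiating \eqref{eq:P_alpha} at $t = 0$ on $C^\infty_c(G)$ produces the generator $\mathbb{L}f + A^\psi_{\mathcal V} f = \mathbb{L}_\psi f$, with the two pieces coming from $\mathbb{P}_t$ and from the convolution by $\mu^{\psi_t}$ respectively (the latter via \cite[Theorem 31.5]{SatoBook1999}). To obtain $C^\infty_c(G)$ as a core, I reproduce the strategy of Lemma~\ref{lem:core_Q} and the proof of Theorem~\ref{thm:heat_perurbation}: enlarge first to the invariant class $\mathcal{C}$ from \eqref{eq:C(G)}, apply \cite[Lemma 31.7]{SatoBook1999} to place $\mathcal{C}$ inside $\mathcal{D}(\mathbb{L}_\psi)$, and then truncate by smooth cutoffs to approximate in the graph norm.

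For part \eqref{it:4}, I set $\mathsf{p}_\psi := \mathsf{q}_{\frac12} \ast_v \overline{\mu}_\psi$. The identity \eqref{eq:p_gamma} follows by taking the Fourier transform in the $(r, v)$ variables and combining Proposition~\ref{prop:heat_kernel} (at $t = 1/2$) with \eqref{eq:p_psi}; smoothness $\mathsf{p}_\psi \in C^\infty_0(G)$ is inherited from $\mathsf{q}_{\frac12}$ since convolution with a finite measure preserves Schwartz-type decay. Invariance is the crux and I plan to verify it by duality: using $\int (f \ast_v \nu) g\, dg = \int f\,(g \ast_v \overline\nu)\, dg$ together with the commutation $\mathbb{P}_t(f \ast_v \nu) = (\mathbb{P}_t f) \ast_v \tilde\nu$ from paragraph one, I push all vertical convolutions off $\mathsf{p}_\psi$ and onto $f$. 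A Fourier computation then shows that the net measure convolved onto $f$ has Fourier transform
\begin{equation*}
\exp\!\left(\int_0^t \psi(e^{-2u}\lambda)\, du + \int_t^\infty \psi(e^{-2u}\lambda)\, du\right) = \mathcal{F}(\mu_\psi)(\lambda),
\end{equation*}
so the statement reduces to the known invariance of $\mathsf{q}_{\frac12}$ under $\mathbb{P}$ established in \cite{Lust-Piquard2010}.

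Uniqueness of $\mathsf{p}_\psi$ comes from coupling the ergodicity of $\mathbb{P}$ on $L^2(G, \mathsf{q}_{\frac12})$ with the uniqueness of $\mu_\psi$ as the invariant distribution of the vertical L\'evy-OU semigroup $P^\psi$ on $\R^m$; the log-moment hypothesis $\psi \in \mathcal{N}_{\log}$ enters precisely to make the tail integral defining $\mu_\psi$ converge, cf.~\cite[Theorem~17.5]{SatoBook1999}. Once invariance is established, Jensen's inequality gives the extension of $\mathbb{P}^\psi$ to a strongly continuous contraction semigroup on each $L^p(G, \mathsf{p}_\psi)$ with $p \geqslant 1$. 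I expect the main technical obstacle to be the Fourier bookkeeping in the invariance argument: the telescoping between the forward exponent $\psi_t$ carried by $\mu^{\psi_t}$ and the backward tail integral $\int_0^\infty \psi(e^{-2s}\lambda)\, ds$ carried by $\mu_\psi$ must be tracked precisely through the dilation-convolution commutation, and it is exactly this matching that uses all the information in the definition of $\psi_t$.
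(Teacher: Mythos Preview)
Your overall strategy---define $\mathbb{P}^\psi_t$ by \eqref{eq:P_alpha}, verify the semigroup law via the identity $\psi_s(e^{2t}\lambda)+\psi_t(\lambda)=\psi_{t+s}(\lambda)$, differentiate at $t=0$, then run a core argument---matches the paper's. But the core step as you describe it has a real gap: the class $\mathcal{C}$ from \eqref{eq:C(G)} is tailored to $\Delta_{\mathcal H}$ and does \emph{not} control $\mathbb{D}$. The dilation generator contains the term $-2\langle v,\nabla_v\rangle$, so for $\mathbb{L}_\psi f\in C_0(G)$ you need $|v|\,\partial_v f\to 0$ at infinity, which is not part of the definition of $\mathcal{C}$. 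The paper therefore introduces the smaller class $\mathcal{C}'$ in Lemma~\ref{cor:gen_OU_core} with this extra decay condition. Even after passing to $\mathcal{C}'$, the invariance $\mathbb{P}^\psi_t(\mathcal{C}')\subseteq\mathcal{C}'$ is not automatic: bounding $|v|\,\partial_v\mathbb{P}^\psi_t f$ via the triangle inequality $|v|\le |v-v'|+|v'|$ leaves a term $\int |v'|\,\mu^{\psi_t}(dv')$ that is finite only when the L\'evy measure has a first moment. The paper handles this by first assuming $\kappa$ has finite mean, obtaining the core, and then approximating general $\psi$ by truncating $\kappa$ to balls and showing $(\alpha I-\mathbb{L}_{\psi_n})(C^\infty_c(G))\to(\alpha I-\mathbb{L}_\psi)(C^\infty_c(G))$ uniformly. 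Your sketch omits both the correction to $\mathcal{C}'$ and this approximation layer.

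For part \eqref{it:4} you take a slightly different but legitimate route: you posit $\mathsf{p}_\psi=\mathsf{q}_{1/2}\ast_v\overline{\mu}_\psi$ and verify invariance by Fourier telescoping, whereas the paper computes $\lim_{t\to\infty}\mathbb{P}^\psi_t f$ directly (splitting as in \eqref{eq:eq1} and using the weak convergence $\mu^{\psi_t}\circ d_{e^{2t}}\to\mu_\psi$). The paper's route yields uniqueness for free, since the limit is independent of the starting point; your uniqueness sketch (``coupling the ergodicity of $\mathbb{P}$ with uniqueness of $\mu_\psi$'') would need to be made precise---it is not obvious how an arbitrary invariant measure for $\mathbb{P}^\psi$ decouples into horizontal and vertical pieces.
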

	\begin{remark}
		In addition to the above theorem, using a standard approximation method with mollifiers one can prove that $C^\infty_c(G)$ is a core for $L^p(G,\mathsf{p}_\psi)$-generator of $\mathbb{P}^\psi$ for any $p\geqslant 1$.
	\end{remark}
	
	In the next proposition, we derive an explicit expression of the GFT of $\mathbb{P}^\psi$. This will play an important role in the proof of the above proposition and also the subsequent results.
	\begin{proposition}\label{prop:fourier_transform}
		Let $\mathbb{P}^{\psi}_t$ be defined as in \eqref{eq:P_alpha}. Then, for any $f\in\mathcal{S}(G)$ and $(\lambda,\nu)\in\mathcal{Z}\times\R^k$, 
		\begin{align}\label{eq:P}
			\mathcal{F}_G(\mathbb{P}^\psi_tf)(\lambda,\nu)=e^{(n+2m)t} e^{\psi_t(\lambda)}d_{e^{-t}}\mathcal{F}_G(f)(e^{2t}\lambda ,e^t\nu )d_{e^t} H^{\lambda,\nu}_{\frac{e^{2t}-1}{2}}.
		\end{align}
		If $\widehat{A}$ denotes the $L^2(G)$-adjoint of $A$, then 
		\begin{align}\label{eq:P_hat}
			\mathcal{F}_G(\widehat{\mathbb{P}}^{\psi}_tf)(\lambda,\nu)=e^{\psi_{t}(-e^{-2t}\lambda)}d_{e^t} \mathcal{F}_G({f})(e^{-2t}\lambda , e^{-t}\nu ) d_{e^{-t}} H^{\lambda,\nu}_{\frac{1-e^{-2t}}{2}}.
		\end{align}
	\end{proposition}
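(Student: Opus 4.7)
My plan is to reduce everything to the three ingredients already established: the vertical convolution rule of Lemma~\ref{lem:vertical_conv}, the dilation formula of Lemma~\ref{lem:dilation}, and the heat-semigroup GFT in \eqref{eq:heat_fourier} together with the scaling identity of Lemma~\ref{lem:scaling_Harmonic}. The starting point for \eqref{eq:P} is the explicit formula $\mathbb{P}^\psi_t f = (\mathbb{P}_t f){\ast}_v \mu^{\psi_t}$ from \eqref{eq:P_alpha}, combined with the identity $\mathbb{P}_t = \delta_{e^{-t}}\mathbb{Q}_{(1-e^{-2t})/2}$ from \eqref{eq:OU_semigroup}. Since $\mathcal{F}(\mu^{\psi_t})(\lambda) = e^{\psi_t(\lambda)}$ by \eqref{eq:q_psi_t}, Lemma~\ref{lem:vertical_conv} immediately yields
\begin{equation*}
\mathcal{F}_G(\mathbb{P}^\psi_t f)(\lambda,\nu) = e^{\psi_t(\lambda)}\,\mathcal{F}_G(\mathbb{P}_t f)(\lambda,\nu),
\end{equation*}
so it suffices to compute $\mathcal{F}_G(\mathbb{P}_t f)$.

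For that, I would first apply Lemma~\ref{lem:dilation} with $c = e^{-t}$ to $g = \mathbb{Q}_{(1-e^{-2t})/2} f$, obtaining
\begin{equation*}
\mathcal{F}_G(\mathbb{P}_t f)(\lambda,\nu) = e^{(n+2m)t}\, d_{e^{-t}}\, \mathcal{F}_G(\mathbb{Q}_{(1-e^{-2t})/2} f)(e^{2t}\lambda, e^t\nu)\, d_{e^t}.
\end{equation*}
Then \eqref{eq:heat_fourier} replaces the inner GFT by $\mathcal{F}_G(f)(e^{2t}\lambda, e^t\nu)\circ H^{e^{2t}\lambda,\,e^t\nu}_{(1-e^{-2t})/2}$, and Lemma~\ref{lem:scaling_Harmonic} with $c=e^t$ rewrites the Hermite operator as
\begin{equation*}
H^{e^{2t}\lambda,\,e^t\nu}_{(1-e^{-2t})/2} = d_{e^t}\, H^{\lambda,\nu}_{(e^{2t}-1)/2}\, d_{e^{-t}}.
\end{equation*}
The trailing $d_{e^{-t}}\, d_{e^t}$ cancels, leaving exactly \eqref{eq:P}.

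For the adjoint identity \eqref{eq:P_hat}, the plan is to dualize each piece of the factorization $\mathbb{P}^\psi_t = C_{\mu^{\psi_t}}\circ \delta_{e^{-t}}\circ \mathbb{Q}_{(1-e^{-2t})/2}$, where $C_\mu f := f{\ast}_v \mu$. A direct change of variable in the Haar integral gives $\widehat{\delta_c} = c^{-(n+2m)}\delta_{1/c}$, while $\mathbb{Q}_s$ is self-adjoint and $\widehat{C_{\mu^{\psi_t}}} = C_{\overline{\mu^{\psi_t}}}$. Hence $\widehat{\mathbb{P}^\psi_t} f = \mathbb{Q}_{(1-e^{-2t})/2}\bigl(e^{(n+2m)t}\delta_{e^t}(f{\ast}_v \overline{\mu^{\psi_t}})\bigr)$. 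Re-running the same three-step computation, but with $c = e^t$ instead of $e^{-t}$, and using the second part of Lemma~\ref{lem:vertical_conv} (which gives the factor $e^{\psi_t(-e^{-2t}\lambda)}$ after the dilation rescaling of $\lambda$), produces \eqref{eq:P_hat}.

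The main bookkeeping obstacle is the non-commutativity of $d_c$, $H^{\lambda,\nu}_s$, and $\mathcal{F}_G(f)(\cdot,\cdot)$ as operators on $L^2(\R^d)$: the dilations must be threaded through the Hermite operator using Lemma~\ref{lem:scaling_Harmonic} in exactly the right order for the two dilations to cancel. The other subtle point is the sign and argument of $\psi_t$ in the adjoint formula; it arises because taking the image measure $\overline{\mu^{\psi_t}}$ introduces $\mathcal{F}(\mu^{\psi_t})(-\lambda) = e^{\psi_t(-\lambda)}$, and the argument is subsequently evaluated at $e^{-2t}\lambda$ because of the change of variables $\lambda\mapsto e^{-2t}\lambda$ coming from dualizing the dilation. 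Once these two points are handled, everything else is routine manipulation that can be omitted in the final write-up.
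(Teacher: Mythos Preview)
Your proposal is correct and follows essentially the same route as the paper: factor $\mathbb{P}^\psi_t=(\mathbb{P}_t\,\cdot){\ast}_v\mu^{\psi_t}$ with $\mathbb{P}_t=\delta_{e^{-t}}\mathbb{Q}_{(1-e^{-2t})/2}$, then apply Lemma~\ref{lem:vertical_conv}, Lemma~\ref{lem:dilation}, \eqref{eq:heat_fourier}, and Lemma~\ref{lem:scaling_Harmonic} in sequence; for the adjoint the paper likewise writes $\widehat{\mathbb{P}}^\psi_t f=\widehat{\mathbb{P}}_t(f{\ast}_v\overline{\mu}^{\psi_t})$ with $\widehat{\mathbb{P}}_t=e^{(n+2m)t}\mathbb{Q}_{(1-e^{-2t})/2}\delta_{e^t}$ and repeats the computation. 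Your identification of the two delicate points (threading the dilations through $H^{\lambda,\nu}_s$, and the origin of the argument $-e^{-2t}\lambda$ in $\psi_t$) matches what the paper does implicitly.
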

	\begin{proof}
		We note that \eqref{eq:OU_semigroup} and Lemma~\ref{lem:dilation} imply that for any $t>0$ and $f\in\mathcal{S}(G)$, 
		\begin{align*}
			\mathcal{F}_G(\mathbb{P}_tf)(\lambda,\nu)=e^{(n+2m)t} d_{e^{-t}}\mathcal{F}_G(f)(e^{2t}\lambda , e^{t}\nu )d_{e^t} H^{\lambda,\nu}_{\frac{e^{2t}-1}{2}}.
		\end{align*}
		If $\mathbb{P}^{\psi}_t$ is defined according to \eqref{eq:P_alpha}, we note that $\mathbb{P}^{\psi}_tf\in L^1(G)$ for any $f\in\mathcal{S}(G)$. Therefore, using the representation $\mathbb{P}^{\psi}_t=\mathbb{P}_tf{\ast}_v\mu^{\psi_t}$, we conclude the proof of \eqref{eq:P}. Next, we observe that 
		\begin{align*}
			{\widehat{\mathbb{P}}_t=e^{(n+2m)t}\mathbb{Q}_{\frac{1-e^{-2t}}{2}}\delta_{e^{t}}.}
		\end{align*}
		Now, for any $\widetilde{f}\in\mathcal{S}(G)$, we note that 
		\begin{align*}
			\int_{G}{\mathbb{P}}^\psi_tf(h,v)\widetilde{f}(h,v) dh dv=\int_{G}\mathbb{P}_tf(h,v)\left(\int_{\R} \widetilde{f}(h,v+v')\mu^{\psi_t}(dv')\right) dh dv,
		\end{align*}
		which shows that $\widehat{\mathbb{P}}^\psi_t \widetilde{f}=\widehat{\mathbb{P}}_t (\widetilde{f}{\ast}_v \overline{\mu}^{\psi_t})$, where $\overline{\mu}^{\psi_t}(B)=\overline{\mu}^{\psi_t}(-B)$ for all $B\in\mathcal{B}(\R^m)$. Consequently, applying Lemma~\ref{lem:vertical_conv}, for all $f\in\mathcal{S}(G)$ we have 
		\begin{align*}
			\mathcal{F}_G(\widehat{\mathbb{P}}^\psi_tf)(\lambda,\nu)&=\mathcal{F}_G(\widehat{\mathbb{P}}_t ({f}{\ast}_v \overline{\mu}^{\psi_t}))(\lambda,\nu) \\
			&=\mathcal{F}_G({f}{\ast}_v\overline{\mu}^{\psi_t})(e^{-2t}\lambda ,e^{-t}\nu ) d_{e^{-t}} H^{\lambda,\nu}_{\frac{1-e^{-2t}}{2}} \\
			&=\mathcal{F}_G({f})(\lambda e^{-2t},\nu e^{-t})\mathcal{F}(\mu^{\psi_t})(-e^{-2t}\lambda)d_{e^{-t}} H^{\lambda,\nu}_{\frac{1-e^{-2t}}{2}}  \\
			&=e^{\psi_{t}(-e^{-2t}\lambda)} d_{e^t} \mathcal{F}_G(f)(e^{-2t}\lambda ,e^{-t}\nu ) d_{e^{-t}} H^{\lambda,\nu}_{\frac{1-e^{-2t}}{2}}.
		\end{align*}
		This completes the proof of the proposition.
	\end{proof}
	In the next lemma, we provide  core for the generator $\mathbb{L}$ that is also invariant under the action of the semigroup $\mathbb{P}_t$.
	
	\begin{lemma}\label{cor:gen_OU_core}
		Let $\mathcal{C}'\subset C_0(G)$ be defined by
		\begin{align}
			\mathcal{C}'=\left\{f\in C^\infty_0(G): \lim_{h,v\to\infty}|h|^{2} \frac{\partial f}{\partial h^{\beta_1} \partial v^{\beta_2}}=0, \ \lim_{h,v\to\infty}|v|\frac{\partial f}{\partial v}=0, \ \beta_1+\beta_2\le 2 \right\},
		\end{align}
		Then, $\mathcal{C}'\subset\mathcal{D}(\mathbb{L})$ and $\mathbb{P}_t (\mathcal{C}')\subseteq \mathcal{C}'$ for all $t\geqslant 0$. Moreover, $C^\infty_c(G)$ is a core for the generator $\mathbb{L}$.
		\begin{proof}
			From the definition of $\mathcal{C}'$, it is evident that $\mathbb{L}(\mathcal C')\subset C_0(G)$. Let $f\in\mathcal C'$. Then, for any fixed $g\in G$, 
			\begin{align*}
				\frac{\mathbb{P}_tf(g) -f(g)}{t}&=\frac{\mathbb{Q}_{\frac{1-e^{-2t}}{2}}f(\delta_{e^{-t}}g)-f(g)}{t}\\
				&=\frac{\mathbb{Q}_{\frac{1-e^{-2t}}{2}}f(\delta_{e^{-t}}g)-f(\delta_{e^{-t}}g)+f(\delta_{e^{-t}}g)-f(g)}{t}.
			\end{align*}
			Since $(\mathbb{Q}_{\frac{1-e^{-2t}}{2}}f-f)/t\to \Delta_{\mathcal{H}} f$ uniformly and $(f(\delta_{e^{-t}}g)-f(g))/t\to-\mathbb{D}f(g)$ point-wise as $t\searrow 0$, we have 
			\begin{align*}
				\lim_{t\searrow 0}\frac{\mathbb{P}_tf -f}{t}= \Delta_{\mathcal{H}} f-\mathbb{D}f=\mathbb{L}f
			\end{align*}
			point-wise. This proves that $f\in\mathcal{D}(\mathbb{L})$.
			Also, using a very similar idea as in the proof of Lemma~\ref{lem:core_Q}, one can show that $\mathbb{Q}_t(\mathcal C')\subseteq \mathcal C'$ for all $t\geqslant 0$. Therefore, for all $t\geqslant 0$,
			\begin{align*}
				\mathbb{P}_t(\mathcal C')=\delta_{e^{-t}}\mathbb{Q}_{\frac{1-e^{-2t}}{2}}(\mathcal C')\subseteq \delta_{e^{-t}}(\mathcal C')\subseteq \mathcal C'.
			\end{align*}
			Using the same approximation technique as in the proof of Lemma~\ref{lem:core_Q}, it follows that $C^\infty_c(G)$ is a core for $\mathbb{L}$.
		\end{proof}
	\end{lemma}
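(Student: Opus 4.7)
The overall idea is to use the factorization $\mathbb{P}_t=\delta_{e^{-t}}\mathbb{Q}_{(1-e^{-2t})/2}$ from \eqref{eq:OU_semigroup}, which expresses the OU flow on $G$ as a time-rescaled horizontal heat flow followed by a sub-Riemannian dilation. This reduces each of the three claims about $\mathbb{L}=\Delta_{\mathcal H}+\mathbb{D}$ to a fact about $\Delta_{\mathcal H}$ already established in Lemma~\ref{lem:core_Q} plus a fact about the dilation generator $\mathbb{D}$; the extra decay hypothesis $|v|\partial f/\partial v\to 0$ that distinguishes $\mathcal{C}'$ from the class $\mathcal{C}$ of Lemma~\ref{lem:core_Q} is precisely what is needed to handle the $\mathbb{D}$-contribution.

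To prove $\mathcal{C}'\subset\mathcal{D}(\mathbb{L})$, I would split
\begin{align*}
\frac{\mathbb{P}_tf(g)-f(g)}{t}=\frac{\mathbb{Q}_{(1-e^{-2t})/2}f(\delta_{e^{-t}}g)-f(\delta_{e^{-t}}g)}{t}+\frac{f(\delta_{e^{-t}}g)-f(g)}{t}.
\end{align*}
Since $\mathcal{C}'\subset\mathcal{C}\subset\mathcal{D}(\Delta_{\mathcal H})$ by Lemma~\ref{lem:core_Q} and $\delta_{e^{-t}}$ is a contraction on $C_0(G)$, the first term converges uniformly to $\Delta_{\mathcal H}f$ (using that $\Delta_{\mathcal H}f\in C_0(G)$ is uniformly continuous). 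The second term is the difference quotient of the dilation semigroup; a mean-value argument gives uniform convergence to $\mathbb{D}f$ provided $h\cdot\nabla_h f$ and $v\cdot\nabla_v f$ lie in $C_0(G)$, which follows from the decay conditions defining $\mathcal{C}'$: the bound $|h|^2|\partial f/\partial h_i|\to 0$ controls $h\cdot\nabla_h f$ at infinity, and the new condition $|v|\partial f/\partial v\to 0$ is exactly what is needed for the vertical part $2v\cdot\nabla_v$ of $\mathbb{D}$. Adding yields $\mathbb{L}f=\Delta_{\mathcal H}f+\mathbb{D}f$ in the Feller sense.

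For the invariance $\mathbb{P}_t(\mathcal{C}')\subseteq\mathcal{C}'$, I would combine two sub-invariances. The anisotropic dilation $\delta_{e^{-t}}$ preserves $\mathcal{C}'$ because it only rescales the arguments of $f$ and both decay conditions are preserved under such rescaling. For $\mathbb{Q}_s$, I would reuse the convolution-type representation from Lemma~\ref{lem:core_Q},
\begin{align*}
\mathbb{Q}_sf(h,v)=\int_G f\bigl(h-\sqrt{s}\,\widetilde{h},\,v-s\widetilde{v}-\sqrt{s}\,\omega(h,\widetilde{h})\bigr)\mathsf{q}_1(\widetilde{h},\widetilde{v})\,d\widetilde{h}\,d\widetilde{v},
\end{align*}
together with the fact that $\mathsf{q}_1$ has all polynomial moments; dominated convergence then shows that $\mathbb{Q}_s$ propagates not only the $|h|^2$-type decay (the content of Lemma~\ref{lem:core_Q}) but also the new $|v|\partial f/\partial v\to 0$ condition. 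Composing the two invariances and invoking \eqref{eq:OU_semigroup} gives $\mathbb{P}_t(\mathcal{C}')\subseteq \mathcal{C}'$.

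Finally, $\mathcal{C}'$ is dense in $C_0(G)$ (since $C^\infty_c(G)\subset\mathcal{C}'$), lies in $\mathcal{D}(\mathbb{L})$, and is $(\mathbb{P}_t)$-invariant, so by the standard semigroup criterion \cite[Lemma~31.6]{SatoBook1999} it is a core for $\mathbb{L}$. To descend from $\mathcal{C}'$ to $C^\infty_c(G)$ I would reuse the cutoff from the proof of Lemma~\ref{lem:core_Q}: taking $f_n(g)=f(g)\varphi(|g|^2/n^2)$ with $\varphi\in C^\infty_c(\mathbb{R})$ equal to $1$ near the origin, the decay hypotheses in $\mathcal{C}'$ annihilate the error terms produced when $\mathbb{L}$ differentiates the cutoff, giving both $f_n\to f$ and $\mathbb{L}f_n\to\mathbb{L}f$ in $C_0(G)$. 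The main technical point is ensuring uniform (not merely pointwise) convergence of the dilation-type difference quotients and of the drift term after the cutoff; this is exactly what the condition $|v|\partial f/\partial v\to 0$ is engineered to secure, and once it is available, the rest of the argument parallels Lemma~\ref{lem:core_Q}.
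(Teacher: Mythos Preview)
Your proposal is correct and follows essentially the same approach as the paper: both use the factorization $\mathbb{P}_t=\delta_{e^{-t}}\mathbb{Q}_{(1-e^{-2t})/2}$ to split the difference quotient into a heat part and a dilation part, establish $\mathbb{P}_t$-invariance of $\mathcal{C}'$ by composing $\mathbb{Q}_s$-invariance (argued as in Lemma~\ref{lem:core_Q}) with invariance under dilations, and then apply \cite[Lemma~31.6]{SatoBook1999} together with the cutoff approximation from Lemma~\ref{lem:core_Q}. Your treatment is, if anything, slightly more careful in insisting on uniform convergence of the dilation difference quotient (which is indeed what the $|v|\,\partial f/\partial v\to 0$ condition is designed for), whereas the paper is content with pointwise convergence at that step.
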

	\begin{proof}[Proof of Theorem~\ref{prop:P_gamma}]
		For \eqref{it:3}, we first need to show that $\mathbb{P}^\psi$ is a semigroup on $C_0(G)$. By \eqref{eq:P} we have that for any $s, t>0$, $(\lambda,\nu)\in\mathcal{Z}\times \R^k$, and $f\in\mathcal{S}(G)$  
		\begin{align*}
			&\mathcal{F}_G(\mathbb{P}^{\psi}_t\mathbb{P}^{\psi}_sf)(\lambda,\nu) 
			\\
			&=e^{-(n+2m)t} e^{\psi_t(\lambda)} d_{e^{-t}}\mathcal{F}_G(\mathbb{P}^{\psi}_sf)(e^{2t}\lambda ,e^t\nu )d_{e^t} H^{\lambda,\nu}_{\frac{e^{2t}-1}{2}} 
			\\
			&=e^{-(n+2m)(s+t)}e^{\psi_t(\lambda)+\psi_s(e^{2t}\lambda)}  d_{e^{-(t+s)}}\mathcal{F}_G(f)(e^{2(t+s)}\lambda,e^{t+s}\nu )d_{e^{s}} H^{e^{2t}\lambda, e^t\nu }_{\frac{e^{2s}-1}{2}} d_{e^{t}} H^{\lambda,\nu}_{\frac{e^{2t}-1}{2}} 
			\\
			&=e^{-(n+2m)(s+t)}e^{\psi_{t+s}(\lambda)} d_{e^{-(t+s)}}\mathcal{F}_G(f)(e^{2(t+s)}\lambda, e^{t+s}\nu)d_{e^{(s+t)}} H^{\lambda,\nu}_{\frac{e^{2(s+t)}-e^{2t}}{2}} H^{\lambda,\nu}_{\frac{e^{2t}-1}{2}} 
			\\
			&=\mathcal{F}_G(\mathbb{P}^{\psi}_{t+s} f)(\lambda,\nu),
		\end{align*}
		where the second to the last equality follows from Lemma~\ref{lem:scaling_Harmonic} and the observation $\psi_{t+s}(\lambda)=\psi_t(\lambda)+\psi_s(e^{2t}\lambda)$. By injectivity of the GFT we conclude that $\mathbb{P}^{\psi}_t \mathbb{P}^{\psi}_s=\mathbb{P}^{\psi}_{t+s}$ on $\mathcal{S}(G)$. The previous identity extends to $C_0(G)$ by using the density of $\mathcal{S}(G)$ in $C_0(G)$. Next, to find the generator of $\mathbb{P}^{\psi}$, we proceed similarly to the proof of Theorem~\ref{thm:heat_perurbation}. For any $f\in \mathcal{C}'$, where $\mathcal C'$ has been introduced in Lemma~\ref{cor:gen_OU_core}, we note that $\mathbb{L}_\psi f\in C_0(G)$. Also, for any $t\geqslant 0$ we have 
		\begin{equation}\label{eq:limit_vertical}
			\begin{aligned}
				\mathbb{P}^{\psi}_tf(h,v)-f(h,v)&=\int_{\R^m}\left(\mathbb{P}_tf(h,v-v')-f(h,v-v')\right)\mu^{\psi_t}(dv') \\
				&+\int_{\R^m} f(h, v-v')\mu^{\psi_t}(dv')-f(h,v).
			\end{aligned}
		\end{equation}
		We first claim that
		\begin{align}\label{eq:limit1}
			\lim_{t\to 0} t^{-1}\left(\int_{\R^m} f(h, v-v')\mu^{\psi_t}(dv')-f(h,v)\right)=A^\psi_{\mathcal{V}} f(h, v)
		\end{align}
		for each $(h,v)$. Since the function $v\mapsto f(h,v)\in C^\infty_0(\R^m)$,
		\eqref{eq:limit_vertical} follows exactly in the same manner as in the proof of \cite[Theorem~31.5]{SatoBook1999} after observing that 
		\begin{align*}
			\exp\left(t^{-1} (e^{\psi_t(\lambda)}-1)\right)=\psi(\lambda) \quad \text{for all} \ \lambda\in\R^m.
		\end{align*}
		On the other hand, since $f\in \mathcal{C}'\subset \mathcal{D}(\mathbb{L})$, one has 
		\begin{align*}
			\lim_{t\searrow 0}\frac{\mathbb{P}_tf-f}{t}= \mathbb{L} f \  \text{ in $C_0(G)$}.
		\end{align*}
		As $\lim_{t\searrow 0}\psi_t(\lambda)=1$, using the weak convergence of $\mu^{\psi_t}$ to $\delta_0$ as $t\searrow 0$, we conclude that
		\begin{align*}
			&t^{-1}\int_{\R^m}\left(\mathbb{P}_tf(h,v-v')-f(h,v-v')\right)\mu^{\psi_t}(dv') \\
			&= \int_{\R^m}\mathbb{L}f(h, v-v')\delta_0(dv') \\
			&=\mathbb{L}f(h,v).
		\end{align*}
		Consequently, using \cite[Lemma~31.7]{SatoBook1999}, we have $\mathcal{C}'\subset \mathcal{D}(\mathbb{L}_{\psi})$. Let us first assume that the L\'evy measure $\kappa$ has finite mean. Then by \cite[Theorem~25.3]{SatoBook1999} it follows that $\mu^{\psi_t}$ has finite mean for each $t\geqslant 0$. Therefore, for any $t\geqslant 0$ and $f\in\mathcal{C}'$, 
		\begin{align*}
			&|v|\left|\frac{\partial}{\partial v}\int_{\R^m}\mathbb{P}_tf(h,v-v')\mu^{\psi_t}(dv')\right| \\
			&\le\int_{\R^m}|v-v'|\left|\frac{\partial}{\partial v}\mathbb{P}_tf(h,v-v')\right|\mu^{\psi_t}(dv')+\int_{\R^m}\left|\frac{\partial}{\partial v}\mathbb{P}_tf(h,v-v')\right||v'|\mu^{\psi_t}(dv').
		\end{align*}
		Since 
		\begin{align*}
		\lim_{|v|\to\infty}|v-v'|\frac{\partial}{\partial v}\mathbb{P}_tf(h,v-v')&= 0, \\
		\lim_{|v|\to\infty}\frac{\partial}{\partial v}\mathbb{P}_tf(h,v-v')&= 0
		\end{align*}
		uniformly, see Corollary~\ref{cor:gen_OU_core}, using dominated convergence theorem we conclude that $\mathbb{P}^\psi_t(\mathcal{C}')\subseteq \mathcal{C}'$ for all $t\geqslant 0$.
		Thus, applying \cite[Lemma~31.6]{SatoBook1999}, it follows that $\mathcal{C}'$ is a core for the generator $\mathbb{L}_\psi$ when $\kappa$ has finite mean. Finally, using the approximation technique in the proof of Lemma~\ref{lem:core_Q}, it follows that $C^\infty_c(G)$ is a core for $\mathbb{L}_\psi$ when $\kappa$ has finite mean. This is also equivalent to the density of $(\alpha I-\mathbb{L}_\psi)(C^\infty_c(G))$ in $C_0(G)$ for some $\alpha>0$, see e.g. \cite[Exercise~1.15]{EngelNagelBook2006}. Now, when $\kappa$ does not have a finite mean, let us denote $\kappa_n=\kappa(\cdot\cap B_n)$, where $B_n$ is the ball of radius $n$ in $\R^m$. Also, let $\psi_n\in\mathcal N$ be associated to the triplet $(\sigma,b,\kappa_n)$. In that case, for all $f\in C^\infty_c(G)$ we have 
		\begin{align*}
			|\mathbb{L}_{\psi_n}f(h,v)-\mathbb{L}_\psi f(h,v)|\le \int_{v'\in B^c_n} |f(h,v+v')-f(h,v)|\kappa(dv)\le \|f\|_\infty\kappa(B^c_n),
		\end{align*}
		which shows that $\mathbb{L}_{\psi_n}f \to \mathbb{L}_\psi f$ uniformly as $n\to\infty$. Since $(\alpha I-\mathbb{L}_{\psi_n})(C^\infty_c(G))$ is dense in $C_0(G)$ for all $n\geqslant 1$, the above approximation shows that $(\alpha I-\mathbb{L}_\psi)(C^\infty_c(G))$ is dense in $C_0(G)$ as well. Thus, $C^\infty_c(G)$ is a core for $\mathbb{L}_\psi$ for any $\psi\in\mathcal N$. This completes the proof of \eqref{it:3}. 
		
		For the existence of the limiting distribution, we note that for any $f\in C_0(G)$ and $(h,v)\in G$, we can use \eqref{eq:P_alpha} combined with \eqref{eq:OU_semigroup}, and  a simple change of variable yields 
		\begin{align}
			\mathbb{P}^{\psi}_tf(h,v)&=\int_{\R^m}\delta_{e^{-t}}\mathbb{Q}_{\frac{1-e^{-2t}}{2}}f(h,v-v')\mu^{\psi_t}(dv') \nonumber \\
			&=\int_{\R^m}\left(\delta_{e^{-t}}\mathbb{Q}_{\frac{1-e^{-2t}}{2}}f(h,v-v')-\delta_{e^{-t}}\mathbb{Q}_{\frac12}f(h,v-v')\right)\mu^{\psi_t}(dv') \label{eq:eq1} \\
			&+\int_{\R^m}\mathbb{Q}_{\frac12}f(e^{-t}h,e^{-2t}v-v') \mu^{\psi_t}\circ d_{e^{2t}}(dv'). \nonumber
		\end{align}
		The strong continuity of the semigroup $\mathbb{Q}_t$ implies that $\mathbb{Q}_{(1-e^{-2t})/2}f\to\mathbb{Q}_{1/2}f$ in the uniform topology of $C_0(G)$ as $t\to\infty$. As a result, the contraction property of the dilation group $\delta_{e^{-t}}$ implies that for each $(h,v)\in G$, the first term in \eqref{eq:eq1} converges to $0$ as $t\to\infty$.
		To conclude the proof, we need the following lemma.
		\begin{lemma}\label{lem:weak_conv}
			For any $\psi\in\mathcal{N}_{\log}$, $\mu^{\psi_t}\circ d_{e^{2t}}\to \mu_\psi$ weakly as $t\to\infty$, where $\mu^{\psi_t}$ and $\mu_\psi$ are defined by \eqref{eq:q_psi_t} and \eqref{eq:p_psi} respectively.
		\end{lemma}
		\begin{proof}
			For any $\lambda\in\R^m$ and $t>0$, we note that 
			\begin{align*}
				\int_{\R^m} e^{\i\langle\lambda,v\rangle}\mu^{\psi_t}\circ d_{e^{2t}}(dv)=e^{\psi_t(e^{-2t}\lambda)}.
			\end{align*}
			Now, for all $t\ge 0$, we have
			\begin{align*}
				\psi_t(e^{-2t}\lambda)=\int_0^t \psi(e^{-2(t-s)}\lambda)ds=\int_0^t \psi(e^{-2s}\lambda)ds,
			\end{align*}	
			which shows that $e^{\psi_t(e^{-2t}\lambda)}\overset{t\to\infty}{\longrightarrow} e^{-\psi_{-\infty}(\lambda)}$ point-wise as $\psi\in\mathcal{N}_{\log}$, see \cite[Theorem~17.5]{SatoBook1999} for details. This proves the lemma.
		\end{proof}
		Note that for any fixed $(h,v)\in G$, 
		\begin{align*}
			\lim_{t\to\infty}\mathbb{Q}_{\frac12}f(e^{-t}h, e^{-2t}v-v')=\mathbb{Q}_{\frac12}f(0,-v')
		\end{align*}	
		uniformly with respect to $v'$. This fact follows from the uniform continuity of the function $\mathbb{Q}_{\frac12}f$. Thus, Lemma~\ref{lem:weak_conv} yields
		\begin{align*}
			\lim_{t\to\infty}&\int_{\R^m}\mathbb{Q}_{\frac12}f(e^{-t}h,e^{-2t}v-v') \mu^{\psi_t}\circ d_{e^{2t}}(dv') \\
			&=\int_{\R^m}\mathbb{Q}_{\frac{1}{2}}f(0,-v')\mu_\psi(dv')\\
			&=\int_{G}f(h,v)\left(\int_{\R^m}\mathsf{q}_{\frac12}(h,-v'-v)\mu_\psi(dv')\right)dhdv\\
			&=\int_{G}f(h,v)\left(\int_{\R^m}\mathsf{q}_{\frac12}(h,v+v')\mu_\psi(dv')\right)dhdv,
		\end{align*}
		where the last line follows as the function $v\mapsto\mathsf{q}_{\frac12}(h,v)$ is even.
		This shows that for any $g\in G$, the kernel $\mathsf{p}^{\psi}_t$ of $\mathbb{P}^{\psi}_t$ satisfies
		\begin{align*}
			\lim_{t\to\infty} \mathsf{p}^{\psi}_t(g,\cdot)=\mathsf{q}_{\frac12}{\ast}_v \overline{\mu}_\psi   \text{ weakly},
		\end{align*}
		where $\overline{\mu}_\psi(B)=\mu_\psi(-B)$ for all $B\in\mathcal{B}(\R^m)$.
		Therefore, $\mathsf{p}_\psi=\mathsf{q}_{\frac12}{\ast}_v \overline{\mu}_\psi$.
		Finally, the proof of \eqref{it:4} is concluded after observing that 
		\begin{align*}
			&(\mathsf{q}_{\frac12}{\ast}_v \overline{\mu}_\psi)^{\lambda,\nu}(z)\\
			&=e^{-\frac{|\nu|^2}{2}}e^{-\psi_{-\infty}(-\lambda)}\prod_{j=1}^d\frac{\eta_j(\lambda)}{2\sinh(\eta_j(\lambda)/2)}\exp\left(-\sum_{j=1}^d\frac{\eta_j(\lambda)|z_j|^2}{2}\coth(\eta_j(\lambda)/2)\right),
		\end{align*} 
		where for any $f\in L^1(G)$, $f^{\lambda,\nu}$ is defined by \eqref{eq:fourier_radical}.
		Since $\mathsf{q}_{\frac12}$ is smooth, the limiting distribution $\mathsf{p}_\psi$ is absolutely continuous with respect to the Haar measure on $G$, and the density, still denoted by $\mathsf{p}_\psi$, is also smooth.
	\end{proof}
	
	Since we will be considering the Markov semigroup $\mathbb{P}^\psi$ on the weighted space $L^p(G,\mathsf{p}_\psi)$, $p\in [1,\infty]$, we restrict ourselves to the subclass $\mathcal{N}_{\log}$ throughout the rest of the article.
	
	\subsection{Intertwining with OU semigroups on Euclidean spaces}
	In the next result, we show that the horizontal projections of the semigroups $\mathbb{P}^{\psi}_t$ coincide with the OU semigroup on $\R^{n}$. To this end, let us introduce the operator $\Pi:L^p(\R^{n},\widetilde{\mu})\longrightarrow L^p(G,\mathsf{p}_\psi)$ defined by 
	\begin{align}
		\Pi f(h,v)=f(h),
	\end{align}
	where $\widetilde{\mu}(dh)=(2\pi)^{-\frac{n}{2}} e^{-|h|^2/2} dh$ is the Gaussian measure on $\R^n$.
	Then, we have the following result.
	\begin{proposition}\label{prop:horizontal_intertwining}
		For any $\psi\in\mathcal{N}_{\log}$ (see \eqref{eq:N_log} for definition) and $p\geqslant 1$, $\Pi:L^p(\R^{n},\widetilde{\mu})\longrightarrow L^p(G,\mathsf{p}_\psi)$ is a isometry. Moreover, for any $f\in L^p(\R^{n},\widetilde{\mu})$, one has
		\begin{align*}
			\mathbb{P}^\psi_t\Pi f=\Pi \widetilde{P}_t f,
		\end{align*}
		where $(\widetilde{P}_t)_{t\geqslant 0}$ is the OU semigroup on $L^p(\R^{n},\mu)$ generated by $\widetilde{L}=\Delta-\langle h, \nabla\rangle$.
	\end{proposition}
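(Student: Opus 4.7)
The proof splits into the isometry claim and the intertwining claim, and both reduce to straightforward consequences of the factorization $\mathsf{p}_\psi = \mathsf{q}_{1/2} \ast_v \overline{\mu}_\psi$ from Theorem~\ref{prop:P_gamma}\eqref{it:4} and the Mehler identity \eqref{eq:OU_semigroup}. The plan is first to identify the horizontal marginal of $\mathsf{p}_\psi$ with $\widetilde{\mu}$, then to check directly that $\mathbb{P}^\psi_t$ acts on $\Pi f$ by the Euclidean OU semigroup.

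\textbf{Isometry.} By Fubini and the fact that $\overline{\mu}_\psi$ is a probability measure, $\int_{\R^m} \mathsf{p}_\psi(h,v)\,dv = \int_{\R^m} \mathsf{q}_{1/2}(h,v)\,dv$. This last quantity is the density at time $t=1/2$ of the horizontal projection of the $\mathbb{Q}$-diffusion. From \eqref{eq:V_i}, since $\nabla_v \Pi f \equiv 0$, the sub-Laplacian restricted to horizontal-only functions coincides with the Euclidean Laplacian $\Delta_{\R^n}$, so this horizontal projection is Brownian motion on $\R^n$ with generator $\Delta_{\R^n}$ and thus density $(4\pi s)^{-n/2} e^{-|h|^2/(4s)}$ at time $s$. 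Evaluating at $s=1/2$ yields exactly $(2\pi)^{-n/2} e^{-|h|^2/2}$, the density of $\widetilde{\mu}$. Integrating $|\Pi f|^p = |f(h)|^p$ against $\mathsf{p}_\psi$ and using this marginal identification gives $\|\Pi f\|_{L^p(G,\mathsf{p}_\psi)} = \|f\|_{L^p(\R^n,\widetilde{\mu})}$ for every $p \in [1,\infty]$.

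\textbf{Intertwining.} At the semigroup level, the identification $\Delta_{\mathcal H}\Pi = \Pi \Delta_{\R^n}$ upgrades to $\mathbb{Q}_s \Pi f = \Pi Q_s f$, where $Q_s = e^{s \Delta_{\R^n}}$ is the classical heat semigroup on $\R^n$. Concretely, for $g=(h,v)$, a translation in the $v$-variable shows that $\int_{\R^m} \mathsf{q}_s(g^{-1} \star (h',v'))\,dv'$ depends on $(h,h')$ only through $h'-h$ and equals the Euclidean heat kernel at time $s$. Combining with the obvious $\delta_{e^{-t}}\Pi f = \Pi(d_{e^{-t}} f)$ and \eqref{eq:OU_semigroup},
\[
\mathbb{P}_t \Pi f \;=\; \delta_{e^{-t}} \mathbb{Q}_{(1-e^{-2t})/2} \Pi f \;=\; \Pi\bigl(d_{e^{-t}} Q_{(1-e^{-2t})/2} f\bigr) \;=\; \Pi \widetilde{P}_t f,
\]
where the last equality $\widetilde{P}_t = d_{e^{-t}} Q_{(1-e^{-2t})/2}$ is the standard Mehler factorization for the OU semigroup with generator $\widetilde{L} = \Delta_{\R^n} - \langle h,\nabla\rangle$, verified by differentiating $u(t,h):=(Q_{(1-e^{-2t})/2}f)(e^{-t}h)$ in $t$ and noting $s'(t)e^{2t}=1$ with $s(t)=(1-e^{-2t})/2$. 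Finally, \eqref{eq:P_alpha} together with the fact that $\Pi \widetilde{P}_t f$ is independent of the $v$-variable gives
\[
\mathbb{P}^\psi_t \Pi f(h,v) \;=\; \int_{\R^m} \Pi \widetilde{P}_t f(h,v-v')\,\mu^{\psi_t}(dv') \;=\; \widetilde{P}_t f(h) \;=\; \Pi \widetilde{P}_t f(h,v),
\]
since $\mu^{\psi_t}$ is a probability measure. Passage from a core such as $C_c^\infty(\R^n)$ to all of $L^p(\R^n,\widetilde{\mu})$ is immediate from the isometry and $L^p$-contractivity of $\widetilde P$ and $\mathbb{P}^\psi$.

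\textbf{Obstacle.} There is no serious technical obstacle; the argument is essentially bookkeeping. The only point requiring a small amount of care is that $\Pi f \notin C_0(G) \cap L^p(G, dg)$, so the identity $\mathbb{P}^\psi_t \Pi f = \Pi \widetilde{P}_t f$ must be established either by a direct kernel computation as above (handled by translation-invariance in $v$) or in the $L^p(G,\mathsf{p}_\psi)$ realization of $\mathbb{P}^\psi_t$; once the isometry is in place, these are equivalent by density.
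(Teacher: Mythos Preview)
Your proof is correct and follows essentially the same approach as the paper: identify the horizontal marginal of $\mathsf{p}_\psi$ with $\widetilde{\mu}$ (the paper does this by setting $\lambda=0$ in the heat-kernel formula \eqref{eq:heat_kernel_perturbed}, you do it via the factorization $\mathsf{p}_\psi=\mathsf{q}_{1/2}\ast_v\overline{\mu}_\psi$), then establish $\mathbb{P}_t\Pi=\Pi\widetilde{P}_t$ from \eqref{eq:OU_semigroup} and finally pass to $\mathbb{P}^\psi_t$ using \eqref{eq:P_alpha} and $v$-independence of $\Pi f$. Your explicit check of the Mehler factorization and your remark that $\Pi f\notin C_0(G)$ are nice touches the paper glosses over, but the argument is the same.
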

	\begin{proof}
		Let us define the projection map
		\begin{equation}
			\begin{aligned}
				\pi:G&\longrightarrow\R^{n} \\
				(h,v)&\longmapsto h.
			\end{aligned}
		\end{equation}
		With this notation, one has $\Pi f=f\circ\pi$. Denoting the heat kernel associated to $\mathbb{Q}^\psi_t$ by $\mathsf{q}^\psi_t$, we claim that $\widetilde{\mu}_t=\mathsf{q}^\psi_t\circ \pi^{-1}$, where $\widetilde{\mu}_t$ is the heat kernel associated with the Laplacian on $\R^{n}$. Indeed, putting $\lambda=0$ in \eqref{eq:heat_kernel_perturbed} one gets
		\begin{align}\label{eq:marginal}
			\int_{\R^m} \mathsf{q}^\psi_t(h,v) dv=\widetilde{\mu}_t(h),
		\end{align}
		which proves our claim. Since $\widetilde{\mu}=\widetilde{\mu}_{1/2}$, we have $\widetilde{\mu}=\mathsf{p}_\psi\circ\pi^{-1}$. This shows that $\Pi: L^p(\R^{n},\widetilde{\mu})\longrightarrow L^p(G,\mathsf{p}_\psi)$ is a isometry. Since $\pi$ also commutes with the dilations, that is,
		\begin{align*}
			\pi\delta_c=d_c\pi \text{  for all } c>0,
		\end{align*}
		\eqref{eq:marginal} implies that $\mathbb{P}_t\Pi=\Pi\widetilde{P}_t$ on $L^p(\R^n,\widetilde{\mu})$ for all $t>0$. Finally, for any $f\in C_0(\R^n)$, \eqref{eq:P_alpha} shows that 
		\begin{align*}
			\mathbb{P}^\psi_t \Pi f(h,v)&=\int_{\R^m}\mathbb{P}_t \Pi f(h,v-v')\mu^{\psi_t}(dv') \\
			&=\int_{\R^m}\Pi \widetilde{P}_t f(h,v-v')\mu^{\psi_t}(dv') \\
			&=\widetilde{P}_t f(h)=\Pi\widetilde{P}_t f(h,v).
		\end{align*}
		The above equality extends to $L^p(\R^n,\widetilde{\mu})$ due to the density of $C_0(\R^n)$ in $L^p(\R^n,\widetilde{\mu})$, which completes the proof.
	\end{proof}
	\begin{corollary}\label{cor:non-normal}
		For any $\psi\in\mathcal N$ and $t>0$, $\mathbb{P}^\psi_t$ is not a normal operator on $L^2(G,\mathsf{p}_\psi)$.
	\end{corollary}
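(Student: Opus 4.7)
The plan is to argue by contradiction, combining the horizontal intertwining of Proposition~\ref{prop:horizontal_intertwining} with a direct computation on a specific pair of polynomial eigenfunctions of $\mathbb L_\psi$ in order to derive an identity required by normality that fails in a direct adjoint calculation.

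By Proposition~\ref{prop:horizontal_intertwining}, each coordinate function $h_k$ lifted through $\Pi$ lies in $L^2(G,\mathsf p_\psi)$ and satisfies $\mathbb P^\psi_t h_k=e^{-t}h_k$, and a dilation-degree argument shows that $\{h_1,\dots,h_n\}$ span the polynomial part of the eigenspace of $\mathbb L_\psi$ for the eigenvalue $-1$. Since $\mathfrak g$ is genuinely of step two, one can choose indices $l,j$ for which the linear form $R_{lj}(h):=\Delta_{\mathcal H}(h_l v_j)=\sum_k c^j_{kl}h_k$, computed from \eqref{eq:V_i} using the structure constants of $\omega$, is not identically zero. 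Combined with $\mathbb D(h_l v_j)=-3 h_l v_j$ and $A^\psi_{\mathcal V}(h_l v_j)=\alpha_j h_l$ (a constant drift produced by the L\'evy-Khintchine triple of $\psi$), this gives $\mathbb L_\psi(h_l v_j)=\widetilde R_{lj}(h)-3 h_l v_j$ with $\widetilde R_{lj}:=R_{lj}+\alpha_j h_l\not\equiv 0$, so that $\Phi_{lj}:=h_l v_j-\tfrac12\widetilde R_{lj}(h)$ is an eigenfunction of $\mathbb L_\psi$ with eigenvalue $-3$, hence of $\mathbb P^\psi_t$ with eigenvalue $e^{-3t}$.

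Now assume $\mathbb P^\psi_t$ is normal on $L^2(G,\mathsf p_\psi)$. Because the relevant eigenvalues $e^{-t}$ and $e^{-3t}$ are real, normality forces $(\mathbb L_\psi)^\ast h_k=-h_k$ for every $k$ and $(\mathbb L_\psi)^\ast\Phi_{lj}=-3\Phi_{lj}$; expanding the latter identity using the former and linearity yields
\begin{equation*}
(\mathbb L_\psi)^\ast(h_l v_j)\;=\;-3\,h_l v_j+\widetilde R_{lj}(h),
\end{equation*}
whose linear-in-$h$ part is nonzero. One then refutes this identity by a direct computation of the adjoint on $h_lv_j$, using $(\mathbb L_\psi)^\ast g=\mathsf p_\psi^{-1}\widehat{\mathbb L}_\psi(g\,\mathsf p_\psi)$ with $\widehat{\mathbb L}_\psi=\Delta_{\mathcal H}+\widehat{A^\psi_{\mathcal V}}-\mathbb D+(n+2m)$ (the left-invariant vector fields are divergence-free for the Haar measure, and $\widehat{\mathbb D}=-\mathbb D+(n+2m)$) together with the explicit density formula \eqref{eq:p_gamma}. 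Already in the diffusive case $\psi(\lambda)=-|\lambda|^2$, the fully Gaussian stationary Carnot Ornstein-Uhlenbeck law has block-diagonal covariance structure between horizontal and vertical variables, which yields the conditional-expectation identity $\mathbb E[H_{0,l}V_{0,j}\mid X_t]=e^{-3t}H_{t,l}V_{t,j}$ and hence $(\mathbb L_\psi)^\ast(h_l v_j)=-3h_l v_j$, manifestly lacking the $\widetilde R_{lj}(h)$ correction that normality demands. This contradicts the displayed identity and proves non-normality.

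The main obstacle will be the direct computation of $(\mathbb L_\psi)^\ast(h_l v_j)$ for a general $\psi\in\mathcal N_{\log}$: one must differentiate $\mathsf p_\psi$ explicitly and track the horizontal-vertical cross terms coming from $\Delta_{\mathcal H}$ against the noncommutative group law when computing $\widehat{\mathbb L}_\psi(h_lv_j\,\mathsf p_\psi)$. The structural reason the argument succeeds uniformly in $\psi$ is that the linear correction $\widetilde R_{lj}(h)$ is produced by the forward action of $\Delta_{\mathcal H}$ through the bracket $\omega$ but has no analogue in the time-reversed dynamics; thus no choice of $\psi\in\mathcal N_{\log}$ can rescue normality.
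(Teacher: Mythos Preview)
Your strategy --- produce explicit polynomial eigenfunctions and use normality to force an identity on the adjoint, then refute it by direct computation --- is different from the paper's, which invokes the Fuglede--Putnam--Rosenblum theorem on the intertwining $\mathbb{P}^\psi_t\Pi=\Pi\widetilde P_t$ to get $\mathbb{P}^{\psi*}_t\Pi=\Pi\widetilde P_t$ in one stroke, then differentiates at $t=0$ to obtain a generator identity on $C^\infty_c$ that visibly fails. The paper never touches polynomials or moment conditions.

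Your argument has genuine gaps. First, for general $\psi\in\mathcal N_{\log}$ the function $h_lv_j$ need not lie in $L^2(G,\mathsf p_\psi)$: the vertical second moment of $\mathsf p_\psi=\mathsf q_{1/2}\ast_v\overline{\mu}_\psi$ requires a second moment for $\mu_\psi$, which the logarithmic condition does not give. Likewise $A^\psi_{\mathcal V}(h_lv_j)=\alpha_j h_l$ presupposes $\int_{|v'|>1}|v'|\,\kappa(dv')<\infty$, again not implied by $\mathcal N_{\log}$. So your eigenfunction $\Phi_{lj}$ may not exist in the right space. Second, and more seriously, the computation you actually carry out is only in the diffusive case $\psi(\lambda)=-|\lambda|^2$, and there it rests on the claim that the stationary law is ``fully Gaussian with block-diagonal covariance''. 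This is false: the horizontal heat kernel $\mathsf q_{1/2}$ on a step-two Carnot group is \emph{not} Gaussian (cf.\ Proposition~\ref{prop:heat_kernel}), so $\mathsf p_\psi$ is not Gaussian either, and the conditional-expectation identity you invoke is unjustified. For the general $\psi$ you concede the adjoint computation is an ``obstacle'' and do not complete it. In short, the contradiction is never actually derived. The paper's route through Fuglede--Putnam--Rosenblum sidesteps all of this by working on $C^\infty_c(\R^n)$ where no integrability or explicit density computation is needed.
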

	\begin{proof}
		Denoting the $L^2(G,\mathsf{p}_\psi)$-adjoint of $\mathbb{P}^\psi_t$ by $\mathbb{P}^{\psi*}_t$, we have 
		\begin{align*}
			\mathbb{P}^{\psi*}_tf=\frac{\widehat{\mathbb{P}}^\psi_t (\mathsf{p}_\psi f)}{\mathsf{p}_\psi} \text{ for all } f\in C_b(G).
		\end{align*}
		Now assume that $\mathbb{P}^\psi_t$ is a normal operator. Then by the Fuglede-Putnam-Rosenblum theorem (see \cite[Theorem~12.16, p.~ 315]{GrandpaRudinBook}), Proposition~\ref{prop:horizontal_intertwining} implies that 
		\begin{align*}
			\mathbb{P}^{\psi*}_t \Pi =\Pi \widetilde{P}_t \text{ on } L^2(\R^n,\mu).
		\end{align*}
		Therefore for all $f\in C_b(\R^n)$ one must have
		\begin{align*}
			\widehat{\mathbb{P}}^\psi_t(\mathsf{p}_\psi \Pi f)=\mathsf{p}_\psi \Pi \widetilde{P}_t f.
		\end{align*}
		Taking the derivative of the above equality at $t=0$ implies that at least for all $f\in C^\infty_c(\R^n)$, 
		\begin{align*}
			(\Delta_{\mathcal{H}}+\mathbb{D}+A^{\overline{\psi}}_{\mathcal V}+n+2m)(\mathsf{p}_\psi\Pi f)=\mathsf{p}_\psi\Pi\widetilde{L} f,
		\end{align*}
		which clearly does not hold for all $f\in C^\infty_c(\R^n)$. This leads to a contradiction, which completes the proof.
	\end{proof}
	
	\subsection{Intertwining between OU semigroups} We recall the semigroups $\mathbb{P}$, $P^\psi$, and $\mathbb{P}^\psi$ defined by \eqref{eq:OU_semigroup}, \eqref{eq:OU_levy}, and Theorem~\ref{prop:P_gamma}\eqref{it:3} respectively.
	Consider the Markov kernels $\Lambda:C_0(G)\longrightarrow C_0(\R^m)$ and $\Gamma_\psi: C_0(G)\longrightarrow C_0(G)$ such that 
	\begin{equation}\label{eq:Lambda}
		\begin{aligned}
			&\Lambda f(v)=\int_{\R^n}\int_{\R^m}f(h,v+v')\mathsf{p}(h,v')dv' dh, \\
			&\Gamma_\psi f(h,v)=\int_{\R^m}f(h, v+v')\mu_\psi(dv'),
		\end{aligned}
	\end{equation}
	where $\mathsf{p}$ is the invariant distribution of $\mathbb{P}$ that also coincides with $\mathsf{q}_{1/2}$, the horizontal heat kernel on $G$ at $t=1/2$, and $\mu_\psi$ defined in \eqref{eq:p_psi} is the invariant distribution of $P^\psi$.
	The next lemma shows that both $\Lambda$ and $\Gamma_\psi$ can be extended as bounded operators on weighted $L^p$-spaces.
	\begin{lemma}
		For any $\psi\in\mathcal{N}_{\log}$ we have 
		\begin{align*}
			\mu_\psi\Lambda&=\mathsf{p}_\psi, \\
			\mathsf{p}\Gamma_\psi&=\mathsf{p}_\psi.
		\end{align*}
		As a result, $\Lambda$ (resp. $\Gamma_{\psi}$) extends as a bounded operator between $L^p(G, \mathsf{p}_\psi)$ and $L^p(\R^m, \mu_\psi)$ (resp. $L^p(G, \mathsf{p}_\psi)$ and $L^p(G,\mathsf{p})$) for any $p\geqslant 1$.
	\end{lemma}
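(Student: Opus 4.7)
The plan is to verify the two identities by direct Fubini computation, and then deduce the boundedness of $\Lambda$ and $\Gamma_\psi$ on weighted $L^p$-spaces from Jensen's inequality for Markov kernels.

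For $\mu_\psi \Lambda = \mathsf{p}_\psi$, I will test against an arbitrary $f \in C_c(G)$ and expand
\[
(\mu_\psi\Lambda)(f) = \int_{\R^m}\Lambda f(v)\,\mu_\psi(dv) = \int_{\R^m}\int_{\R^n}\int_{\R^m} f(h,v+v')\,\mathsf{p}(h,v')\,dv'\,dh\,\mu_\psi(dv).
\]
After applying Fubini and the substitution $u = v + v'$ in the inner integral, this becomes $\int_G f(h,u)\,\rho(h,u)\,dh\,du$ with density $\rho(h,u) = \int_{\R^m} \mathsf{p}(h,u-v)\,\mu_\psi(dv)$. Using the vertical evenness $\mathsf{p}(h,v) = \mathsf{p}(h,-v)$, which follows from the explicit formula of Proposition~\ref{prop:heat_kernel} (the Fourier transform in $(r,v)$ depends on $\lambda$ only through the even functions $\eta_j$ and on $\nu$ only through $|\nu|^2$), together with the reflection sending $\mu_\psi$ to $\overline{\mu}_\psi$, I can rewrite $\rho$ as $\mathsf{p} \ast_v \overline{\mu}_\psi$, which is precisely $\mathsf{p}_\psi$ by Theorem~\ref{prop:P_gamma}~\eqref{it:4}. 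The identity $\mathsf{p}\,\Gamma_\psi = \mathsf{p}_\psi$ follows analogously: expand $(\mathsf{p}\,\Gamma_\psi)(f) = \int_G \Gamma_\psi f \cdot \mathsf{p}\,dh\,dv$ by Fubini, change variables $u = v + v'$, and apply the same symmetry to identify the resulting density with $\mathsf{p}_\psi$.

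For the boundedness, observe that both $\Lambda$ and $\Gamma_\psi$ are Markov kernels: they are positivity-preserving and satisfy $\Lambda \mathbbm{1} = \mathbbm{1}$, $\Gamma_\psi \mathbbm{1} = \mathbbm{1}$ because $\mathsf{p}$ is a probability density on $G$ and $\mu_\psi$ is a probability measure on $\R^m$. Jensen's inequality applied pointwise gives $|\Lambda f|^p \leq \Lambda(|f|^p)$ for any $p \geq 1$ and $f \in C_b(G)$. Integrating against $\mu_\psi$ and using $\mu_\psi\Lambda = \mathsf{p}_\psi$ yields
\[
\int_{\R^m}|\Lambda f(v)|^p\,\mu_\psi(dv) \leq (\mu_\psi\Lambda)(|f|^p) = \int_G |f|^p\,d\mathsf{p}_\psi,
\]
so $\Lambda$ extends to a contraction $L^p(G,\mathsf{p}_\psi) \to L^p(\R^m,\mu_\psi)$ by density of $C_b(G)$. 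The analogous argument with $\mathsf{p}\,\Gamma_\psi = \mathsf{p}_\psi$ yields boundedness of $\Gamma_\psi$ between $L^p(G,\mathsf{p}_\psi)$ and $L^p(G,\mathsf{p})$.

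The principal technical point is the bookkeeping in the first step: reconciling the convolution-with-$\mu_\psi$ density produced by Fubini with the convolution-with-$\overline{\mu}_\psi$ formula from Theorem~\ref{prop:P_gamma}~\eqref{it:4} requires a careful use of the vertical evenness of $\mathsf{p}$ from Proposition~\ref{prop:heat_kernel} to absorb the reflection $\mu_\psi \leftrightarrow \overline{\mu}_\psi$. Once this identification is in hand, the passage to bounded operators via Jensen's inequality is entirely standard.
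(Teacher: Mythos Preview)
Your proof is correct and follows essentially the same route as the paper: both verify the measure identities by testing against bounded functions, applying Fubini with a change of variable, and identifying the resulting density as $\mathsf{p}\ast_v\mu_\psi=\mathsf{p}_\psi$; both then deduce the $L^p$-boundedness from Jensen's inequality for Markov kernels. You are in fact more explicit than the paper about the evenness of $\mathsf{p}=\mathsf{q}_{1/2}$ in the vertical variable needed to reconcile $\mathsf{p}\ast_v\mu_\psi$ with the formula $\mathsf{p}_\psi=\mathsf{p}\ast_v\overline{\mu}_\psi$ from Theorem~\ref{prop:P_gamma}\eqref{it:4}; the paper simply asserts $\mathsf{p}\ast_v\mu_\psi=\mathsf{p}_\psi$ without comment.
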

	\begin{proof}
		For any $f\in C_b(G)$ by a change of variable and using Fubini's theorem we note that 
		\begin{align*}
			\mu_\psi\Lambda f&=\int_{\R^m}\left(\int_{\R^n}\int_{\R^m}\mathsf{p}(h,v'-v)f(h,v') dv' dh\right)\mu_\psi(dv) \\
			&=\int_{\R^n}\int_{\R^m}\left(\int_{\R^m}\mathsf{p}(h,v'-v)\mu_\psi(dv)\right)f(h,v') dv' dh \\
			&=\int_{\R^n}\int_{\R^m}\mathsf{p}_\psi(h,v')f(h,v') dv' dh,
		\end{align*}
		where we used fact that $\mathsf{p}{\ast}_v \mu_\psi=\mathsf{p}_\psi$.
		Similarly, we have 
		\begin{align*}
			\mathsf{p}\Gamma_\psi f&=\int_{\R^n}\int_{\R^m}\left(\int_{\R^{m}} f(h, v+v')\mu_\psi(dv')\right)\mathsf{p}(h,v) dv dh \\
			&=\int_{\R^n}\int_{\R^m}\left(\int_{\R^m} \mathsf{p}(h,v-v')p_{\psi}(dv')\right) f(h, v)dv dh \\
			&=\int_{\R^n} \int_{\R^m}f(h,v)\mathsf{p}_\psi(h, v) dh dv.
		\end{align*}
		This completes the proof of the first statement. Next, for any $f\in L^p(G, \mathsf{p}_\psi)\cap C_b(G)$ with $p\geqslant 1$ one has 
		\begin{align*}
			\int_{\R^m} (\Lambda f)^p(v)p_{\psi}(dv)
			&\le \int_{\R^m} \Lambda (|f|^p)(v) p_{\psi}(dv) dv \\
			&= \int_{G}|f(h,v)|^p \mathsf{p}_\psi(h,v) dh dv.
		\end{align*}
		The above inequality extends to $L^p(G, \mathsf{p}_\psi)$ by a density argument. The extension of $\Gamma_\psi$ between the $L^p$-spaces can be proved similarly as above.
	\end{proof}

	We are now ready to state the main result of this section.
	\begin{theorem}\label{thm:OU_intertwining}
		For any $\psi\in\mathcal{N}_{\log}$ and $p\geqslant 1$, the following intertwining relationships hold.
		\begin{align}
			P^{\psi}_t \Lambda& = \Lambda \mathbb{P}^{\psi}_t \text{ on } L^p(G, \mathsf{p}_\psi), \label{eq:Q_P}
			\\
			\mathbb{P}_t \Gamma_\psi & =  \Gamma_\psi \mathbb{P}^{\psi}_t \text{ on } L^p(G, \mathsf{p}_\psi), \label{eq:P_P}
		\end{align}
		where $\mathbb{P}$ is defined in \eqref{eq:OU_semigroup}, and $P^\psi$ is the semigroup corresponding to the stochastic differential equation \eqref{eq:P_psi_sde}.
	\end{theorem}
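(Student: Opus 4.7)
The plan is to prove both intertwinings first at the generator level on the common core $C^\infty_c(G)$ (which is a core for $\mathbb{L}_\psi$ in $L^p(G,\mathsf{p}_\psi)$ by the remark following Theorem~\ref{prop:P_gamma}) and then promote them to the semigroup level by a standard closure-and-differentiation argument. The key simplification is that both $\Lambda$ and $\Gamma_\psi$ act by convolution in the vertical variable (composed, in the case of $\Lambda$, with integration over the horizontal variables against $\mathsf{p}$), so they commute with every operator that is translation-invariant in $v$. In particular $A^\psi_{\mathcal{V}}$ and $\Delta_{\mathcal{H}}$ commute with $\Lambda$ and $\Gamma_\psi$, so the entire calculation reduces to controlling the commutator with the dilation generator $\mathbb{D}=-h\cdot\nabla_h-2v\cdot\nabla_v$, which will be supplied in each case by the invariance of the relevant equilibrium measure.

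For \eqref{eq:P_P}, a direct application of the product rule gives, for $f\in C^\infty_c(G)$,
\[
[\mathbb{D},\Gamma_\psi]f(h,v) \;=\; 2\int_{\R^m} v'\cdot\nabla_v f(h,v+v')\,\mu_\psi(dv').
\]
Freezing $(h,v)$ and setting $g(u):=f(h,v+u)$, the right hand side equals $\int 2u\cdot\nabla g(u)\,\mu_\psi(du)$; stationarity of $\mu_\psi$ under $P^\psi$, i.e.\ $\int L_\psi g\,d\mu_\psi=0$, rewrites this as $\int A^\psi g\,d\mu_\psi=\Gamma_\psi A^\psi_{\mathcal{V}} f(h,v)$. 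Combined with $[\Delta_{\mathcal{H}},\Gamma_\psi]=0$ and $[\Gamma_\psi,A^\psi_{\mathcal{V}}]=0$ from translation invariance, this gives $\mathbb{L}\Gamma_\psi=\Gamma_\psi\mathbb{L}+\Gamma_\psi A^\psi_{\mathcal{V}}=\Gamma_\psi\mathbb{L}_\psi$ on $C^\infty_c(G)$.

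For \eqref{eq:Q_P} the same scheme applies, with $\mathsf{p}$ replacing $\mu_\psi$. Fix $v\in\R^m$ and set $F(h,v'):=f(h,v+v')\in C^\infty_c(G)$. Translation invariance in $v$ gives $(\Delta_{\mathcal{H}}F)(h,v')=(\Delta_{\mathcal{H}}f)(h,v+v')$ and $A^\psi\Lambda=\Lambda A^\psi_{\mathcal{V}}$. After cancelling the $A^\psi$ contributions and collecting the terms from $\mathbb{D}$ against the factor $-2v\cdot\nabla\Lambda f$ in $L_\psi\Lambda f$, the whole difference $L_\psi\Lambda f(v)-\Lambda\mathbb{L}_\psi f(v)$ collapses to
\[
-\int_G \mathbb{L} F(h,v')\,\mathsf{p}(h,v')\,dh\,dv',
\]
which vanishes because $\mathsf{p}$ is the invariant density of $\mathbb{P}$. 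Hence $L_\psi\Lambda=\Lambda\mathbb{L}_\psi$ on $C^\infty_c(G)$.

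For the final lift to the semigroup level, the preceding lemma shows that $\Lambda$ and $\Gamma_\psi$ are bounded between the relevant $L^p$-spaces, so by closedness the two generator identities extend to all of $\mathcal{D}(\mathbb{L}_\psi)$. For $f\in\mathcal{D}(\mathbb{L}_\psi)$, differentiating the curves $s\mapsto P^\psi_{t-s}\Lambda \mathbb{P}^\psi_s f$ and $s\mapsto \mathbb{P}_{t-s}\Gamma_\psi\mathbb{P}^\psi_s f$ on $[0,t]$ yields a vanishing derivative, so the two endpoints coincide, and density of $\mathcal{D}(\mathbb{L}_\psi)$ in $L^p(G,\mathsf{p}_\psi)$ together with continuity of all operators involved extends both identities to all of $L^p$. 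The main delicate point is to justify the termwise differentiation and that a suitable domain is preserved by $\mathbb{P}^\psi_s$; this is handled by the invariant subcore $\mathcal{C}'$ of Lemma~\ref{cor:gen_OU_core} in the finite-mean case, together with the $\kappa_n$-truncation approximation already used in the proof of Theorem~\ref{prop:P_gamma} to remove the moment assumption.
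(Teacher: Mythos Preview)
Your approach is correct and takes a genuinely different route from the paper's. You establish the intertwining at the generator level on $C^\infty_c(G)$ by exploiting two structural facts: (i) both $\Lambda$ and $\Gamma_\psi$ are built from convolution in the vertical variable (and, for $\Lambda$, integration against $\mathsf{p}$), hence commute with $\Delta_{\mathcal{H}}$ and $A^\psi_{\mathcal{V}}$; (ii) the remaining commutator with $\mathbb{D}$ collapses, via the identities $\int L_\psi g\,d\mu_\psi=0$ and $\int\mathbb{L}F\,d\mathsf{p}=0$, to zero. The paper instead works directly at the semigroup level via the generalized Fourier transform: for \eqref{eq:Q_P} it passes to the $L^2(G)$-adjoint, introduces the Fourier-multiplier operator $\mathcal{I}$ with $\widehat{\mathcal{I}}|_{L^2\cap C_b}=\Lambda$ (Lemma~\ref{lem:V=Lambda*}), and verifies $\widehat{\mathbb{P}}^\psi_t\mathcal{I}=\mathcal{I}\widehat{P}^\psi_t$ by a direct multiplier computation using Proposition~\ref{prop:fourier_transform} and Lemma~\ref{lem:scaling_Harmonic}; for \eqref{eq:P_P} it similarly realizes $\Gamma_\psi$ as the multiplier $\widetilde{\mathcal{I}}$ (Lemma~\ref{lem:V_fourier}) and matches symbols using Lemma~\ref{lem:psi_t}. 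Your argument is more elementary in that it avoids the GFT machinery entirely and makes the probabilistic origin of the identities (stationarity of the reference measures) transparent. The cost is the extra lift from generators to semigroups, which, as you correctly flag, is delicate: for general $\psi\in\mathcal{N}_{\log}$ the measure $\mu_\psi$ need not have a first moment, so $\Gamma_\psi f$ for $f\in C^\infty_c(G)$ need not lie in $\mathcal{C}'$, and the $\kappa_n$-truncation you invoke is genuinely needed to close the argument. The paper's Fourier route sidesteps these domain issues by never leaving the semigroup level.
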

	We now consider a different representation of the operators $\Lambda$ and $\Gamma_\psi$ as Fourier multiplier operators, which will be useful in the proof of the above theorem. We start by defining two operators $\mathcal{I}:L^2(\R^m)\longrightarrow L^2(G)$ and $\widetilde{\mathcal{I}}:L^2(G)\longrightarrow L^2(G)$ such that for all $f\in L^2(\R^m),\widetilde{f}\in L^2(G)$, and $(\lambda,\nu)\in\mathcal{Z}\times\R^m$
	\begin{equation}\label{eq:V_fourier}
		\begin{aligned}
			\mathcal{F}_G(\mathcal{I}f)(\lambda,\nu)&=\mathcal{F}(f)(\lambda)H^{\lambda,\nu}_{\frac{1}{2}}, 
			\\
			\mathcal{F}_G(\widetilde{\mathcal I}\widetilde{f})(\lambda,\nu)&=\exp(-\psi_{-\infty}(-\lambda))\mathcal{F}_G(\widetilde{f})(\lambda,\nu),
		\end{aligned}
	\end{equation}
	where $H^{\lambda,\nu}_{\frac12}$ is defined in \eqref{eq:harmonic_osc}.
	\begin{lemma}\label{lem:V_fourier}
		$\mathcal{I}:L^2(\R^m)\longrightarrow L^2(G)$ and $\widetilde{\mathcal I}:L^2(G)\longrightarrow L^2(G)$ are bounded and injective. Moreover, for any $f\in L^2(\R^m)\cap C_b(\R^m)$ and $\widetilde{f}\in L^2(G)\cap C_b(G)$, 
		\begin{align}
			\mathcal If(h,v)&=\int_{\R^m} \mathsf{p}(h, v-v')f(v') dv' \label{eq:V}, \\
			\widetilde{\mathcal I}\widetilde{f}(h,v)&=\Gamma_\psi \widetilde{f}(h,v)\label{eq:V_tilde},
		\end{align} 
		where $\Gamma_{\psi}$ is defined in \eqref{eq:Lambda}.
	\end{lemma}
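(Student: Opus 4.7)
The plan is to work through the GFT multiplier representations in \eqref{eq:V_fourier} using the Plancherel formula (Proposition~\ref{prop:plancherel}), the identification $\mathcal{F}_G(\mathsf{p})(\lambda,\nu)=H^{\lambda,\nu}_{1/2}$ coming from \eqref{eq:heat_fourier} (since $\mathbb{Q}_{1/2}$ is right-convolution by $\mathsf{q}_{1/2}=\mathsf{p}$), and Lemma~\ref{lem:vertical_conv}. For boundedness of $\mathcal{I}$, I would first compute the Hilbert--Schmidt norm of the multiplier via the eigenvalue expansion \eqref{eq:harmonic_osc},
\[
\|H^{\lambda,\nu}_{1/2}\|^2_{\HS}=\sum_{\beta\in\N^d_0}e^{-\mathfrak{n}(\beta,\lambda,\nu)}=e^{-|\nu|^2}\prod_{j=1}^d\frac{1}{2\sinh(\eta_j(\lambda))}.
\]
Plugging this into Proposition~\ref{prop:plancherel}, using $\Pf(\lambda)=\prod_j\eta_j(\lambda)$, and the elementary bound $\eta_j(\lambda)/(2\sinh(\eta_j(\lambda)))\leqslant 1/2$, one gets $\|\mathcal{I}f\|^2_{L^2(G)}\leqslant C\|\mathcal{F}(f)\|^2_{L^2(\R^m)}=C'\|f\|^2_{L^2(\R^m)}$ after invoking Plancherel on $\R^m$. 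Injectivity of $\mathcal{I}$ is then immediate: if $\mathcal{I}f=0$, the first line of \eqref{eq:V_fourier} forces $\mathcal{F}(f)(\lambda)=0$ for a.e.\ $\lambda\in\mathcal{Z}$ because $H^{\lambda,\nu}_{1/2}$ is a nonzero operator, and since $\mathcal{Z}$ has full Lebesgue measure in $\R^m$ we conclude $f=0$.

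To establish \eqref{eq:V}, for $f\in L^2(\R^m)\cap C_b(\R^m)$ set $F(h,v):=\int_{\R^m}\mathsf{p}(h,v-v')f(v')dv'$. Young's inequality applied in the vertical variable (noting $\int\mathsf{p}(h,v)dv$ equals the Gaussian density $\widetilde{\mu}_{1/2}(h)$, which is in $L^2(\R^n)$) shows $F\in L^2(G)$. A change of variable in \eqref{eq:fourier_radical} followed by Fubini yields
\[
F^{\lambda,\nu}(z)=\mathcal{F}(f)(\lambda)\,\mathsf{p}^{\lambda,\nu}(z),
\]
and combining this with \eqref{eq:Fourier_Weyl} and $W_\lambda(\mathsf{p}^{\lambda,\nu})=H^{\lambda,\nu}_{1/2}$ gives $\mathcal{F}_G(F)(\lambda,\nu)=\mathcal{F}(f)(\lambda)H^{\lambda,\nu}_{1/2}=\mathcal{F}_G(\mathcal{I}f)(\lambda,\nu)$, so $F=\mathcal{I}f$ by injectivity of the GFT.

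For $\widetilde{\mathcal{I}}$, I would rewrite $\Gamma_\psi\widetilde{f}(h,v)=\int\widetilde{f}(h,v-u)\overline{\mu}_\psi(du)=\widetilde{f}\,{\ast}_v\,\overline{\mu}_\psi(h,v)$; Lemma~\ref{lem:vertical_conv} then gives $\mathcal{F}_G(\Gamma_\psi\widetilde{f})(\lambda,\nu)=\mathcal{F}(\mu_\psi)(-\lambda)\mathcal{F}_G(\widetilde{f})(\lambda,\nu)$. The change of variable $s\mapsto -s$ in \eqref{eq:psi_t} combined with \eqref{eq:p_psi} identifies $\mathcal{F}(\mu_\psi)(-\lambda)=e^{-\psi_{-\infty}(-\lambda)}$, so both operators have the same GFT multiplier, proving \eqref{eq:V_tilde}. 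Since this multiplier is a characteristic function of a probability measure it has modulus at most $1$, and Plancherel immediately yields that $\widetilde{\mathcal{I}}$ is a contraction on $L^2(G)$. The main subtle point, and the step I expect to be the one obstacle, is injectivity of $\widetilde{\mathcal{I}}$: one must rule out vanishing of $e^{-\psi_{-\infty}(-\lambda)}$ on a set of positive measure. The argument is that the exponent $-\psi_{-\infty}(\lambda)=\int_0^\infty\psi(e^{-2s}\lambda)ds$ is itself of L\'evy--Khintchine form (being a limit of convex combinations of such exponents), so $\mu_\psi$ is infinitely divisible, and the characteristic function of an infinitely divisible distribution never vanishes by a classical theorem of L\'evy (see \cite{SatoBook1999}). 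Therefore $\widetilde{\mathcal{I}}\widetilde{f}=0$ forces $\mathcal{F}_G(\widetilde{f})=0$ a.e., i.e., $\widetilde{f}=0$.
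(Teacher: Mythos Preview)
Your argument is correct and follows essentially the same route as the paper's proof: Plancherel plus the eigenvalue expansion of $H^{\lambda,\nu}_{1/2}$ for boundedness of $\mathcal{I}$, nonvanishing of the multiplier for injectivity, and identification of both operators as vertical convolutions via Lemma~\ref{lem:vertical_conv} for \eqref{eq:V} and \eqref{eq:V_tilde}. One remark: you flag injectivity of $\widetilde{\mathcal{I}}$ as the delicate step and invoke L\'evy's nonvanishing theorem for infinitely divisible characteristic functions, but this is unnecessary here---the multiplier $e^{-\psi_{-\infty}(-\lambda)}$ is literally the exponential of a finite complex number (finiteness of $\psi_{-\infty}$ being exactly what $\psi\in\mathcal{N}_{\log}$ guarantees), so it is trivially nonzero everywhere; the paper accordingly dispatches this in one line.
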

	\begin{proof}
		Since $H^{\lambda,\nu}_{\frac{1}{2}}$ is a Hilbert-Schmidt operator, $\mathcal I$ is well defined on its domain. Now, let us verify the boundedness of $\mathcal I$. Using the Plancherel theorem for $\mathcal{F}_G$, for all $f\in L^2(\R^m)$ we have,
		\begin{align*}
			\|\mathcal If\|^2_{L^2(G)}=\frac{1}{(2\pi)^{d+k+m}}\int_{\R^k}\int_{\R^m} \|\mathcal{F}_G(\mathcal I f)(\lambda,\nu)\|^2_{\HS} \Pf(\lambda) d\lambda d\nu.
		\end{align*}
		Now, for each $t>0$, and $\beta\in\N^d_0$ we have 
		\begin{align*}
			H^{\lambda,\nu}_t \Phi^\lambda_\beta= e^{-t\mathfrak{n}(\beta,\lambda,\nu)} \Phi^\lambda_{\beta},
		\end{align*}
		where $\{\Phi^\lambda_{\beta}: \beta\in\N^d_0\}$ are defined in \eqref{eq:E}. As a result, 
		\begin{align*}
			\|H^{\lambda,\nu}_{\frac{1}{2}}\|^2_{\HS}&=\sum_{\beta\in\N^d_0} e^{-\mathfrak{n}(\beta,\lambda,\nu)}\\
			&=\sum_{\beta\in\N^d_0}\exp\left(-|\nu|^2-\sum_{j=1}^d (2\beta_j+1)\eta_j(\lambda)\right) \\
			&=e^{-|\nu|^2}\prod_{j=1}^d\sum_{\beta_j=0}^\infty e^{-(2\beta_j+1)\eta_j(\lambda)} = e^{-|\nu|^2}\prod_{j=1}^d \frac{e^{-\eta_j(\lambda)}}{1-e^{-2\eta_j(\lambda)}}.
		\end{align*}
		Since $\eta_j$ are smooth and homogeneous of degree $1$, one has $\lim_{\lambda\to 0}\eta_j(\lambda)=0$ for each $j=1,\ldots, d$. Therefore, the function defined by
		\begin{align*}
			u(\lambda):=\Pf(\lambda)\prod_{j=1}^d\frac{e^{-\eta_j(\lambda)}}{1-e^{-2\eta_j(\lambda)}}=\prod_{j=1}^d\frac{\eta_j(\lambda)e^{-\eta_j(\lambda)}}{1-e^{-2\eta_j(\lambda)}}
		\end{align*}
		is bounded on $\R^m$. Consequently, for all $f\in L^2(\R^m)$,
		\begin{align}
			\|\mathcal I f\|^2_{L^2(G)}&=\frac{1}{(2\pi)^{d+k+m}}\int_{\R^{m+k}} \|\mathcal{F}_G(\mathcal I f)(\lambda,\nu)\|^2_{\HS}\Pf(\lambda) d\lambda d\nu \nonumber \\
			&\le \frac{c}{(2\pi)^m}\int_{\R^{m}}|\mathcal{F}(f)(\lambda)|^2 d\lambda=c\|f\|^2_{L^2(\R^m)}, \label{eq:plancherel}
		\end{align}
		where 
		\begin{align*}
			c=\frac{1}{(2\pi)^{d+k}}\|u\|_\infty\int_{\R^k} e^{-|\nu|^2}d\nu,
		\end{align*}
		and \eqref{eq:plancherel} follows by the Plancherel theorem on $L^2(\R^m)$. To show the injectivity, let $f\in L^2(\R^m)$ be such that $\mathcal If=0$. Then, for all $(\lambda,\nu)\in\mathcal{Z}\times\R^k$, 
		\begin{align*}
			\mathcal{F}(f)(\lambda,\nu)H^{\lambda,\nu}_{\frac{1}{2}}=0.
		\end{align*}
		Since $H^{\lambda,\nu}_{\frac{1}{2}}\neq 0$ for any $(\lambda,\nu)\in\mathcal{Z}\times\R^k$, it implies  that $\mathcal{F}(f)(\lambda,\nu)=0$, which shows that $f=0$ a.e.
		Injectivity of $\widetilde{\mathcal I}$ follows by a similar argument and the boundedness follows directly from the Plancherel theorem for $\mathcal{F}_G$. Now, for any $f\in C^\infty_c(\R^m)$, we note that 
		\begin{align*}
			\mathcal{F}_G(\mathsf{p}{\ast}_v {f})(\lambda,\nu)=\mathcal{F}(f)(\lambda)\mathcal{F}_G(\mathsf{p})(\lambda,\nu)=\mathcal{F}(f)(\lambda)H^{\lambda,\nu}_{\frac12},
		\end{align*}
		which shows that $\mathcal{I}f=\mathsf{p}{\ast}_v{f}$ for any $f\in C^\infty_c(\R^m)$. Using the density of $C^\infty_c(\R^m)$ in $L^2(\R^m)\cap C_b(\R^m)$, the proof of \eqref{eq:V} follows. Proof of \eqref{eq:V_tilde} follows by a similar argument.
	\end{proof}
	
	In the next lemma, we establish the link between the operators $\mathcal{I}$ and $\Lambda$. 
	\begin{lemma}\label{lem:V=Lambda*}
		Let $\widehat{\mathcal I}: L^2(G)\longrightarrow L^2(\R^m)$ denote the adjoint of $\mathcal I$. Then 
		\begin{align*}
			\Lambda=\left. \widehat{\mathcal I}\right|_{L^2(G)\cap C_b(G)} \\
		\end{align*}
	\end{lemma}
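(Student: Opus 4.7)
The plan is to verify directly, from the integral representation of $\mathcal{I}$ obtained in Lemma~\ref{lem:V_fourier}, that the adjoint $\widehat{\mathcal{I}}$ coincides with $\Lambda$ on a dense subclass, using Fubini together with a vertical translation. Since $\widehat{\mathcal{I}}: L^2(G) \longrightarrow L^2(\R^m)$ is already known to be bounded (as the adjoint of a bounded operator), and $\Lambda$ is bounded between the corresponding weighted $L^p$ spaces, it is enough to prove the pointwise-a.e. identity $\widehat{\mathcal{I}}g = \Lambda g$ for $g$ in a dense subset of $L^2(G)\cap C_b(G)$.

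First, I would pick $f \in L^2(\R^m)\cap C_b(\R^m)$ and $g \in L^2(G)\cap C_b(G)$ with, say, $f$ and $g$ compactly supported (or Schwartz) so that all the integrals below converge absolutely and Fubini applies. By Lemma~\ref{lem:V_fourier}, equation \eqref{eq:V}, we have $\mathcal{I}f(h,v) = \int_{\R^m} \mathsf{p}(h, v - v')\, f(v')\, dv'$. Then
\begin{align*}
\langle \mathcal{I}f, g\rangle_{L^2(G)}
&= \int_{\R^n}\int_{\R^m}\int_{\R^m} \mathsf{p}(h, v - v')\, f(v')\, \overline{g(h,v)}\, dv'\, dv\, dh \\
&= \int_{\R^m} f(v') \left( \int_{\R^n}\int_{\R^m} \mathsf{p}(h, v - v')\, \overline{g(h,v)}\, dv\, dh \right) dv'.
\end{align*}
The change of variable $v \mapsto v + v'$ in the inner integral (with $dv$ unchanged) yields
\begin{align*}
\int_{\R^n}\int_{\R^m} \mathsf{p}(h, v)\, \overline{g(h, v + v')}\, dv\, dh = \overline{\Lambda g(v')},
\end{align*}
by the definition of $\Lambda$ in \eqref{eq:Lambda} and the fact that $\mathsf{p}$ is real valued. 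Therefore $\langle \mathcal{I}f, g\rangle_{L^2(G)} = \langle f, \Lambda g\rangle_{L^2(\R^m)}$, which identifies $\widehat{\mathcal{I}}g = \Lambda g$ for all such $g$.

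To conclude, I would extend the identity from this dense subclass to all of $L^2(G) \cap C_b(G)$ by approximation. The only subtle point is justifying that the inner integral $\int_G \mathsf{p}(h, v-v')\overline{g(h,v)}\, dh\, dv$ is finite for a.e.~$v'$ and represents an $L^2$ function of $v'$ — this follows once we know the boundedness of $\widehat{\mathcal{I}}$ from $L^2(G)$ to $L^2(\R^m)$ (established in Lemma~\ref{lem:V_fourier} via Plancherel for $\mathcal{F}_G$), so no genuine obstacle arises. The main step is really the change of variable in the vertical direction that converts the convolution kernel of $\mathcal{I}$ into the integration kernel of $\Lambda$; everything else is bookkeeping with Fubini and a standard density argument.
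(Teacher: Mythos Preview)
Your argument is correct and essentially identical to the paper's: both compute $\langle \mathcal{I}f, g\rangle_{L^2(G)}$ using the integral representation \eqref{eq:V}, apply Fubini and the vertical change of variable $v\mapsto v+v'$, and identify the result as $\langle f, \Lambda g\rangle_{L^2(\R^m)}$. The paper works directly with $f\in L^2(\R^m)\cap C_b(\R^m)$ and $g\in L^2(G)\cap C_b(G)$ without the intermediate compactly-supported step, but your extra density/approximation layer does no harm.
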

	\begin{proof}
		For any $f\in L^2(\R^m)\cap C_b(\R^m)$ and $\widetilde{f}\in L^2(G)\cap C_b(G)$ we note that 
		\begin{align*}
			\langle\mathcal{I}f, \widetilde{f}\rangle_{L^2(G)}&=\int_{\R^n}\int_{\R^m}\left(\int_{\R^m} \mathsf{p}(h,v-v') f(v') dv'\right) \widetilde{f}(h,v)dvdh \\
			&=\int_{\R^m}\left(\int_{\R^n}\int_{\R^m} \mathsf{p}(h,v)\widetilde{f}(h,v+v')dv dh\right) f(v')dv' \\
			&= \langle f, \Lambda \widetilde{f}\rangle_{L^2(\R^m)},
		\end{align*}
		where the second to the last equality is justified by Fubini's theorem. 
	\end{proof}
	\begin{lemma}\label{lem:psi_t}
		For any $t\in [-\infty,\infty)$, let $\psi_t$ be defined by \eqref{eq:psi_t}. Then for all $t\in [-\infty,\infty)$ and $\lambda\in\R^m$ we have
		\begin{align*}
			-\psi_{-\infty}(e^{2t}\lambda)=-\psi_{-\infty}(\lambda)+\psi_t(\lambda).
		\end{align*}
	\end{lemma}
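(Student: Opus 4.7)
The plan is to reduce the identity to a single change of variables in the integral representation of $\psi_t$. The first equality in \eqref{eq:psi_t} gives us
\[
\psi_t(\lambda)=\int_0^t \psi(e^{2s}\lambda)\,ds,
\]
which makes sense for all finite $t\in\R$. For $t=-\infty$, since we work under the standing hypothesis $\psi\in\mathcal{N}_{\log}$, the integral $\int_{-\infty}^0 \psi(e^{2s}\lambda)\,ds$ converges (by \cite[Theorem~17.5]{SatoBook1999}), so I would \emph{define}
\[
\psi_{-\infty}(\lambda):=-\int_0^\infty \psi(e^{-2u}\lambda)\,du,
\]
and check, via the substitution $u=-s$, that this is consistent with the limit of $\psi_t(\lambda)$ as $t\to -\infty$ and with the formula \eqref{eq:p_psi} for $\mu_\psi$. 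Thus $-\psi_{-\infty}(\lambda)=\int_{-\infty}^0\psi(e^{2s}\lambda)\,ds$.

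Second, I would combine these representations with the substitution $u=s+t$ to compute directly:
\begin{align*}
-\psi_{-\infty}(e^{2t}\lambda)
&= \int_{-\infty}^0 \psi\bigl(e^{2s}\cdot e^{2t}\lambda\bigr)\,ds
= \int_{-\infty}^0 \psi\bigl(e^{2(s+t)}\lambda\bigr)\,ds \\
&= \int_{-\infty}^t \psi(e^{2u}\lambda)\,du
= \int_{-\infty}^0 \psi(e^{2u}\lambda)\,du + \int_0^t \psi(e^{2u}\lambda)\,du \\
&= -\psi_{-\infty}(\lambda) + \psi_t(\lambda),
\end{align*}
which is the claimed identity. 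The case $t=-\infty$ is the trivial consistency check $-\psi_{-\infty}(0)=0=-\psi_{-\infty}(\lambda)+\psi_{-\infty}(\lambda)$ (using $\psi(0)=0$).

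There is no real obstacle; the lemma is simply the semigroup-in-$t$ property of the family $\{\psi_t\}$ under the dilation action $\lambda\mapsto e^{2t}\lambda$. The only care needed is in justifying the extension to $t=-\infty$, which is where the logarithmic moment assumption $\psi\in\mathcal{N}_{\log}$ enters to guarantee absolute convergence of the tail integral.
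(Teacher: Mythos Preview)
Your proof is correct and is precisely what the paper has in mind: the authors simply write that ``the proof follows directly from the definition of $\psi_t$ in \eqref{eq:psi_t},'' and your change of variables $u=s+t$ together with the splitting of the integral is exactly that direct verification. If anything, you have been more careful than the paper by explicitly handling the $t=-\infty$ endpoint and noting where the $\mathcal{N}_{\log}$ hypothesis is used.
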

	\begin{proof}
		The proof follows directly from the definition of $\psi_t$ in \eqref{eq:psi_t}.
	\end{proof}
	\begin{proof}[Proof of Theorem~\ref{thm:OU_intertwining}]
		For \eqref{eq:Q_P}, we will prove the adjoint of the intertwining relationship, that is, for all $f\in L^2(\R^m)$,
		\begin{align}\label{eq:intertwining_adj}
			\widehat{\mathbb{P}}^{\psi}_t \mathcal{I} f =\mathcal{I}\widehat{P}^{\psi}_t f,
		\end{align}
		where $\mathcal{I}$ is defined in \eqref{eq:V_fourier}. Using the result of Proposition~\ref{prop:fourier_transform}, for any $f\in L^2(\R^m)$ we have 
		\begin{align}
			\mathcal{F}_G(\widehat{\mathbb P}^{\psi}_t \mathcal{I} f)(\lambda)&= e^{\psi_t(-e^{-2t}\lambda) } d_{e^t}\mathcal{F}_G(\mathcal I f)(e^{-2t}\lambda, e^{-t}\nu) d_{e^{-t}}H^{\lambda,\nu}_{\frac{1-e^{-2t}}{2}} \nonumber \\
			&=e^{\psi_t(-e^{-2t}\lambda) }\mathcal{F}(f)(e^{-2t}\lambda)d_{e^t} H^{e^{-2t}\lambda, e^{-t}\nu}_{\frac{1}{2}} d_{e^{-t}} H^{\lambda,\nu}_{\frac{1-e^{-2t}}{2}} \nonumber \\
			&=e^{\psi_t(-e^{-2t}\lambda) }\mathcal{F}(f)(e^{-2t}\lambda) H^{\lambda,\nu}_{\frac{1}{2}}, \label{eq:1}
		\end{align}
		where the last line follows from Lemma~\ref{lem:scaling_Harmonic} and the semigroup property of $(H^{\lambda,\nu}_t)_{t\geqslant 0}$. On the other hand, 
		\begin{align*}
			\mathcal{F}_G(\mathcal I \widehat{P}^{\psi}_t f)(\lambda,\nu)&=\mathcal{F}(\widehat{P}^{\psi}_t f)(\lambda)H^{\lambda,\nu}_{\frac{1}{2}} \\
			&= e^{\psi_t(-e^{-2t}\lambda) }\mathcal{F}(f)(e^{-2t}\lambda)H^{\lambda,\nu}_{\frac{1}{2}} \\
			&=\mathcal{F}_G(\widehat{\mathbb P}^{\psi}_t \mathcal{I}f)(\lambda,\nu),
		\end{align*}
		where the last identity follows from \eqref{eq:1}. Finally using Lemma~\ref{lem:V=Lambda*} and taking the adjoint of the identity in \eqref{eq:intertwining_adj}, \eqref{eq:Q_P} follows. Next, to prove \eqref{eq:P_P}, we note that for any $f\in C^\infty_c(G)$, \eqref{eq:P} with $\psi\equiv 0$ yields
		\begin{align*}
			\mathcal{F}_G(\mathbb{P}_t \widetilde{\mathcal{I}} f)(\lambda,\nu)&=e^{-(n+2m)t} d_{e^{-t}}\mathcal{F}_G(\widetilde{\mathcal{I}}f)(e^{2t}\lambda, e^t\nu)d_{e^t} H^{\lambda,\nu}_{\frac{e^{2t}-1}{2}}\\
			&=e^{-(n+2m)t}e^{-\psi_{-\infty}(e^{2t}\lambda)}d_{e^{-t}}\mathcal{F}_G(f)(e^{2t}\lambda, e^t\nu)d_{e^t} H^{\lambda,\nu}_{\frac{e^{2t}-1}{2}} \\
			&=e^{-\psi_{-\infty}(\lambda)}e^{-(n+2m)t} e^{\psi_t(\lambda)}d_{e^{-t}}\mathcal{F}_G(f)(e^{2t}\lambda, e^t\nu)d_{e^t} H^{\lambda,\nu}_{\frac{e^{2t}-1}{2}}\\
			&=\mathcal{F}_G(\widetilde{\mathcal{I}} \mathbb{P}^{\psi}_tf)(\lambda,\nu),
		\end{align*}
		where we repeatedly used Proposition~\ref{prop:fourier_transform} along with Lemma~\ref{lem:psi_t}. Thus by injectivity of GFT and the density of $C^\infty_c(G)$ in $L^2(G)$, we have 
		\begin{align*}
			\mathbb{P}_t\widetilde{\mathcal{I}} f=\widetilde{\mathcal{I}}\mathbb{P}^{\psi}_tf
		\end{align*}
		for all $f\in L^2(G)$.
		The proof is concluded by using \eqref{eq:V_tilde} in Lemma~\ref{lem:V_fourier}.
	\end{proof}
	\subsection{Compactness and spectrum of perturbed OU semigroup} In this section, we consider a further sub-class of the L\'evy-Khintchine exponents $\mathcal{N}$ that satisfies the finite exponential moment condition. Let us define 
	\begin{align}\label{eq:exponential_moment}
		\mathcal{N}_{\exp}=\left\{\psi=(\sigma, b,\kappa)\in\mathcal{N}: \int_{|v|>1} e^{a |v|}\kappa(dv)<\infty \text{ for all } a>0 \right\}.
	\end{align}
	We note that $\mathcal{N}_{\exp}\subset\mathcal{N}_{\log}$, where $\mathcal{N}_{\log}$ is defined by \eqref{eq:N_log}, and therefore according to Theorem~\ref{prop:P_gamma}, $\mathbb{P}^\psi$ is ergodic with the unique invariant density $\mathsf{p}_\psi$ whenever $\psi\in\mathcal{N}_{\log}$. We need to restrict ourselves to $\mathcal{N}_{\exp}$ to ensure that the space of polynomials in $G$ is included in $L^p(G,\mathsf{p}_\psi)$ for all $p\geqslant 1$.
	If $(S_t)_{t\geqslant 0}$ denotes the L\'evy process associated to $\psi$, the above condition is also equivalent to $\mathbb{E}(e^{\xi|S_t|})<\infty$ for all $\xi>0$ and $t>0$, see \cite[Theorem~25.3]{SatoBook1999}. In the next proposition we prove compactness of the semigroup $\mathbb{P}^\psi$.
	\begin{proposition}\label{prop:compactness}
		For any $\psi\in\mathcal{N}_{\exp}$, where $\mathcal{N}_{\exp}$ is defined by \eqref{eq:exponential_moment}, $\mathbb{P}^\psi_t$ is a Hilbert-Schmidt operator on $L^2(G,\mathsf{p}_\psi)$.
	\end{proposition}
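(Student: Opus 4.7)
The strategy is to estimate the Hilbert--Schmidt norm directly,
$$\|\mathbb{P}^\psi_t\|^2_{\HS} \;=\; \int_G\int_G \frac{\mathsf{p}^\psi_t(g,g')^2}{\mathsf{p}_\psi(g')}\,\mathsf{p}_\psi(g)\,dg\,dg',$$
using the two Mehler-type factorizations already at our disposal: $\mathbb{P}^\psi_t\!f = (\mathbb{P}_t f)*_v\mu^{\psi_t}$ together with $\mathbb{P}_t=\delta_{e^{-t}}\mathbb{Q}_{(1-e^{-2t})/2}$ (Theorem~\ref{prop:P_gamma}\eqref{it:3}) for the transition kernel, and $\mathsf{p}_\psi = \mathsf{q}_{1/2}*_v\overline{\mu}_\psi$ for the invariant density. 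Both the numerator and the denominator are thus vertical convolutions of the explicit sub-Laplacian heat kernels of Proposition~\ref{prop:heat_kernel} with L\'evy-type probability measures whose tails are controlled by $\psi$.

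The first step is the unperturbed case $\psi\equiv 0$: to show that $\mathbb{P}_t$ is HS on $L^2(G,\mathsf{q}_{1/2})$ and to obtain an explicit upper bound on its HS norm. Inserting the formula of Proposition~\ref{prop:heat_kernel} into the HS integrand and carrying out the dilation $\delta_{e^{-t}}$, one obtains a quadratic exponential in the horizontal coordinates $z$ (a Mehler-type kernel governed by $\eta_j(\lambda)\coth(\eta_j(\lambda)s)$) multiplied by Gaussian factors in $r$ and $v$. The horizontal integration reduces to a classical Mehler computation on $\R^{2d}$, while the integrations in $r$ and $v$ are plain Euclidean Gaussians, yielding an explicit finite bound consistent with the discrete spectral picture of \cite{Lust-Piquard2010}.

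The second step handles the perturbation by passing to the vertical Fourier variable via Plancherel. Convolutions with $\mu^{\psi_t}$ and $\overline{\mu}_\psi$ become multiplication by $e^{\psi_t(\lambda)}$ and $e^{-\psi_{-\infty}(-\lambda)}$, respectively (cf.\ the formulas in Proposition~\ref{prop:fourier_transform} and Theorem~\ref{prop:P_gamma}\eqref{it:4}), so that on the frequency side the perturbed integrand differs from the unperturbed one only by a scalar factor involving $|e^{\psi_t(\lambda)}|^2$ divided by $|e^{-\psi_{-\infty}(-\lambda)}|$. The exponential-moment condition \eqref{eq:exponential_moment} guarantees that $\psi$, and therefore $\psi_t$ and $\psi_{-\infty}$, extend to analytic functions on any complex strip $\{\lambda\in\C^m:|\Im\lambda|<R\}$; a contour shift then turns the extra factor into a finite multiplicative constant, depending only on exponential moments of $\mu_\psi$ and $\mu^{\psi_t}$, so that the full HS integral is controlled by the unperturbed estimate of Step~1.

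The principal obstacle I foresee is making the comparison between numerator and denominator rigorous: even though the vertical Fourier identities reduce things to a ratio of exponentials of L\'evy-Khintchine symbols, the density $\mathsf{p}_\psi$ is \emph{not} pointwise comparable to $\mathsf{q}_{1/2}$, and its reciprocal can grow rapidly in the vertical variable. The exponential-moment hypothesis is precisely what is needed to keep the vertical tails of $\mu_\psi$ and $\mu^{\psi_t}$ no heavier than exponential, so that $1/\mathsf{p}_\psi$ grows no faster than exponentially in $v$; this is then dominated by the Gaussian decay of $\mathsf{p}^\psi_t$ coming from the horizontal Mehler factor, combined with the horizontal--vertical coupling $\omega$ in the group law \eqref{eq:group_law}.
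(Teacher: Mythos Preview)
Your Step~2 contains a genuine gap: the Fourier/Plancherel manoeuvre cannot handle the division by $\mathsf{p}_\psi(g')$ in the Hilbert--Schmidt integrand.  Plancherel turns convolutions into products, but $\int_G \mathsf{p}^\psi_t(g,g')^2/\mathsf{p}_\psi(g')\,dg'$ is not a convolution in $g'$; the pointwise reciprocal of a vertical convolution has no tractable Fourier representation, and no contour shift will convert $1/\mathsf{p}_\psi$ into a multiplicative factor of the form $|e^{-\psi_{-\infty}(-\lambda)}|^{-1}$.  Your claim that ``on the frequency side the perturbed integrand differs from the unperturbed one only by a scalar factor'' is therefore unjustified, and the analytic-continuation argument never gets off the ground.

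The paper abandons the Fourier side entirely at this point and works with pointwise Gaussian estimates in terms of the Carnot--Carath\'eodory distance $\rho$, following \cite{Lust-Piquard2010}.  The key inputs are the two-sided heat-kernel bounds $K_\epsilon e^{-\rho(g)^2/(2-2\epsilon)}\le \mathsf{q}_{1/2}(g)\le C_\epsilon e^{-\rho(g)^2/(2+2\epsilon)}$ and the equivalence of $\rho$ with the homogeneous norm $|(h,v)|_G=\sqrt{|h|^2+|v|_1}$.  Because $\rho^2$ is then linear in $|v|_1$, the triangle inequality gives $\rho((h,v-\tilde v),(h',v'))^2 = \rho(g,g')^2 + O(|\tilde v|_1)$, so the vertical convolution with $\mu^{\psi_t}$ contributes at worst a factor $\int_{\R^m} e^{c|\tilde v|_1}\,\mu^{\psi_t}(d\tilde v)$ after Jensen's inequality; this is exactly where $\psi\in\mathcal{N}_{\exp}$ enters.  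The lower bound on $\mathsf{p}_\psi(g')$ comes from the lower Gaussian bound on $\mathsf{q}_{1/2}$ and requires no moment condition at all (the integral $\int e^{-c|\tilde v|_1}\mu_\psi(d\tilde v)$ is automatically finite).  With these pointwise bounds in hand, the HS integral reduces to the one already treated in \cite[Proposition~6(a)]{Lust-Piquard2010}.  Your final paragraph points in this direction, but it is a different argument from your Step~2, and it is the one you should pursue.
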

	\begin{proof}
		We adapt an idea very similar to the proof of \cite[Proposition~6(a)]{Lust-Piquard2010}. Writing 
		\begin{align*}
			\mathbb{P}^{\psi}_tf(g,g')=\int_{G}f(g')\frac{\mathsf{p}^\psi_t(g,g')}{\mathsf{p}_\psi(g')} \mathsf{p}_\psi(g')dg',
		\end{align*}
		it suffices to show that 
		\begin{align}
			I(t):=\int\limits_{G\times G} \mathsf{p}^\psi_t(g,g')^2 \frac{\mathsf{p}_\psi(g)}{\mathsf{p}_\psi(g')} dg dg'<\infty.
		\end{align}
		Now, for any $g=(h,v)$ and $g'=(h',v')$,
		\begin{align*}
			\mathsf{p}^\psi_t(g,g')=\int_{\R^m}\mathsf{p}_t((h,v-\widetilde{v}), (h',v')) \mu^{\psi_t}(d\widetilde{v}),
		\end{align*}
		where $\mathsf{p}_t$ is the kernel associated with the semigroup $\mathbb{P}_t$. Let $\rho$ denote the Carnot-Carath\'eodory distance on $G$. Then, due to \cite[Proposition~5.1.4]{BonfiglioliLanconelliUguzzoniBook}, there exists a positive constant $c$ such that 
		\begin{align}
			c^{-1}|g-g'|_{G}\le \rho(g,g')\le c |g-g'|_G,
		\end{align}
		where $|g|_G$ is the homogeneous norm on $G$ defined as $|(h,v)|_G=\sqrt{|h|^2+|v|_1}$, where $|\cdot|_1$ denotes the $l^1$ norm on $\R^m$.
		Therefore, using the triangle inequality one gets
		\begin{align*}
			\rho(g,g')^2-c|\widetilde{v}|_1\le \rho((h,v-\widetilde{v}), (h',v'))^2\le \rho(g,g')^2+c^{-1}|\widetilde{v}|_1.
		\end{align*} 
		Also, using the estimates of the heat kernel $\mathsf{q}_t$ in the proof of \cite[Proposition~ 6(a)]{Lust-Piquard2010} we obtain that for any $0<\epsilon<1$, there exists a constant $C_\epsilon$ such that
		\begin{align*}
			\mathsf{p}^\psi_t(g,g')^2\le C_\epsilon \exp\left(-\frac{\rho(\delta_{(e^{2t}-1)^{-1/2}}g^{-1}\delta_{(1-e^{-2t})^{-1/2}}g')^2}{1+\epsilon}\right)\\
			\times\int_{\R^m}\exp\left(\frac{c|\widetilde{v}|_1}{(1+\epsilon)(e^{2t}-1)}\right)\mu^{\psi_t}(d\widetilde{v}),
		\end{align*}
		where $\mu^{\psi_t}$ is defined in \eqref{eq:q_psi_t}, and we also used the Jensen's inequality with respect to the probability measure $\mu^{\psi_t}$. Since $\psi_t\in\mathcal{N}_{\exp}$ whenever $\psi\in\mathcal{N}_{\exp}$, the integral above on the right hand side is convergent for all $t>0$. This shows that for any $\epsilon\in (0,1)$,
		\begin{align*}
			\mathsf{p}^\psi_t(g,g')^2\le C_{\epsilon} \exp\left(-\frac{\rho(\delta_{(e^{2t}-1)^{-1/2}}g^{-1}\delta_{(1-e^{-2t})^{-1/2}}g')^2}{1+\epsilon}\right) \text{ for all } t>0
		\end{align*}
		for some constant $C_{\epsilon}>0$. A very similar argument also leads to the following estimates:
		\begin{equation}\label{eq:p_psi_estimate}
			\begin{aligned}
				\mathsf{p}_\psi(g)&\le C_\epsilon\exp\left(-\frac{\rho(g)^2}{2+2\epsilon}\right)\int_{\R^m} e^{\frac{c|\widetilde{v}|_1}{2+2\epsilon}} \mu_\psi(d\widetilde v) \\
				\mathsf{p}_\psi(g')& \geqslant K_\epsilon\exp\left(-\frac{\rho(g')^2}{2-2\epsilon}\right)\int_{\R^m} e^{-\frac{c^{-1}|\widetilde{v}|_1}{2-2\epsilon}} \mu_\psi(d\widetilde v)
			\end{aligned}
		\end{equation}
		for some positive constants $C_\epsilon, K_\epsilon$. As a result, using the same argument as in the proof of \cite[Proposition~6(a)]{Lust-Piquard2010}, one can show that $I(t)<\infty$ for all $t>0$, which proves the proposition.
	\end{proof}
	\begin{corollary}
		Let $\psi\in\mathcal{N}_{\exp}$ (see \eqref{eq:exponential_moment} for definition). Then, for all $1<p<\infty$, $\mathbb{P}^\psi_t:L^p(G,\mathsf{p}_\psi)\longrightarrow L^p(G,\mathsf{p}_\psi)$ is a compact operator for all $t>0$.
	\end{corollary}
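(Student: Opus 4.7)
The plan is to bootstrap from the Hilbert--Schmidt conclusion on $L^2(G,\mathsf{p}_\psi)$ established in Proposition~\ref{prop:compactness} to compactness on every $L^p(G,\mathsf{p}_\psi)$ with $1<p<\infty$ by means of a classical interpolation argument for compact operators. The two ingredients needed beyond Proposition~\ref{prop:compactness} are (i) uniform boundedness of $\mathbb{P}^{\psi}_t$ on the endpoint spaces $L^1(G,\mathsf{p}_\psi)$ and $L^\infty(G,\mathsf{p}_\psi)$, and (ii) an interpolation theorem stating that compactness at $L^2$, combined with boundedness at an endpoint, forces compactness on the intermediate scale.

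For step (i), I would observe that $\mathbb{P}^{\psi}_t$ is a Markov semigroup on $C_0(G)$ with invariant density $\mathsf{p}_\psi$ by Theorem~\ref{prop:P_gamma}\eqref{it:4}, so $\|\mathbb{P}^{\psi}_t f\|_\infty\leqslant \|f\|_\infty$ and the $L^1(G,\mathsf{p}_\psi)$-adjoint (which is again a Markov operator because $\mathsf{p}_\psi$ is invariant) is an $L^\infty$-contraction; by duality $\mathbb{P}^{\psi}_t$ is an $L^1$-contraction. Riesz--Thorin then gives contractivity on every $L^p(G,\mathsf{p}_\psi)$ for $1\le p\le\infty$, which is the boundedness input we need at the endpoints $p=1$ and $p=\infty$.

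For step (ii), I would appeal to Krasnoselskii's interpolation theorem for compact operators: if $T$ is bounded on both $L^{p_0}$ and $L^{p_1}$ and compact on one of them, then $T$ is compact on $L^p$ for every $p$ strictly between $p_0$ and $p_1$. Applying it with $(p_0,p_1)=(2,1)$ yields compactness of $\mathbb{P}^{\psi}_t$ on $L^p(G,\mathsf{p}_\psi)$ for every $p\in(1,2)$, and with $(p_0,p_1)=(2,\infty)$ yields compactness for every $p\in(2,\infty)$; the remaining value $p=2$ is exactly Proposition~\ref{prop:compactness}. This concludes the proof.

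The main technical concern is simply selecting an interpolation statement whose hypotheses are satisfied in our setting; I do not anticipate a genuine obstacle, since $(G,\mathsf{p}_\psi(g)\,dg)$ is a $\sigma$-finite measure space and $\mathbb{P}^{\psi}_t$ is a genuine linear operator that acts consistently on every $L^q(G,\mathsf{p}_\psi)$. If one prefers to avoid Krasnoselskii, an equivalent route is to factor $\mathbb{P}^{\psi}_t=\mathbb{P}^{\psi}_{t/2}\circ\mathbb{P}^{\psi}_{t/2}$, approximate each factor in Hilbert--Schmidt norm by kernel operators with bounded, compactly supported kernels (using the Gaussian-type decay of $\mathsf{p}^\psi_t$ and the lower bound \eqref{eq:p_psi_estimate} already used in Proposition~\ref{prop:compactness}), and note that operators with such kernels are compact on every $L^p(G,\mathsf{p}_\psi)$; the $L^2$-approximation combined with uniform $L^p$-contractivity then delivers compactness on $L^p$ as a norm limit of compact operators.
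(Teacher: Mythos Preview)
Your proposal is correct and matches the paper's own argument: the paper's proof is the single sentence ``Since $\mathbb{P}^\psi_t$ is compact on $L^2(G,\mathsf{p}_\psi)$ and bounded on $L^\infty(G,\mathsf{p}_\psi)$ and $L^1(G,\mathsf{p}_\psi)$, by interpolation, $\mathbb{P}^\psi_t$ is compact on $L^p(G,\mathsf{p}_\psi)$ for any $1<p<\infty$,'' which is exactly your steps (i) and (ii). You simply make explicit the interpolation theorem (Krasnoselskii) that the paper leaves implicit.
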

	\begin{proof}
		Since $\mathbb{P}^\psi_t$ is compact on $L^2(G,\mathsf{p}_\psi)$ and bounded on $L^\infty(G, \mathsf{p}_\psi)$ and $L^1(G,\mathsf{p}_\psi)$, by interpolation, $\mathbb{P}^\psi_t$ is compact on $L^p(G,\mathsf{p}_\psi)$ for any $1<p<\infty$.
	\end{proof}
	We denote the set of all polynomials on $G$ by $\mathcal P$ and the set of homogeneous polynomials in $\mathcal P$ with degree $n$ by $\mathcal{P}_n$. Equivalently, $\mathcal{P}_n$ is the eigenspace of $\mathbb D$ on $\mathcal{P}$ associated to the eigenvalue $n$. We also define $\mathcal{B}_n$ to be the space of all polynomials on $G$ with degree at most $n$. Note that 
	$\mathcal{B}_n=\mathcal{P}_0+\cdots +\mathcal{P}_n$ for each $n\geqslant 1$. Thus, $\mathcal{P}=\cup_{n\geqslant 0}\mathcal{B}_n$. Also, both $\mathbb{Q}_{1/2}$ and $\Gamma_\psi$ (see \eqref{eq:Lambda} for definition) are invertible on $\mathcal B_n$, hence on $\mathcal P$ whenever $\psi\in\mathcal{N}_{\exp}$. Moreover, from the estimate in \eqref{eq:p_psi_estimate} it follows that 
	\begin{align*}
		\int_{G} e^{a |g|_1}\mathsf{p}_\psi(dg)<\infty
	\end{align*}
	for all $a>0$, where $|g|_1$ denotes the $l^1$-norm of $g$ in $\R^{n+m}$. As a result, by \cite{AkhiezerBook2021}, $\mathcal{P}$ is dense in $L^p(G,\mathsf{p}_\psi)$ for all $p\ge 1$. In the next theorem, we show that for any $\psi\in\mathcal N_{\exp}$, the spectrum of $\mathbb{P}^\psi$ does not depend on $\psi$. This phenomenon is very different from what we have observed for the spectrum of $\mathbb{Q}^\psi_t$ (equivalently, $\Delta^{\!\psi}_{G}$), which clearly depends on $\psi$, according to Corollary~\ref{cor:spec_delta_psi}.
	
	\begin{theorem}\label{thm:spectrum} Let $\psi\in\mathcal{N}$. Then, for all $t>0$ we have the following:
		\begin{enumerate}[leftmargin=*]
			\item \label{it:spec1}For all $\psi\in\mathcal{N}_{\log}$ and $1<p<\infty$, $e^{-t\N_0}\subseteq \sigma_{\rm p}(\mathbb{P}^\psi_t; L^p(G,\mathsf{p}_\psi))$.
			\item \label{it:spec2} If $\psi\in\mathcal{N}_{\exp}$, the spectrum of $\mathbb{P}^\psi_t$ in $L^p(G, \mathsf{p}_\psi)$ does not depend on $p\in (1,\infty)$. Moreover,  
			\begin{align*}
				\sigma(\mathbb{P}^\psi_t; L^p(G,\mathsf{p}_\psi))\setminus \{0\}=\sigma_{\rm p}(\mathbb{P}^\psi_t; L^p(G, \mathsf{p}_\psi))=e^{-t\N_0}.
			\end{align*}
			\item \label{it:spec_3}For any $1<p<\infty$, the eigenspace of $\mathbb{P}^\psi_t$ in $L^p(G, \mathsf{p}_\psi)$ associated to $e^{-nt}$ is given by $\mathcal{E}^\psi_n=(\mathbb{Q}_{\frac12}\Gamma_\psi)^{-1}(\mathcal P_n)$, where $\Gamma_\psi$ is defined in \eqref{eq:Lambda}.
			
		\end{enumerate}
	\end{theorem}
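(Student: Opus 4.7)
The plan is to reduce the eigenvalue problem for $\mathbb{P}^\psi_t$ to the trivial spectral theory of the dilation $\delta_{e^{-t}}$ acting on the graded space of polynomials $\mathcal{P} = \bigoplus_n \mathcal{P}_n$, via two intertwinings. The representation $\mathbb{P}_t = \delta_{e^{-t}}\mathbb{Q}_{(1-e^{-2t})/2}$ together with the scaling relation $\mathbb{Q}_s\delta_{e^{-t}} = \delta_{e^{-t}}\mathbb{Q}_{se^{-2t}}$ (a direct consequence of the self-similarity of $\mathsf{q}_t$) gives the clean identity $\mathbb{Q}_{1/2}\mathbb{P}_t = \delta_{e^{-t}}\mathbb{Q}_{1/2}$. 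Composing this with $\mathbb{P}_t\Gamma_\psi = \Gamma_\psi\mathbb{P}^\psi_t$ from Theorem~\ref{thm:OU_intertwining} yields the fundamental identity
\[
\mathbb{Q}_{1/2}\,\Gamma_\psi\,\mathbb{P}^\psi_t \;=\; \delta_{e^{-t}}\,\mathbb{Q}_{1/2}\,\Gamma_\psi,
\]
which conjugates $\mathbb{P}^\psi_t$ to $\delta_{e^{-t}}$ at the level of polynomials, where $\delta_{e^{-t}}P = e^{-nt}P$ for every $P \in \mathcal{P}_n$ by homogeneity.

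For (\ref{it:spec1}), the cleanest route is the horizontal intertwining $\mathbb{P}^\psi_t\Pi = \Pi\widetilde{P}_t$ of Proposition~\ref{prop:horizontal_intertwining}, which is valid for every $\psi\in\mathcal{N}_{\log}$. Taking $H_n$ to be any non-trivial $n$-th Hermite polynomial on $(\R^n,\widetilde\mu)$ one has $\widetilde{P}_t H_n = e^{-nt}H_n$, and since $\Pi$ is an isometry $\Pi H_n$ is a non-zero element of $L^p(G,\mathsf{p}_\psi)$ with $\mathbb{P}^\psi_t(\Pi H_n) = e^{-nt}(\Pi H_n)$, delivering $e^{-t\N_0} \subseteq \sigma_{\rm p}(\mathbb{P}^\psi_t;L^p(G,\mathsf{p}_\psi))$.

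The easy inclusion $(\mathbb{Q}_{1/2}\Gamma_\psi)^{-1}(\mathcal{P}_n) \subseteq \mathcal{E}^\psi_n$ in (\ref{it:spec_3}) is then direct: for $\psi \in \mathcal{N}_{\exp}$, polynomials lie in every $L^p(G,\mathsf{p}_\psi)$, and both $\mathbb{Q}_{1/2}$ and $\Gamma_\psi$ are invertible on each $\mathcal{B}_n$ (as already noted in the text). For $P \in \mathcal{P}_n$, the polynomial $f := (\mathbb{Q}_{1/2}\Gamma_\psi)^{-1}P \in \mathcal{B}_n$ is a non-zero element of $L^p(G,\mathsf{p}_\psi)$, and the fundamental identity gives $\mathbb{Q}_{1/2}\Gamma_\psi\mathbb{P}^\psi_t f = \delta_{e^{-t}}P = e^{-nt}\mathbb{Q}_{1/2}\Gamma_\psi f$; injectivity of $\mathbb{Q}_{1/2}\Gamma_\psi$ then forces $\mathbb{P}^\psi_t f = e^{-nt}f$.

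The remaining content of (\ref{it:spec2}) and the converse inclusion in (\ref{it:spec_3}) is where the difficulty concentrates. The corollary to Proposition~\ref{prop:compactness} gives compactness of $\mathbb{P}^\psi_t$ on $L^p(G,\mathsf{p}_\psi)$ for $\psi \in \mathcal{N}_{\exp}$ and $p \in (1,\infty)$, whence $\sigma(\mathbb{P}^\psi_t;L^p(G,\mathsf{p}_\psi))\setminus\{0\} = \sigma_{\rm p}(\mathbb{P}^\psi_t;L^p(G,\mathsf{p}_\psi))$ with finite-dimensional generalized eigenspaces. The hard analytic ingredient, which must be supplied separately in Section~\ref{sec:regularity}, is that every non-zero $L^p$-eigenfunction $f$ of $\mathbb{P}^\psi_t$ is a polynomial on $G$; the non-locality of $A^\psi_{\mathcal V}$ rules out a direct elliptic bootstrap, so one must leverage the smoothing and decay estimates for the semigroup $\mathbb{P}^\psi$ to propagate regularity through the vertical L\'evy perturbation---this is the main obstacle. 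Granted this, set $g := \mathbb{Q}_{1/2}\Gamma_\psi f \in \mathcal{P}$; the fundamental identity turns $\mathbb{P}^\psi_t f = \lambda f$ into $\delta_{e^{-t}}g = \lambda g$, and decomposing $g = \sum_{k=0}^N g_k$ with $g_k \in \mathcal{P}_k$ yields $\sum_k (e^{-kt}-\lambda)g_k = 0$. Linear independence of the $\mathcal{P}_k$'s forces $\lambda = e^{-nt}$ for a unique $n$ and $g \in \mathcal{P}_n$, so $f \in (\mathbb{Q}_{1/2}\Gamma_\psi)^{-1}(\mathcal{P}_n)$. The $p$-independence of the spectrum in (\ref{it:spec2}) then follows at once, since each eigenspace consists of polynomials belonging to every $L^p(G,\mathsf{p}_\psi)$.
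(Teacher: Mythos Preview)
Your argument for (\ref{it:spec1}) via the horizontal intertwining $\Pi$ matches the paper exactly, as does your derivation of the fundamental identity $\mathbb{Q}_{1/2}\Gamma_\psi\mathbb{P}^\psi_t = \delta_{e^{-t}}\mathbb{Q}_{1/2}\Gamma_\psi$ (this is Lemma~\ref{lem:LP_intertwining}) and the easy inclusion $(\mathbb{Q}_{1/2}\Gamma_\psi)^{-1}(\mathcal P_n)\subseteq\mathcal E^\psi_n$.

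For the converse inclusion and (\ref{it:spec2}), however, the paper takes a different and more economical route. Rather than invoking the regularity machinery of Section~\ref{sec:regularity} to show that every $L^p$-eigenfunction is a polynomial, the paper applies a general fact about compact operators (Lemma~\ref{lem:compact_operator}): if a compact operator has eigensubspaces $E_\theta$ whose union is total in the ambient Banach space, then those $E_\theta$ are automatically the \emph{full} eigenspaces and the spectrum is exactly $\Theta\cup\{0\}$. Since $\bigcup_n(\mathbb{Q}_{1/2}\Gamma_\psi)^{-1}(\mathcal P_n)=\mathcal P$ is dense in $L^p(G,\mathsf p_\psi)$, this closes (\ref{it:spec2}) and (\ref{it:spec_3}) immediately once compactness is available, and a short semigroup-property argument handles small $t$. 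No a priori knowledge that eigenfunctions are polynomials is needed.

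Your route via regularity is viable in principle, but be aware of a circularity trap: Lemma~\ref{lem:gen_eigen} as stated handles the base case $s=1$ by \emph{citing} Theorem~\ref{thm:spectrum}\eqref{it:spec_3}, so you cannot invoke Section~\ref{sec:regularity} as a black box here. You would instead have to apply Lemmas~\ref{lem:regularity} and~\ref{lem:sobolev_bound} directly to an eigenfunction of $\mathbb{P}^\psi_t$ (for a single fixed $t$) with eigenvalue $\lambda\neq 0$: from $|\lambda|\,\|X^r_\alpha f\|=\|X^r_\alpha\mathbb{P}^\psi_t f\|\le e^{-kt}\|X^r_\alpha f\|$ one gets $X^r_\alpha f=0$ once $e^{-kt}<|\lambda|$. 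This works, but it is extra labor that the paper's totality argument sidesteps; the paper reserves the regularity estimates for the multiplicity statement (Theorem~\ref{prop:multiplicity}), where they are genuinely needed for generalized eigenfunctions.
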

	
	For any closed operator $A:\mathcal{D}(A)\subseteq E\longrightarrow E$, where $E$ is a Banach space, the algebraic multiplicity of an eigenvalue $\theta$ of $A$ is defined as 
	\begin{align*}
		\mathtt{M}_a(\theta, A)=\dim\left(\cup_{n=1}^\infty \ker (A-\theta I)^n\right),
	\end{align*}
	and the geometric multiplicity is defined as $\mathtt{M}_g(\theta,A)=\dim(\ker(A-\theta I))$. We note that both algebraic and geometric multiplicities can be infinite. The next proposition shows that the eigenvalues of $\mathbb{L}$ and $\mathbb{L}_\psi$ have the same multiplicities whenever $\psi\in\mathcal{N}_{\exp}$.
	\begin{theorem}\label{prop:multiplicity} 
		Let $\mathbb{L}_\psi$ (resp.~$\mathbb{L})$ denote the generator of the semigroup $\mathbb{P}^\psi_t:L^p(G,\mathsf{p}_\psi)\longrightarrow L^p(G,\mathsf{p}_\psi)$ (resp.~$\mathbb{P}_t:L^p(G,\mathsf{p})\longrightarrow L^p(G,\mathsf{p})$). Then for any $1<p<\infty$, $\psi\in\mathcal{N}_{\exp}$ and $\beta\in\mathbb N_0$ we have 
		\begin{align*}
			\mathtt{M}_a(-\beta, \mathbb{L}_\psi)&=\mathtt{M}_a(-\beta,\mathbb{L})<\infty, \\
			\mathtt{M}_g(-\beta, \mathbb{L}_\psi)&=\mathtt{M}_g(-\beta,\mathbb{L})<\infty.
		\end{align*}
	\end{theorem}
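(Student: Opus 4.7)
The plan is to compare $\mathbb{L}_\psi$ and $\mathbb{L}$ on the finite-dimensional filtration $\mathcal{B}_0\subset\mathcal{B}_1\subset\cdots$ of weighted-degree polynomials, and then to show that every generalized eigenfunction in $L^p$ is already in this filtration. Two ingredients drive the argument: the intertwining $\mathbb{P}_t\Gamma_\psi=\Gamma_\psi\mathbb{P}^\psi_t$ from Theorem~\ref{thm:OU_intertwining}, which supplies a matrix similarity between the restrictions of the generators to $\mathcal{B}_n$, and the regularity estimates for $\mathbb{P}^\psi$ announced in the introduction, which force the generalized eigenfunctions to be polynomials.

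First I would verify that each $\mathcal{B}_n$ is invariant under both $\mathbb{L}_\psi$ and $\mathbb{L}$. Since $\mathbb{D}$ preserves the grading $\mathcal{P}=\bigoplus_n\mathcal{P}_n$ and $\Delta_{\mathcal{H}}$ strictly lowers weighted degree, this reduces to controlling the non-local piece $A^\psi_{\mathcal{V}}$. The hypothesis $\psi\in\mathcal{N}_{\exp}$ guarantees that all moments of the underlying L\'evy measure are finite, and a direct computation on monomials $v^\alpha$ shows that the L\'evy-Khintchine compensation removes the top-degree term, so $A^\psi_{\mathcal{V}}$ maps $\mathcal{B}_n$ into itself and strictly lowers the weighted degree on the vertical monomials. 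The same moment argument applied to the vertical convolution $\Gamma_\psi$ defined in \eqref{eq:Lambda} shows that it preserves each $\mathcal{B}_n$ and acts as the identity plus a strictly lower weighted-degree correction, so $\Gamma_\psi|_{\mathcal{B}_n}$ is invertible.

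Next, differentiating the semigroup intertwining $\mathbb{P}_t\Gamma_\psi=\Gamma_\psi\mathbb{P}^\psi_t$ at $t=0$ on the invariant finite-dimensional subspace $\mathcal{B}_n$ yields the matrix identity $\mathbb{L}\,\Gamma_\psi=\Gamma_\psi\,\mathbb{L}_\psi$ on $\mathcal{B}_n$, and invertibility of $\Gamma_\psi|_{\mathcal{B}_n}$ upgrades this to
\begin{equation*}
\mathbb{L}_\psi\bigl|_{\mathcal{B}_n}=\bigl(\Gamma_\psi|_{\mathcal{B}_n}\bigr)^{-1}\,\mathbb{L}\bigl|_{\mathcal{B}_n}\,\Gamma_\psi|_{\mathcal{B}_n}.
\end{equation*}
Similar matrices have identical Jordan structures, so for every $\beta\in\mathbb{N}_0$ and every $k\geqslant 1$,
\begin{equation*}
\dim\ker\bigl((\mathbb{L}_\psi+\beta I)^k\bigl|_{\mathcal{B}_\beta}\bigr)=\dim\ker\bigl((\mathbb{L}+\beta I)^k\bigl|_{\mathcal{B}_\beta}\bigr).
\end{equation*}
Taking $k\to\infty$ gives equality of the algebraic multiplicities computed inside $\mathcal{B}_\beta$, and $k=1$ gives the same for the geometric multiplicities; both are automatically finite since $\dim\mathcal{B}_\beta<\infty$.

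The main obstacle is to lift these finite-dimensional identities to the full $L^p$ multiplicities, which amounts to establishing the inclusion
\begin{equation*}
\bigcup_{k\geqslant 1}\ker(\mathbb{L}_\psi+\beta I)^k\subseteq\mathcal{B}_\beta
\end{equation*}
in $L^p(G,\mathsf{p}_\psi)$, together with its $\mathbb{L}$-analogue. Given such an $f$, the Dunford expansion
\begin{equation*}
\mathbb{P}^\psi_tf=e^{-\beta t}\sum_{j=0}^{k-1}\frac{(-t)^j}{j!}(\mathbb{L}_\psi+\beta I)^jf
\end{equation*}
combined with the smoothing bounds for $\mathbb{P}^\psi$ from the regularity results referenced in the introduction forces $f\in C^\infty(G)$. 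Taking the GFT of the eigenvector identity and using the explicit formula \eqref{eq:P} of Proposition~\ref{prop:fourier_transform}, together with the Gaussian-type decay of $\mathsf{p}_\psi$ from \eqref{eq:p_psi_estimate} used to control the growth of $f$ coming from $f\in L^p(G,\mathsf{p}_\psi)$, one concludes that $f$ must be a polynomial; a degree count reading the Dunford identity as $t\to\infty$ and comparing powers of $t$ then pins down $f\in\mathcal{B}_\beta$. Combined with the previous step, this yields the claimed equalities $\mathtt{M}_a(-\beta,\mathbb{L}_\psi)=\mathtt{M}_a(-\beta,\mathbb{L})$ and $\mathtt{M}_g(-\beta,\mathbb{L}_\psi)=\mathtt{M}_g(-\beta,\mathbb{L})$, both finite.
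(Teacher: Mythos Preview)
Your overall architecture matches the paper's: use the intertwining $\Gamma_\psi$ to obtain a similarity between $\mathbb{L}_\psi$ and $\mathbb{L}$ on polynomials, and separately prove that every $L^p(G,\mathsf{p}_\psi)$ generalized eigenfunction for the eigenvalue $-\beta$ is already a polynomial of weighted degree at most $\beta$. The first part is fine and essentially identical to what the paper does.

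The gap is in your treatment of the second part. You propose to ``take the GFT of the eigenvector identity'' and combine it with the explicit formula \eqref{eq:P}. But the GFT in this paper is only defined and controlled on $L^1(G)$ and $L^2(G)$ with respect to the Haar measure; a generalized eigenfunction $f$ lives in $L^p(G,\mathsf{p}_\psi)$, and the target objects---polynomials---are not in $L^2(G)$ at all, so there is no GFT to take. The growth control from \eqref{eq:p_psi_estimate} only tells you $f$ has at most sub-Gaussian growth, which is far from enough to run a Fourier argument or to conclude polynomiality directly. Your subsequent ``degree count as $t\to\infty$'' presupposes that $f$ is already known to be a polynomial, so it cannot close the loop.

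The paper handles this step differently and more robustly (Lemma~\ref{lem:gen_eigen}, via Lemmas~\ref{lem:regularity} and~\ref{lem:sobolev_bound}): it uses that \emph{right}-invariant vector fields commute with the left-invariant heat semigroup (Lemma~\ref{lem:Melcher}) and scale under the dilation, giving $X^r_\alpha\mathbb{P}^\psi_t f=e^{-|\alpha|t}\mathbb{P}^\psi_t X^r_\alpha f$ for $f\in W^{|\alpha|,p}(G,\mathsf{p}_\psi)$. From the Dunford expansion one first gets $f\in W^{k,p}(G,\mathsf{p}_\psi)$ for all $k$ (your smoothing step), and then, for $|\alpha|\geqslant\beta+1$, the expansion gives $\|X^r_\alpha f\|_{L^p}\cdot e^{-\beta t}\leqslant e^{-|\alpha|t}\|X^r_\alpha f\|_{L^p}$, forcing $X^r_\alpha f=0$. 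This kills all derivatives of horizontal order $>\beta$ and hence all weighted-degree $>\beta$ components, so $f\in\mathcal{B}_\beta$. This argument stays entirely in Sobolev spaces and never invokes the GFT on $f$. You should replace your GFT sketch by this mechanism (or an equivalent one that does not require Fourier-transforming a non-$L^2$ function).
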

	In particular, choosing $\psi(\lambda)=-\epsilon |\lambda|^2$ for $\epsilon>0$, we have the following corollary.
	\begin{corollary}
		For any $\epsilon>0$, let us denote 
		\begin{align*}
			\mathbb{L}_\epsilon=\mathbb{L}+\epsilon\Delta_{\mathcal V}.
		\end{align*}
		Then, $\mathbb{L}_\epsilon$ generates a perturbed OU semigroup on $G$, denoted by $\mathbb{P}^\epsilon$. Denoting its invariant density by $\mathsf{p}_\epsilon$, for all $1<p<\infty$ we have 
		\begin{align*}
			\sigma(\mathbb{L}_\epsilon; L^p(G, \mathsf{p}_\epsilon))=\sigma_{\rm p}(\mathbb{L}_\epsilon; L^p(G, \mathsf{p}_\epsilon))=-\mathbb{N}_0.
		\end{align*}
		Also, for any $\beta\in\mathbb{N}_0$ and for all $\epsilon>0$ we have
		\begin{align*}
			\mathtt{M}_a(-\beta, \mathbb{L}_\epsilon)&=\mathtt{M}_a(-\beta,\mathbb{L})<\infty, \\
			\mathtt{M}_g(-\beta, \mathbb{L}_\epsilon)&=\mathtt{M}_g(-\beta,\mathbb{L})<\infty.
		\end{align*}
	\end{corollary}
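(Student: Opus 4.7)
The plan is to view this corollary as a direct specialization of Theorem~\ref{thm:spectrum} and Theorem~\ref{prop:multiplicity} to the choice $\psi(\lambda) = -\epsilon|\lambda|^2$. The first step is to verify that this $\psi$ belongs to $\mathcal{N}_{\exp}$ defined in \eqref{eq:exponential_moment}. Writing $\psi$ in the L\'evy-Khintchine form \eqref{eq:psi}, we read off the triple $(\sigma, b, \kappa) = (\epsilon I_m, 0, 0)$, so the L\'evy measure is the zero measure and the exponential moment condition $\int_{|v|>1} e^{a|v|}\kappa(dv) < \infty$ is trivially satisfied for all $a>0$. Consequently $\psi \in \mathcal{N}_{\exp} \subset \mathcal{N}_{\log}$.

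Next I would identify the operator $A^\psi$ from \eqref{eq:Levy} for this triple. Since $\sigma = \epsilon I_m$, $b=0$, and $\kappa=0$, the integro-differential operator reduces to $A^\psi f(v) = \epsilon \,\mathrm{tr}(\nabla^2 f(v)) = \epsilon \Delta f(v)$. Therefore $A^\psi_{\mathcal V} = \epsilon \Delta_{\mathcal V}$, and hence
\begin{align*}
\mathbb{L}_\psi = \mathbb{L} + A^\psi_{\mathcal V} = \mathbb{L} + \epsilon \Delta_{\mathcal V} = \mathbb{L}_\epsilon.
\end{align*}
Theorem~\ref{prop:P_gamma}\eqref{it:3} then yields that the closure of $(\mathbb{L}_\epsilon, C^\infty_c(G))$ generates a Feller Markov semigroup $\mathbb{P}^\epsilon := \mathbb{P}^\psi$ on $C_0(G)$, and part~\eqref{it:4} together with $\psi \in \mathcal{N}_{\log}$ gives existence of a unique smooth invariant density $\mathsf{p}_\epsilon := \mathsf{p}_\psi$. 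This justifies the setup announced in the corollary.

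For the spectral statement, I would invoke Theorem~\ref{thm:spectrum}\eqref{it:spec2}: since $\psi \in \mathcal{N}_{\exp}$, for all $1<p<\infty$ and $t>0$,
\begin{align*}
\sigma(\mathbb{P}^\epsilon_t; L^p(G,\mathsf{p}_\epsilon)) \setminus \{0\} = \sigma_{\rm p}(\mathbb{P}^\epsilon_t; L^p(G,\mathsf{p}_\epsilon)) = e^{-t\mathbb{N}_0}.
\end{align*}
Passing to the generator via the spectral mapping theorem for eigenvalues of strongly continuous semigroups gives $\sigma(\mathbb{L}_\epsilon; L^p(G,\mathsf{p}_\epsilon)) = \sigma_{\rm p}(\mathbb{L}_\epsilon; L^p(G,\mathsf{p}_\epsilon)) = -\mathbb{N}_0$. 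Finally, for the multiplicity identities, I would apply Theorem~\ref{prop:multiplicity} directly with this particular $\psi \in \mathcal{N}_{\exp}$, which asserts that $\mathtt{M}_a(-\beta, \mathbb{L}_\psi) = \mathtt{M}_a(-\beta, \mathbb{L})$ and $\mathtt{M}_g(-\beta, \mathbb{L}_\psi) = \mathtt{M}_g(-\beta, \mathbb{L})$ are finite for every $\beta \in \mathbb{N}_0$.

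This is essentially a bookkeeping deduction rather than a new computation, so the main (mild) obstacle is purely verifying membership in $\mathcal{N}_{\exp}$ and matching the operator $A^\psi$ to $\epsilon \Delta$; no further spectral analysis is needed beyond quoting the two preceding theorems.
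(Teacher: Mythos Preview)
Your proposal is correct and matches the paper's approach exactly: the paper presents this corollary as an immediate specialization of Theorem~\ref{thm:spectrum} and Theorem~\ref{prop:multiplicity} to $\psi(\lambda)=-\epsilon|\lambda|^2$, with no separate proof given. Your verification that this $\psi$ has triple $(\epsilon I_m,0,0)\in\mathcal{N}_{\exp}$ and that $A^\psi_{\mathcal V}=\epsilon\Delta_{\mathcal V}$ is precisely the bookkeeping the paper leaves implicit.
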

	In order to prove the above results, we need the following two lemmas. 
	\begin{lemma}\label{lem:LP_intertwining}
		For all $t\geqslant 0$, we have 
		\begin{align*}
			\mathbb{Q}_{\frac12}\Gamma_\psi \mathbb{P}^\psi_{t}=\delta_{e^{-t}} \mathbb{Q}_{\frac12}\Gamma_\psi \text{ on } \mathcal P.
		\end{align*}
	\end{lemma}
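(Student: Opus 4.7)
The plan is to reduce the identity to the intertwining relation already established in Theorem~\ref{thm:OU_intertwining} combined with the homogeneity of the sub-Laplacian under the dilations $(\delta_c)_{c>0}$.

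First, by \eqref{eq:P_P} of Theorem~\ref{thm:OU_intertwining}, we have $\Gamma_\psi \mathbb{P}^\psi_t = \mathbb{P}_t \Gamma_\psi$ on $L^p(G,\mathsf{p}_\psi)$ for any $p\geqslant 1$. Since the present section is devoted to $\psi\in\mathcal{N}_{\exp}$, the discussion following Theorem~\ref{prop:multiplicity} guarantees $\mathcal{P}\subset L^p(G,\mathsf{p}_\psi)$, so the intertwining applies to polynomials. Thus
\begin{equation*}
\mathbb{Q}_{\frac12}\Gamma_\psi\mathbb{P}^\psi_t = \mathbb{Q}_{\frac12}\mathbb{P}_t\Gamma_\psi \quad \text{on } \mathcal{P},
\end{equation*}
and the lemma reduces to the operator identity
\begin{equation}\label{eq:key}
\mathbb{Q}_{\frac12}\mathbb{P}_t = \delta_{e^{-t}}\mathbb{Q}_{\frac12},
\end{equation}
which I will establish on, say, $L^2(G)$ (and hence on the dense invariant subspace we care about).

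To prove \eqref{eq:key}, recall from \eqref{eq:OU_semigroup} the factorization $\mathbb{P}_t = \delta_{e^{-t}}\mathbb{Q}_{(1-e^{-2t})/2}$. Since $\Delta_{\mathcal H}$ is homogeneous of degree two under $(\delta_c)_{c>0}$ (each $Z_i$ is homogeneous of degree one), one has $\Delta_{\mathcal H}\delta_c = c^2\delta_c\Delta_{\mathcal H}$, and by exponentiation
\begin{equation*}
\mathbb{Q}_s\delta_c = \delta_c\mathbb{Q}_{c^2 s}, \qquad c>0,\ s\geqslant 0.
\end{equation*}
Applying this with $c=e^{-t}$ and $s=\tfrac12$, then using the semigroup property of $\mathbb{Q}$ and the arithmetic identity $\tfrac{e^{-2t}}{2}+\tfrac{1-e^{-2t}}{2}=\tfrac12$, we compute
\begin{equation*}
\mathbb{Q}_{\frac12}\mathbb{P}_t
= \mathbb{Q}_{\frac12}\delta_{e^{-t}}\mathbb{Q}_{\frac{1-e^{-2t}}{2}}
= \delta_{e^{-t}}\mathbb{Q}_{\frac{e^{-2t}}{2}}\mathbb{Q}_{\frac{1-e^{-2t}}{2}}
= \delta_{e^{-t}}\mathbb{Q}_{\frac12},
\end{equation*}
which is \eqref{eq:key}.

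There is no real obstacle here; the whole argument is essentially a three-line manipulation once one has Theorem~\ref{thm:OU_intertwining} and the scaling law for $\mathbb{Q}_s$. The only point worth checking carefully is that both sides of the intertwining \eqref{eq:P_P} and of \eqref{eq:key} are well-defined on $\mathcal{P}$: this follows because $\Gamma_\psi$ is a convolution in the vertical variable with a probability measure of finite exponential moments (so it preserves $\mathcal{P}$), $\mathbb{P}^\psi_t$ preserves $\mathcal{P}$ (as can be seen from its representation \eqref{eq:P_alpha} combined with $\mathbb{P}_t(\mathcal{P})\subseteq\mathcal{P}$ via $\mathbb{P}_t=\delta_{e^{-t}}\mathbb{Q}_{(1-e^{-2t})/2}$ and the fact that the horizontal heat semigroup preserves polynomials of bounded degree), and $\mathbb{Q}_{1/2}$ similarly preserves $\mathcal{P}$.
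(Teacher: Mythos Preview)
Your proof is correct and follows essentially the same route as the paper: both arguments combine the intertwining \eqref{eq:P_P} with the identity $\mathbb{Q}_{\frac12}\mathbb{P}_t=\delta_{e^{-t}}\mathbb{Q}_{\frac12}$. The paper obtains the latter by citing \cite[Lemma~5(b)]{Lust-Piquard2010} (written there as $\mathbb{Q}_{\frac12}\cos^{\mathbb{L}}\theta=\delta_{\cos\theta}\mathbb{Q}_{\frac12}$ with $\cos\theta=e^{-t}$), whereas you derive it directly from the scaling law $\mathbb{Q}_s\delta_c=\delta_c\mathbb{Q}_{c^2s}$ and the factorization \eqref{eq:OU_semigroup}; these are the same identity and the same proof in slightly different packaging.
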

	
	\begin{proof}
		By \cite[Lemma~5(b)]{Lust-Piquard2010} it is known that $\mathbb{Q}_{\frac12}\cos^{\mathbb{L}}\theta=\delta_{\cos\theta}\mathbb{Q}_{\frac12}$. Writing $\cos\theta=e^{-t}$ for $t\geqslant 0$ and using the intertwining relationship \eqref{eq:P_P} concludes the  proof of the lemma.
	\end{proof}
	The next lemma is about the spectrum and eigenspaces of a general compact operator and its proof can be found in the proof of \cite[Theorem~7]{Lust-Piquard2010}.
	\begin{lemma}\label{lem:compact_operator}
		Let $T:E\longrightarrow E$ be a compact operator on a separable Banach space $E$. Let $\Theta$ be a set of eigenvalues of $T$ and let $E_\theta, \theta\in\Theta$ be eigensubspaces of $T$ whose union is total in $E$, i.e., $\Span\{\cup_{\theta\in\Theta} E_\theta\}$ is dense in $E$. Then 
		\begin{enumerate}[leftmargin=*]
			\item The spectrum of $T$ is $\Theta\cup\{0\}$.
			\item $E_\theta$ is the whole eigenspace of $T$ associated to $\theta\in\Theta$.
		\end{enumerate}
	\end{lemma}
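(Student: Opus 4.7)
The plan is to lean on Riesz--Schauder theory for compact operators on Banach spaces throughout, using a duality argument for part~(1) and Riesz spectral projections for part~(2). Separability of $E$ is not actually needed in the argument I have in mind.

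For part~(1) the inclusion $\Theta\cup\{0\}\subseteq \sigma(T)$ is immediate since each $\theta\in\Theta$ is by hypothesis an eigenvalue. For the reverse inclusion I would take $\lambda\in\sigma(T)\setminus\{0\}$: Riesz--Schauder guarantees that $\lambda$ is an eigenvalue of $T$ of finite algebraic multiplicity, and the Fredholm alternative for compact operators yields a nonzero $\varphi\in E^\ast$ with $T^\ast\varphi=\lambda\varphi$. For any $\theta\in\Theta$ and $v\in E_\theta$, evaluating $\varphi(Tv)$ in two ways gives $\theta\varphi(v) = (T^\ast\varphi)(v)=\lambda\varphi(v)$, i.e.\ $(\theta-\lambda)\varphi(v)=0$. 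If $\lambda\notin\Theta$ then $\theta\ne\lambda$ for every $\theta\in\Theta$, so $\varphi$ vanishes on $\cup_{\theta\in\Theta}E_\theta$, hence on its linear span, hence by density and continuity on all of $E$, contradicting $\varphi\ne 0$. Thus $\lambda\in\Theta$.

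For part~(2), fix $\theta\in\Theta$ and let $P_\theta:=\tfrac{1}{2\pi\i}\oint_\gamma(zI-T)^{-1}\,dz$ be the Riesz spectral projection, where $\gamma$ is a small loop enclosing $\theta$ and no other point of $\sigma(T)$. This is a bounded idempotent commuting with $T$ whose range is the finite-dimensional generalized eigenspace $G_\theta^{\mathrm{gen}}:=\bigcup_{n\geqslant 1}\ker(T-\theta I)^n$. Since $E_\theta\subseteq\ker(T-\theta I)\subseteq G_\theta^{\mathrm{gen}}$, the projection $P_\theta$ acts as the identity on $E_\theta$, while the Riesz decomposition $P_\theta P_{\theta'}=0$ forces $P_\theta v=0$ whenever $v\in E_{\theta'}$ with $\theta'\in\Theta\setminus\{\theta\}$. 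Hence $P_\theta$ sends $\Span\{\cup_{\theta'\in\Theta}E_{\theta'}\}$ into $E_\theta$; continuity of $P_\theta$ combined with density then gives $P_\theta(E)\subseteq\overline{E_\theta}$. Because $E_\theta$ sits inside the finite-dimensional $G_\theta^{\mathrm{gen}}$ it is automatically closed, so $\ker(T-\theta I)\subseteq G_\theta^{\mathrm{gen}}=P_\theta(E)\subseteq \overline{E_\theta}=E_\theta$, and combined with the reverse inclusion (part of the hypothesis) this identifies $E_\theta$ with the full eigenspace at $\theta$.

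No real obstacle arises beyond invoking the standard ingredients (Riesz--Schauder dichotomy, the Fredholm alternative identifying nonzero eigenvalues of $T$ and $T^\ast$, and the existence of the Riesz projection at isolated spectral points). The one subtlety worth flagging is that \emph{a priori} $E_\theta$ is not assumed closed; this is automatic here because $G_\theta^{\mathrm{gen}}$ is finite-dimensional, and without that observation the argument would only yield $\overline{E_\theta}=\ker(T-\theta I)$.
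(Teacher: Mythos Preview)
Your proof is correct. The paper does not supply its own proof of this lemma; it simply refers the reader to the proof of Theorem~7 in \cite{Lust-Piquard2010}. Your route via the Fredholm alternative for part~(1) and Riesz spectral projections for part~(2) is clean and standard.

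A few minor remarks. In part~(1) you assert $\Theta\cup\{0\}\subseteq\sigma(T)$ without saying why $0$ is there; this needs $E$ infinite-dimensional (as it is in the paper's application), since a compact operator on an infinite-dimensional space is never invertible. In part~(2) your argument tacitly assumes $\theta\ne 0$ so that $\theta$ is an isolated spectral point and $P_\theta$ exists; in the paper's application $\Theta=\{e^{-nt}:n\in\mathbb N_0\}$ avoids $0$, so this is harmless. Finally, justifying $P_\theta v=0$ for $v\in E_{\theta'}$ via ``$P_\theta P_{\theta'}=0$'' presupposes $P_{\theta'}$ exists, which fails if $\theta'=0$ is an accumulation point of $\Theta$; the direct computation $(zI-T)^{-1}v=(z-\theta')^{-1}v$ and Cauchy's theorem gives $P_\theta v=0$ without invoking $P_{\theta'}$ at all, and sidesteps this.
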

	\begin{proof}[Proof of Theorem~\ref{thm:spectrum}] From Proposition~\ref{prop:horizontal_intertwining} we have $\mathbb{P}^\psi_t\Pi=\Pi \widetilde{P}_t$ on $L^p(\R^n,\mu)$ for all $t\geqslant 0$ and $p>1$. Since $\Pi$ is injective and $\sigma_{\rm p}(\widetilde{P}_t; L^p(\R^n,\mu))=e^{-t\N_0}$, \eqref{it:spec1} follows immediately. 
		
		Next, we observe that $\mathbb{Q}_{\frac12}\Gamma_\psi:\mathcal{P}_n\longrightarrow \mathcal{P}_n$ is bijective for each $n\geqslant 1$. Therefore, Lemma~\ref{lem:LP_intertwining} implies that for all $t\geqslant 0$,
		\begin{align*}
			\mathbb{P}^\psi_t (\mathbb{Q}_{\frac12}\Gamma_\psi)^{-1}=(\mathbb{Q}_{\frac12}\Gamma_\psi)^{-1} \delta_{e^{-t}} \text{ on } \mathcal P.
		\end{align*}
		Since $\mathcal{P}_n$ is an eigenspace of $\delta_{e^{-t}}$ associated to $e^{-nt}$, the above intertwining relation implies that $\mathcal{E}^\psi_n=(\mathbb{Q}_{1/2}\Gamma_\psi)^{-1}(\mathcal P_n)$ is an eigenspace of $\mathbb{P}^\psi_t$ associated to $e^{-nt}$. Also, from the equality $(\mathbb{Q}_{1/2}\Gamma_\psi)^{-1}(\mathcal P)=\mathcal P$ we get that
		\begin{align*}
			\bigcup_{n=0}^\infty\mathcal{E}^\psi_n=\bigcup_{n=0}^\infty (\mathbb{Q}_{1/2}\Gamma_\psi)^{-1}(\mathcal P_n)=\mathcal{P},
		\end{align*}
		and therefore, $\Span\{\cup_{n=0}^\infty\mathcal{E}^\psi_n\}$ is dense in $L^p(G,\mathsf{p}_\psi)$. Also, by Proposition~\ref{prop:compactness}, there exists $T_\psi>0$ such that $\mathbb{P}^\psi_t$ is compact on $L^p(G,\mathsf{p}_\psi)$ for all $t>T_\psi$. Thus by Lemma~\ref{lem:compact_operator} for all $t>T_\psi$ we have 
		\begin{align}\label{eq:P_spect}
			\sigma(\mathbb{P}^\psi_t; L^p(G, \mathsf{p}_\psi))\setminus\{0\}=\sigma_{\rm p}(\mathbb{P}^\psi_t; L^p(G, \mathsf{p}_\psi))=\{e^{-nt}: n\in\N_0\},
		\end{align}
		and $\mathcal{E}^\psi_n$ is the eigenspace of $\mathbb{P}^\psi_t$ corresponding to the eigenvalue $e^{-nt}$. By the spectral mapping theorem for semigroups, e.~g. \cite[p.~180, Equation (2.7)]{EngelNagelBook2006}, it follows that \eqref{eq:P_spect} holds for all $t>0$.
		It remains to show that for any $0<t<T_\psi$, the eigenspace of $\mathbb{P}^\psi_t$ corresponding to the eigenvalue $e^{-nt}$ is still given by $\mathcal{E}^\psi_n$. Note that Lemma~\ref{lem:LP_intertwining} implies that $\mathcal{E}^\psi_n$ is included in the eigenspace of $\mathbb{P}^\psi_t$ associated to the eigenvalue $e^{-nt}$ for any $t>0$. For $t<T_\psi$, let $\mathcal{E}^\psi_{n,t}$ denote the eigenspace of $\mathbb{P}^\psi_t$ in $L^p(G,\mathsf{p}_\psi)$ associated to $e^{-nt}$. Also, let $k\in\mathbb{N}$ be such that $kt>T_\psi$. Using the semigroup property of $\mathbb{P}^\psi$, for any $f\in\mathcal{E}^\psi_{n,t}$ we have 
		\begin{align*}
			e^{-knt} f= \underbrace{\mathbb{P}^\psi_t \cdots \mathbb{P}^\psi_t}_{k \text{ times }} f=\mathbb{P}^\psi_{kt} f.
		\end{align*}
		Since $kt>T_\psi$, from the above argument it follows that $f\in\mathcal{E}^\psi_n$. Therefore, $\mathcal{E}^\psi_{n,t}\subseteq\mathcal{E}^\psi_n$, which implies that $\mathcal{E}^\psi_{n,t}=\mathcal{E}^\psi_n$ for all $0<t<T_\psi$. This proves \eqref{it:spec_3}, and the proof of theorem is complete. \end{proof}
	
	\subsection{Regularity properties of perturbed OU semigroups}\label{sec:regularity}
	In order to prove Proposition~\ref{prop:multiplicity}, we will first prove that any generalized eigenfunction of $\mathbb{L}_\psi$ must be a polynomial. To achieve this, let us denote the  \emph{right-invariant} vector field corresponding to $X\in\mathfrak{g}$ by $X^r$. Then, the following is known from \cite{Melcher2008}. 
	\begin{lemma}[Lemma~3.2 in \cite{Melcher2008}]\label{lem:Melcher}
		Let $P_t$ be the left-invariant horizontal heat semigroup on a nilpotent Lie group $G$. Then, for any $X\in\mathfrak{g}$ and $f\in C^\infty_c(G)$
		\begin{align*}
			X^rP_t f=P_t X^r f.
		\end{align*}
	\end{lemma}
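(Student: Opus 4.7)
The key structural fact is that the right-invariant vector field $X^r$ is the infinitesimal generator of the one-parameter family of left translations $\tau_{\exp(sX)}$, where $\tau_h f(g) := f(h\star g)$; that is, $\frac{d}{ds}\big|_{s=0}\tau_{\exp(sX)} f = X^r f$. Since the sub-Laplacian $\Delta_{\mathcal{H}}$ is a sum of squares of left-invariant vector fields, it commutes with every $\tau_h$, and this passes to $P_t = e^{t\Delta_{\mathcal{H}}}$. Differentiating the commutation $\tau_h P_t = P_t \tau_h$ in $h$ at the identity will then produce the desired identity $X^r P_t f = P_t X^r f$.

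My plan is as follows. On the nilpotent (hence unimodular) group $G$, the horizontal heat semigroup admits the convolution representation
\begin{align*}
P_t f(g) = \int_G f(g \star h)\, p_t(h)\, dh,
\end{align*}
where $p_t$ is the smooth heat kernel at the origin (smoothness is a consequence of hypoellipticity of $\Delta_{\mathcal{H}}$). This representation makes left-invariance transparent: for any $k \in G$,
\begin{align*}
\tau_k P_t f(g) = \int_G f(k \star g \star h)\, p_t(h)\, dh = P_t \tau_k f(g).
\end{align*}
Taking $k = \exp(sX)$ and differentiating at $s = 0$ formally yields
\begin{align*}
X^r P_t f(g) = \int_G \tfrac{d}{ds}\Big|_{s=0} f(\exp(sX) \star g \star h)\, p_t(h)\, dh = \int_G X^r f(g \star h)\, p_t(h)\, dh = P_t X^r f(g).
\end{align*}

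The only real obstacle is justifying the interchange of $\tfrac{d}{ds}$ and the integral. This is harmless for $f \in C^\infty_c(G)$: for $s$ in any bounded interval and fixed $g$, the set of $h$ for which $\exp(sX) \star g \star h \in \supp f$ is contained in a fixed compact set $K \subset G$ (by continuity of the group operation and compactness of the support), so the integrand and its $s$-derivative are dominated by $\|X^r f\|_\infty \mathbf{1}_K(h)$, an integrable majorant against the probability kernel $p_t$. An alternative that avoids the kernel entirely: the map $s \mapsto \tau_{\exp(sX)} f$ is $C^1$ in the $C_0(G)$-norm with derivative $X^r f$ (since $f$ has uniformly continuous derivatives and compact support), and $P_t$ is bounded on $C_0(G)$; differentiating both sides of $\tau_{\exp(sX)} P_t f = P_t \tau_{\exp(sX)} f$ in $s$ at $0$ in the $C_0(G)$-norm then yields $X^r P_t f = P_t X^r f$ directly.
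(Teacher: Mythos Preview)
The paper does not supply its own proof of this lemma; it simply cites it as Lemma~3.2 of \cite{Melcher2008}. Your argument is correct and is essentially the standard one (and almost certainly what appears in the cited reference): left-invariance of $P_t$ gives $\tau_{\exp(sX)} P_t = P_t \tau_{\exp(sX)}$, and differentiating in $s$ at $0$ yields the commutation with $X^r$. Both of your justifications for the interchange---the dominated-convergence argument via compact support and the $C_0(G)$-norm differentiability argument---are valid; the second is cleaner since it avoids any reference to the kernel. Nothing is missing.
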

	For a multi-index $\alpha\in\{1,\ldots, n\}^k$, let denote $X^r_\alpha=X^r_{\alpha_1}\cdots X^r_{\alpha_k}$. We also denote the weighted Sobolev spaces with respect to the right-invariant vector fields by $W^{k,p}(G, \mathsf{p}_\psi)$, that is, 
	\begin{align*}
		W^{k,p}(G,\mathsf{p}_\psi)=\left\{f\in L^p(G,\mathsf{p}_\psi): X^r_\alpha f\in L^p(G,\mathsf{p}_\psi) \text{ for each } \alpha\in\{1, \ldots, n\}^k \right\},
	\end{align*}
	where $X^r_\alpha f$ is defined in the weak sense. We note that for all $1<p<\infty$ and $\psi\in\mathcal{N}_{\log}$, the set of compactly supported smooth functions $C^\infty_c(G)$ is dense in $W^{k,p}(G,\mathsf{p}_\psi)$. Indeed, a simple truncation argument shows that the set of all $W^{k,p}(G,\mathsf{p}_\psi)$-functions with compact support is dense. On the other hand, given any $f\in W^{k,p}(G,\mathsf{p}_\psi)$ with compact support, it follows from \cite[Proposition~3.3]{BrunoPelosoTabaccoVallarino2019} with $\chi=1$ (see also \cite[Proposition~19]{CoulhonRussTardivel-Nachef2001}) that $f$ can be approximated by functions in $C^\infty_c(G)$. The next two lemmas provide some regularity properties of the semigroup $\mathbb{P}^\psi$. 
	\begin{lemma}\label{lem:regularity}
		For all $t>0$, $\mathbb{P}^\psi_t$ maps $L^p(G,\mathsf{p}_\psi)$ to $W^{k,p}(G,\mathsf{p}_\psi)$ continuously for all $1<p<\infty$ and $k\geqslant 1$.
	\end{lemma}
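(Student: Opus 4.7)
The plan is to represent $X^r_\alpha \mathbb{P}^\psi_t f$ as an integral operator by differentiating the smooth transition kernel of $\mathbb{P}^\psi_t$ under the integral, establish Gaussian-type pointwise bounds on the resulting derivative kernel using the strategy of Proposition~\ref{prop:compactness}, and convert these bounds into $L^p(G, \mathsf{p}_\psi)$-boundedness via a weighted Schur test that exploits the Gaussian envelope \eqref{eq:p_psi_estimate} of the invariant density. By Theorem~\ref{prop:P_gamma}\eqref{it:3}, the semigroup admits the transition kernel $\mathsf{p}^\psi_t(g,g') = \int_{\R^m} \mathsf{p}_t((h,v-\widetilde{v}), (h',v'))\, \mu^{\psi_t}(d\widetilde{v})$, where $g=(h,v)$, $g'=(h',v')$, and $\mathsf{p}_t$ is the kernel of $\mathbb{P}_t = \delta_{e^{-t}}\mathbb{Q}_{(1-e^{-2t})/2}$. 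Since $\mathsf{q}_s \in \mathcal{S}(G)$ for every $s>0$, both $\mathsf{p}^\psi_t$ and all its right-invariant derivatives in the first variable are smooth, so for $f \in C^\infty_c(G)$ differentiation under the integral sign yields
\[ X^r_\alpha \mathbb{P}^\psi_t f(g) = \int_G \bigl(X^r_{\alpha, g}\, \mathsf{p}^\psi_t(g, g')\bigr)\, f(g')\, dg'. \]

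For the pointwise bound, the key observation is that $X^r_\alpha \mathsf{q}_s$ is again Schwartz on $G$ with a Gaussian envelope of the same exponential type as $\mathsf{q}_s$, up to a polynomial prefactor and a homogeneous scaling factor depending on $s$ and $|\alpha|$. Together with the scaling relation $X^r_i \delta_c = c^{d_i} \delta_c X^r_i$ (with $d_i \in \{1,2\}$ according to whether $X_i$ is horizontal or vertical) and the commutation of right-invariant vector fields with vertical convolution (the vertical directions being central), the same chain of estimates used in Proposition~\ref{prop:compactness} yields, for every $0 < \epsilon < 1$,
\[
\bigl|X^r_{\alpha, g}\, \mathsf{p}^\psi_t(g, g')\bigr| \leq C_{\alpha,\epsilon,t}\, P_\alpha(g, g')\, \exp\!\Bigl(-\tfrac{1}{1+\epsilon}\, \rho\bigl(\delta_{(e^{2t}-1)^{-1/2}} g^{-1} \delta_{(1-e^{-2t})^{-1/2}} g'\bigr)^{\!2}\Bigr),
\]
where $P_\alpha$ grows at most polynomially and the integration against $\mu^{\psi_t}$ is absorbed by the moment assumption on $\psi$.

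Combining this pointwise estimate with the Gaussian upper and lower bounds on $\mathsf{p}_\psi$ from \eqref{eq:p_psi_estimate}, a weighted Schur test—verifying
\[
\sup_g \int_G |X^r_{\alpha,g}\mathsf{p}^\psi_t(g,g')|\, dg' < \infty \quad\text{and}\quad \sup_{g'}\int_G |X^r_{\alpha,g}\mathsf{p}^\psi_t(g,g')|\, \frac{\mathsf{p}_\psi(g)}{\mathsf{p}_\psi(g')}\, dg < \infty
\]
—then yields $L^p(G,\mathsf{p}_\psi) \to L^p(G,\mathsf{p}_\psi)$ boundedness for every $1<p<\infty$. A density argument, using that $C^\infty_c(G)$ is dense in $L^p(G,\mathsf{p}_\psi)$, extends the bound to all of $L^p(G,\mathsf{p}_\psi)$, with $X^r_\alpha \mathbb{P}^\psi_t f$ then interpreted as a weak derivative. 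Iterating this for every $|\alpha|\leq k$ gives $\mathbb{P}^\psi_t f \in W^{k,p}(G,\mathsf{p}_\psi)$ together with the continuous dependence on $f$.

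The principal technical difficulty lies in the second Schur estimate: the Gaussian decay rate $1/(1+\epsilon)$ of the kernel must dominate the rate $1/(2-2\epsilon')$ coming from $\mathsf{p}_\psi(g')^{-1}$. This is achieved by choosing $\epsilon, \epsilon'$ small enough so that $1/(1+\epsilon) > 1/(2-2\epsilon')$, which is the Carnot-group analog of the familiar Mehler-kernel/Gaussian-measure cancellation underlying the smoothing property of the classical Euclidean Ornstein--Uhlenbeck semigroup. The Carnot structure enters only through the change of variables $g^{-1}\star g' \mapsto \delta_{(e^{2t}-1)^{-1/2}} g^{-1}\delta_{(1-e^{-2t})^{-1/2}} g'$ appearing inside the Carnot--Carath\'eodory distance $\rho$, which controls all Gaussian factors simultaneously.
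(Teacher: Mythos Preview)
Your approach differs substantially from the paper's, and the second Schur condition you propose actually fails. The paper never touches kernel-level Gaussian envelopes here; instead it proves a pointwise reverse Poincar\'e inequality
\[
|X^r_i\mathbb{Q}_t f|\le \frac{C}{\sqrt t}\,(\mathbb{Q}_t|f|^p)^{1/p}
\]
(via $X^r_i\mathbb{Q}_1 f=-\langle f, X^r_i\mathsf{q}_1\rangle$ and H\"older with $\|X^r_i(\log\mathsf q_1)\|_{L^q(\mathsf q_1)}<\infty$ for $q<\infty$), transfers it to $\mathbb{P}^\psi_t$ using $\mathbb{P}_t=\delta_{e^{-t}}\mathbb{Q}_{(1-e^{-2t})/2}$ and Jensen's inequality against $\mu^{\psi_t}$, and finally integrates against the invariant density $\mathsf p_\psi$, where invariance gives $\int\mathbb{P}^\psi_t(|f|^p)\,\mathsf p_\psi=\int|f|^p\,\mathsf p_\psi$ exactly. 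Higher $k$ follows from the commutation $X^r_\alpha\mathbb P^\psi_t=e^{-|\alpha|t}\mathbb P^\psi_t X^r_\alpha$ and iteration.

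The gap in your argument is that $X^r_\alpha\mathbb P^\psi_t$ is \emph{not} bounded on $L^1(G,\mathsf p_\psi)$, so the Schur condition $\sup_{g'}\int|X^r_{\alpha,g}\mathsf p^\psi_t(g,g')|\,\mathsf p_\psi(g)/\mathsf p_\psi(g')\,dg<\infty$ cannot hold. Already in the Euclidean OU case one computes $\int|\partial_{x_i}\mathsf p_t(x,y)|\,\mathsf p_\infty(x)\,dx\asymp (1+|y|)\,\mathsf p_\infty(y)$, so dividing by $\mathsf p_\infty(y)$ gives an unbounded function of $y$. The heuristic comparison $1/(1+\epsilon)>1/(2-2\epsilon')$ ignores the dilation $\delta_{e^{-t}}$ inside the Gaussian argument: the exact kernel rate is precisely borderline for $L^1$ (invariance gives equality $\int\mathsf p^\psi_t(g,g')\mathsf p_\psi(g)\,dg=\mathsf p_\psi(g')$), and any relaxation by $\epsilon>0$---or the polynomial prefactor from the derivative---tips it the wrong way. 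This is exactly why the lemma excludes $p=1$, and why the paper's route through the pointwise $L^p$-gradient bound (which degenerates only as $p\downarrow 1$ via $q\uparrow\infty$) is the right mechanism. Your $L^\infty$ condition together with a Hilbert--Schmidt bound would salvage $2\le p<\infty$ by interpolation, but $1<p<2$ needs a different idea. Note also that your use of \eqref{eq:p_psi_estimate} requires $\psi\in\mathcal N_{\exp}$, whereas the paper's proof works under $\psi\in\mathcal N_{\log}$.
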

	\begin{proof}
		First, we show that for all $t>0$, $1<q<\infty$, $i\in\{1,\ldots, n\}$ and $f\in C^\infty_c(G)$, 
		\begin{align}\label{eq:reverse-poincare}
			|X^r_i \mathbb{Q}_tf|\le \frac{C}{\sqrt t} \mathbb{Q}_t(|f|^p)^{\frac 1p} 
		\end{align}
		for some positive constant $C$. Indeed, using the same technique from the proof of \cite[Theorem~3.1]{BakryBaudoinBonnefontChafai2008}, we obtain that 
		\begin{align*}
			X^r_i \mathbb{Q}_1f=\mathbb{Q}_1 X^r_if=-\langle f, X^r_i\mathsf{q}_1\rangle,
		\end{align*}
		where $\mathsf{q}_1$ is the horizontal heat kernel at $t=1$ and the last identity follows from integration by parts. Noting that $X^r_i \mathsf{q}_1=X^r_i (\log \mathsf{q}_1) \mathsf{q}_1$, using H\"older's inequality we obtain 
		\begin{align*}
			|X^r_i\mathbb{Q}_1f|\le \mathbb{Q}_1(|f|^p)^{\frac1p} \left(\int_G |X^r_i(\log \mathsf{q}_1)|^{q}\right)^{\frac{1}{q}}=C\mathbb{Q}_1(|f|^p)^{\frac1p}.
		\end{align*}
		Since $c\delta_c X^r_i= X^r_i \delta_c$ and $\delta_{\frac{1}{\sqrt t}} \mathbb{Q}_{1}\delta_{\sqrt t}=\mathbb{Q}_t$ for all $t, c>0$, the above inequality yields \eqref{eq:reverse-poincare}. Recall that $\mathbb{P}_t=\delta_{e^{-t}}\mathbb{Q}_{\frac{1-e^{-2t}}{2}}$, therefore \eqref{eq:reverse-poincare} implies that for all $f\in C^\infty_c(G)$ and $t>0$, 
		\begin{align*}
			|X^r_i \mathbb{P}_tf|\le \frac{C}{\sqrt{e^{2t}-1}} \mathbb{P}_t( |f|^p|)^{\frac1p}.
		\end{align*}
		Now, for any $\psi\in\mathcal{N}_{\log}$ we can use \eqref{eq:P_alpha}, and then for any $f\in C^\infty_c(G)$ we have 
		\begin{align*}
			& |X^r_i \mathbb{P}^\psi_tf|^p \\
			&\le \left|\int_G X^r_i \mathbb{P}_tf(h, v-v') \mu^{\psi_t}(dv')\right|^p \\
			&\le \int_G |X^r_i \mathbb{P}_tf(h, v-v')|^p \mu^{\psi_t}(dv') \\
			&\le \frac{C^p}{(e^{2t}-1)^{\frac p2}}\int_G \mathbb{P}_t(|f|^p)(h,v-v') \mu^{\psi_t}(dv') \\
			&=\frac{C^p}{(e^{2t}-1)^{\frac p2}} \mathbb{P}^\psi_t (|f|^p).
		\end{align*}
		Integrating the above inequality with respect to the invariant distribution $\mathsf{p}_\psi$, we get 
		\begin{align}\label{eq:sobolev_contraction}
			\|X^r_i \mathbb{P}^\psi_tf\|_{L^p(G, \mathsf{p}_\psi)} \le \frac{C}{\sqrt{e^{2t}-1}} \|f\|_{L^p(G, \mathsf{p}_\psi)}.
		\end{align}
		Using the density of $C^\infty_c(G)$ in $W^{1,p}(G, \mathsf{p}_\psi)$, we conclude that \eqref{eq:sobolev_contraction} holds for all $f\in W^{1,p}(G,\mathsf{p}_\psi)$, which proves the statement of the lemma for $k=1$. For $k\geqslant 2$, the proof follows in the same way with the additional observation 
		\begin{align*}
			X^r_\alpha\mathbb{P}^\psi_tf=e^{-kt}\mathbb{P}^\psi_tf
		\end{align*}
		for all $f\in W^{k,p}(G,\mathsf{p}_\psi)$ with $|\alpha|=k$. The last identity holds for all $f\in C^\infty_c(G)$ and extends to $W^{k,p}(G,\mathsf{p}_\psi)$ by density.
	\end{proof}
	\begin{lemma}\label{lem:sobolev_bound}
		For any $f\in W^{k,p}(G,\mathsf{p}_\psi)$, $\alpha\in\{1,\ldots, n\}^k$, and $\psi\in\mathcal N_{\log}$, we have 
		\begin{align*}
			\|X^r_\alpha \mathbb{P}^\psi_tf\|_{L^p(G,\mathsf{p}_\psi)}\le e^{-kt}\|X^r_\alpha f\|_{L^p(G,\mathsf{p}_\psi)}.
		\end{align*}
	\end{lemma}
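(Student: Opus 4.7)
The plan is to reduce the claim to the commutation identity
\begin{align*}
X^r_\alpha \mathbb{P}^\psi_t f = e^{-kt}\, \mathbb{P}^\psi_t X^r_\alpha f, \qquad f \in C^\infty_c(G),\ |\alpha|=k,
\end{align*}
and then apply the $L^p$-contractivity of the Markov semigroup $\mathbb{P}^\psi_t$ on $L^p(G,\mathsf{p}_\psi)$ together with the density of $C^\infty_c(G)$ in $W^{k,p}(G,\mathsf{p}_\psi)$ noted before Lemma~\ref{lem:regularity}. Once the commutation is in hand,
\begin{align*}
\|X^r_\alpha \mathbb{P}^\psi_t f\|_{L^p(G,\mathsf{p}_\psi)} = e^{-kt}\|\mathbb{P}^\psi_t X^r_\alpha f\|_{L^p(G,\mathsf{p}_\psi)} \le e^{-kt}\|X^r_\alpha f\|_{L^p(G,\mathsf{p}_\psi)}
\end{align*}
is immediate for $f\in C^\infty_c(G)$, and the inequality extends to all of $W^{k,p}(G,\mathsf{p}_\psi)$ by approximation, using the boundedness of $\mathbb{P}^\psi_t$ and the closability of the $X^r_\alpha$ on $L^p(G,\mathsf{p}_\psi)$.

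To prove the commutation on $C^\infty_c(G)$, I would first handle the single-letter case $k=1$ by combining three elementary facts for any horizontal right-invariant field $X^r_i$, $1\le i\le n$. First, Lemma~\ref{lem:Melcher} gives $X^r_i \mathbb{Q}_t = \mathbb{Q}_t X^r_i$. Second, since $X_i\in\mathcal{H}$ is homogeneous of degree $1$, one has $\delta_c\exp(sX_i)=\exp(scX_i)$ and hence, because $\delta_c$ is a group homomorphism and acts on the right factor of $\exp(sX_i)\star g$, the scaling relation
\begin{align*}
X^r_i \delta_c = c\,\delta_c X^r_i, \qquad c>0,
\end{align*}
holds on smooth functions. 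Third, writing $X^r_i = \partial_{h_i} + \tfrac{1}{2}\omega(e_i,h)\cdot\nabla_v$ in exponential coordinates, the coefficients depend only on $h$, so $X^r_i$ commutes with the vertical convolution $f\mapsto f\,{\ast}_v\,\mu$ for any finite Borel measure $\mu$ on $\R^m$ (by differentiation under the integral). Combining these with the factorization $\mathbb{P}_t = \delta_{e^{-t}}\mathbb{Q}_{(1-e^{-2t})/2}$ from \eqref{eq:OU_semigroup} and the representation $\mathbb{P}^\psi_t f = \mathbb{P}_t f\,{\ast}_v\,\mu^{\psi_t}$ from \eqref{eq:P_alpha} yields
\begin{align*}
X^r_i \mathbb{P}^\psi_t f = X^r_i\bigl(\mathbb{P}_t f \,{\ast}_v\,\mu^{\psi_t}\bigr) = (X^r_i \mathbb{P}_t f)\,{\ast}_v\,\mu^{\psi_t} = e^{-t}\bigl(\mathbb{P}_t X^r_i f\bigr)\,{\ast}_v\,\mu^{\psi_t} = e^{-t}\mathbb{P}^\psi_t X^r_i f.
\end{align*}

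Iterating this one-variable identity $k$ times (using that $X^r_{\alpha_1}\cdots X^r_{\alpha_{k-1}} f$ still lies in $C^\infty_c(G)$) gives the commutation for general $|\alpha|=k$. The only non-routine verification is the commutation of $X^r_i$ with vertical convolution, which is essentially structural: on the Lie algebra side, vertical translations sit inside the center so they are intertwined by every horizontal right-invariant derivative. The extension from $C^\infty_c(G)$ to $W^{k,p}(G,\mathsf{p}_\psi)$ does not require any new analysis beyond the density statement quoted in the paragraph preceding Lemma~\ref{lem:regularity}, combined with the observation that both sides of the final inequality are continuous in $f$ with respect to the $W^{k,p}(G,\mathsf{p}_\psi)$-norm.
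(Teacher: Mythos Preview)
Your proof is correct and follows essentially the same approach as the paper: establish the commutation identity $X^r_\alpha\mathbb{P}^\psi_t f=e^{-kt}\mathbb{P}^\psi_t X^r_\alpha f$ on $C^\infty_c(G)$ via Lemma~\ref{lem:Melcher}, the scaling $X^r_i\delta_{e^{-t}}=e^{-t}\delta_{e^{-t}}X^r_i$, and commutation of $X^r_i$ with vertical convolution, then invoke the $L^p(G,\mathsf{p}_\psi)$-contractivity of $\mathbb{P}^\psi_t$ and density. Your write-up is a bit more explicit (treating $k=1$ first and iterating), but the ingredients and their assembly are the same as in the paper.
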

	\begin{proof}
		It is enough to prove the above lemma for $f\in C^\infty_c(G)$. The rest will follow from the density of $C^\infty_c(G)$ in $W^{k,q}(G,\mathsf{p}_\psi)$. Using Lemma~\ref{lem:Melcher}, for any $f\in C^\infty_c(G)$ and $\alpha\in\{1,\ldots, n\}^k$ we have 
		\begin{align*}
			X^r_\alpha \mathbb{P}_tf=e^{-kt} \mathbb{P}_t X^r_\alpha f,
		\end{align*}
		where we used the fact that $\mathbb{P}_t=\delta_{e^{-t}}\mathbb{Q}_{\frac{1-e^{-2t}}{2}}$ and $X^r\delta_{e^{-t}}=e^{-t}\delta_{e^{-t}} X^r$ for any $X\in\mathfrak{g}$. Now, for any $\psi\in\mathcal N_{\log}$ and $f\in C^\infty_c(G)$ we have
		\begin{align*}
			X^r_\alpha \mathbb{P}^\psi_tf=X^r_\alpha \int_{\R^m} \mathbb{P}_tf(v,w-w') \mu^{\psi_t}(dw').
		\end{align*}
		Since $X^r_\alpha$ commutes with the vertical vector fields and $f, \mathbb{P}_tf$ are smooth, we have
		\begin{align*}
			X^r_\alpha \int_{\R^m} \mathbb{P}_tf(v,w-w') \mu^{\psi_t}(dw')&=e^{-kt}\int_{\R^m} X^r_\alpha\mathbb{P}_tf(v,w-w')\mu^{\psi_t}(dw')\\
			&=e^{-kt} X^r_\alpha\mathbb{P}^\psi_tf.
		\end{align*}
		Finally, using the contraction property of $\mathbb{P}^\psi_t$ on $L^p(G,\mathsf{p}_\psi)$ we conclude the proof of the lemma.
	\end{proof}
	Next, we call $f$ to be a generalized eigenfunction of $\mathbb{L}_\psi$ corresponding to the eigenvalue $\theta$ if there exists $s\in\N$ such that $f\in\mathcal{D}(\mathbb{L}^s_\psi)$ and $(\mathbb{L}_\psi-\theta I)^sf=0$.
	\begin{lemma}\label{lem:gen_eigen}
		Let $f$ be a generalized eigenfunction of $\mathbb{L}_\psi$ corresponding to the eigenvalue $-\beta$ satisfying $(\mathbb{L}_\psi+\beta I)^sf=0$. Then $f$ is a polynomial of degree at most $\beta$.
	\end{lemma}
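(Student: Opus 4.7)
The strategy is to combine the Jordan-block action of the semigroup on a generalized eigenfunction with the regularity and contraction estimates from Lemmas \ref{lem:regularity} and \ref{lem:sobolev_bound}: together they will force enough high-order right-invariant horizontal derivatives of $f$ to vanish that the standard polynomial characterization on Carnot groups applies. Setting $A := \mathbb{L}_\psi + \beta I$ and $g_j := A^j f$ for $0 \leq j \leq s-1$, the hypothesis $A^s f = 0$ combined with the commutation of $A$ with $\mathbb{L}_\psi$ gives
$$\mathbb{P}^\psi_t f \;=\; e^{-\beta t}\, e^{tA} f \;=\; e^{-\beta t} \sum_{j=0}^{s-1} \frac{t^j}{j!}\, g_j$$
as an identity in $L^p(G,\mathsf{p}_\psi)$ for every $t \geq 0$.

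By Lemma \ref{lem:regularity}, $\mathbb{P}^\psi_t f \in W^{k,p}(G,\mathsf{p}_\psi)$ for every $t>0$ and every $k \in \mathbb{N}$. Evaluating the previous display at any $s$ distinct positive times and inverting the (scalar, nonsingular) Vandermonde matrix expresses each coefficient $g_j$ as a linear combination of $W^{k,p}$-smoothed functions, so $g_j \in W^{k,p}(G,\mathsf{p}_\psi)$ for all $k$; in particular $f = g_0 \in W^{k,p}(G,\mathsf{p}_\psi)$. Now fix any horizontal multi-index $\alpha \in \{1,\dots,n\}^k$ with $k > \beta$. Applying $X^r_\alpha$ to the semigroup identity and invoking Lemma \ref{lem:sobolev_bound} yields
$$\left\| \sum_{j=0}^{s-1} \frac{t^j}{j!}\, X^r_\alpha g_j \right\|_{L^p(\mathsf{p}_\psi)} \;=\; e^{\beta t}\,\|X^r_\alpha \mathbb{P}^\psi_t f\|_{L^p(\mathsf{p}_\psi)} \;\leq\; e^{-(k-\beta)t}\, \|X^r_\alpha f\|_{L^p(\mathsf{p}_\psi)},$$
and the right-hand side tends to $0$ as $t \to \infty$. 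The left-hand side is the $L^p$-norm of a polynomial in $t$ of degree at most $s-1$ with coefficients in the Banach space $L^p(G,\mathsf{p}_\psi)$; pairing against any bounded linear functional and using that a scalar polynomial whose modulus vanishes at infinity is identically zero, Hahn--Banach forces the polynomial itself to vanish, hence $X^r_\alpha g_j = 0$ for every $j$, and in particular $X^r_\alpha f = 0$.

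Thus $X^r_\alpha f = 0$ for every horizontal multi-index $\alpha$ with $|\alpha| > \beta$. To conclude that $f \in \mathcal{B}_\beta$, one invokes the standard polynomial characterization on step-$2$ Carnot groups: the horizontal right-invariant fields $X_1^r, \dots, X_n^r$ are homogeneous of degree $-1$ under the Carnot dilation, and together with their iterated commutators they generate the whole right-invariant universal enveloping algebra, with words of length $\ell$ exhausting all right-invariant operators of weighted order $\ell$; since $f$ is smooth (being in every $W^{k,p}$ and hypoelliptic theory), an exponential-coordinate Taylor argument then shows that vanishing of all horizontal right-invariant products of order exceeding $\beta$ forces $f$ to be a polynomial of Carnot-homogeneous degree at most $\beta$. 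I expect this last step --- the polynomial characterization --- to be the main obstacle, since the preceding semigroup manipulation is a routine consequence of the Jordan-block formula once Lemmas \ref{lem:regularity} and \ref{lem:sobolev_bound} are in hand, whereas the identification of $\ker\{X^r_\alpha : |\alpha| > \beta\}$ with $\mathcal{B}_\beta$ requires a careful Lie-theoretic argument combining Hörmander spanning, the PBW basis, and the behavior of right-invariant fields under the Carnot dilation.
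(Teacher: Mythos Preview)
Your argument is correct and reaches the same conclusion as the paper, but by a genuinely different route. The paper proceeds by induction on $s$: for $s=1$ it invokes Theorem~\ref{thm:spectrum}\eqref{it:spec_3}, which already identifies the eigenspace of $\mathbb{P}^\psi_t$ at $e^{-\beta t}$ as $(\mathbb{Q}_{1/2}\Gamma_\psi)^{-1}(\mathcal{P}_\beta)\subset\mathcal{P}$; then in the inductive step the terms $(\mathbb{L}_\psi+\beta I)^jf$ for $j\geqslant 1$ are polynomials of degree $\leqslant\beta$ by hypothesis, so applying $X^r_\alpha$ with $|\alpha|>\beta$ to the Jordan expansion kills them outright and leaves $X^r_\alpha\mathbb{P}^\psi_t f=e^{-\beta t}X^r_\alpha f$, whence Lemma~\ref{lem:sobolev_bound} gives $e^{-\beta t}\|X^r_\alpha f\|\leqslant e^{-kt}\|X^r_\alpha f\|$ directly. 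Your approach sidesteps both the induction and the dependence on Theorem~\ref{thm:spectrum}: the Vandermonde trick gives $g_j\in W^{k,p}$ for all $j$ in one stroke, and the Hahn--Banach argument on the $t$-polynomial handles all the $g_j$ simultaneously. This makes your proof more self-contained, at the price of a slightly more elaborate limiting argument.

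On the final step you flag as the ``main obstacle'': the paper treats it as a one-line fact (``Therefore, $f$ is a polynomial of degree at most $\beta$''), and indeed it is standard on stratified groups. Since $[X^r_i,X^r_j]$ span the vertical fields, any right-invariant differential operator of homogeneous order $>\beta$ is a linear combination of horizontal words of length $>\beta$; combined with the fact that in exponential coordinates the right-invariant and constant-coefficient operators of a given homogeneous order have the same span (PBW), vanishing of all $X^r_\alpha f$ with $|\alpha|>\beta$ forces $\partial_h^a\partial_v^b f=0$ whenever $|a|+2|b|>\beta$, and the smoothness of $f$ (from $f\in\bigcap_k W^{k,p}$ and subelliptic Sobolev embedding) then gives $f\in\mathcal{B}_\beta$. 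So your concern is well placed but the resolution is routine, not a genuine obstacle.
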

	\begin{proof}
		We will prove the lemma using an argument similar to the proof of \cite[Proposition~3.1]{MetafunePallaraPriola2002}. For $s=1$, the lemma holds true due to Theorem~\ref{thm:spectrum}\eqref{it:spec_3}. Now, we proceed by induction. Suppose that the statement of the lemma holds for some $s\geqslant 1$. Let $f\in\mathcal{D}(\mathbb{L}_\psi^{s+1})\subset L^p(G,\mathsf{p}_\psi)$ be such that $(\mathbb{L}_\psi+\beta I)^{s+1}f=0$. Then, we must have 
		\begin{align*}
			\mathbb{P}^\psi_tf=e^{-\beta t}f+e^{-\beta t}\sum_{j=1}^r\frac{(\mathbb{L}_\psi+\beta I)^jf}{j!}.
		\end{align*}
		Also, by induction hypothesis, for each $1\le j\le r$, $(\mathbb{L}_\psi+\beta I)^jf$ is a polynomial of degree at most $\beta$. Since by Lemma~\ref{lem:regularity} $P_tf\in W^{k,p}(G,\mathsf{p}_\psi)$, we deduce from the above equation that $f\in W^{k,p}(G,\mathsf{p}_\psi)$. Therefore, for any $\alpha\in\{1,\ldots,n\}^k$, $X^r_\alpha\mathbb{P}^\psi_tf= e^{-\beta t}X^r_\alpha f$ for all $k\geqslant \beta+1$. Using Lemma~\ref{lem:sobolev_bound} we obtain that for all $k\ge\beta+1$ and $\alpha\in\{1,\ldots, n\}^k$,
		\begin{align*}
			e^{-\beta t}\|X^r_\alpha f\|_{L^p(G,\mathsf{p}_\psi)}\le e^{-kt}\|X^r_\alpha f\|_{L^p(G,\mathsf{p}_\psi)},
		\end{align*}
		which forces that $X^r_\alpha f=0$ whenever $\alpha\in\{1,\ldots, n\}^k$ and $k\geqslant \beta+1$. Therefore, $f$ is a polynomial of degree at most $\beta$.
	\end{proof}
	
	\begin{proof}[Proof of Theorem~\ref{prop:multiplicity}] We note that Lemma~\ref{lem:gen_eigen} implies that the algebraic multiplicity of the eigenvalue $\beta$ of $\mathbb{L}_\psi$ is finite. Moreover, Theorem~\ref{thm:OU_intertwining} also implies that for all $\beta\in\N_0$ and $s\geqslant 1$,
		\begin{align*}
			(\mathbb{L}+\beta I)^s\Gamma_\psi=\Gamma_\psi (\mathbb{L}_\psi+\beta I)^s \text{ on } \mathcal P.
		\end{align*}
		Since $\Gamma_\psi$ is invertible on $\mathcal P$, we conclude that $\dim(\ker(\mathbb{L}+\beta I)^s)=\dim(\ker(\mathbb{L}_\psi+\beta I)^s)$ for all $s\geqslant 1$, which concludes the proof.
	\end{proof}

	\subsection{Co-eigenfunctions of perturbed OU semigroup} Note that we only used the intertwining with $\mathbb{P}$ to determine the spectrum and the eigenspaces of $\mathbb{P}^\psi$. On the other hand, the intertwining with $P^\psi$ provides some information about the co-eigenfunctions of $\mathbb{P}^\psi$ for even eigenvalues. For any $\psi\in\mathcal{N}_{\log}$ (see \eqref{eq:N_log} for definition) and $\beta=(\beta_1,\ldots, \beta_m)\in\N^m_0$, let us define 
	\begin{align}\label{eq:G-psi}
		J^\psi_\beta(h,v)=(-1)^{|\beta|}\frac{\partial_v^{\beta}\mathsf{p}_\psi(h,v)}{\mathsf{p}_\psi(h,v)},
	\end{align}
	where $\partial^{\beta}_v=\partial^{\beta_1}_{v_1}\cdots\partial^{\beta_m}_{v_m}$. For any $1<p<\infty$, let us denote the adjoint of $\mathbb{P}^\psi_t: L^p(G,\mathsf{p}_\psi)\longrightarrow L^p(G,\mathsf{p}_\psi)$ by $\mathbb{P}^{\psi*}_t: L^{q}(G,\mathsf{p}_\psi)\longrightarrow L^{q}(G,\mathsf{p}_\psi)$, where $q=\frac{p}{p-1}$. Then we have the following result.
	\begin{theorem}\label{thm:coeignefunction}
		Let $\psi\in\mathcal{N}_{\log}$. Then, for all $\beta\in\N^m_0$ and $1<p<\infty$, $J^\psi_\beta\in L^{q}(G,\mathsf{p}_\psi)$. Moreover, for all $t\geqslant 0$, 
		\begin{align}\label{eq:coeigenfunction}
			\mathbb{P}^{\psi*}_t J^\psi_\beta= e^{-2|\beta|t}J^\psi_\beta \text{ on }  L^{q}(G,\mathsf{p}_\psi).
		\end{align}
	\end{theorem}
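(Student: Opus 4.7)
The strategy is a direct computation of the bilinear pairing $\langle \mathbb{P}^\psi_t f, J^\psi_\beta\rangle_{L^2(\mathsf{p}_\psi)}$ for test functions $f\in C^\infty_c(G)$, followed by extension by density. Two structural facts drive the argument: the vertical vector fields $\partial_{v_j}=Z_{n+j}$ lie in the center of $\mathfrak g$ (so they commute with $\Delta_{\mathcal H}$ and trivially with the vertical Fourier multiplier $A^\psi_{\mathcal V}$), and the dilations act on the vertical layer with weight $2$, namely $\partial_{v_j}\delta_c = c^2\delta_c \partial_{v_j}$.

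The first step is to establish the commutation identity $\partial^\beta_v \mathbb{P}^\psi_t = e^{-2|\beta|t}\mathbb{P}^\psi_t \partial^\beta_v$ for every $\beta\in\N^m_0$. Writing $\mathbb{P}_t = \delta_{e^{-t}}\mathbb{Q}_{(1-e^{-2t})/2}$ as in \eqref{eq:OU_semigroup} and combining centrality of $Z_{n+j}$ (which gives $\partial_{v_j}\mathbb{Q}_s=\mathbb{Q}_s\partial_{v_j}$) with $\partial_{v_j}\delta_{e^{-t}}=e^{-2t}\delta_{e^{-t}}\partial_{v_j}$ yields $\partial_{v_j}\mathbb{P}_t = e^{-2t}\mathbb{P}_t\partial_{v_j}$; integrating this against $\mu^{\psi_t}$ in the representation \eqref{eq:P_alpha} propagates the identity to $\mathbb{P}^\psi$, and iteration covers all multi-indices $\beta$. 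The second step is to verify that $J^\psi_\beta \in L^q(G,\mathsf{p}_\psi)$ for every $q \in (1,\infty)$. Since $\mathsf{p}_\psi = \mathsf{q}_{1/2} {\ast}_v \overline{\mu}_\psi$ by Theorem~\ref{prop:P_gamma}\eqref{it:4}, we may differentiate under the convolution to obtain $\partial^\beta_v \mathsf{p}_\psi = (\partial^\beta_v \mathsf{q}_{1/2}) {\ast}_v \overline{\mu}_\psi$, and combining Gaussian-type upper bounds on $\partial^\beta_v \mathsf{q}_{1/2}$ with the lower bound on $\mathsf{p}_\psi$ from \eqref{eq:p_psi_estimate} produces a subexponential pointwise bound on $|J^\psi_\beta|$ that is $q$-integrable against $\mathsf{p}_\psi$.

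With these two facts in hand, for $f\in C^\infty_c(G)$ we integrate twice by parts in the vertical variables (the boundary terms vanishing thanks to the decay estimates above) and invoke invariance of $\mathsf{p}_\psi$ under $\mathbb{P}^\psi_t$:
\begin{align*}
\langle \mathbb{P}^\psi_t f, J^\psi_\beta\rangle_{L^2(\mathsf{p}_\psi)}
&= (-1)^{|\beta|}\int_G \mathbb{P}^\psi_t f\,\partial^\beta_v \mathsf{p}_\psi\, dh\,dv \\
&= \int_G \partial^\beta_v \mathbb{P}^\psi_t f\, \mathsf{p}_\psi\, dh\,dv \\
&= e^{-2|\beta|t}\int_G \mathbb{P}^\psi_t(\partial^\beta_v f)\,\mathsf{p}_\psi\,dh\,dv \\
&= e^{-2|\beta|t}\int_G \partial^\beta_v f\, \mathsf{p}_\psi\,dh\,dv \\
&= e^{-2|\beta|t}\langle f,J^\psi_\beta\rangle_{L^2(\mathsf{p}_\psi)}.
\end{align*}
Density of $C^\infty_c(G)$ in $L^p(G,\mathsf{p}_\psi)$ then upgrades this to the identity on all of $L^p$, which is precisely the claimed statement $\mathbb{P}^{\psi*}_tJ^\psi_\beta=e^{-2|\beta|t}J^\psi_\beta$.

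The main obstacle is the $L^q$-integrability step together with the justification of the vanishing of boundary terms: both require quantitative vertical-derivative bounds on the horizontal heat kernel $\mathsf{q}_{1/2}$ (of Gaussian flavor in $h$ and controlled growth in $v$) sharp enough to dominate the ratio $\partial^\beta_v\mathsf{p}_\psi/\mathsf{p}_\psi$. These should be accessible by refining the heat-kernel estimates of the flavor used in the proof of Proposition~\ref{prop:compactness}, applied now to $\partial^\beta_v\mathsf{q}_{1/2}$ in place of $\mathsf{q}_{1/2}$, together with the exponential-moment assumption already implicit in $\mathcal{N}_{\log}$ (and more cleanly in $\mathcal{N}_{\exp}$).
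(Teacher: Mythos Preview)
Your computation of the eigenfunction identity is correct and is a genuinely different, more elementary route than the paper's.  The paper proves $\mathbb{P}^{\psi*}_tJ^\psi_\beta=e^{-2|\beta|t}J^\psi_\beta$ in two ways: when $\sigma$ is non-singular it exhibits $J^\psi_\beta=\Lambda_\psi^{*}\Phi^{(\varrho)}_\beta$ via an intertwining with a Euclidean OU semigroup (Proposition~\ref{prop:eigen_adjoint}, Lemma~\ref{lem:coeigen_approx}), and for general $\sigma$ it performs a direct GFT computation of $\widehat{\mathbb P}^\psi_t(J^\psi_\beta\mathsf{p}_\psi)$ using Proposition~\ref{prop:fourier_transform}.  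Your approach replaces all of this with the single commutation relation $\partial^\beta_v\mathbb{P}^\psi_t=e^{-2|\beta|t}\mathbb{P}^\psi_t\partial^\beta_v$ (centrality of the vertical fields plus the weight-$2$ action of dilations), two integrations by parts, and invariance of $\mathsf{p}_\psi$.  This is cleaner and avoids both the representation theory and the intertwining machinery.

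There is, however, a genuine gap in your integrability argument.  You assert that exponential moments are ``already implicit in $\mathcal{N}_{\log}$''; they are not---$\mathcal{N}_{\log}$ only imposes $\int_{|v|>1}\log|v|\,\kappa(dv)<\infty$.  Your proposed bound on $|\partial^\beta_v\mathsf{p}_\psi|$ proceeds by convolving a Gaussian-type estimate for $\partial^\beta_v\mathsf{q}_{1/2}$ against $\overline{\mu}_\psi$, and controlling the result against the lower bound \eqref{eq:p_psi_estimate} forces an integral of the form $\int e^{c|v'|}\mu_\psi(dv')$, which is finite only under $\mathcal{N}_{\exp}$.  So your method, as written, proves the theorem only for $\psi\in\mathcal{N}_{\exp}$, whereas the statement is for $\psi\in\mathcal{N}_{\log}$.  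The paper circumvents this via Lemma~\ref{lem:coeigen_approx}: for invertible $\sigma$, the identity $J^\psi_\beta=\Lambda_\psi^{*}\Phi^{(\varrho)}_\beta$ and boundedness of $\Lambda_\psi^{*}$ give $\|J^\psi_\beta\|_{L^q(\mathsf{p}_\psi)}\le\|\Phi^{(1)}_\beta\|_{L^q(\mu_1)}$, a bound \emph{independent of $\psi$}; one then approximates a general $\psi$ by $\psi_\epsilon$ with $\sigma_\epsilon$ invertible and passes to the limit via Fatou.  This uniform-bound-plus-approximation trick is the missing idea if you want your argument to cover all of $\mathcal{N}_{\log}$.
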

	\begin{corollary}
		For any $\beta\in\N^m_0$, let us define 
		\begin{align*}
			J(h,v)=(-1)^{|\beta|}\frac{\partial^\beta_v \mathsf{p}(h,v)}{\mathsf{p}(h,v)}.
		\end{align*}
		Then, for all $t\geqslant 0$ and $1<p<\infty$, $\mathbb{P}^{\ast}_t J_\beta= e^{-2|\beta| t} J_\beta$ on $ L^{q}(G, \mathsf{p})$.
	\end{corollary}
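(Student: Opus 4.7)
The plan is to observe that this corollary is precisely the specialization of Theorem~\ref{thm:coeignefunction} to the trivial L\'evy-Khintchine exponent $\psi \equiv 0$, so essentially no new work is required beyond checking that the hypotheses are satisfied.

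First, I would note that $\psi \equiv 0$ corresponds to the triple $(\sigma, b, \kappa) = (0, 0, 0)$, and this trivially belongs to $\mathcal{N}_{\log}$ since the logarithmic moment condition $\int_{|v|>1}\log|v|\,\kappa(dv) < \infty$ in \eqref{eq:N_log} is vacuously satisfied when $\kappa = 0$. Under this choice, $A^\psi_{\mathcal V} \equiv 0$, so $\mathbb{L}_\psi = \mathbb{L}$ and consequently $\mathbb{P}^\psi_t = \mathbb{P}_t$ by the uniqueness of the semigroup generated by the closure of $(\mathbb{L}, C^\infty_c(G))$ established in Lemma~\ref{cor:gen_OU_core}. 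Moreover, using \eqref{eq:psi_t} we see that $\psi_t \equiv 0$, so $\mu^{\psi_t} = \delta_0$ and $\mu_\psi = \delta_0$, which makes \eqref{eq:p_gamma} reduce to the formula for $\mathsf{q}_{1/2}$. That is, $\mathsf{p}_\psi = \mathsf{q}_{1/2} = \mathsf{p}$ by Theorem~\ref{prop:P_gamma}\eqref{it:4}.

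Next, from the definition \eqref{eq:G-psi} of $J^\psi_\beta$, this immediately gives $J^0_\beta = (-1)^{|\beta|} \partial^\beta_v \mathsf{p} / \mathsf{p} = J_\beta$.

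Finally, I would apply Theorem~\ref{thm:coeignefunction} with $\psi \equiv 0$ to obtain $J_\beta = J^0_\beta \in L^q(G, \mathsf{p})$ and
\begin{align*}
\mathbb{P}^*_t J_\beta = \mathbb{P}^{0*}_t J^0_\beta = e^{-2|\beta|t} J^0_\beta = e^{-2|\beta|t} J_\beta \text{ on } L^q(G, \mathsf{p}),
\end{align*}
for all $t \geqslant 0$ and $1 < p < \infty$, completing the proof. Since the entire argument is a direct substitution, there is no genuine obstacle here; the only thing to verify carefully is that each of the constructions (the semigroup, invariant density, and co-eigenfunction formula) really does reduce correctly when $\psi \equiv 0$, which follows transparently from the definitions.
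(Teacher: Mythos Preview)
Your proposal is correct and matches the paper's intent: the corollary is stated immediately after Theorem~\ref{thm:coeignefunction} with no separate proof, precisely because it is the specialization $\psi \equiv 0$, and your verification that $(0,0,0) \in \mathcal{N}_{\log}$, $\mathbb{P}^0 = \mathbb{P}$, $\mathsf{p}_0 = \mathsf{p}$, and $J^0_\beta = J_\beta$ is exactly the reduction the paper leaves implicit.
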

	We need some auxiliary results to prove the above theorem. To this end, for any nonnegative definite matrix $\sigma$ of order $m\times m$, let $P^\sigma$ denote the OU semigroup on $\R^m$ generated by 
	\begin{align*}
		L_\sigma f(v)=\trace(\sigma\nabla^2 f(v))-2\langle v, \nabla f(v)\rangle.
	\end{align*}
	\begin{proposition}\label{prop:eigen_adjoint}
		If $\psi=(\sigma,b,\kappa)\in\mathcal{N}_{\log}$ with $\sigma$ non-singular, for all $p\geqslant 1$ there exists a Markov operator $\Lambda_\psi: L^p(G,\mathsf{p}_\psi)\longrightarrow L^p(\R^m,\mu_\varrho)$ with dense range satisfying
		\begin{align}\label{eq:intertwining_0}
			P^{(\varrho)}_t\Lambda_\psi=\Lambda_\psi\mathbb{P}^\psi_t \text{ on  }  L^p(G,\mathsf{p}_\psi),
		\end{align}\label{eq:coeigen}
		where $P^{(\varrho)}=P^{\varrho I_{m\times m}}$, $\varrho$ being the smallest eigenvalue of $\frac{\sigma}{2}$, and $\mu_\varrho$ is the invariant distribution of $P^{(\varrho)}$ that is given by
		\begin{align*}
			\mu_\varrho(dv)=(\varrho\pi)^{-\frac d2} e^{-\frac{|v|^2}{\varrho}}dv.
		\end{align*}
	\end{proposition}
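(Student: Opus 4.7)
The plan is to construct $\Lambda_\psi$ as a composition $\Lambda_\psi = K \circ \Lambda$, where $\Lambda$ is the Markov operator from Theorem~\ref{thm:OU_intertwining} and $K$ is an auxiliary Markov kernel on $\R^m$ that intertwines $P^\psi$ with $P^{(\varrho)}$. First I would decompose $\psi = \psi^{(\varrho)} + \widetilde\psi$ where $\psi^{(\varrho)}(\lambda) = -\varrho|\lambda|^2$ and $\widetilde\psi := (\sigma - \varrho I_m, b, \kappa)$. Since $\varrho$ is the smallest eigenvalue of $\sigma/2$, one has $\sigma \geqslant 2\varrho I_m$ and hence $\sigma - \varrho I_m \geqslant \varrho I_m > 0$ as $\sigma$ is non-singular, so $\widetilde\psi \in \mathcal{N}_{\log}$. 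The L\'evy process $Y^\psi$ then decomposes in law as a sum of two independent L\'evy processes $Y^{(\varrho)} + Y^{\widetilde\psi}$, and by linearity of the OU SDE \eqref{eq:P_psi_sde} the invariant distributions satisfy $\mu_\psi = \mu_\varrho \ast \mu_{\widetilde\psi}$.

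Next, define $Kf(v) := \int_{\R^m} f(v+w) \mu_{\widetilde\psi}(dw)$. Jensen's inequality together with the convolution identity for $\mu_\psi$ shows that $K : L^p(\R^m, \mu_\psi) \to L^p(\R^m, \mu_\varrho)$ is a Markov contraction. To see the intertwining $KP^\psi_t = P^{(\varrho)}_t K$, write the OU process driven by $Y^\psi$ as $X(t) = X^{(\varrho)}(t) + X^{\widetilde\psi}(t)$ with independent $X^{(\varrho)}, X^{\widetilde\psi}$ driven by $Y^{(\varrho)}, Y^{\widetilde\psi}$ respectively. Initializing $X^{\widetilde\psi}(0)$ with distribution $\mu_{\widetilde\psi}$ independently of $X^{(\varrho)}(0) = v$, the invariance of $\mu_{\widetilde\psi}$ under $P^{\widetilde\psi}$ forces $X^{\widetilde\psi}(t) \sim \mu_{\widetilde\psi}$, and hence $KP^\psi_t f(v) = \E[f(X(t)) \mid X^{(\varrho)}(0) = v] = P^{(\varrho)}_t(Kf)(v)$. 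Setting $\Lambda_\psi := K\Lambda$ and chaining with \eqref{eq:Q_P} yields
\[
P^{(\varrho)}_t \Lambda_\psi = P^{(\varrho)}_t K \Lambda = K P^\psi_t \Lambda = K \Lambda \mathbb{P}^\psi_t = \Lambda_\psi \mathbb{P}^\psi_t,
\]
while Markov contractivity of $\Lambda_\psi$ is immediate from composition.

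The main obstacle is density of the range. By Hahn-Banach it suffices to show any $g \in L^q(\R^m, \mu_\varrho)$, with $1/p + 1/q = 1$, satisfying $\int_{\R^m} \Lambda_\psi f \cdot g \, d\mu_\varrho = 0$ for every $f \in L^p(G, \mathsf{p}_\psi)$ must vanish $\mu_\varrho$-a.e. I would test on products $f(h,v) = \phi(h) e^{\i\langle \lambda, v\rangle}$ for $\phi \in C^\infty_c(\R^n)$ and $\lambda \in \R^m$; unwinding the definitions of $\Lambda$ and $K$ gives $\Lambda_\psi f(v) = C(\lambda, \phi) e^{\i\langle\lambda, v\rangle}$ with
\[
C(\lambda, \phi) = \mathcal{F}(\mu_{\widetilde\psi})(\lambda) \int_{\R^n} \phi(h) \widehat{\mathsf{p}}_v(h, \lambda) \, dh,
\]
where $\widehat{\mathsf{p}}_v(h, \cdot)$ denotes the $v$-Fourier transform of $\mathsf{p}(h, \cdot)$. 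The first factor is nowhere vanishing as $\mu_{\widetilde\psi}$ is infinitely divisible, while the explicit heat kernel formula in Proposition~\ref{prop:heat_kernel} shows that $\widehat{\mathsf{p}}_v(h, \lambda)$ is not identically zero in $h$ for any fixed $\lambda$. A suitable choice of $\phi$ therefore forces $\int_{\R^m} e^{\i\langle \lambda, v\rangle} g(v) \mu_\varrho(dv) = 0$ for every $\lambda \in \R^m$, and injectivity of the Fourier transform on the finite signed measure $g \, \mu_\varrho$, together with positivity of the density of $\mu_\varrho$, gives $g = 0$ $\mu_\varrho$-a.e., completing the proof.
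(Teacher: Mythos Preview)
Your proof is correct and takes a route that differs from the paper's in one substantive way. The paper factors the intertwining operator as $\Lambda_\psi = T_\sigma\, T_{b,\kappa}\,\Lambda$, inserting an intermediate OU semigroup $P^\sigma$: first $T_{b,\kappa}$ (a convolution with the stationary law of the OU process driven by the $(0,b,\kappa)$-L\'evy process) intertwines $P^\psi$ with $P^\sigma$, and then a separate operator $T_\sigma$, imported from \cite{Patie_Vaidyanathan}, intertwines $P^\sigma$ with $P^{(\varrho)}$. Dense range is obtained factor by factor. You instead collapse these two steps into a single convolution $K$ with $\mu_{\widetilde\psi}$, where $\widetilde\psi=(\sigma-\varrho I_m,b,\kappa)$, and justify the intertwining $KP^\psi_t=P^{(\varrho)}_tK$ by the probabilistic decomposition of the OU SDE along the independent splitting $Y^\psi\overset{d}{=}Y^{(\varrho)}+Y^{\widetilde\psi}$; this avoids the external reference and is more self-contained. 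Your dense-range argument via testing against $\phi(h)e^{\i\langle\lambda,v\rangle}$ is also essentially the paper's Lemma~\ref{lem:injectivity} applied directly to the composite $\Lambda_\psi$ rather than to each factor. The trade-off is that the paper's modular decomposition isolates reusable pieces ($T_{b,\kappa}$ and $T_\sigma$ separately), while your argument is shorter and stays within elementary convolution and SDE reasoning.
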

	We need the following lemmas to prove the above proposition.
	\begin{lemma}\label{lem:injectivity} Let $\Lambda$ be defined as in \eqref{eq:Lambda}. Then for any $p\geqslant 1$, $\Lambda: L^p(G, \mathsf{p}_\psi)\longrightarrow L^p(\R^m,\mu_\psi)$ has dense range.
	\end{lemma}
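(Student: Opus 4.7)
My plan is to reduce the claim for $1 < p < \infty$ to injectivity of the Banach adjoint, then to turn this into a Fourier uniqueness statement on $\R^m$ which is handled by the positivity of the horizontal heat kernel. By Hahn-Banach, $\Range(\Lambda)$ is dense in $L^p(\R^m,\mu_\psi)$ iff the adjoint $\Lambda^\ast \colon L^q(\R^m,\mu_\psi) \to L^q(G,\mathsf{p}_\psi)$ is injective, where $q = p/(p-1)$. Applying Fubini's theorem to the pairing $\int g\,\Lambda f\, d\mu_\psi = \int (\Lambda^\ast g)\, f\, d\mathsf{p}_\psi$ gives
\[
(\Lambda^\ast g)(h,w) \;=\; \mathsf{p}_\psi(h,w)^{-1}\bigl(\mathsf{p}(h,\cdot)\ast_v (g\mu_\psi)\bigr)(w),
\]
where the convolution is in the vertical variable alone and $h$ is treated as a parameter. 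Since $\mu_\psi$ is a probability measure and $q > 1$, H\"older's inequality guarantees that $g\mu_\psi$ is a finite signed measure on $\R^m$, and the equation $\Lambda^\ast g = 0$ becomes equivalent to $\mathsf{p}(h,\cdot)\ast_v (g\mu_\psi) = 0$ for almost every $h \in \R^n$.

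The crux is to show that the partial Fourier transform
\[
\widehat{\mathsf{p}}(h,\lambda) \;:=\; \int_{\R^m}\mathsf{p}(h,v)\,e^{\i\langle\lambda,v\rangle}\,dv
\]
is strictly positive for every $\lambda \in \mathcal{Z}$ and every $h \in \R^n$. I will derive this from Proposition~\ref{prop:heat_kernel} at $t = 1/2$ by inverting the $\nu$-Fourier transform. The right-hand side there factors as a Gaussian in $\nu$ times a $\nu$-independent expression, so the inversion produces a Gaussian in $r$, yielding in the coordinates of Notation~\ref{notation:coordinates}
\[
(2\pi)^{d+k/2}\,\widehat{\mathsf{p}}(z,r,\lambda) \;=\; e^{-|r|^2/2}\prod_{j=1}^d \frac{\eta_j(\lambda)}{2\sinh(\eta_j(\lambda)/2)}\exp\!\left(-\sum_{j=1}^d \frac{\eta_j(\lambda)|z_j|^2}{2}\coth(\eta_j(\lambda)/2)\right).
\]
Each factor is strictly positive on $\mathcal{Z}$, and the apparent singularities at $\eta_j(\lambda) = 0$ are removable with limits $1$ and $e^{-|z_j|^2}$ respectively. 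Combined with the convolution vanishing from the first step, this forces $\widehat{g\mu_\psi}(\lambda) = 0$ for almost every $\lambda \in \mathcal{Z}$; but since $g\mu_\psi$ is a finite signed measure its Fourier transform is continuous, and $\mathcal{Z}$ is Zariski open and hence of full Lebesgue measure, so $\widehat{g\mu_\psi}\equiv 0$. Fourier uniqueness for finite measures then gives $g\mu_\psi = 0$, hence $g = 0$ in $L^q(\R^m,\mu_\psi)$.

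For the endpoint $p = 1$ I bootstrap from $p = 2$. Since $\mu_\psi$ and $\mathsf{p}_\psi$ are probability measures, $L^2(G,\mathsf{p}_\psi) \hookrightarrow L^1(G,\mathsf{p}_\psi)$ and $L^2(\R^m,\mu_\psi)\hookrightarrow L^1(\R^m,\mu_\psi)$ continuously and densely. The set $\Lambda\bigl(L^2(G,\mathsf{p}_\psi)\bigr)$ is $L^2$-dense in $L^2(\R^m,\mu_\psi)$ by the first part, hence $L^1$-dense in $L^2(\R^m,\mu_\psi)$, and therefore $L^1$-dense in $L^1(\R^m,\mu_\psi)$; since this set is contained in $\Lambda\bigl(L^1(G,\mathsf{p}_\psi)\bigr)$, the full range of $\Lambda$ acting on $L^1$ is also dense.

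The step I anticipate to require the most care is the inversion of the $\nu$-Fourier transform producing the closed form for $\widehat{\mathsf{p}}$: one has to justify the Fubini interchange and handle the Zariski-degenerate locus $\mathcal{Z}^c$ where the Pfaffian vanishes (which is harmless because of the continuity of $\widehat{g\mu_\psi}$ and the density of $\mathcal Z$). The remaining pieces, namely the adjoint computation, the application of Fourier uniqueness, and the lift from $p = 2$ to $p = 1$, are essentially routine once this positivity has been established.
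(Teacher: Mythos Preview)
Your argument is correct, but the paper's proof is considerably more direct. Rather than passing to the adjoint and invoking Fourier uniqueness, the paper simply observes that the vertical exponentials $e_\lambda(h,v)=e^{\i\langle\lambda,v\rangle}$ lie in $L^p(G,\mathsf{p}_\psi)$ (they are bounded), and that $\Lambda e_\lambda = c_\lambda\, e_\lambda$ on $\R^m$ with $c_\lambda=\int_{\R^n}\int_{\R^m} e^{\i\langle\lambda,v'\rangle}\mathsf{p}(h,v')\,dv'\,dh$; since $c_\lambda\neq 0$ and $\Span\{e_\lambda:\lambda\in\R^m\}$ is dense in $L^p(\R^m,\mu_\psi)$, the range of $\Lambda$ is dense for every $p\ge 1$ at once, with no separate treatment of $p=1$.

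The two arguments rest on the same nonvanishing fact---your pointwise positivity of $\widehat{\mathsf{p}}(h,\lambda)$ integrates over $h$ to give exactly the constant $c_\lambda>0$ the paper needs---but the paper exploits the special structure $\Lambda e_\lambda=c_\lambda e_\lambda$ to avoid computing the adjoint, the Hahn--Banach step, and the $\nu$-inversion altogether. Your route is more systematic (it would generalize to kernels lacking such an eigenfunction structure) at the cost of extra machinery; the paper's route is a two-line observation tailored to this $\Lambda$.
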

	\begin{remark}
		We note that $\Lambda$ is not injective. Choosing $f(x,y,r,v)=x$, we have $f\in L^2(G,\mathsf{p}_\psi)$. Also, due the symmetry of $\mathsf{p}$ with respect to $x$, we must have 
		\begin{align*}
			\int_{\R^m}\int_{\R^k} \int_{\C^d}x\mathsf{p}(x,y,r,v+v')dx dy drdv'=0.
		\end{align*}
	\end{remark}
	\begin{proof}
		Let us write $e_{\lambda}(v)=e^{\i\langle \lambda, v\rangle}$. Clearly, $e_\lambda\in L^2(G, \mathsf{p}_\psi)$ for any $\lambda\in\R^m$. Also, 
		\begin{align*}
			\Lambda e_{\lambda}(v)=e_{\lambda}(v)\int_{\C^d}\int_{\R^k}\int_{\R^m} \mathsf{p}(z,r,v')e^{-\i\langle\lambda,v'\rangle} dv' dr dz,
		\end{align*}
		which shows that $e_{\lambda}\in\Range(\Lambda)$ for all $\lambda\in\R^m$. Therefore, the proof of the lemma follows after noting that $\Span\{e_{\lambda}: \lambda\in\R^m\}$ is a dense subspace in $ L^p(\R^m,\mu_{\psi})$ for all $p\geqslant 1$.
	\end{proof}
	The next lemma is a version of \cite[Theorem~5.3]{Sarkar2025}.
	\begin{lemma}\label{lem:intertwining_00} 
		Let $\psi=(\sigma,b,\kappa)\in\mathcal{N}_{\log}$. Then, for all $t\geqslant 0$, 
		\begin{align}\label{eq:intertwining_00}
			P^{\sigma}_t T_{b,\kappa}=T_{b,\kappa} P^\psi_t \text{ on } C_b(\R^m),
		\end{align}
		where $T_{b,\kappa} f=f{\ast}h_{b,\kappa}$ and for all $\lambda\in\R^m$,
		\begin{align*}
			\mathcal{F}(h_{b,\kappa})(\lambda)=\exp(-\widetilde{\psi}_{-\infty}(-\lambda))
		\end{align*}
		with $\widetilde{\psi}=(0,b,\kappa)\in\mathcal N$ and $\widetilde{\psi}_{-\infty}=\lim_{t\to -\infty}\widetilde{\psi}_t$, see \eqref{eq:psi_t}.
		Moreover, $T_{b,\kappa}$ is a Markov operator with dense range, and has a unique continuous extension, also denoted by $T_{b,\kappa}: L^p(\R^m,\mu_\psi)\longrightarrow L^p(\R^m,\mu_\varrho)$ for all $p\geqslant 1$. As a result, \eqref{eq:intertwining_00} holds on $ L^p(\R^m,\mu_\psi)$ for all $p\geqslant 1$.
	\end{lemma}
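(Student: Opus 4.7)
The plan is to verify the intertwining by testing on the characters $e_\lambda(v):=e^{\i\langle\lambda,v\rangle}$ and then extending via Fourier inversion and density. Write $\psi^{\mathrm{g}}(\lambda):=-\langle\sigma\lambda,\lambda\rangle$, so that $\psi=\psi^{\mathrm{g}}+\widetilde\psi$; the linearity of the definition \eqref{eq:psi_t} in the L\'evy-Khintchine triple gives the analogous splitting $\psi_s=\psi^{\mathrm{g}}_s+\widetilde\psi_s$ for every $s\in\R$, and in particular $\psi_{-\infty}=\psi^{\mathrm{g}}_{-\infty}+\widetilde\psi_{-\infty}$. Since $\widetilde\psi=(0,b,\kappa)$ carries the same L\'evy measure as $\psi$ it inherits the finite logarithmic moment, so by \cite[Theorem~17.5]{SatoBook1999} the measure $\mu_{\widetilde\psi}$ of \eqref{eq:p_psi} is well-defined, and its pushforward by $v\mapsto -v$ is the probability measure whose characteristic function is $\exp(-\widetilde\psi_{-\infty}(-\lambda))$. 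This identifies $h_{b,\kappa}$ and shows that $T_{b,\kappa}$ is a Markov convolution operator on $C_b(\R^m)$.

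For the intertwining I would use the stochastic representation of the L\'evy-OU process $X_\psi(t)=e^{-2t}x+Z^\psi_t$, where $Z^\psi_t$ is independent of $x$ with $\E[e^{\i\langle\lambda,Z^\psi_t\rangle}]=e^{-\psi_{-t}(\lambda)}$, to obtain
\begin{align*}
P^\psi_t e_\lambda(x)=e^{-\psi_{-t}(\lambda)}\,e_{e^{-2t}\lambda}(x),
\end{align*}
and the analogous identity for $P^\sigma_t$ with $\psi^{\mathrm{g}}$ in place of $\psi$, together with $T_{b,\kappa}e_\lambda=e^{-\widetilde\psi_{-\infty}(\lambda)}\,e_\lambda$. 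Comparing $P^\sigma_t T_{b,\kappa}e_\lambda$ and $T_{b,\kappa}P^\psi_te_\lambda$ reduces the intertwining on characters to the scalar identity
\begin{align*}
\widetilde\psi_{-\infty}(\lambda)+\psi^{\mathrm{g}}_{-t}(\lambda)=\psi_{-t}(\lambda)+\widetilde\psi_{-\infty}(e^{-2t}\lambda),
\end{align*}
which, after cancelling $\psi^{\mathrm{g}}_{-t}$ via the additive decomposition, becomes $\widetilde\psi_{-\infty}(e^{-2t}\lambda)-\widetilde\psi_{-\infty}(\lambda)=-\widetilde\psi_{-t}(\lambda)$; this is exactly Lemma~\ref{lem:psi_t} applied to $\widetilde\psi$ with $t$ replaced by $-t$. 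Fourier inversion together with dominated convergence (using that each of $T_{b,\kappa}$, $P^\psi_t$, $P^\sigma_t$ is integration against a finite kernel) promotes the identity to $\mathcal{S}(\R^m)$, hence to $C_0(\R^m)$ by density. Finally, equality on $C_0(\R^m)$ uniquely determines the underlying Markov kernels, so the identity lifts to $C_b(\R^m)$.

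For the remaining assertions, every character $e_\lambda$ lies in $\Range(T_{b,\kappa})$ (since $e^{-\widetilde\psi_{-\infty}(\lambda)}\neq 0$), and the span of characters is dense in $L^p(\R^m,\mu_\varrho)$ for each $p\geqslant 1$. For the $L^p$ boundedness, Jensen's inequality applied to the Markov kernel of $T_{b,\kappa}$ yields $\int|T_{b,\kappa}f|^p\,d\mu\leqslant\int|f|^p\,d(\mu T_{b,\kappa})$ for any probability measure $\mu$. A direct Fourier computation using the additive splitting $\psi_{-\infty}=\psi^{\mathrm{g}}_{-\infty}+\widetilde\psi_{-\infty}$ gives $\mu_{\psi^{\mathrm{g}}}T_{b,\kappa}=\mu_\psi$, where $\mu_{\psi^{\mathrm{g}}}$ is the Gaussian invariant distribution of $P^\sigma$. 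Combining this with the factorization $\mu_{\psi^{\mathrm{g}}}=\mu_\varrho\ast N(0,(\sigma-\varrho I)/2)$, valid because $\sigma\succeq\varrho I$ and obtained by comparing characteristic functions, yields the continuous extension $T_{b,\kappa}:L^p(\R^m,\mu_\psi)\longrightarrow L^p(\R^m,\mu_\varrho)$. The principal technical point I expect to demand the most care is justifying the Fubini/dominated-convergence interchanges that carry the formal identity on characters into an $\mathcal{S}(\R^m)$-identity, because the drift in $L_\sigma$ and $L_\psi$ is of polynomial growth and the Fourier inversion integral must be controlled uniformly in the evaluation point $x$.
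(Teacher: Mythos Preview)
Your intertwining argument is essentially the same as the paper's: both compute on the Fourier side and reduce to the scalar identity coming from Lemma~\ref{lem:psi_t}. The paper phrases it through the $L^2$-adjoints $\widehat{P}^\psi_t$, $\widehat{T}_{b,\kappa}$ as Fourier multipliers and then takes the adjoint back, while you act directly on characters $e_\lambda$; the underlying computation is identical. Your dense-range argument via characters is also the paper's.

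There is, however, a genuine gap in your $L^p$ extension. From Jensen you get $\int |T_{b,\kappa}f|^p\,d\mu_\varrho\leqslant\int|f|^p\,d(\mu_\varrho T_{b,\kappa})$, so what you need is $\mu_\varrho T_{b,\kappa}=\mu_\psi$ (or domination by a multiple of it). Your Fourier computation gives $\mu_{\psi^{\mathrm g}}T_{b,\kappa}=\mu_\psi$, and you then invoke $\mu_{\psi^{\mathrm g}}=\mu_\varrho\ast N$. But combining these yields $\mu_\psi=(\mu_\varrho T_{b,\kappa})\ast N$, i.e.\ $\mu_\psi$ is a Gaussian smoothing of $\mu_\varrho T_{b,\kappa}$, which goes the wrong way: it does not imply $\mu_\varrho T_{b,\kappa}\leqslant C\mu_\psi$, and in fact $\mu_\varrho T_{b,\kappa}\neq\mu_\psi$ unless $\sigma=\varrho I$ (compare characteristic functions: one carries $e^{-\varrho|\lambda|^2/4}$, the other $e^{-\langle\sigma\lambda,\lambda\rangle/4}$). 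The paper instead argues via invariance: from $P^\sigma_t T_{b,\kappa}=T_{b,\kappa}P^\psi_t$ and the fact that the invariant measure of $P^\sigma$ applied to both sides forces $\mu_\sigma T_{b,\kappa}$ to be $P^\psi$-invariant, hence equal to $\mu_\psi$. Note that this gives $T_{b,\kappa}:L^p(\mu_\psi)\to L^p(\mu_\sigma)$, which is exactly what is needed in Proposition~\ref{prop:eigen_adjoint} for the composition with $T_\sigma:L^p(\mu_\sigma)\to L^p(\mu_\varrho)$; the appearance of $\mu_\varrho$ rather than $\mu_\sigma$ in the target space of the lemma statement is a typo, and you should not try to prove the stronger (false) claim.
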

	\begin{proof}
		From \cite[Theorem~17.1]{SatoBook1999} it is known that for any $f\in L^1(\R^m)\cap L^2(\R^m)$ and $t\geqslant 0$, 
		\begin{align*}
			\mathcal{F}(\widehat{P}^\psi_t f)(\lambda)=e^{\psi_t(e^{-2t}\lambda)}\mathcal{F}(f)(e^{-2t}\lambda),
		\end{align*}
		where $\psi_t$ is defined in \eqref{eq:psi_t}. In particular, we also have 
		\begin{align*}
			\mathcal{F}(\widehat{P}^\sigma_t f)(\lambda)=\exp\left(\frac{e^{-4t}-1}{4}\langle\sigma\lambda,\lambda\rangle\right)\mathcal{F}(f)(e^{-2t}\lambda)
		\end{align*}
		for all $\lambda\in\R^m$. Also, $T_{b,\kappa}$ is a Markov operator as $h_{b,\kappa}$ is the probability measure associated with the L\'evy-Khintchine exponent $\lambda\mapsto-\widetilde{\psi}_{-\infty}(-\lambda)$. As a result, $T_{b,\kappa}$ can be extended uniquely as a bounded operator on $ L^p(\R^m)$ for all $p\geqslant 1$. Let $\widehat{T}_{b,\kappa}$ denote the $ L^2(\R^m)$-adjoint of $T_{b,\kappa}$. Then, from the definition of $T_{b,\kappa}$ it follows that for all $f\in C^\infty_c(\R^m)$ and $\lambda\in\R^m$,
		\begin{align*}
			\mathcal{F}(\widehat{T}_{b,\kappa} f)(\lambda)=e^{-\widetilde{\psi}_{-\infty}(\lambda)}\mathcal{F}(f)(\lambda).
		\end{align*}
		Now, a simple computation shows that for all $\lambda\in\R^m$ and $t>0$, 
		\begin{align*}
			\psi_t(e^{-2t}\lambda)-\widetilde{\psi}_{-\infty}(e^{-2t}\lambda)=-\widetilde{\psi}_{-\infty}(\lambda)+\frac{e^{-4t}-1}{4}\langle\sigma\lambda,\lambda\rangle.
		\end{align*}
		As a result, for all $\lambda\in\R^m$ and $f\in L^2(\R^m)\cap L^1(\R^m)$ we have 
		\begin{align*}
			\mathcal{F}(\widehat{P}^\psi_t \widehat{T}_{b,\kappa} f)(\lambda)=\mathcal{F}(\widehat{T}_{b,\kappa}\widehat{P}^\sigma_t f )(\lambda),
		\end{align*}
		which is equivalent to $\widehat{P}^\psi_t\widehat{T}_{b,\kappa} f=\widehat{\Lambda}_{b,\kappa} \widehat{P}^\sigma_t f$ for all $f\in C^\infty_c(\R^m)$. Taking the $L^2$-adjoint of the last identity proves \eqref{eq:intertwining_00} on $C^\infty_c(\R^m)$. Using the density of $C^\infty_c(\R^m)$ in $C_b(\R^m)$ and the contraction property of the semigroups, we conclude that \eqref{eq:intertwining_00} holds. We further get that 
		\begin{align*}
			p_\varrho T_{b,\kappa} f=p_\varrho P^{(\varrho)}_t T_{b,\kappa}=p_{\varrho} T_{b,\kappa} P^\psi_t f
		\end{align*}
		for all $t\geqslant 0$ and $f\in C_b(\R^m)$. Since $T_{b,\kappa}$ is Markov and $\mu_\psi$ is the unique invariant distribution of $P^\psi$, the above equation forces that $\mu_\varrho T_{b,\kappa}=\mu_\psi$. As a result, $T_{b,\kappa}$ extends as a bounded operator from $ L^p(\R^m, \mu_\psi)$ to $ L^p(\R^m, \mu_\varrho)$ for all $p\geqslant 1$.
		Finally, we note that for all $\lambda\in\R^m$, $T_{b,\kappa}e_\lambda=e^{-\psi_{-\infty}(-\lambda)}e_\lambda$ where $e_\lambda(v)=e^{\i\langle\lambda, v\rangle}$. Since $\Span\{e_\lambda:\lambda\in\R^m\}$ is dense in $ L^p(\R^m,\mu_\psi)$ for all $p\in [1,\infty]$, we conclude that $T_{b,\kappa}: L^p(\R^m,\mu_\psi)\longrightarrow  L^p(\R^m,\mu_\varrho)$ has dense range for all $p\in [1,\infty]$. This completes the proof of the lemma.
	\end{proof}
	\begin{proof}[Proof of Proposition~\ref{prop:eigen_adjoint}]Since $\sigma$ is invertible with positive eigenvalues $\varrho_1,\ldots, \varrho_m$, from \cite[Proposition~4.1]{Patie_Vaidyanathan} it is known that for all $t\geqslant 0$, 
		\begin{align}\label{eq:intertwining_2}
			P^{(\varrho)}_tT_\sigma=T_\sigma P^\sigma_t \text{ on }  L^p(\R^m, \mu_\sigma),
		\end{align}
		where $T_\sigma: L^p(\R^m,\mu_\sigma)\longrightarrow L^p(\R^m,\mu_\varrho)$ is a Markov operator with a dense range. Due to the transitivity of the intertwining relationships, combining the identities \eqref{eq:Q_P}, \eqref{eq:intertwining_2} and Lemma~\ref{lem:intertwining_00}, we obtain \eqref{eq:intertwining_0} with
		\begin{align}\label{eq:Lambda_psi}
			\Lambda_\psi=T_\sigma T_{b,\kappa}\Lambda.
		\end{align}
		As $T_\sigma$, $T_{b,\kappa}$ and $\Lambda$ have dense range, we conclude that $\Lambda_\psi$ has dense range as well. This completes the proof of the proposition.
	\end{proof}
	To prove Theorem~\ref{thm:coeignefunction}, we first need to show that $J^\psi_\beta\in L^{q}(G, \mathsf{p}_\psi)$ for all $1<p<\infty$.
	\begin{lemma}\label{lem:coeigen_approx}
		Let $J^\psi_\beta$ be as in \eqref{eq:G-psi} and $\psi\in\mathcal{N}_{\log}$. If $\psi=(\sigma, b,\kappa)$ with $\sigma$ being non-singular, then for all $\beta\in\N^m_0$, $J^\psi_\beta=\Lambda^{\!*}_\psi\Phi^{(\varrho)}_\beta$, where 
		$\Phi^{(\varrho)}_\beta$ denote the generalized Hermite polynomials defined by 
		\begin{align}
			\Phi^{(\varrho)}_\beta(v)=\frac{\partial^{\beta} \mu_\varrho(v)}{\mu_\varrho(v)},
		\end{align}
		and $\Lambda_\psi$ is the same as in Proposition~\ref{prop:eigen_adjoint}.
		Moreover, for any $\psi\in\mathcal{N}_{\log}$, $J^\psi_\beta\in L^{q}(G,\mathsf{p}_\psi)$ for all $1<p<\infty$.
	\end{lemma}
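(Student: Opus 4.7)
The lemma contains two claims: the identity $J^\psi_\beta = \Lambda^{\!*}_\psi \Phi^{(\varrho)}_\beta$ under the assumption that $\sigma$ is non-singular, and the $L^q$-membership $J^\psi_\beta \in L^q(G,\mathsf{p}_\psi)$ for arbitrary $\psi \in \mathcal{N}_{\log}$. The plan is to establish the identity by a weak duality argument exploiting the structure of $\Lambda_\psi$, and then to obtain the general $L^q$-bound by a direct Jensen-type estimate applied to the convolution representation $\mathsf{p}_\psi = \mathsf{q}_{\frac12}\,{\ast}_v\,\overline{\mu}_\psi$ from Theorem~\ref{prop:P_gamma}.

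For the identity, the key structural observation is that the factorization $\Lambda_\psi = T_\sigma T_{b,\kappa} \Lambda$ in \eqref{eq:Lambda_psi} exhibits $\Lambda_\psi$ as a composition of three operators, each acting by convolution (or marginalization, in the case of $\Lambda$) purely in the vertical variable. Differentiation under the integral therefore yields the commutation $\Lambda_\psi \partial^\beta_v = \partial^\beta_v \Lambda_\psi$ on $C^\infty_c(G)$. Moreover, since $\Lambda_\psi$ is Markov ($\Lambda_\psi 1 = 1$) and intertwines $\mathbb{P}^\psi_t$ with the ergodic semigroup $P^{(\varrho)}_t$, sending $t\to\infty$ in \eqref{eq:intertwining_0} produces the measure-pushforward identity $\int_{\R^m}\Lambda_\psi g\,d\mu_\varrho = \int_G g\,d\mathsf{p}_\psi$ for all $g \in L^1(G,\mathsf{p}_\psi)$. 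Testing both $J^\psi_\beta$ and $\Lambda^{\!*}_\psi \Phi^{(\varrho)}_\beta$ against an arbitrary $f \in C^\infty_c(G)$ in the $L^2(\mathsf{p}_\psi)$-pairing, two integrations by parts applied to the left-hand side produce $\int \mathsf{p}_\psi\,\partial^\beta_v f$, while the defining relation $\Phi^{(\varrho)}_\beta \mu_\varrho = \partial^\beta \mu_\varrho$ combined with integration by parts against $\Lambda_\psi f$, the commutation above, and the pushforward identity produce the same expression on the right-hand side. This gives the identity (up to the conventional sign factor in the definition of $\Phi^{(\varrho)}_\beta$). The $L^q$-bound in the non-singular case is then immediate from the fact that $\Lambda^{\!*}_\psi : L^q(\R^m,\mu_\varrho) \to L^q(G,\mathsf{p}_\psi)$ is a contraction (Markov adjoint) and $\Phi^{(\varrho)}_\beta$ is a polynomial, hence in $L^q(\mu_\varrho)$ for every $q \geq 1$.

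For arbitrary $\psi \in \mathcal{N}_{\log}$, the natural perturbation $\psi_\epsilon := \psi - \epsilon|\lambda|^2$ fails, because the resulting bound $\|\Phi^{(\varrho_\epsilon)}_\beta\|_{L^q(\mu_{\varrho_\epsilon})}$ blows up like $\varrho_\epsilon^{-|\beta|/2}$ as $\epsilon\searrow 0$ whenever $\sigma$ is singular; this is the main obstacle. The resolution is to argue directly. Writing $\mathsf{p}_\psi(h,v) = \int_{\R^m}\mathsf{q}_{\frac12}(h,v+v')\,\mu_\psi(dv')$ and setting $A_\beta(h,w) := \partial^\beta_w \mathsf{q}_{\frac12}(h,w)/\mathsf{q}_{\frac12}(h,w)$, the ratio $J^\psi_\beta(h,v)$ is $(-1)^{|\beta|}$ times the conditional expectation of $A_\beta(h,v+V')$ under the probability measure on $\R^m$ with density proportional to $\mathsf{q}_{\frac12}(h,v+v')$ against $\mu_\psi(dv')$. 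Jensen's inequality for $\phi(x) = |x|^q$ followed by Fubini and the change of variable $w = v+v'$ (for each fixed $v'$) gives the clean estimate
\[
\|J^\psi_\beta\|_{L^q(G,\mathsf{p}_\psi)}^q \;\leq\; \int_{\R^m}\mu_\psi(dv')\int_G |A_\beta(h,w)|^q\,\mathsf{q}_{\frac12}(h,w)\,dh\,dw \;=\; \|J_\beta\|_{L^q(G,\mathsf{q}_{\frac12})}^q,
\]
a $\psi$-independent quantity (in particular, the moment assumptions on $\mu_\psi$ play no role). Its finiteness reduces to $J_\beta \in L^q(\mathsf{q}_{\frac12})$ for the unperturbed horizontal heat kernel, which follows from the polynomial-in-$(h,w)$ growth of $A_\beta$ together with the Gaussian-type decay of $\mathsf{q}_{\frac12}$; both can be extracted from the explicit formula in Proposition~\ref{prop:heat_kernel} by differentiating under the GFT.
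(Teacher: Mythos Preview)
Your argument for the identity in the non-singular case is essentially the paper's: both rest on the fact that the flat $L^2$-adjoint $\widehat{\Lambda}_\psi$ commutes with vertical derivatives and satisfies $\widehat{\Lambda}_\psi\mu_\varrho=\mathsf{p}_\psi$, so that $\Lambda^{\!*}_\psi\Phi^{(\varrho)}_\beta=\widehat{\Lambda}_\psi(\partial^\beta\mu_\varrho)/\mathsf{p}_\psi=\partial^\beta_v\mathsf{p}_\psi/\mathsf{p}_\psi$. The paper writes this directly via the formula $\Lambda^{\!*}_\psi f=\widehat{\Lambda}_\psi(f\mu_\varrho)/\mathsf{p}_\psi$; your weak-pairing formulation is equivalent (and shares the same harmless sign discrepancy with the stated definition of $\Phi^{(\varrho)}_\beta$).

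For the $L^q$-bound with general $\psi$ the two arguments genuinely diverge. The paper does attempt the approximation $\psi_\epsilon=(\sigma_\epsilon,b,\kappa)$ with $\sigma_\epsilon$ invertible and $\sigma_\epsilon\to\sigma$, combines Fatou with the contraction bound $\|J^{\psi_\epsilon}_\beta\|_{L^q(\mathsf{p}_{\psi_\epsilon})}\le\|\Phi^{(\varrho_\epsilon)}_\beta\|_{L^q(\mu_{\varrho_\epsilon})}$, and then asserts that the right-hand side equals $\|\Phi^{(1)}_\beta\|_{L^q(\mu_1)}$ ``by a simple change of variable''. Your diagnosis is exactly right: the rescaling $v\mapsto v/\sqrt{\varrho}$ introduces a factor $\varrho^{-|\beta|/2}$, so when $\sigma$ is singular the bound blows up and the paper's step does not close. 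Your Jensen reduction
\[
\|J^\psi_\beta\|_{L^q(\mathsf{p}_\psi)}\le\|J_\beta\|_{L^q(\mathsf{q}_{1/2})}
\]
is a clean and correct replacement, with the bonus that the right-hand side is independent of $\psi$. The one point you should firm up is the base case $\|J_\beta\|_{L^q(\mathsf{q}_{1/2})}<\infty$: since $\psi=0$ itself has singular diffusion part, this cannot be fed back into the first half of the lemma. Your claim that $A_\beta=\partial^\beta_v\mathsf{q}_{1/2}/\mathsf{q}_{1/2}$ has polynomial growth is correct but is better justified by the standard pointwise derivative estimates for sub-Laplacian heat kernels on stratified groups (of the type $|\partial^\beta_v\mathsf{q}_t(g)|\le C_\beta\, t^{-|\beta|}(1+\rho(g)^2/t)^{N_\beta}\,\mathsf{q}_t(g)$) than by differentiating the oscillatory integral in Proposition~\ref{prop:heat_kernel}.
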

	\begin{proof}
		For $\sigma$ invertible, let $\Lambda_\psi$ be the same as in Proposition~\ref{prop:eigen_adjoint}. Denoting the adjoint of $\Lambda_\psi: L^2(\R^d)\longrightarrow  L^2(G)$ by $\widehat{\Lambda}_\psi$, we note that for all $f\in L^2(\R^m, p_\varrho)$,
		\begin{align*}
			\Lambda^{\!*}_\psi f=\frac{\widehat{\Lambda}_\psi(f\mu_{\varrho})}{\mathsf{p}_\psi}.
		\end{align*}
		Therefore, we have $\Lambda^{\ast}_\psi\Phi^{(\varrho)}_\beta(h,v)=\frac{1}{\mathsf{p}_\psi(h,v)}\widehat{\Lambda}_\psi(\partial^\beta \mu_\varrho)(h,v)$. Since $\widehat{\Lambda}_\psi$ commutes with derivatives along vertical variables, the first statement of the lemma follows. Moreover, when $\sigma$ is invertible, since $\Phi^{(\varrho)}_\beta\in L^q(\R^m,\mu_\varrho)$, and $\Lambda^{\!*}_\psi: L^{q}(\R^m, \mu_\varrho)\longrightarrow L^{q}(G, \mathsf{p}_\psi)$ is bounded, we conclude that $J^\psi_\beta\in L^{q}(G, \mu_\psi)$ for all $1<p<\infty$.
		
		When $\sigma$ is not invertible, we approximate $\psi=(\sigma, b,\kappa)$ by $\psi_\epsilon=(\sigma_\epsilon, b,\kappa)$ such that $\sigma_\epsilon$ is invertible and $\sigma_\epsilon\to \sigma$ as $\epsilon\to 0$. Since by Theorem~\ref{prop:P_gamma}\eqref{it:4}, $\mathsf{p}_{\psi_\epsilon}=\mathsf{q}_{\frac12}{\ast}_v\overline{\mu}_{\psi_\epsilon}$ and $\mathsf{q}_{\frac 12}$ is smooth, for any $\beta\in\N^m_0$ we have 
		\begin{align}
			\partial^\beta_v \mathsf{p}_{\psi_\epsilon}=\partial^\beta_v \mathsf{q}_{\frac12}{\ast}_v \overline{\mu}_{\psi_\epsilon}.
		\end{align}
		As $\lim_{\epsilon\to 0}\psi_\epsilon=\psi$, we have $\lim_{\epsilon\to 0} \mu_{\psi_\epsilon}=\mu_\psi$ weakly. Using the fact that $\partial^\beta_v \mathsf{q}_{\frac12}\in C_0(G)$, we conclude that $\lim_{\epsilon\to 0} \partial^\beta_v \mathsf{p}_{\psi_\epsilon}=\partial^\beta_v \mathsf{p}_\psi$. As a result, $\lim_{\epsilon\to 0} J^{\psi_\epsilon}_\beta=J^\psi_\beta$ for all $\beta\in\N^m_0$. Let $\varrho_\epsilon$ denote the smallest eigenvalue of $\frac{\sigma_{\epsilon}}{2}$. Using Fatou's lemma we infer that 
		\begin{align*}
			\int_G |J^\psi_\beta(h,v)|^{q} \mathsf{p}_\psi(h,v) dh dv&\le \liminf_{\epsilon\to 0} \int_G |J^{\psi_\epsilon}_\beta(h,v)|^{q} \mathsf{p}_{\psi_\epsilon}(h,v)dh dv \\
			&\le \sup_{\epsilon>0} \|J^{\psi_\epsilon}_\beta\|^{q}_{ L^{q}(G, \mathsf{p}_{\psi_\epsilon})}=\sup_{\epsilon>0}\|\Lambda^{\!*}_{\psi_\epsilon} \Phi^{(\varrho)}_\beta\|_{ L^{q}(G,\mathsf{p}_{\psi_\epsilon})} \\
			&\le \sup_{\epsilon>0} \|\Phi^{(\varrho_\epsilon)}_\beta\|^{q}_{ L^{q}(\R^m, \mu_{\varrho_\epsilon})}=\|\Phi^{(1)}_\beta\|^{q}_{ L^{q}(\R^m, \mu_{1})}<\infty,
		\end{align*}
		where the last equality follows by a simple change of variable.
		This completes the proof of the lemma.
	\end{proof}
	\begin{proof}[Proof of Theorem~\ref{thm:coeignefunction}]
		First, we prove the theorem when $\sigma$ is non-singular. Taking the adjoint of \eqref{eq:intertwining_0} we get
		\begin{align}\label{eq:intertwining_adjoint}
			\mathbb{P}^{\psi*}_t\Lambda^{\!\ast}_\psi=\Lambda^{\!\ast}_\psi P^{(\varrho)}_t \text{ on }  L^{q}(\R^m,\mu),
		\end{align}
		where $q=\frac{p}{p-1}$.
		Since each of the operators $T_\sigma, T_{b,\kappa}$, and $\Lambda$ has dense range, thanks to Lemma~\ref{lem:injectivity}, $\Lambda^{\!*}_\psi$ must be injective. Also, for any $\beta\in\N^m_0$, $P^{(\varrho)}_t \Phi^{(\varrho)}_\beta= e^{-2|\beta| t}\Phi^{(\varrho)}_\beta$. As a result, by Lemma~\ref{lem:coeigen_approx}, \eqref{eq:coeigenfunction} follows when $\sigma$ is invertible. When $\sigma$ is not invertible, we use the following alternative argument. Since for any $f\in L^{q}(G, \mathsf{p}_\psi)$, 
		\begin{align*}
			\mathbb{P}^{\psi*}_t f=\frac{\widehat{\mathbb{P}}^\psi_t(f\mathsf{p}_\psi)}{\mathsf{p}_\psi},
		\end{align*}
		it suffices to prove that $\widehat{\mathbb{P}}^{\psi}_t(J^\psi_\beta \mathsf{p}_\psi)= e^{-2|\beta|t} J^\psi_\beta \mathsf{p}_\psi$ for all $\beta\in\N^m_0$. 
		We note that for all $(\lambda,\nu)\in\mathcal{Z}\times\R^m$, 
		\begin{align*}
			\mathcal{F}_G(\partial^\beta_v \mathsf{p}_\psi)(\lambda,\nu)=\i^{|\beta|}\lambda^\beta\exp(-\psi_{-\infty}(-\lambda))H^{\lambda,\nu}_{\frac 12},
		\end{align*}
		where $\lambda^\beta=\lambda^{\beta_1}_1\cdots \lambda^{\beta_m}_m$.
		Therefore, using \eqref{eq:P_hat} in Proposition~\ref{prop:fourier_transform} we obtain that for all $(\lambda,\nu)\in\mathcal{Z}\times\R^m$, 
		\begin{align*}
			&\mathcal{F}_G(\widehat{\mathbb{P}}^{\psi}_t(J^\psi_\beta \mathsf{p}_\psi))(\lambda,\nu)\\
			&=e^{\psi_{t}(-e^{-2t}\lambda)}d_{e^t} \mathcal{F}_G(\partial^\beta_v \mathsf{p}_\psi)(e^{-2t}\lambda , e^{-t}\nu ) d_{e^{-t}} H^{\lambda,\nu}_{\frac{1-e^{-2t}}{2}}\\
			&=e^{-2t|\beta|}e^{\psi_{t}(-e^{-2t}\lambda)} \i^{|\beta|} \lambda^\beta\exp(-\psi_{-\infty}(-e^{-2t}\lambda))d_{e^t} H^{\lambda e^{-2t},\nu e^{-t}}_{\frac12}d_{e^{-t}} H^{\lambda,\nu}_{\frac{1-e^{-2t}}{2}}  \\
			&= e^{-2t|\beta|}\i^{|\beta|} \lambda^\beta\exp(-\psi_{-\infty}(-\lambda))H^{\lambda,\nu}_{\frac 12} \\
			&= e^{-2t|\beta|}\mathcal{F}_G(J^\psi_\beta \mathsf{p}_\psi)(\lambda,\nu),
		\end{align*}
		where we used Lemma~\ref{lem:psi_t}.
		Therefore, $\widehat{\mathbb{P}}^\psi_t (J^\psi_\beta \mathsf{p}_\psi) =e^{-2|\beta| t}J^\psi_\beta \mathsf{p}_\psi$ for all $t\geqslant 0$ and $\beta\in\N^m_0$. Since $J^\psi_\beta\in L^{q}(G,\mathsf{p}_\psi)$ for all $\psi\in\mathcal{N}_{\log}$, the proof of the theorem is concluded.
	\end{proof}
	
	\subsection{\texorpdfstring{$L^1$}{L1}-spectrum} As noted above, the $L^p(G,\mathsf{p}_\psi)$-spectrum of $\mathbb{P}^\psi$ is discrete for all $1<p<\infty$, where $\mathsf{p}_\psi$ is the unique invariant density of $\mathbb{P}^\psi$ defined by \eqref{eq:p_gamma}. The spectrum of $\mathbb{P}^\psi$ is very different in $L^1(G,\mathsf{p}_\psi)$. The following result is analogous to \cite[Theorem~5.1]{MetafunePallaraPriola2002}.
	\begin{theorem}\label{thm:L_1_spec}
		For any $\psi\in\mathcal N_{\log}$ and $t\geqslant 0$, $\sigma(\mathbb{P}^\psi_t; L^1(G, \mathsf{p}_\psi))=\{z\in\C: |z|\le 1\}$. In fact, $\{z\in\C: |z|<1\}\subseteq\sigma_{\rm p}(\mathbb{P}^\psi_t; L^1(G, \mathsf{p}_\psi))$.
		Therefore, $\mathbb{P}^\psi_t$ is non-compact on $ L^1(G,\mathsf{p}_\psi)$ for every $t>0$.
	\end{theorem}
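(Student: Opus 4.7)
The plan is to reduce the claim to the corresponding $L^1$-spectrum result for the classical Ornstein--Uhlenbeck semigroup on $\R^n$ via the intertwining $\mathbb{P}^\psi_t \Pi = \Pi \widetilde{P}_t$ established in Proposition~\ref{prop:horizontal_intertwining}. The key external input is \cite[Theorem~5.1]{MetafunePallaraPriola2002}, which asserts that the $L^1(\R^n, \widetilde{\mu})$-spectrum of the standard OU semigroup $\widetilde{P}_t$ generated by $\widetilde{L} = \Delta - \langle h, \nabla\rangle$ equals the closed unit disk, with the open unit disk entirely contained in the point spectrum.

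I will proceed as follows. Recall from Proposition~\ref{prop:horizontal_intertwining} that $\Pi : L^1(\R^n, \widetilde{\mu}) \longrightarrow L^1(G, \mathsf{p}_\psi)$ is an isometric embedding. For any $z \in \C$ with $|z| < 1$, the Metafune--Pallara--Priola result furnishes a nonzero $g \in L^1(\R^n, \widetilde{\mu})$ with $\widetilde{P}_t g = z g$. Setting $f := \Pi g$ produces a nonzero element of $L^1(G, \mathsf{p}_\psi)$ satisfying
\begin{align*}
    \mathbb{P}^\psi_t f = \mathbb{P}^\psi_t \Pi g = \Pi \widetilde{P}_t g = z \Pi g = z f,
\end{align*}
so that $z \in \sigma_{\rm p}(\mathbb{P}^\psi_t; L^1(G, \mathsf{p}_\psi))$. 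This establishes the inclusion $\{z \in \C : |z| < 1\} \subseteq \sigma_{\rm p}(\mathbb{P}^\psi_t; L^1(G, \mathsf{p}_\psi))$. Since $\mathbb{P}^\psi_t$ is a Markov operator and hence a contraction on $L^1(G, \mathsf{p}_\psi)$, we also have $\sigma(\mathbb{P}^\psi_t; L^1(G, \mathsf{p}_\psi)) \subseteq \{z \in \C : |z| \le 1\}$. Combining this with the previous inclusion and the closedness of the spectrum yields the desired equality $\sigma(\mathbb{P}^\psi_t; L^1(G, \mathsf{p}_\psi)) = \{z \in \C : |z| \le 1\}$.

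Non-compactness on $L^1(G, \mathsf{p}_\psi)$ for every $t > 0$ then follows immediately: a compact operator on a Banach space has at most countably many spectral values, each nonzero one being an isolated eigenvalue of finite algebraic multiplicity, whereas the closed unit disk is uncountable. I do not anticipate a serious obstacle beyond confirming that the Metafune--Pallara--Priola framework applies to $\widetilde{L} = \Delta - \langle h, \nabla\rangle$, which is routine since the diffusion matrix $Q = 2I$ is positive definite and the drift matrix $B = -I$ has all eigenvalues with strictly negative real part, placing the semigroup squarely within the hypotheses of \cite{MetafunePallaraPriola2002}. In fact, the entire argument is essentially a one-line transport of the Euclidean result through the intertwining $\Pi$; the genuinely nontrivial work lies in having built $\Pi$ as an isometry of $L^1$-spaces, which is precisely what Proposition~\ref{prop:horizontal_intertwining} provides.
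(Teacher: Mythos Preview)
Your proposal is correct and follows essentially the same route as the paper's proof: transport the Metafune--Pallara--Priola $L^1$-spectrum result for the classical OU semigroup $\widetilde{P}_t$ on $\R^n$ through the intertwining $\mathbb{P}^\psi_t\Pi=\Pi\widetilde{P}_t$ of Proposition~\ref{prop:horizontal_intertwining}, use injectivity of $\Pi$ to pull eigenfunctions up to $L^1(G,\mathsf{p}_\psi)$, and close up using the contraction bound and closedness of the spectrum. Your verification that $\widetilde{L}=\Delta-\langle h,\nabla\rangle$ falls within the hypotheses of \cite{MetafunePallaraPriola2002} and your non-compactness argument via countability of the spectrum of a compact operator are both fine and match the paper's reasoning.
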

	\begin{proof}
		From Proposition~\ref{prop:horizontal_intertwining}, we have for all $t\geqslant 0$,
		\begin{align*}
			\mathbb{P}^\psi_t\Pi=\Pi\widetilde{P}_t \text{ on }  L^1(\R^m, \widetilde{\mu}).
		\end{align*}
		Since $\Pi: L^1(\R^m,\widetilde{\mu})\longrightarrow L^1(G,\mathsf{p}_\psi)$ is injective and $e^{-t\C_{>0}}\subseteq\sigma_{\rm p}(\widetilde{P}_t; L^1(\R^n,\widetilde{\mu}))$, see \cite[Theorem~5.1]{MetafunePallaraPriola2002}, we conclude that $e^{-t\C_{>0}}\subseteq\sigma_{\rm p}(\mathbb{P}^\psi_t; L^1(G, \mathsf{p}_\psi))$ for all $t>0$, where $\C_{\ge 0}$ (resp. $\C_{>0}$) denotes the the set of all complex numbers with non-positive (resp. strictly negative) real part. As $\sigma(\mathbb{P}^\psi_t; L^1(G,\mathsf{p}_\psi))$ is a closed subset of $\C$ and also contained in $e^{-t\C_{\geqslant 0}}\cup\{0\}$, we have
		$\sigma(\mathbb{P}^\psi_t; L^1(G, \mathsf{p}_\psi))= e^{-t\C_{\geqslant 0}}\cup\{0\}=\{z\in \C: |z|\le 1\}$. Also, $\mathbb{P}^\psi_t: L^1(G,\mathsf{p}_\psi)\longrightarrow L^1(G,\mathsf{p}_\psi)$ is not compact as the its spectrum is not a discrete subset of $\C$.
	\end{proof}
	\appendix
	\section{Generalized Fourier transform on nilpotent Lie groups}\label{sec:3}
	In the setting of locally compact groups the generalized Fourier transform (GFT) relies on the unitary dual $\widehat{G}$ of the group $G$, that is, the space of irreducible unitary representations of $G$. Recall that the GFT $\mathcal{F}_{G}(f)$ of a function $f\in L^1(G)$ is defined as
	
	\begin{align}\label{eq:fourier_transform_general}
		\mathcal{F}_G(f)(\pi):=\int_{G} f(g)\pi(g)dg, \quad \pi=\left(\pi, H_{\pi}\right) \in \widehat{G}.
	\end{align}
	Note that $\mathcal{F}_G(f)(\pi)$ is a linear operator on the representation space $H_{\pi}$. For nilpotent groups the unitary dual $\widehat{G}$ can be described using  \emph{Kirillov's orbit method}, see \cite{CorwinGreenleafBook, KirillovOrbitMethodAMSBook}.  
	For the sake of completeness, we briefly describe it here. First, we recall the following theorem due to Kirillov, we refer to \cite[Section~2.2]{CorwinGreenleafBook} for details. One of the simplifications in the case of nilpotent groups is that one can realize these representations on the same Hilbert space which is described explicitly as an $ L^{2}$-space on a Euclidean space. 
	\begin{theorem}[Kirillov]\label{t.Kirillov}
		Let $G$ be any connected nilpotent Lie group with Lie algebra $\mathfrak g$. Then, the following holds.
		\begin{enumerate}[leftmargin=*]
			\item \label{item:h_l} For any $l\in\mathfrak{g}^{\ast}$ there exists a sub-algebra $\mathfrak h_l$ of maximal dimension such that $l([\mathfrak h_l, \mathfrak h_l])=0$. $\mathfrak{h}_l$ is called polarizing for $l$.
			\item $H_l=\exp(\mathfrak h_l)$ is a closed subgroup of $G$ and $\rho_l(\exp(X))=\exp(\i l(X))$ are one-dimensional representations of $H_l$.
			\item The induced representation $\pi_l=\Ind^G_{H_l, \rho_l}$ is an irreducible unitary representation of $G$.
			\item If $\pi$ is an irreducible unitary representation of $G$ then $\pi$ is unitarily equivalent to $\pi_l$ for some $l\in\mathfrak{g}^{\ast}$.
			\item \label{item5} $\pi_{l_1}$ and $\pi_{l_2}$ are unitarily equivalent if and only if $l_1$ and $l_2$ belong to the same co-adjoint orbit.
		\end{enumerate}
	\end{theorem}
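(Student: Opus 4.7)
The plan is to follow Kirillov's original orbit-method argument (as laid out in Corwin--Greenleaf \cite{CorwinGreenleafBook}), which is an induction on $\dim \mathfrak g$ that proves items \eqref{item:h_l}--\eqref{item5} in an intertwined fashion. As a preliminary reduction I would pass to the simply connected cover: since $G$ is nilpotent, $\exp$ descends from the universal cover and all representations lift, so it suffices to treat the simply connected case where $\exp:\mathfrak g\to G$ is a global diffeomorphism.

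For item \eqref{item:h_l} I would build $\mathfrak h_l$ via Vergne's construction. Fix a Jordan--H\"older flag $\{0\}=\mathfrak g_0\subset \mathfrak g_1\subset\cdots\subset\mathfrak g_n=\mathfrak g$ by ideals with $\dim\mathfrak g_j=j$, and set
\[
\mathfrak h_l \;:=\; \sum_{j=1}^n \operatorname{rad}\bigl(l\big|_{\mathfrak g_j}\bigr),
\]
where $\operatorname{rad}(l|_{\mathfrak g_j})=\{X\in\mathfrak g_j : l([X,\mathfrak g_j])=0\}$. A direct check, using that each $\mathfrak g_j$ is an ideal, shows $\mathfrak h_l$ is a subalgebra isotropic for the skew form $B_l(X,Y)=l([X,Y])$, and a dimension count along the flag yields $\dim\mathfrak h_l=\tfrac{1}{2}(\dim\mathfrak g+\dim\operatorname{rad}B_l)$, the maximal possible isotropic dimension, giving \eqref{item:h_l}. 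Item \eqref{item5} in its \emph{if} direction is then immediate: $H_l=\exp(\mathfrak h_l)$ is closed because $\exp$ is a diffeomorphism on the simply connected $G$, and $\rho_l$ is a genuine character of $H_l$ because the Baker--Campbell--Hausdorff series has every term beyond $X+Y$ lying in $[\mathfrak h_l,\mathfrak h_l]\subset\ker l$, so $\rho_l(\exp X\cdot\exp Y)=e^{\i l(X+Y)}=\rho_l(\exp X)\rho_l(\exp Y)$.

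The analytic heart of the proof is items \eqref{item5} (induced representation is irreducible) and the surjectivity statement in item 4 (every irreducible unitary representation arises this way). I would treat them simultaneously by induction on $\dim\mathfrak g$. Fix a nonzero central element $Z\in Z(\mathfrak g)\subset\mathfrak g_1$ and split into two cases. If $l(Z)=0$, then $Z\in\operatorname{rad}B_l\subset\mathfrak h_l$, so $\pi_l$ factors through $G/\exp(\R Z)$, a simply connected nilpotent group of smaller dimension, and the inductive hypothesis gives both irreducibility and the fact that an arbitrary $\pi$ with $\pi(Z)=I$ comes from an orbit in $(\mathfrak g/\R Z)^\ast$. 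If $l(Z)\neq 0$, then by Schur's lemma any irreducible $\pi$ of $G$ satisfying $\pi(\exp tZ)=e^{\i t\, l(Z)}$ must be modeled as follows: choose a codimension-one ideal $\mathfrak m\subset\mathfrak g$ containing $\mathfrak h_l$, let $M=\exp(\mathfrak m)$, and compare $\pi_l=\operatorname{Ind}_{H_l}^G\rho_l$ with the Mackey decomposition $\operatorname{Ind}_M^G(\pi_l|_M)$. The key computation is that on the codim-one subgroup $M$ the restriction $\pi_l|_M$ is already of Kirillov type for $l|_{\mathfrak m}$, inductively irreducible, and the coadjoint $G$-action on the fiber $l+\mathfrak m^\perp$ is transitive; Mackey's irreducibility criterion then gives irreducibility of $\pi_l$, while Mackey's theorem combined with the Stone--von Neumann uniqueness for the Heisenberg piece gives that every irrep of $G$ with $l(Z)\neq 0$ is equivalent to some $\pi_l$. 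This is the step I expect to be the main obstacle: controlling how induction from $M$ interacts with the Mackey little-group picture when $Z(G)$ is not cyclic requires careful orbit analysis.

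Finally, for the full equivalence criterion in item \eqref{item5}, the forward direction is straightforward: if $l'=\operatorname{Ad}^\ast(g_0)l$, then conjugation by $g_0$ carries $\mathfrak h_l$ to a polarizing subalgebra for $l'$, and right translation by $g_0$ provides an explicit unitary intertwiner between $\pi_l$ and $\pi_{l'}$, while an independence-of-polarization lemma (proved by interpolating between any two polarizations via a chain differing in a single step and exhibiting an explicit partial Fourier transform as intertwiner) shows the representation class $[\pi_l]$ depends only on the coadjoint orbit. For the converse, if $\pi_l\cong\pi_{l'}$ then both have the same central character, forcing $l|_{Z(\mathfrak g)}=l'|_{Z(\mathfrak g)}$; descending through the flag and using the irreducibility already established, an inductive argument shows $l$ and $l'$ must lie on the same $\operatorname{Ad}^\ast$-orbit. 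Throughout, the dependence of $\mathfrak h_l$ on $l$ may be discontinuous, but this is harmless because the final statement only sees orbits, not individual polarizations.
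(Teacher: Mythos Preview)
The paper does not prove this theorem: it is stated in the appendix as a classical result, with the sentence ``we refer to \cite[Section~2.2]{CorwinGreenleafBook} for details'' in place of a proof. Your sketch is exactly the standard Kirillov--Vergne argument found in that reference (Vergne polarization along a Jordan--H\"older flag, induction on $\dim\mathfrak g$ via a central element, Mackey machine on a codimension-one ideal, independence of polarization), so there is nothing to compare.

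One cosmetic issue: your item cross-references have slipped. Where you write ``Item~\eqref{item5} in its \emph{if} direction is then immediate: $H_l=\exp(\mathfrak h_l)$ is closed\ldots'' you mean item~(2), and where you write ``items~\eqref{item5} (induced representation is irreducible)'' you mean item~(3). The mathematics underneath is fine.
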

	In this case, the Hilbert space $H_{\pi_l}$ for the representation $\pi_l$ can be realized as $ L^2(\R^d)$, where $d=n-\dim(\mathfrak{h}_l)$. For any $l\in\mathfrak{g}^{\ast}$, its \emph{radical} is defined as 
	\begin{align*}
		\operatorname{rad}_l=\{X\in\mathfrak{g}: l([X,X'])=0 \text{ for all }  X'\in\mathfrak g\}.
	\end{align*}
	By \cite[Theorem~1.3.3]{CorwinGreenleafBook}, $\mathfrak{h}_l$ in Theorem~\ref{t.Kirillov}\eqref{item:h_l} is a polarizing sub-algebra if and only if 
	\begin{enumerate}
		\item \label{pol1} $\operatorname{rad}_l\subset \mathfrak{h}_l$, 
		\item \label{pol2} $\dim(\mathfrak{h}_l)=\frac{\dim (\operatorname{rad}_l)+\dim(\mathfrak g)}{2}$,
		\item \label{pol3} $l([\mathfrak{h}_l,\mathfrak{h}_l])=0$.
	\end{enumerate} The generalized Fourier transform on $G$ is formally defined as follows: for any $l\in\mathfrak{g}^{\ast}$ and $f\in L^1(G)$,
	\begin{align}
		\mathcal{F}_G(f)(\pi_l)=\int_G f(g)\pi_l(g) dg.
	\end{align}
	Then, by \cite[Theorem~7.44]{FollandHABook}, $\mathcal{F}_G(f)(\pi_l): L^2(\R^d)\longrightarrow L^2(\R^d)$ is a Hilbert-Schmidt operator. We now briefly describe a few preliminaries to state the Plancherel theorem for $\mathcal{F}_G$. The major technicalities are required to define \emph{Plancherel measure} for GFT. Let $\{X_1,\ldots, X_n\}$ be a basis of $\mathfrak g$, and define $K_l\in\mathcal{M}_n(\R)$ such that $K_l(i,j)=l([X_i, X_j])$ for all $1\leqslant i,j\leqslant n$. 
	\begin{definition}\label{def:jump_index}
		Let $l\in\mathfrak{g}^{\ast}$. Then, $i\in\{1,\ldots, n\}$ is called a \emph{jump index} for $l$ if $\rank(K^{i}_l)$ is strictly larger than $\rank(K^{i-1}_l)$, where $K^i_l$ is the $i\times n$ sub-matrix of $K_l$ formed by the first $i$ rows of $K_l$. 
	\end{definition}
	From the above definition, it is clear that for each jump index, the rank increases exactly by $1$. For each $i\in\{1,\ldots, n\}$, let $R_i=\max\{\rank(K^{i}_l): l\in\mathfrak{g}^{\ast}\}$. Let us define 
	\begin{align}\label{eq:U}
		\mathcal{U}^{\ast}=\{l\in\mathfrak{g}^{\ast}: \rank(K^i_l)=R_i \text{ for any } 1\leqslant i\leqslant n\}.
	\end{align}
	Clearly, $\mathcal U^{\ast}$ is a Zariski open subset of $\mathfrak{g}^{\ast}$, and therefore $\mathcal U^{\ast}$ has full Lebesgue measure in $\mathfrak{g}^{\ast}$. Hence, it suffices to consider irreducible unitary representation $\pi_l$ for $l\in \mathcal U^{\ast}$.
	According to \cite[Theorem~3.1.6]{CorwinGreenleafBook}, $\mathcal{U}^{\ast}$ is $G$-invariant and therefore it is an union of co-adjoint orbits in $\mathfrak{g}^{\ast}$, and each $l\in\mathcal{U}^{\ast}$ has the same set of jump indices. Denoting the common set of jump indices by $S$, we have a partition of $\{1,\ldots, n\}$ into two disjoint sets, $S$ and $T$ respectively. Note that $S$ has even number of elements as $K_l$ is a skew-symmetric matrix. Let $\{X^{\ast}_1, \ldots, X^{\ast}_n\}$ denote the dual basis of $\mathfrak{g}^{\ast}$ and let us write
	\begin{align*}
		\mathfrak{g}^{\ast}_T=\Span\{X^{\ast}_i: i\in T\}.
	\end{align*}
	Then by \cite[Theorem~3.1.9]{CorwinGreenleafBook}, every co-adjoint orbit in $\mathcal{U}^{\ast}$ intersects $\mathfrak{g}^{\ast}_T$ at a unique point. As a result, by Theorem~\ref{t.Kirillov}\eqref{item5}, all irreducible representations on $G$ are unitarily equivalent to $\pi_l$ for some $l\in\mathcal{U}^{\ast}\cap \mathfrak{g}^{\ast}_T$. Let $B_l$ denote the $|S|\times |S|$ matrix such that $\widetilde{K}_l(i,j)=l([X_i, X_j])$ for all $i,j\in S$. Then, the following Plancherel theorem holds, see \cite[Theorem~4.3.10]{CorwinGreenleafBook}.
	\begin{theorem}[Plancherel's theorem] \label{t.Plancherel}
		For any $f\in L^2(G)\cap  L^1(G)$
		\begin{align*}
			\int_G |f(g)|^2 dg=\int_{\mathcal{U}^{\ast}\cap \mathfrak{g}^{\ast}_T} \|\mathcal{F}_G(f)(\pi_l)\|^2_{\operatorname{HS}}|\Pf(l)| dl,
		\end{align*}
		where $\Pf(l)^2=\det(\widetilde{K}_l)$ is the Pfaffian, and the above integral is understood by identifying $\mathcal{U}^{\ast}\cap \mathfrak{g}^{\ast}_T$ as a subset of the Euclidean space, and $dl$ is the Lebesgue measure.
	\end{theorem}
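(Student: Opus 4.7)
\textbf{Proof plan for Theorem~\ref{t.Plancherel}.} The strategy is to reduce the computation of $\|\mathcal{F}_G(f)(\pi_l)\|^2_{\HS}$ to an iterated classical Fourier transform on a suitable Euclidean model of the group, and then extract the factor $|\Pf(l)|$ as a Jacobian arising from the orbit parametrization. I will first establish the identity on Schwartz class functions on $G$ (realized via exponential/Malcev coordinates as Schwartz functions on $\mathbb{R}^n$), which is dense in $L^1(G)\cap L^2(G)$, and then extend by continuity.

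First, fix $l \in \mathcal{U}^{\ast}\cap\mathfrak{g}^{\ast}_T$ and a polarizing subalgebra $\mathfrak{h}_l$ satisfying \eqref{pol1}--\eqref{pol3}, and choose a complementary subspace $\mathfrak{m}_l$ with $\mathfrak{g}=\mathfrak{m}_l\oplus\mathfrak{h}_l$ and $\dim \mathfrak{m}_l = d = (\dim\mathfrak{g}-\dim\mathrm{rad}_l)/2$. The induced representation $\pi_l$ can be realized on $L^2(\mathfrak{m}_l)\cong L^2(\mathbb{R}^d)$ by the standard formula for Mackey induction, and one verifies that $\mathcal{F}_G(f)(\pi_l)$ is an integral operator with kernel
\begin{align*}
K_l(\xi,\zeta) = \int_{\mathfrak{h}_l} \widetilde{f}\bigl(\alpha(\xi,\zeta,h)\bigr)\, e^{\i\, l(h)} \, \chi(\xi,\zeta,h)\, dh,
\end{align*}
where $\widetilde{f}=f\circ\exp$, $\alpha$ is the coordinate map coming from the Baker--Campbell--Hausdorff formula expressing $g = \exp(\xi - \zeta)\star\exp(h)$, and $\chi$ is a BCH-derived exponential phase. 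After a linear change of variables in $h$ (using that $\mathfrak{h}_l\supseteq \mathrm{rad}_l$ and that $l$ is nonzero on a symplectic complement to $\mathrm{rad}_l$ in $\mathfrak{h}_l/\mathrm{rad}_l$), the $dh$ integral becomes a classical Fourier integral in the directions dual to $\mathfrak{h}_l/\mathrm{rad}_l$.

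Second, compute $\|\mathcal{F}_G(f)(\pi_l)\|_{\HS}^2 = \int\int |K_l(\xi,\zeta)|^2\, d\xi\, d\zeta$ by applying the classical Plancherel identity in each of the variables successively: once in $h$ (Euclidean Plancherel), once in $\xi-\zeta$, and once in $\xi+\zeta$, reorganizing the integrand so that $|f|^2$ appears integrated over $\exp^{-1}$-coordinates. This yields, for each fixed $l$, an expression of the form
\begin{align*}
\|\mathcal{F}_G(f)(\pi_l)\|_{\HS}^2 = \frac{(2\pi)^{?}}{|\Pf(l)|}\int_{\mathrm{rad}_l^\perp} |\mathcal{F}_{\mathrm{rad}_l}(\widetilde f)(l,\cdot)|^2,
\end{align*}
where $\mathcal{F}_{\mathrm{rad}_l}$ denotes a classical Fourier transform along directions complementary to $\mathrm{rad}_l$. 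The factor $|\Pf(l)|^{-1}$ arises precisely because the Plancherel steps in $\xi\pm\zeta$ introduce a Jacobian determinant equal to $|\det(\widetilde{K}_l)|^{1/2} = |\Pf(l)|$. This is the same mechanism that produces \eqref{eq:Weyl_isometry} in the step-$2$ case, and the computation here is a structural generalization of that.

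Third, multiply by $|\Pf(l)|\, dl$ and integrate over $\mathcal{U}^{\ast}\cap\mathfrak{g}^{\ast}_T$. By \cite[Theorem~3.1.9]{CorwinGreenleafBook}, this cross-section meets each generic coadjoint orbit in exactly one point, so the Fubini argument pairs the remaining Euclidean Fourier transform against $dl$ to reconstruct the full classical Fourier transform on $\mathbb{R}^n$ with respect to an adapted Malcev basis. One final application of the classical Plancherel theorem then yields $\|\widetilde{f}\|^2_{L^2(\mathbb{R}^n)}=\|f\|^2_{L^2(G)}$, since the Malcev coordinates push the bi-invariant Haar measure on $G$ to Lebesgue measure on $\mathbb{R}^n$.

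The main obstacle is the bookkeeping in step two: one must choose the adapted basis and BCH expansions so that all phase terms cancel in pairs and the Jacobians combine cleanly into $|\Pf(l)|$. This is where jump-index technology and the explicit form of $\widetilde{K}_l$ on $\mathfrak{g}/\mathrm{rad}_l$ are essential. Once the identity is proved for Schwartz functions, density of $\mathcal{S}(G)$ in $L^1(G)\cap L^2(G)$ and continuity of both sides of the asserted identity (viewing $f\mapsto\mathcal{F}_G(f)(\pi_l)$ as an operator-valued $L^1$-Fourier transform) extends it to all $f\in L^1(G)\cap L^2(G)$, completing the proof.
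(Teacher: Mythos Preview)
The paper does not prove this theorem; it is stated in the appendix with an explicit citation to \cite[Theorem~4.3.10]{CorwinGreenleafBook} and no argument is given. So there is no ``paper's own proof'' to compare against.

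Your sketch follows the standard Corwin--Greenleaf architecture (realize $\pi_l$ as an integral operator via Mackey induction, compute the HS norm of its kernel by iterated Euclidean Plancherel, identify the Pfaffian as a Jacobian, then integrate over the cross-section $\mathcal{U}^\ast\cap\mathfrak{g}^\ast_T$), and that is indeed the correct high-level strategy. However, as written it is a plan rather than a proof: the formulas for $K_l$, $\alpha$, and $\chi$ are not actually specified, the ``$(2\pi)^{?}$'' placeholder signals that the constants are not tracked, and the phrase ``the main obstacle is the bookkeeping in step two'' concedes that the hard part is not carried out. In the general nilpotent case (arbitrary step), the BCH expansion and the choice of adapted Malcev basis depending measurably on $l$ are genuinely nontrivial, and the clean factorization into a Pfaffian Jacobian requires the detailed jump-index machinery of \cite[\S4.3]{CorwinGreenleafBook}; your outline gestures at this but does not execute it. If the goal is simply to invoke the result, a citation suffices (as the paper does); if the goal is to supply a proof, the kernel computation and the measurable-section argument need to be written out.
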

	
	\section*{Acknowledgment} RS would like to thank Fabrice Baudoin for many insightful discussions during the preparation of this work.

	\providecommand{\bysame}{\leavevmode\hbox to3em{\hrulefill}\thinspace}
	\providecommand{\MR}{\relax\ifhmode\unskip\space\fi MR }
	\providecommand{\MRhref}[2]{%
		\href{http://www.ams.org/mathscinet-getitem?mr=#1}{#2}
	}
	\providecommand{\href}[2]{#2}

\end{document}